\numberwithin{equation}{section}
\newcommand{\R}{\mathbb{R}}
\newcommand{\Pp}{\mathbb{P}}
\newcommand{\N}{\mathbb{N}}
\newcommand{\Z}{\mathbb{Z}}
\newcommand{\calK}{\mathcal{K}}
\newcommand{\calF}{\mathcal{F}}
\newcommand{\calC}{\mathcal{C}}
\newcommand{\calA}{\mathcal{A}}
\newcommand{\calT}{\mathcal{T}}
\newcommand{\e}{\varepsilon}
\renewcommand{\aa}{a_\circ }
\newcommand{\loc}{\mathrm{loc}}
\newcommand{\dist}{\mathrm{dist}}
\newcommand{\sym}{\mathrm{sym}}
\newcommand{\Md}{\mathbb{M}_d}
\newcommand{\expec}[1]{\mathbb{E}\Big[ #1 \Big]}
\newcommand{\expecL}[1]{\mathbb{E}_L\Big[ #1 \Big]}
\newcommand{\step}[1]{\noindent \textit{Step} #1.}
\newcommand{\substep}[1]{\noindent \textit{Substep} #1.}
\newcommand{\E}{\mathbb{E}}
\newcommand{\dd}{\mathrm{d}}
\newcommand{\id}{\mathrm{Id}}
\newcommand{\rNL}{r_{\star,\xi,L}}
\newcommand{\urNL}{{\underline r_{\star,\xi,L}}}
\renewcommand{\r}{r_{\star}}
\newcommand{\rb}{{\underline r_{\star}}}
\newcommand{\Br}{B_{\star}}
\newcommand{\Brr}{B_{2\star}}
\newcommand{\Bt}{B_{5\star}}
\newcommand{\Dr}{D_{\star}}
\newcommand{\aL}{a_{\xi}}
\newcommand{\TaL}{a^*_{\xi}}
\newcommand{\rL}{\tilde{r}_{\star,\xi,e,L}}
\newcommand{\rr}{\tilde{r}_{\star}}
\newcommand{\corL}{{\tilde \phi_{\xi,e}}}
\newcommand{\corNL}{\phi_{\xi}}
\newcommand{\phiL}{\tilde \phi}
\newcommand{\sigL}{\tilde \sigma}
\newcommand{\per}{{\mathrm{per}}}
\newcommand{\us}{\bar{u}^{2s}_{\varepsilon,\delta}}
\newtheorem{theorem}{Theorem}[section]
\newtheorem{hypo}{Hypothesis}[section]
\newtheorem{corollary}[theorem]{Corollary}
\newtheorem{lemma}[theorem]{Lemma}
\newtheorem{proposition}[theorem]{Proposition}
\newtheorem*{proposition*}{Proposition}
\newtheorem{definition}{Definition}[section]
\newtheorem{example}{Example}[section]
\newtheorem*{theorem*}{Theorem}
\newtheorem*{lemma*}{Lemma}
\newtheorem*{corollary*}{Corollary}
\newtheoremstyle{TheoremNum}
      {\topsep}{\topsep}              		
      {\itshape}                      		
      {}                              		
      {\bfseries}                     		
      {.}                             		
      { }                             		
      {\thmname{#1}\thmnote{ \bfseries #3}}	
\theoremstyle{TheoremNum}
\theoremstyle{remark}
\newtheorem{remark}{Remark}
\title{Quantitative nonlinear homogenization: control of oscillations} 
\author{Nicolas Clozeau\thanks{Sorbonne Universit\'e, CNRS, Universit\'e de Paris, Laboratoire Jacques-Louis Lions, 75005~Paris, France} \  and  Antoine Gloria\thanks{Sorbonne Universit\'e, CNRS, Universit\'e de Paris, Laboratoire Jacques-Louis Lions, 75005~Paris, France \& Institut Universitaire de France \& Universit\'e Libre de Bruxelles, D\'epartement de Math\'ematique, 1050~Brussels, Belgium}}
\begin{document}

\maketitle

\begin{abstract}
Quantitative stochastic homogenization of linear elliptic operators is by now well-understood. In this contribution we move forward to the nonlinear setting of monotone operators with $p$-growth. This work is dedicated to a quantitative two-scale expansion result. 
By treating the range of exponents $2\le p <\infty$ in dimensions $d\le 3$, we are
able to consider genuinely nonlinear elliptic equations and systems such as $-\nabla \cdot A(x)(1+|\nabla u|^{p-2})\nabla u=f$ (with $A$ random, non-necessarily symmetric) for the first time.
When going from $p=2$ to $p>2$, the main difficulty is to analyze the associated linearized operator, whose coefficients are degenerate, unbounded, and depend on the random input $A$ via the solution of a nonlinear equation.
One of our main achievements is the 
control of this intricate nonlinear dependence, leading to annealed Meyers' estimates 
for the linearized operator, which are key to the optimal quantitative two-scale expansion result we derive (this is also new in the periodic setting).

\medskip

\noindent \textbf{AMS Classification} 47H05, 35B27, 35R60, 47H40 
\end{abstract}


\tableofcontents

\section{Introduction}

\subsection{Nonlinear elliptic equations and homogenization}

Monotone operators are natural nonlinear extensions of linear operators in divergence form.
They model physical phenomena such as (nonlinear) conductivity in $\R^d$ ($d\ge 1$)
in form of
$$
-\nabla \cdot a (\nabla u(x)) = g(x),
$$
for some map $a:\R^d \to \R^d$ and function $g$ (which we shall take in conservative form later on). In the language of conductivity, such an equation is obtained by combining
\begin{itemize}
\item a conservation law: $\nabla \cdot q+g=0$, where $-q$ is the heat flux\footnote{although $-q$ is the physical heat flux,  we shall rather call $q$ the flux in this contribution.} and $g$ some exterior forcing,
\item with a constitutive law: $q=a(\nabla u)$, where $u$ is the temperature field.
\end{itemize}
For $a(\xi)=\xi$, we recover the Laplace equation (linear heat conduction), and for $a(\xi)=|\xi|^{p-2}\xi$ with $p\ge 2$ the $p$-Laplace equation (nonlinear heat conduction). 

Homogenization is the mathematical theory of composite materials. 
In the above picture, this means that the constitutive law depends on the space variable: the conductive medium is made of different materials with different conductivities.
This leads us to the more general conductivity problem 
$$
-\nabla \cdot a (x,\nabla u(x)) = g(x),
$$
with $a:\R^d \times \R^d, (x,\xi) \mapsto a(x,\xi)$.
To speak of composite materials we need two additional properties: the way the different materials are mixed should display some pattern with averaging properties (such as periodicity or stationarity and ergodicity) and there should be a scale separation between the size of the heterogeneities and the forcing term $g$. In more mathematical terms, we call $0<\e_0\ll 1$ the scale separation parameter of the ``actual'' model, $a$ the map when heterogeneities vary at the unit scale, and embed the problem at physical scale $0<\e_0\ll 1$ into the family of equations with 
arbitrary parameter $\e>0$
\begin{equation}\label{*eps}
-\nabla \cdot a (\tfrac x\e,\nabla u_\e(x)) = g(x).
\end{equation}
Homogenization aims at characterizing the asymptotic behavior of the temperature gradient  $\nabla u_\e$ and of the heat flux $q_\e=a(\frac x\e,\nabla u_\e)$ in the regime $0<\e \ll 1$.
These fields obviously have strong oscillations at scale $\e$, and, if any, convergence can only take place in weak norms (or, in physical terms, after local averaging) when $\e \downarrow 0$. 
The typical output is the existence of map $\bar a:\R^d\to \R^d$ such that $\nabla u_\e$ converges to $\nabla \bar u$, $q_\e$ converges to $\bar q = \bar a(\nabla \bar u)$, 
and $\bar u$ solves
\begin{equation}\label{*hom}
-\nabla \cdot \bar a (\nabla \bar u (x)) = g(x).
\end{equation}
In this case, $\bar a$ is the effective (or homogenized) conductivity of the composite material.
Homogenization can be summarized by the commutative diagram of Figure~\ref{fig:diag}, 
which is nothing but a particular instance of the $H$-convergence by Murat and Tartar.
\begin{figure*}[!h]
$$
\begin{array}{ccc|c}
  \text{Gradient field} & \text{Constitutive law} & \text{Minus flux} & \text{Conservation law}\\
  \nabla u_\e & \stackrel{\text{apply }a_\e} \longrightarrow & q_\e=a(\tfrac \cdot \e,\nabla u_\e) & -\nabla \cdot q_\e =   g
\\
  \downarrow & & \downarrow & \downarrow 
\\
  \nabla \bar u & \stackrel{\text{apply }\bar a} \longrightarrow & \bar  q=\bar a (\nabla \bar u)&-\nabla \cdot \bar q =   g
\end{array}
$$
\caption{Homogenization as a commutative diagram}\label{fig:diag}
\end{figure*}

The main motivation of the homogenization theory is to replace $\nabla u_{\e_0}$
and $q_{\e_0}$ by some effective quantities. The above answer
amounts to taking the weak limit of $\nabla u_\e$ and $q_\e$ as $\e \downarrow 0$
and therefore smooth out the oscillations in a consistent way. The natural following question is whether one can quantify the error made by replacing $(\nabla u_{\e_0},q_{\e_0})$ by $(\nabla \bar u, \bar q)$, and whether we can reconstruct a posteriori the oscillations of $(\nabla u_{\e_0},q_{\e_0})$ given $(\nabla \bar u, \bar q)$ and some intrinsic objects. 
This is the first aim of quantitative homogenization, which is by now well-developed for linear equations. In this contribution, we address genuinely nonlinear operators (such as regularized $p$-Laplacians with $p>2$) for the first time, and prove a quantitative two-scale expansion -- which characterize the \emph{spatial oscillations} of $(\nabla u_{\e_0},q_{\e_0})$ at scale $\e_0$, see Theorem~\ref{th:2s}.
The second aim of quantitative stochastic homogenization is to characterize the \emph{random fluctuations} of observables of $(\nabla u_{\e_0},q_{\e_0})$, see~Section~\ref{sec:towards} for a short discussion.

\medskip

In the rest of this section, we introduce the class of monotone operators we shall consider (and make precise assumptions on $a$), recall the associated qualitative homogenization results, and then turn to quantitative aspects.

\subsection{Qualitative homogenization of monotone operators}

The stochastic homogenization of monotone operators was first addressed by Dal Maso and Modica \cite{DalMaso86,DalMaso-Modica-86b} in their seminal papers ``Nonlinear stochastic homogenization (and ergodic theory)'', based on $\Gamma$-convergence (\cite{DeGiorgi-Franzoni-75,Marcellini-78}) and the subadditive ergodic theorem. We refer to the reference textbooks \cite{DalMaso-93} by Dal Maso, \cite{Braides98} by Braides and Defranceschi,  \cite{JKO94} by Jikov, Kozlov, and Ole{\u\i}nik, and   \cite{Pankov} by Pankov for the finest qualitative results available -- see also \cite{Marcellini-78,DalMaso86,DalMaso-Modica-86b,DalMaso-Gconv,DalMaso-corr,FMT,DG-16b}. 
 
Let us precisely define the class of maps $\hat a :\R^d \to \R^d$ we shall consider in this contribution.
We assume that $\hat a$ satisfies the following
three properties: $\hat a(0)=0$ and there exist $p\ge2$ and $C>0$ such that 
for all $\xi_1,\xi_2 \in \R^d$,
\begin{eqnarray}
|\hat a(\xi_1)-\hat a(\xi_2)|&\le& C  (1+|\xi_1|^{p-2}+|\xi_2|^{p-2})|\xi_1-\xi_2|,
\label{*cont}
\\
(\hat a(\xi_1)-\hat a(\xi_2))\cdot (\xi_1-\xi_2)& \ge& \frac1C  |\xi_1-\xi_2|^p.
\label{*coer}
\end{eqnarray}
Estimate~\eqref{*cont} is a continuity or boundedness property, whereas estimate~\eqref{*coer} is a monotonicity or coercivity property.
With a map $\hat a$ we associate the monotone differential operator $C^2(\R^d) \to C^0(\R^d), v \mapsto \nabla \cdot \hat a(\nabla v)$.
In view of regularity theory (see below) we also consider two strengthenings of \eqref{*coer}, which read for all $\xi_1,\xi_2\in \R^d$
\begin{eqnarray}
\label{*coer+-}
(\hat a(\xi_1)-\hat a(\xi_2))\cdot (\xi_1-\xi_2) &\ge& \frac1C ( |\xi_1-\xi_2|^{2}+ |\xi_1-\xi_2|^{p})=:\frac1C |\xi_1-\xi_2|^{2\& p}, \\
\label{*coer+}
(\hat a(\xi_1)-\hat a(\xi_2))\cdot (\xi_1-\xi_2) &\ge& \frac1C (|\xi_1|^{p-2}+|\xi_2|^{p-2}) |\xi_1-\xi_2|^{2}.
\end{eqnarray}
Let us give two examples.
The $p$-Laplacian, that is, $\hat a:\xi \mapsto |\xi|^{p-2}\xi$, satisfies \eqref{*cont}, \eqref{*coer},
and \eqref{*coer+}. 
The $p$-Laplacian regularized at 0, that is, $\hat a:\xi \mapsto (1+|\xi|^{p-2})\xi$, satisfies \eqref{*cont}, \eqref{*coer}, \eqref{*coer+-}, and \eqref{*coer+}. 

\medskip

In this contribution, we shall say that  an operator $\hat a : \R^d\to \R^d$ with $\hat a(0)=0$ is
\begin{itemize}
\item[(i)] \emph{monotone} with growth $p\ge 2$ if it satisfies \eqref{*cont} and~\eqref{*coer};
\item[(ii)] \emph{strongly monotone} with growth $p\ge 2$ if it  satisfies \eqref{*cont} and~\eqref{*coer+};
\item[(iii)]  monotone and \emph{non-degenerate} with growth $p\ge 2$ if it  satisfies \eqref{*cont} and~\eqref{*coer+-}.
\end{itemize}
In regularity theory, provided $\hat a$ is $C^1$, one usually state these assumptions in form of: for some $s \ge 0$ and for all $\xi,h\in \R^d$, we have
\begin{eqnarray*}
|\hat a(\xi)| \,\le\,C(s^2+|\xi|^2)^\frac{p-1}2, \quad |D_\xi \hat a(\xi)|\,\le\, C(s^2+|\xi|^2)^\frac{p-2}2, \quad D_\xi \hat a(\xi) h \cdot h \ge c(s^2+|\xi|^2)^\frac{p-2}2 |h|^2.
\end{eqnarray*}
If this holds for $s=0$, then we have \eqref{*cont}, \eqref{*coer},
and \eqref{*coer+}. If it holds for $s=1$, then we additionally have \eqref{*coer+}.
This is however in the form  \eqref{*cont} and \eqref{*coer+} that these assumptions will be used in terms of estimates in this contribution.

\medskip

The informal discussion of the previous paragraph can be made rigorous for monotone operators with growth $p\ge 2$ (more general results exist).
Consider a random Carath\'eodory\footnote{that is, measurable with respect to $x$ and continuous with respect to $\xi$.} map $a:\R^d \times \R^d \to \R^d, (x,\xi)\mapsto a(x,\xi)$,
which is stationary and ergodic in the space variable (see Section~\ref{sec:detail-rand} for details),
and is such that $a(x,\cdot)$ satisfies  \eqref{*cont} and~\eqref{*coer}
for all $x\in \R^d$ almost surely (with uniform exponent $p\ge 2$ and constant $C$).
We consider the family of solutions $\{u_\e\}_{\e>0}$ of \eqref{*eps}
for a suitable forcing term of the form  $g=\nabla \cdot f$ with $f \in L^p(\R^d)^d$, in
which case the natural solution space is the homogeneous Sobolev space 
$$\dot W^{1,p}(\R^d):=\{ v \in W^{1,p}_\loc(\R^d)\,|\,\nabla v \in L^p(\R^d)\}/\R.$$
Then, $\nabla u_\e$ and $q_\e=a(\frac \cdot \e,\nabla u_\e)$ converge weakly
in $L^p(\R^d)^d$ almost surely to $\nabla \bar u$ and $\bar q=\bar a(\nabla \bar u)$, respectively, where $\bar u \in \dot W^{1,p}(\R^d)$ solves the homogenized monotone equation \eqref{*hom} on $\R^d$, where $\bar a:\R^d \to \R^d$ is a monotone operator (no spatial dependence) which only depends on $a$ (and in particular, not on $f$ -- see Theorem~\ref{th:qual-hom} for the precise statement) and satisfies   \eqref{*cont} and~\eqref{*coer} with another constant $C'$ (depending only on $C$ and $d$).

\medskip

A natural question is whether one can infer more properties on $\bar a$ if we make more assumptions on $a$.
The answer is more subtle than one would expect. In general, if for all $x\in \R^d$,
$a(x,\cdot)$ 
\begin{itemize}
\item is quadratic, then $\bar a$ is quadratic (linear equations homogenize to linear equations);
\item is a $p$-Laplacian, that is, if $a(x,\xi)$ is homogeneous of degree $p-1$ in $\xi$ and satisfies \eqref{*cont} and \eqref{*coer}, then~\eqref{*coer+} also holds, and, by scaling,  $\bar a$ is also homogeneous of degree $p-1$ and satisfies \eqref{*cont} and~\eqref{*coer+} (for some constant $C$);
\item satisfies~\eqref{*cont} and~\eqref{*coer+-}, then $\bar a$ also satisfies~\eqref{*cont} and~\eqref{*coer+-} (for some constant $C$);
\item satisfies~\eqref{*cont}, \eqref{*coer+-} and~\eqref{*coer+},
then $\bar a$ satisfies~\eqref{*cont} and~\eqref{*coer+-}, but (most presumably) not necessarily~\eqref{*coer+}.
\end{itemize}
In particular, homogenization of a \emph{monotone} map yields a \emph{monotone} map, homogenization of a \emph{non-degenerate} monotone map yields a \emph{non-degenerate} monotone map, but homogenization of a \emph{strongly monotone} map \emph{might not} yield a \emph{strongly monotone} map (we are not aware of an explicit counter-example though).
This suggests that the homogenized operator $\nabla \cdot \bar a(\nabla)$ might not possess $C^{1,\alpha}$-regularity even if  the monotone operator $\nabla \cdot a(\cdot,\nabla)$  does.
Since elliptic regularity plays an important role in quantitative homogenization, this raises interesting questions and will impose restrictions on the operators we can consider.

\subsection{Classical regularity theory for monotone operators}\label{sec:class-reg}

In this section we recall what regularity theory one can expect for 
elliptic operators of the form $\nabla \cdot \hat a(\nabla)$ depending on properties of $\hat a$, which we then apply both to the random and the homogenized operators.
There are essentially two classes of results:
\begin{itemize}
\item \emph{Standard} growth conditions: If $\xi \mapsto \hat a(\xi)$ is smooth, and $\hat a$ is strongly monotone (that is, it satisfies \eqref{*cont} and \eqref{*coer+}), then $\nabla \cdot \hat a(\nabla)$ possesses $C^{k,\alpha}$-regularity and nonlinear Calder\'on-Zygmund theory, cf.~\cite{kuusi2014guide,DarkSide}; 
\item \emph{Non-standard} growth conditions: If  $\xi \mapsto \hat a(\xi)$ is smooth, and $\hat a$ is monotone and non-degenerate (that is,   it satisfies \eqref{*cont} and \eqref{*coer+-}), then $\nabla \cdot \hat a(\nabla)$ possesses $C^{k,\alpha}$-regularity and nonlinear Calder\'on-Zygmund theory provided $2\le p < \frac{2(d-1)}{d-3}$ (only active for $d\ge 4$), cf.~\cite{Bella_2020}\footnote{The new feature in~\cite{Bella_2020}, which establishes $C^{k,\alpha}$-regularity for local solutions of $\nabla \cdot a(\nabla u)=0$, is the largest range of exponents  $2\le p < \frac{2(d-1)}{d-3}$ compared to previous contributions -- more general results with right-hand sides and nonlinear Calder\'on-Zygmund theory  hold true as well.}.
\end{itemize}
On the one hand, strongly monotone operators are nicer since they possess regularity without restriction on the exponent $p$,  but strong monotonicity is not stable by homogenization (in the sense that the homogenized operator might not be strongly monotone even if the random operator is). On the other hand, although non-degenerate operators do only possess regularity if $p$ is close enough to $2$ in high dimensions (there is no restriction for $d\le 3$),  this property is stable by homogenization.

\medskip

As we shall see below, to establish quantitative homogenization results in the scalar setting:%
\begin{itemize}
\item The non-degeneracy condition \eqref{*coer+-} is needed for reasons that are independent of the regularity theory;
\item We need local regularity theory for the random operator, which (provided $x \mapsto a(x,\xi)$ is smooth enough) is automatic for $2\le p < \frac{2(d-1)}{d-3}$ in view of the assumption \eqref{*coer+-}, and follows from any $p\ge 2$ if we further assume \eqref{*coer+};
\item In the random setting, we also need the condition $2\le p < \frac{2(d-1)}{d-3}$ for the analysis of linearized operators. 
\end{itemize}
Most of our analysis also applies to monotone systems, provided we assume in addition that the operator has the Uhlenbeck structure. 

\medskip

To conclude, let us discuss the regularity properties of the homogenized operator in view of these assumptions.
For systems, since the Uhlenbeck structure is not stable by homogenization, our assumptions do not imply that the homogenized operator possesses $C^{1,\alpha}$ regularity.
The same holds in the scalar periodic setting in the range of exponents $p \ge \frac{2(d-1)}{d-3}$ under assumption~\eqref{*coer+}. In the (periodic or stochastic) setting in the range of exponents $p < \frac{2(d-1)}{d-3}$, the homogenized operator does possess $C^{1,\alpha}$ regularity.
We refer the reader to Section~\ref{sec:towards} for a further discussion of this observation.

\subsection{Quantitative homogenization for $p=2$}\label{sec:p=2}

The prototypical example for $p=2$ is the linear equation and its homogenized limit
\begin{equation}\label{e.intro0}
-\nabla \cdot A(\tfrac x\e) \nabla u_\e \,=\, \nabla \cdot f, \qquad -\nabla \cdot \bar A \nabla \bar u \,=\, \nabla \cdot f,
\end{equation}
where $A$ is a stationary and ergodic field of (say, symmetric for simplicity) matrices, which satisfies the uniform bound $\frac1C\id \le A\le C \id$ for some deterministic $C>0$.
The first quantitative estimates for this problem started with the early contributions \cite{Yurinskii-86} by Yurinski\u{\i} and \cite{NS} by Naddaf and Spencer, followed by  \cite{GO1,GO2,Gloria-Otto-10b,GNO1,MaO,GNO2} by Otto, Marahrens, Neukamm, and the second author.
The crucial ingredients in these works are Meyers' estimates, functional calculus in probability (in the form pushed forward in \cite{DG1,DG2} by Duerinckx and the second author for applications to mechanics, see Appendix~\ref{sec:FC} for the Gaussian version), and the central limit theorem scaling (a typical sign of integrable correlations).
Functional calculus is a powerful tool that allows to linearize the dependence of nonlinear random fields with respect to the underlying randomness (here the coefficients of the equation) and provide concentration of measures (in form of the control of high moments).
This strategy was recently revisited and very efficiently streamlined in  \cite{Otto-Tlse,josien2020annealed}.

Later, Armstrong and Smart adapted in \cite{AS} the strategy of Avellaneda and Lin \cite{Avellaneda-Lin-87,Avellaneda-Lin-91} from the periodic to the random setting and proved that the solution $u_\e$ of the heterogeneous equation enjoys the same regularity properties as the solution $\bar u$ of the homogenized equation, albeit on large scales (with respect to the heterogeneities). Since the homogenized equation has constant coefficients $\bar A$, one can derive large-scale $C^{k,\alpha}$-regularity for all $k\ge 0,0\le \alpha\le 1$ -- which constitutes non-perturbative large-scale regularity that goes way beyond Meyers' estimates.
On the one hand, this allowed them to distinguish the (more robust) large-scale regularity properties from the (finer) rates of convergence.  
On the other hand, this opened the way to go beyond the CLT scaling (and typically consider Gaussian coefficient fields with fat tails in \cite{GNO-reg,GNO-quant}), to by-pass functional calculus (and typically treat coefficient fields with a finite rate of dependence, which culminated in  \cite{AKM1,GO4,AKM2,AKM-book}), and to give a complete characterization of fluctuations \cite{DGO1}.
We refer the reader to  \cite{GNO-reg,GO4} for a thorough account of the literature emphasizing the different assumptions and approaches.

\medskip

The nonlinear version of \eqref{e.intro0} for $p=2$ reads
with $a(x,\xi)=A(x,\xi)\xi$ and $\bar a(\xi)=\bar A(\xi)\xi$,
$$
-\nabla \cdot a(\tfrac x\e,\nabla u_\e)  \,=\, \nabla \cdot f, \qquad -\nabla \cdot \bar a(\nabla \bar u)   \,=\, \nabla \cdot f,
$$
with the boundedness property $\frac1C\id   \le A(x,\xi)\le C \id$ for all $x,\xi \in \R^d$,
and the assumption that $a(\cdot,\xi) =D_\xi W(\cdot,\xi)$ for some convex function $\xi \mapsto W(\cdot,\xi)$ (which, in the linear case, takes the form $\frac12 \xi \cdot A \xi$).
A crucial feature of this model is that $\bar A$ satisfies the very same properties as $A$, which makes it the simplest nonlinear model possible. It was first successfully analyzed in \cite{AS,AFK-+,AFK-20} by Armstrong, Smart, Ferguson, and Kuusi (where mostly large-scale regularity is proved), and then in \cite{fischer2019optimal} by Fischer and Neukamm (where rates are obtained).
The new insight compared to \eqref{e.intro0} is that one needs to study the linearized corrector equation (see~\eqref{e.Lcorr} below) on top of the nonlinear corrector equation (see~\eqref{e.cor-eq} below), and the main merit of these works is to show that a certain version of the theory for the linear case extends ``verbatim'' to this nonlinear case.

\medskip

In the following paragraph we detail the new difficulties that appear when going from $p=2$ to a genuinely nonlinear operator with $p>2$. 
 
\subsection{New difficulties for $p>2$}

Whereas for $p=2$, the homogenized operator $\bar a$ has a good regularity theory provided it 
is smooth (which is proved in \cite{AS,AFK-+,AFK-20}), this is less clear as soon as $p>2$, cf.~Section~\ref{sec:class-reg} .

\medskip

When linearizing the nonlinear problem, one obtains a linear operator with coefficients that are heterogeneous and depend on the solution itself. 
Can we prove perturbative regularity (typically in form of Meyers' estimates) for this equation?
In the specific case $p=2$ treated in \cite{AFK-+,AFK-20,fischer2019optimal}, the coefficients of the linearized operator are bounded from above and below, so that Meyers' estimates are standard (which allowed Fischer and Neukamm to essentially follow the linear proof in \cite{fischer2019optimal}). This is not the case for $p>2$,
and this linear equation is hard to handle for two reasons:
\begin{itemize}
\item The coefficients may be degenerate (for the $p$-Laplacian e.g.), and 
despite recent progress on degenerate models, this degeneracy is currently out of reach.
Large-scale regularity has been established for the Laplacian on the percolation cluster by Armstrong and Dario in \cite{AD-18} (and optimal convergence rates by Dario in \cite{Dario}) and for linear elliptic systems with degenerate and unbounded coefficients under  moment bounds assumptions by Bella, Fehrmann and Otto in \cite{bella2018liouville}. The main new difficulty we face here is that, unlike in \cite{AD-18,bella2018liouville}, the degeneracy 
is \textbf{not prescribed a priori}: it is given by the critical set of harmonic coordinates  (that is the set of $x \in \R^d$ such that $\nabla \phi_\xi(x)+\xi=0$, cf.~\eqref{e.cor-eq} below). 
Precise information on this critical set is however currently unavailable for $d>2$ (even  
for the $p$-Laplacian, the unique continuation principle is not known to hold for $d>2$ and $p>2$, e.g.~\cite{Lindvist-06});
\item  The coefficients are unbounded. Since they depend on the solution of a nonlinear PDE,  our sole a priori control is given by the growth exponent $\frac{p}{p-2}$ (the larger $p$, the weaker the integrability). 
\end{itemize}

\medskip

The rest of the article is organized as follows.
In the upcoming section, we state our main results. 
Before we turn to the proofs, we describe our strategy and explain how we deal with the above difficulties. We also provide a thorough discussion of extensions and limitations of our approach,  and finally turn to the core of the proofs.

\section{Main results}

In a nutshell this article extends the results of \cite{Gloria-Otto-10b} (covering scalar linear equations in divergence form) to the setting of genuinely nonlinear monotone equations and systems. Although this article is mostly self-contained, \cite{Gloria-Otto-10b} and \cite{josien2020annealed} may serve as a gentle introduction to the subject and tools used in the present contribution.

\subsection{Qualitative assumptions and qualitative homogenization}\label{sec:qual}

We start with the well-known qualitative homogenization result, essentially due to Dal Maso and Modica~\cite{DalMaso-Modica-86b} -- see also \cite[Chapter~15]{JKO94} and \cite[Chapter~3]{Pankov}. 
In particular we assume that $a$ satisfies  \eqref{*cont} and \eqref{*coer+-}, conditions which are also satisfied by the homogenized operator.
We ask the unfamiliar reader not to worry too much about stochastic assumptions at this stage. Explicit assumptions and standard terminology (such as stationarity and ergodicity) will be given in Hypotheses~\ref{hypo0} and~\ref{hypo} below.
\begin{theorem}[Qualitative homogenization]\label{th:qual-hom}
If $(x,\xi) \mapsto a(x,\xi)$ is a stationary and ergodic random monotone operator of growth $p\ge 2$ such that $a(\cdot,0) \equiv 0$ and $\xi \mapsto a(\cdot,\xi)$ satisfies   \eqref{*cont} and \eqref{*coer+-} for a deterministic constant $C>0$, then there exists a monotone operator $\bar a$ of growth $p$ satisfying $\bar a(0)=0$, \eqref{*cont}, and \eqref{*coer+-},
such that for all $f\in L^p(\R^d)^d$, the unique weak solution $u_\e \in \dot W^{1,p}(\R^d)$ of 
\begin{equation}\label{e.eps-eq}
-\nabla \cdot a(\tfrac x\e,\nabla u_\e(x))=\nabla \cdot f(x)
\end{equation}
weakly converges   almost surely to the unique weak solution $\bar u  \in \dot W^{1,p}(\R^d)$ of
\begin{equation}\label{e.hom-eq}
-\nabla \cdot \bar a( \nabla \bar u(x))=\nabla \cdot f(x)
\end{equation}
(that is, $\nabla u_\e \rightharpoonup \nabla \bar u$ weakly in $L^p(\R^d)^d$),
where the operator $\bar a$ is characterized in direction $\xi \in \R^d$ by
$
\bar a(\xi)=\mathbb E[a(0,\nabla \phi_\xi(0)+\xi)]  
$
(where $\mathbb E[\cdot]$ denotes the expectation),
and $\phi_\xi$ is the corrector, defined as the unique almost sure distributional solution in $W^{1,p}_\loc(\R^d)$ of 
\begin{equation}
-\nabla \cdot a(x,\nabla \phi_\xi(x)+\xi)=0,
\label{e.cor-eq}
\end{equation}
anchored at the origin via $\int_B \phi_\xi=0$, and whose gradient  $\nabla \phi_\xi$ is stationary, has vanishing
expectation $\mathbb E[{\nabla \phi_\xi}]=0$, and satisfies 
\begin{equation}
\expec{|\nabla \phi_\xi|^{2\& p}} \lesssim |\xi|^{2\& p}:=|\xi|^2+|\xi|^p.
\label{e.cormoment}
\end{equation}
\end{theorem}

\subsection{Quantitative assumptions and quantitative two-scale expansion}\label{sec:detail-rand}

In view of the discussion above, to fix ideas and keep results and proofs readable, we consider the explicit class of $p$-Laplacians regularized at zero (see~Section~\ref{sec:extension} for more general conditions).
\begin{hypo}\label{hypo0}
Let $p\ge 2$, and consider the strongly monotone and non-degenerate operator
\begin{equation}\label{e.def-a}
a(x,\xi) \,:=\, A(x)(1+|\xi|^{p-2})\xi,
\end{equation}
where $A$ is a uniformly elliptic (non-necessarily symmetric) stationary ergodic matrix field.
More precisely, we assume that $A$ is smooth (uniformly wrt to the randomness)
and satisfies the ellipticity conditions for some $0<\lambda\le 1$
$$
\forall x,\xi \in \R^d: \quad \xi \cdot A(x)\xi \ge \lambda|\xi|^2 \text{ and }|A(x)\xi|\le |\xi|.
$$
\end{hypo}
Under Hypothesis~\ref{hypo0}, the monotone map $a$
almost surely satisfies  $a(\cdot,0)\equiv 0$, \eqref{*cont}, and \eqref{*coer+-} (and, incidently, also  \eqref{*coer+}) for  some $C$ depending only on $p$ and $\lambda$, so that the qualitative homogenization result of Theorem~\ref{th:qual-hom} applies.

\medskip

Let us now be more precise on the stochastic setting.
It is convenient to define the probability space via $\Omega=\{A:\R^d \to \Md(\lambda)\}$, endowed with some probability measure $\mathbb P$ (we denote by $\mathbb E[\cdot]$ the expectation).
In this setting, a random variable $Y$ can be seen as a (measurable) function of the form $A\mapsto Y(A)$. We say that the measure  $\mathbb P$ is ergodic if we have the implication: $Y(A(\cdot+z))=Y(A)$ for all $z\in \R^d$ $\implies$ $Y=\mathbb E[{Y}]$ almost surely.
We say that a random field $X:\R^d \times \Omega\to \R^k$ (for $k\in \N$) is stationary if for all $z\in \R^d$
and almost all $x\in \R^d$ we have $X(x+z,A)=X(x,A(\cdot+z))$, where $A(\cdot+z):x \mapsto A(x+z)$.
(Note that the expectation $\mathbb E[X(x)]$ of a stationary random field $X$ does not depend on $x\in \R^d$ and we simply write $\mathbb E[ X]$.) We use the notation $L^q(d\mathbb P)$ for the space of $q$-integrable random variables.

\medskip

In order to prove quantitative results, we need to quantify the ergodicity assumption, which we do by assuming Gaussianity of $\mathbb P$ and the integrability of the 
covariance function in the following sense. 
\begin{hypo}\label{hypo}
On top Hypothesis~\ref{hypo0}, assume that 
\begin{equation}\label{e.defAGauss}
A(y)= \chi * B(G(y)),
\end{equation}
where $B:\R \to \Md$ is a Lipschitz map, 
 $G$ is a stationary centered random Gaussian field on $\R^d$ (that is, $\mathbb E[{G}]=0$)
characterized by its covariance function $\mathcal C:\R^d \to \R, x\mapsto \mathcal C(x):=\mathbb E[{G(x)G(0)}]$, which we assume to be integrable on $\R^d$ (in the precise form of \eqref{ModelC} and \eqref{DecayModel} in Appendix~\ref{append:per}), and $\chi : \R^d \to [0,1]$ is a smooth compactly supported convolution kernel (the convolution is taken componentwise). 
In particular, $A$ is smooth (uniformly wrt to the randomness).
We further require $2\le p < \frac{2(d-1)}{d-3}$
in dimensions $d\ge 4$.\footnote{This condition, which comes from an argument of \cite{bella2019local}
and is used in the main part of this paper dedicated to large-scale Meyers estimates, is also crucially used in \cite{Bella_2020}.}
\end{hypo}
Our main achievement is an optimal quantitative corrector result, which extends the results of \cite{fischer2019optimal} to the genuinely nonlinear setting of $p>2$.
Following Dal Maso and Defranceschi \cite{DalMaso-corr}, we start with the suitable definition of
the two-scale expansion.
To this aim, we introduce a scale $\delta>0$ (which we should think of as being $\e$ in the upcoming result), set $\calK_\delta:=\delta\mathbb{Z}^d$ and for all $k\in \calK_\delta$, 
we define the cube ${Q}_{\delta}(k)=k+[-\delta,\delta)^d$ centered at $k$ and of sidelength $2\delta$. 
We also consider  a partition $(\eta_k)_{k\in \calK_\delta}$  of unity on $\mathbb{R}^d$ with the following properties: 
$0\leq \eta_k\leq 1$, $\eta_k\equiv 1$ on $\mathcal{Q}_{\frac{\delta}{2d}}(k)$, $\eta_k\geq c$ on $\mathcal{Q}_{(1-\frac{1}{3d})\delta}$, $\mathrm{supp}\, \eta_k \subset Q_{2\delta}(k)$, and $\vert\nabla\eta_k\vert\leq C\delta^{-1}$ (for some suitable $c,C>0$ independent of $\delta$). 
Given the solution~$\bar u$ of \eqref{e.hom-eq}, we introduce local averages associated with the partition of unity in form 
for all $k \in \calK_\delta$ of 
$$
(\nabla \bar u)_{k,\delta}\,:=\, \frac{\int_{\mathbb{R}^d}\eta_k\nabla\bar{u}}{\int_{\mathbb{R}^d}\eta_k},
$$
and define the two-scale expansion $\bar u^{2s}_{\e,\delta}$ associated with $\bar u$ via
\begin{equation}\label{e.2s}
\bar u^{2s}_{\e,\delta}:= \bar{u}+\e \sum_{k\in \calK_\delta}\eta_k\phi_{(\nabla \bar u)_{k,\delta}}(\tfrac \cdot\e),
\end{equation}
where $\phi_\xi$ denotes the corrector in direction $\xi \in \R^d$ (cf.~Theorem~\ref{th:qual-hom}).
This constitutes a convenient variant (introduced in  \cite{DalMaso-corr} to deal with monotone operators)
of the classical two-scale expansion $x\mapsto \bar u(x) + \e \phi_{\nabla \bar u(x)}(\tfrac x\e)$, which may raise 
measurability issues. Based on this two-scale expansion, we have the following optimal convergence result.
\begin{theorem}\label{th:2s}
Assume Hypothesis~\ref{hypo} and let $f \in L^p(\R^d)^d$. 
Let the weight $\mu_d: \R^k \to \R_+$ (for $k=1$ and $d$) be given by 
\begin{equation}\label{e.def-mud}
\mu_d(z)\,=\,\left\{
\begin{array}{rcl}
d=1&:& 1+\sqrt{|z|}, \\
d=2&:&  \log (2+|z|)^\frac12, \\
d>2&:& 1.
\end{array}
\right.
\end{equation}
For all $\e>0$ we denote by $u_\e \in \dot W^{1,p}(\R^d)$
the unique weak solution of \eqref{e.eps-eq}, by $\bar u  \in \dot W^{1,p}(\R^d)$ the unique weak solution of the 
homogenized equation~\eqref{e.hom-eq}, and by $\bar u^{2s}_{\e}$ the two-scale expansion \eqref{e.2s} for the choice $\delta=\e$. 
If the homogenized solution $\bar u$ satisfies
$\nabla \bar u \in L^\infty(\R^d)^d$ and $\mu_d \nabla^2 \bar u \in L^2(\R^d)^{d\times d}$, then we have
\begin{equation}
\|\nabla u_\e - \nabla \bar u^{2s}_{\e}\|_{L^2(\R^d)} \,\le \, C_{\e,\bar u} \, \e \mu_d(\tfrac1\e),
\label{th:2sIneg}
\end{equation}
where $C_{\e,\bar u}$ denotes a random variable that satisfies
\begin{equation}
\expec{\exp(c C_{\e,\bar u}^\alpha)}\le 2,
\label{th:2smoment}
\end{equation}
for some exponent $\alpha>0$ depending on $d$, $p$, $\lambda$, and $\|\nabla \bar u\|_{L^\infty(\R^d)}$, and some constant $c$ further depending on $\|\mu_d \nabla^2 \bar u\|_{L^2(\R^d)}$, but not on $\e$.
\end{theorem}
Some comments are in order:
\begin{itemize}
\item This result also holds for nonlinear systems with Uhlenbeck structure under the same assumptions on~$p$.
\item In the periodic setting, Theorem~\ref{th:2s} holds without restrictions on $p\ge 2$
and with $\mu_d \equiv 1$ in any dimension. This result is sharper than \cite{CS-04}, which contains the first quantitative two-scale expansion estimate for monotone periodic operators with $p>2$ (there, one needs to know that $\bar a$ satisfies \eqref{*coer+}   to construct a second-order two-scale expansion, which gives \eqref{th:2sIneg} after truncation and with a dependence of the constant  on stronger norms of $\bar u$).
\item The choice to work on the whole space with a right-hand side in divergence form allows one to avoid boundary layers (and therefore to truly focus on the homogenization error, in line with \cite{GNO-quant}) and to treat all dimensions at once. In particular one could state and prove a similar result on a bounded domain with Dirichlet boundary conditions, in which case the bound would be of the order of the square root of that in \eqref{th:2sIneg}.
\item This result takes the same form (with the same optimal rates) as for the linear case \cite{GNO-quant,GO4,AKM2} and for 
the nonlinear case \cite{fischer2019optimal} with $p=2$. As opposed to the latter, the stretched exponential exponent $\alpha$ in Theorem~\ref{th:2s} depends on $\|\nabla \bar u\|_{L^\infty(\R^d)}$ itself. This intricate dependence could be made explicit (provided we make the exponent and constants explicit in Gehring's lemma) and is reminiscent of the way we treat the non-degeneracy of the linearized equation (that is, perturbatively).
\item This result makes the a priori assumption that  $\nabla \bar u \in L^\infty(\R^d)^d$ and $\mu_d \nabla^2 \bar u \in L^2(\R^d)^{d\times d}$:
\begin{itemize}
\item In the scalar setting, under Hypothesis~\ref{hypo}, since $\bar a \in C^{1,1}_\loc$ (cf.~Corollary~\ref{coro:corr-diff} below), the conditions   $\nabla \bar u \in L^\infty(\R^d)^d$ and $\mu_d \nabla^2 \bar u \in L^2(\R^d)^{d\times d}$ are not restrictive and hold under suitable assumptions on the right-hand side $f$, capitalizing on the results \cite{Bella_2020} by Bella and Sch\"affner.
\item Since the above result is local in nature, this estimate holds on domains of $\R^d$ on which $\bar u$ has the required regularity. In any case, if $\nabla \bar u$ develops some singularity somewhere, one does not expect the two-scale expansion to be accurate in that region. This remark applies in particular to systems, and to the periodic setting in the regime of exponents $p \ge \frac{2(d-1)}{d-3}$ (which does not imply regularity of solutions of the homogenized problem).
\end{itemize}
\item The restriction $2\le p<\frac{2(d-1)}{d-3}$  on the exponent $p$ in Hypothesis~\ref{hypo} (which is only active in high dimensions $d\ge 4$) is related to the perturbative regularity theory in the large that we develop for the linearized operator in Section~\ref{mainresultNLunifL}. Indeed, the coefficient $a_\xi:=Da(\cdot,\xi+\nabla \phi_\xi)$ of the operator $a$ linearized at $\xi+\nabla \phi_\xi$
scales like $1+|\xi+\nabla \phi_\xi|^{p-2}$ and therefore only satisfies $\mathbb E[{|a_\xi|^\frac{p}{p-2}}]<\infty$ a priori: as $p$ increases, the stochastic integrability of the coefficients decreases. At some threshold (depending on dimension), this poor stochastic integrability cannot be compensated any longer by the Sobolev embedding --- whence our restriction (even in dimension 3, the argument to get all the exponents $2\le p<\infty$ 
is not straightforward -- see Sections~\ref{sec:strat} and~\ref{mainresultNLunifL}).
\end{itemize}

\subsection{Remarks on the strong monotonicity of $\bar a$}

Since the quantitative two-scale expansion of Theorem~\ref{th:2s} is conditional to the regularity of $\bar u$, it is worth further investigating under which additional assumptions one could prove that $\bar a$ possesses good regularity properties.
In the range of exponents $2\le p<\frac{2(d-1)}{d-3}$, this is fine as already emphasized.

\medskip

Let us now discuss the case $p\ge \frac{2(d-1)}{d-3}$ and assume that $a$ satisfies \eqref{*coer+} next to \eqref{*coer} and \eqref{*coer+-}.
If $\bar a$ also satisfied \eqref{*coer+}, then it would possess the desired regularity theory.
Let us first explain why this question is subtle, and assume for simplicity that $A$ is symmetric. In what follows we denote by $D_i$ the derivative with respect to the $i$-th entry of the vector $\xi \in \R^d$ 
(so that $D_i a(x,\xi):=\nabla_{\xi_i} a(x,\xi)$).
Informal computations (that are made rigorous in this paper) suggest that 
$
D \bar a(\xi) = \bar a_\xi,
$
where $\bar a_\xi$ is the homogenized matrix associated with the random coefficient field
\begin{equation}\label{e.axi-details}
a_\xi:= D a(\cdot,\xi+\nabla \phi_\xi)\,=\, (1+|\xi+\nabla \phi_\xi|^{p-2})A+(p-2)A\frac{(\xi+\nabla \phi_\xi)\otimes (\xi+\nabla \phi_\xi)}{|\xi+\nabla\phi_\xi|^2} |\xi+\nabla \phi_\xi|^{p-2}.
\end{equation}
As a first attempt to control $\bar a_\xi$ from below, we appeal to the Voigt-Reiss bounds (see \cite[Section~1.6]{JKO94}), which yields after neglecting the second contribution to $a_\xi$
$$
\bar a_\xi \ge \expec{ a_\xi^{-1}}^{-1} \ge \lambda \expec{(1+|\xi+\nabla \phi_\xi|^{p-2})^{-1}}^{-1},
$$
and amounts to controlling the harmonic average of $1+|\xi+\nabla \phi_\xi|^{p-2}$ from below (and therefore to have information on the critical set of the harmonic coordinate $x \mapsto \xi \cdot x+\phi_\xi(x)$). 
A second attempt is to consider the specific direction $\xi \cdot \bar a_\xi \xi$.
 Starting point is the (informal) minimization problem
\begin{eqnarray*}
\xi \cdot \bar a_\xi \xi& =& \inf_{\text{suitable }\nabla \psi} \expec{(\xi+\nabla \psi) \cdot a_\xi (\xi+\nabla \psi)}
\\
&\ge & \inf_{\text{suitable }\nabla \psi} \expec{(\xi+\nabla \psi) \cdot A (1+|\xi+\nabla \phi_\xi|^{p-2})(\xi+\nabla \psi)}.
\end{eqnarray*}
Call $\nabla \psi^*$ a minimizer. By minimality of $\nabla \psi^*$, the definition \eqref{e.def-a} of the monotone map $a$, and the corrector equation~\eqref{e.cor-eq}, we then have
\begin{eqnarray*}
\xi \cdot \bar a_\xi \xi&\ge & \expec{(\xi+\nabla \psi^*) \cdot A(1+|\xi+\nabla \phi_\xi|^{p-2})(\xi+\nabla \psi^*)}
\\
&=& \expec{(\xi+\nabla \phi_\xi) \cdot A(1+|\xi+\nabla \phi_\xi|^{p-2})(\xi+\nabla \psi^*)}
\\
&=&\expec{(\xi+\nabla \psi^*) \cdot a(\xi+\nabla \phi_\xi) }
\\
&\stackrel{\eqref{e.cor-eq}}=&\expec{(\xi+\nabla \phi_\xi) \cdot A(1+|\xi+\nabla \phi_\xi|^{p-2})(\xi+\nabla \phi_\xi) } \,\gtrsim \, |\xi|^{2\& p}.
\end{eqnarray*}
In dimension $d=1$, one directly has $D \bar a (\xi)\gtrsim 1+|\xi|^{p-2}$, which yields   \eqref{*coer+}. For $d>1$ this is different since from the 
a priori estimate $\xi \cdot D \bar a (\xi)\xi \gtrsim |\xi|^2(1+|\xi|^{p-2})$ we cannot deduce
$e \cdot D \bar a (\xi) e \gtrsim |e|^2(1+|\xi|^{p-2})$ for general $e\in \R^d$, unless combined with some isotropy arguments (which would ensure that controlling one direction is enough to control all directions).

\medskip

The upcoming results follow both paths. 
First we show that $\bar a$ satisfies \eqref{*coer+} provided we have a quantitative version of unique continuation, at least for periodic coefficients (as essentially noticed by 
Cherednichenko and  Smyshlyaev in \cite{CS-04}).
\begin{theorem}\label{th:isotropic-per}
Let $A$ be a $Q$-periodic Lipschitz matrix field.
For all $\xi \in \R^d$, denote by $\psi_\xi \in W^{1,p}_\per(Q)$ the unique weak solution of 
$$
-\nabla \cdot A(x)|\nabla \psi_\xi+\xi|^{p-2}(\nabla \psi_\xi+\xi)=0.
$$
Assume that for all $\xi \in \R^d$, there exists $r>0$ such that the $r$-tubular neighborhood 
$\calT_r(\xi)=\{x+B_r \,|\,x \in \calC(\xi)\}$ of the critical set $\calC(\xi)=\{x \in \R^d\,|\, \xi+\nabla \psi_\xi(x)=0\}$ is such that $\R^d \setminus \calT_r(\xi)$ is a connected set.
Then there exists $C>0$ such that $\bar a$ satisfies  \eqref{*coer+-} and \eqref{*coer+}.
\end{theorem}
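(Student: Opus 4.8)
The plan is to show that the homogenized map $\bar a$ associated with the $p$-Laplacian $\xi \mapsto A(x)|\xi|^{p-2}\xi$ (periodic case) satisfies the lower bound $e\cdot D\bar a(\xi) e \gtrsim (1+|\xi|^{p-2})|e|^2$ for all $\xi, e$, which together with the already-known $p$-growth upper bound on $D\bar a$ and the a priori coercivity $\xi\cdot D\bar a(\xi)\xi\gtrsim (1+|\xi|^p)$ will give $\bar a\in\calM(p,1,2,c)$. First I would recall the corrector formula: differentiating the cell equation for $\psi_\xi$ in the direction $e$ yields the linearized corrector equation
$$
-\nabla\cdot\big(A(x)|\nabla\psi_\xi+\xi|^{p-2}\big(\id+(p-2)\tfrac{(\nabla\psi_\xi+\xi)\otimes(\nabla\psi_\xi+\xi)}{|\nabla\psi_\xi+\xi|^2}\big)(\nabla w_{\xi,e}+e)\big)=0,
$$
where $w_{\xi,e}=D\psi_\xi\,e$ is $Q$-periodic, and then
$$
e\cdot D\bar a(\xi)e = \fint_Q A(x)|\nabla\psi_\xi+\xi|^{p-2}\Big(\big|\nabla w_{\xi,e}+e\big|^2+(p-2)\Big(\tfrac{(\nabla\psi_\xi+\xi)}{|\nabla\psi_\xi+\xi|}\cdot(\nabla w_{\xi,e}+e)\Big)^2\Big)\,dx.
$$
Since $p\ge 2$ and $A\ge\lambda$, this is bounded below by $\lambda\fint_Q |\nabla\psi_\xi+\xi|^{p-2}|\nabla w_{\xi,e}+e|^2\,dx$. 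It thus suffices to bound this quantity from below by $(1+|\xi|^{p-2})|e|^2$, up to a constant; by homogeneity (the $p$-Laplacian scales: $\psi_{t\xi}=t\psi_\xi$, $w_{t\xi,e}=w_{\xi,e}$) it is enough to treat $|\xi|=1$ and then also the constant-coefficient / large-$|\xi|$ regime separately — but with the extra $+1$ coming from the Laplacian part of $a\in\calM(p,1,2)$ (footnote in the paper), the term $|e|^2$ is automatic from the Laplacian contribution, so the real content is the lower bound $\fint_Q|\nabla\psi_\xi+\xi|^{p-2}|\nabla w_{\xi,e}+e|^2\gtrsim |\xi|^{p-2}|e|^2$ for $|\xi|$ large, i.e. for the pure $p$-Laplacian after rescaling, $\fint_Q|\nabla\psi_\xi+\xi|^{p-2}|\nabla w_{\xi,e}+e|^2\,dx\gtrsim_\xi |e|^2$.

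The key step is a \emph{non-degeneracy} argument for the linearized corrector: I claim $\fint_Q|\nabla w_{\xi,e}+e|\,dx\gtrsim |e|$ (in fact $\fint_Q(\nabla w_{\xi,e}+e) = e$ trivially since $w_{\xi,e}$ is periodic, giving $\fint_Q|\nabla w_{\xi,e}+e|\ge|e|$), and then upgrade this to a weighted $L^2$ bound. The obstruction is exactly the critical set $\calC(\xi)=\{\nabla\psi_\xi+\xi=0\}$, on which the weight $|\nabla\psi_\xi+\xi|^{p-2}$ vanishes. This is where the hypothesis enters: on the tubular neighborhood $\calT_r(\xi)$ the weight is small, but away from it the weight is bounded below. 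I would argue by contradiction / via a Caccioppoli-type estimate: the function $v:=w_{\xi,e}+e\cdot x$ (a "linearized harmonic coordinate") is $A$-$|\nabla\psi_\xi+\xi|^{p-2}$-harmonic in the degenerate-elliptic sense; one shows that if the weighted Dirichlet energy $\fint_Q|\nabla\psi_\xi+\xi|^{p-2}|\nabla v|^2$ were too small relative to $|e|^2$, then $\nabla v$ would be concentrated on $\calT_r(\xi)$ — but on $\R^d\setminus\calT_r(\xi)$, which is connected by assumption, the weight is uniformly positive, hence $v$ would be essentially constant there, forcing $\fint_{\R^d\setminus\calT_r(\xi)}\nabla v$ to be small; combined with the fact that $\nabla v$ has mean $e$ over $Q$ and that the degenerate equation prevents $\nabla v$ from having large mass inside the (small) tube, one reaches $|e|\ll|e|$, a contradiction. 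The connectedness of the complement of the tube is precisely what allows a Poincaré-type inequality there with a controlled constant.

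The main obstacle I anticipate is making the "concentration on the tube is impossible" step quantitative without quantitative information on the geometry of $\calC(\xi)$. The natural tool is a Caccioppoli inequality adapted to the degenerate weight $\mu:=|\nabla\psi_\xi+\xi|^{p-2}$: testing the equation for $v$ with $\eta^2(v-\langle v\rangle)$ for cutoffs $\eta$ localized near $\calC(\xi)$ gives $\int\mu\eta^2|\nabla v|^2\lesssim \int\mu|\nabla\eta|^2|v-\langle v\rangle|^2$, and one needs to control the right-hand side using a weighted Poincaré inequality — for which one would like $\mu$ to be a Muckenhoupt $A_2$-weight, or at least to satisfy a Fabes–Kenig–Serapioni-type degenerate Sobolev inequality. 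Verifying that $|\nabla\psi_\xi+\xi|^{p-2}\in A_2$ (or the relevant weighted inequality) is delicate: it follows from Gehring-type self-improvement (Meyers) for the $p$-Laplacian gradient plus local boundedness of $\nabla\psi_\xi$ (from De Giorgi–Nash–Moser for the $p$-Laplacian with Lipschitz $A$), but the \emph{lower} $A_2$-bound $\fint \mu^{-1}$ requires a reverse control that is exactly unique-continuation-flavored. I would therefore route the argument so that the only non-elementary input used is the \emph{qualitative} statement that $\mu>0$ a.e. on the connected set $\R^d\setminus\calT_r(\xi)$ (true for any $r>0$ by definition of $\calC(\xi)$) together with its local positivity bound, and push all quantitative loss into a compactness argument on the fixed cell $Q$ (using that $\nabla\psi_\xi$, hence $\mu$, is a fixed, smooth-enough function once $\xi$ is fixed, by Lipschitz regularity of $A$ and the $C^{1,\alpha}$ theory for the $p$-Laplacian). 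After establishing $e\cdot D\bar a(\xi)e\gtrsim_\xi |e|^2$ for each fixed $\xi$, I would restore uniformity in $\xi$ by the scaling invariance of the $p$-Laplacian ($|\xi|\to\infty$ reduces to $|\xi|=1$) and continuity/compactness in the bounded range $|\xi|\le 1$, where the $+1$ in $a\in\calM(p,1,2)$ already provides the bound; concatenating the two regimes yields the claimed $\bar a\in\calM(p,1,2,c)$.
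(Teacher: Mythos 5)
Your high-level strategy is right — reduce to proving $e\cdot D\bar a(\xi)e\gtrsim|\xi|^{p-2}|e|^2$ for large $|\xi|$, exploit that the $p$-Laplacian part dominates in this regime so that the linearized corrector equation approaches the one for $\psi_\xi$, and use the connectedness of $\R^d\setminus\calT_r(\xi)$ to get a positive lower bound for a constrained Dirichlet energy. This matches the paper's plan. There are, however, two concrete gaps.

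\emph{The approximation from $a$ to the $p$-Laplacian is not addressed.} You write the formula for $e\cdot D\bar a(\xi)e$ in terms of the $p$-Laplacian corrector $\psi_\xi$ and its linearization $w_{\xi,e}=D\psi_\xi e$, and then appeal to the 1-homogeneity $\psi_{t\xi}=t\psi_\xi$. But $\bar a$ is the homogenization of the non-homogeneous operator $a(x,\xi)=A(x)(1+|\xi|^{p-2})\xi$, whose correctors are $\phi_\xi\ne\psi_\xi$ and whose linearized correctors $\tilde\phi_{\xi,e}$ differ from $w_{\xi,e}$; in particular the weight $|\xi+\nabla\phi_\xi|^{p-2}$ appearing in the exact formula has a critical set that is a priori different from $\calC(\xi)=\{|\xi+\nabla\psi_\xi|=0\}$, which is the only object the hypothesis controls. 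Your remark that the ``$+1$ gives $|e|^2$ automatically'' correctly isolates the $p$-Laplacian contribution, but it does not tell you that that contribution is the $\psi_\xi$-quantity you then estimate. The missing step is to show that $\frac1s\phi_{s\xi}\to\psi_{\xi}$ and $\frac{1}{s^{p-2}}Da(\cdot,s\xi+\nabla\phi_{s\xi})\to Db(\cdot,\xi+\nabla\psi_\xi)$ in $C^\alpha(Q)$ as $s\uparrow\infty$ (combining an energy estimate on $\nabla(\frac1s\phi_{s\xi}-\psi_\xi)$ with the uniform $C^{1,\alpha}$ bounds from~\cite{kuusi2014guide}, and Arzel\`a--Ascoli), which then propagates the uniform ellipticity on $Q\setminus\calT_r(\xi_\infty)$ from $b_{\xi_\infty}$ to $\frac{1}{s^{p-2}}a_{s\xi_s}$ for $s$ large, along a subsequence $\xi_s\to\xi_\infty$ of minimizing directions.

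\emph{Your mechanism for exploiting connectedness is much heavier than necessary, and as you note yourself you do not know how to close it.} You envision Caccioppoli estimates with the degenerate weight, a weighted Poincar\'e inequality requiring Muckenhoupt-type lower bounds, and a compactness argument to absorb the loss. None of this is needed. Once the $C^\alpha$ approximation above gives $a_{s\xi_s}^{\mathrm{sym}}\ge\frac{\kappa}{2}s^{p-2}\,\id$ on $Q\setminus\calT_r(\xi_\infty)$ for $s$ large, you drop the integral over $\calT_r$ entirely and estimate
\begin{equation*}
 e\cdot\tfrac{1}{s^{p-2}}D\bar a(s\xi_s)e\;\ge\;\tfrac{\kappa}{2}\int_{Q\setminus\calT_r(\xi_\infty)}|e+\nabla\tilde\phi_{s\xi_s,e}|^2
 \;\ge\;\tfrac{\kappa}{2}\inf_{\tilde\phi\in H^1_\per(Q)}\int_{Q\setminus\calT_r(\xi_\infty)}|e+\nabla\tilde\phi|^2,
\end{equation*}
and the last quantity is strictly positive precisely because $\R^d\setminus\calT_r(\xi_\infty)$ is connected: if it were zero one could find $\tilde\phi\in H^1_\per(Q)$ with $\nabla\tilde\phi=-e$ a.e.\ on $Q\setminus\calT_r(\xi_\infty)$, hence $\tilde\phi+e\cdot x$ locally constant there, and connectedness of the lifted set forces it to be globally constant, contradicting periodicity. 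No weighted inequalities are needed, and no information on the degenerate operator inside the tube enters at all. (The possible Lavrentiev phenomenon you worry about is indeed the obstruction to running the argument directly at the level of the $p$-Laplacian; the $C^\alpha$ approximation of the linearized coefficient is what lets one bypass it.)
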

\begin{remark}
The assumptions of Theorem~\ref{th:isotropic-per} are quite strong. They are satisfied in dimension $d=2$ by \cite{AS-01} (which shows that $\calC(\xi)\cap Q$ is indeed a finite union of points) -- but, in this setting, regularity for the homogenized operator also holds because $\bar a$ is non-degenerate, cf.~\cite{Marcellini-91}. For $d>2$ this is a widely open problem. For linear equations, this follows from~\cite{CNV-15}.
\end{remark}
In the random setting, we have a positive result assuming 
the statistical isotropy of $A$, which is new and holds in any dimension.
\begin{theorem}\label{th:isotropic}
On top of Hypothesis~\ref{hypo0}, assume that $A(x)=b(x) \id$ for some scalar-valued function $b$ and that for all $R \in SO(d)$, $b(R \cdot)$ and $b$ have the same (joint) distribution (in which case $A$ is statistically isotropic). Then there exists $C>0$ such that $\bar a$ satisfies \eqref{*coer+-} and \eqref{*coer+}.
\end{theorem}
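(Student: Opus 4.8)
The plan is to reduce everything to a pointwise ellipticity bound on $D\bar a$, and then to use statistical isotropy to reduce \emph{that} bound, in turn, to the a priori radial estimate recalled in the introduction. Since $\bar a\in\calM(p,1,p,c)$ by Theorem~\ref{th:qual-hom}, the normalization $\bar a(0)=0$ and the upper bound $|\bar a(\xi_1)-\bar a(\xi_2)|\le c(1+|\xi_1|+|\xi_2|)^{p-2}|\xi_1-\xi_2|$ are already available (the exponent $\alpha=1$ is the same in $\calM(p,1,p,c)$ and $\calM(p,1,2,c)$), so only the coercivity must be improved from $\beta=p$ to $\beta=2$, i.e.\ one must show $(\bar a(\xi_1)-\bar a(\xi_2),\xi_1-\xi_2)\ge\frac1c(1+|\xi_1|+|\xi_2|)^{p-2}|\xi_1-\xi_2|^2$. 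As $\bar a\in\calM(p,1,p,c)$ is locally Lipschitz, hence a.e.\ differentiable, and $t\mapsto\bar a((1-t)\xi_2+t\xi_1)$ is absolutely continuous, it suffices to prove the pointwise bound $e\cdot D\bar a(\xi)\,e\ge\frac1c(1+|\xi|)^{p-2}|e|^2$ for a.e.\ $\xi\in\R^d$ and all $e\in\R^d$: integrating this along the segment $[\xi_2,\xi_1]$ and, assuming $|\xi_1|\ge|\xi_2|$, restricting to $t\in[\tfrac34,1]$ (where $|(1-t)\xi_2+t\xi_1|\ge(2t-1)|\xi_1|\ge\tfrac12|\xi_1|$) then yields the coercivity.

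First I would prove that \emph{$\bar a$ is radial}: $\bar a(\xi)=\bar g(|\xi|)\,\frac{\xi}{|\xi|}$ for some scalar function $\bar g$. The operator of Hypothesis~\ref{hypo0} derives from the convex potential $W(x,\xi)=b(x)\big(\tfrac12|\xi|^2+\tfrac1p|\xi|^p\big)$, so $\bar a=\nabla_\xi\bar W$ where $\bar W$ is the homogenized energy density, characterized by $\bar W(\xi)=\mathbb E[W(0,\nabla\phi_\xi(0)+\xi)]$. Fix $R\in SO(d)$. Using $a(x,R\eta)=R\,a(x,\eta)$ together with the elementary identity $\nabla\cdot(R\,F(R^{\mathrm T}\cdot))=(\nabla\cdot F)(R^{\mathrm T}\cdot)$, one checks that $x\mapsto\phi_\xi(R^{\mathrm T}x)$ solves the corrector equation~\eqref{e.cor-eq} written for the (equidistributed) coefficient field $b(R^{\mathrm T}\cdot)$ in the direction $R\xi$; it is correctly anchored and has stationary, mean-zero gradient, so by uniqueness of the corrector it \emph{is} that corrector. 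Combined with the measure-preservation of $A\mapsto A(R^{\mathrm T}\cdot)$ (statistical isotropy), this gives the covariance $(b(0),\nabla\phi_{R\xi}(0))\overset{d}{=}(b(0),R\,\nabla\phi_\xi(0))$; since $W(x,\cdot)$ is rotation invariant, $\bar W(R\xi)=\bar W(\xi)$, hence $\bar W(\xi)=W_0(|\xi|)$ and $\bar a(\xi)=\nabla_\xi\bar W(\xi)=W_0'(|\xi|)\frac{\xi}{|\xi|}=:\bar g(|\xi|)\frac{\xi}{|\xi|}$, with $\bar g$ locally Lipschitz on $[0,\infty)$ and $\bar g(0)=0$. (For $d\ge3$ the potential structure can be bypassed: the stabilizer of $\xi$ in $SO(d)$ already forces $\bar a(\xi)\parallel\xi$.)

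Then, for a radial map, wherever $\bar g$ is differentiable,
\[
D\bar a(\xi)=\bar g'(|\xi|)\,\tfrac{\xi}{|\xi|}\otimes\tfrac{\xi}{|\xi|}+\frac{\bar g(|\xi|)}{|\xi|}\Big(\mathrm{Id}-\tfrac{\xi}{|\xi|}\otimes\tfrac{\xi}{|\xi|}\Big),
\]
so the eigenvalues of $D\bar a(\xi)$ are the radial one $\bar g'(|\xi|)=\frac{\xi\cdot D\bar a(\xi)\xi}{|\xi|^2}$ and the $(d-1)$-fold tangential one $\frac{\bar g(|\xi|)}{|\xi|}$. The key point is that \emph{both} are now controlled by the single a priori estimate $\xi\cdot D\bar a(\xi)\xi\gtrsim|\xi|^2(1+|\xi|^{p-2})$ recalled in the introduction: it reads $\bar g'(s)\gtrsim(1+s)^{p-2}$ for a.e.\ $s>0$, and integrating (using $\bar g(0)=0$ and local Lipschitz continuity of $\bar g$) gives $\bar g(s)=\int_0^s\bar g'(\tau)\,\mathrm d\tau\gtrsim\int_0^s(1+\tau)^{p-2}\,\mathrm d\tau\gtrsim s\,(1+s)^{p-2}$, i.e.\ the tangential eigenvalue $\frac{\bar g(s)}{s}\gtrsim(1+s)^{p-2}$ as well. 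Hence $D\bar a(\xi)\succeq\frac1c(1+|\xi|)^{p-2}\,\mathrm{Id}$ a.e., which is exactly the pointwise bound needed in the first paragraph, and $\bar a\in\calM(p,1,2,c)$ follows.

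The step requiring the most care is the rotation covariance of the corrector in the second paragraph, which genuinely rests on its uniqueness (one must check the normalization of the rotated field); alternatively one may read $\bar W(R\xi)=\bar W(\xi)$ off the sub-additive/variational characterization of $\bar W$ directly, via the change of variables $x\mapsto R^{\mathrm T}x$. The conceptual content — and the reason the argument is special to the isotropic scalar setting — is that isotropy forces $\bar a$ to be radial, and that for a radial monotone map the \emph{tangential} ellipticity, which as explained in the introduction (difficulty~(I)) one cannot control in general, is tied to the energy $\bar a(\xi)\cdot\xi=\bar g(|\xi|)|\xi|$ rather than to a genuinely transverse linearized quantity, so that the same (easy) a priori estimate that governs the radial direction governs it too.
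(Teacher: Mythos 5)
Your reduction is sound and is essentially the paper's: isotropy plus $A=b\,\mathrm{Id}$ give the radial form $\bar a(\xi)=\bar g(|\xi|)\,\xi/|\xi|$, and (either by the eigenvalue decomposition of $D\bar a$, as you do, or via the paper's Lemma~\ref{criterion}) one is reduced to the scalar bound $\bar g'(s)\gtrsim(1+s)^{p-2}$. Your remark about the tangential eigenvalue is also correct and can be made even more direct: testing the corrector equation with $\phi_\xi$ and using Jensen give
$\bar g(s)\,s=\bar a(se)\cdot se=\mathbb E\big[a(0,se+\nabla\phi_{se})\cdot(se+\nabla\phi_{se})\big]\ge\lambda(s^2+s^p)$,
so $\bar g(s)/s\ge\lambda(1+s^{p-2})$ without any integration. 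The genuine gap is the radial bound $\bar g'(s)\gtrsim(1+s)^{p-2}$, which you assert is a priori (pointing at the introduction). It is not. The estimate that is a priori is the one on $\bar g$ just displayed, i.e.\ $\bar a(\xi)\cdot\xi\gtrsim|\xi|^2+|\xi|^p$, not $\xi\cdot D\bar a(\xi)\,\xi\gtrsim|\xi|^2+|\xi|^p$; the two are not the same, and the introductory sentence is, at best, imprecise on this point. Indeed, from
$\bar g'(s)=\mathbb E\big[(e+\nabla\tilde\phi_{se,e})\cdot a_{se}(e+\nabla\tilde\phi_{se,e})\big]\ge\lambda\,\mathbb E\big[|e+\nabla\tilde\phi_{se,e}|^2\big(1+|se+\nabla\phi_{se}|^{p-2}\big)\big]$
one only gets $\bar g'(s)\ge\lambda$ by Jensen; upgrading this to $\gtrsim s^{p-2}$ would require a quantitative non-degeneracy bound such as $\mathbb E\big[(1+|se+\nabla\phi_{se}|^{p-2})^{-1}\big]\lesssim s^{-(p-2)}$, precisely the degeneracy difficulty the paper flags as intractable under~(II.1). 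And a lower bound on $\bar g$ does not transfer to $\bar g'$: a monotone $\bar g$ satisfying $\bar g(s)\ge\lambda s(1+s^{p-2})$ can still have $\bar g'$ arbitrarily close to zero on long stretches. Your integration (from $\bar g'$ to $\bar g$) goes in the easy but unhelpful direction; the hard step is from $\bar g$ to $\bar g'$, and it is left unaddressed.

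In the paper this radial bound is the bulk of the proof, established by an ODE argument exploiting the approximate $p$-homogeneity of $a$ at large gradients (Step~2 of the proof of Theorem~\ref{th:isotropic-erg}). Setting $\zeta(t)=\bar W(te)$, one derives the differential relation $t\zeta'(t)-p\zeta(t)=h(t)$, where $h$ collects the sub-$(p-1)$-homogeneous contributions and satisfies $|h(t)|\lesssim t^{\beta+2}$ with $\beta<p-2$; solving this ODE and differentiating twice isolates a leading term $p(p-1)\gamma(t)t^{p-2}$ whose coefficient is bounded below using $\zeta(t)\ge\frac{\lambda}{p}t^p$, plus a remainder controlled by $h$ and $h'$. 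The bound on $h'$ in turn uses an energy estimate for the linearized corrector. None of this has an analogue in your proposal, and you would need to supply this argument (or a replacement) to make the proof complete.
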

These results are proved in~Appendices~\ref{app:isotropic-per} and~\ref{app:closed}.
We suspect that Theorems~\ref{th:isotropic-per} and~\ref{th:isotropic} hold under weaker assumptions but we are currently unable to establish this (even using the quantitative estimates proved in this paper).

\section{Strategy of the proof, extensions, and limitations}\label{sec:strat}

Throughout the paper, we use the short-hand notation $\lesssim$, $\gtrsim$ and $\sim$ for $\le C\times$, $\ge C\times$, and $\frac1C \times \le \cdot \le C\times$ for some universal constant $C$ depending on $\lambda,d,p$ (and possibly on further quantities displayed as subscripts). We use  $\ll$ and $\gg$ for $\le \frac1C \times$ and $\ge C \times$ in the case when $C$ needs to be chosen large enough. 
Recall that for all $t\ge 0$ and $p\ge 2$, we set $t^{2\& p}=t^2+t^p$.

\subsection{General strategy and auxiliary results}

In this section, we motivate our general strategy by comparison to the linear setting.
To be more precise, we also consider the linear homogenization problem on $\R^d$
\begin{equation}\label{e.lin-discu}
-\nabla \cdot A(\tfrac x\e) \nabla v_\e(x)=\nabla \cdot f(x),
\end{equation}
with the same assumptions on $A$ as in Hypothesis~\ref{hypo}.

\medskip

We start by defining the notion of (nonlinear) flux corrector.
\begin{definition}\label{defsigmaNL}
For all $\xi\in \R^d$, there exists a unique skew-symmetric random matrix field $(\sigma_{\xi,ij})_{1\le i,j\le d}$, which 
solves almost surely in the distributional sense in $\R^d$ the flux corrector equation
\begin{equation}
-\triangle \sigma_{\xi,ij} \,=\, \partial_i (a(\cdot,\xi+\nabla\phi_{\xi})\cdot e_j)-\partial_j(a(\cdot,\xi+\nabla\phi_{\xi})\cdot e_i),
\label{e.Laplace-sig}
\end{equation}
which is anchored at the origin via $\int_B\sigma_{\xi}=0$, and whose gradient $\nabla \sigma_\xi$ is stationary, has 
vanishing expectation $\mathbb E[{\nabla \sigma_\xi}]=0$, and is bounded
in the sense
$$
\expec{|\nabla \sigma_\xi|^\frac{p}{p-1}}\lesssim |\xi|^{2\& p}.
$$
In addition we have
\begin{equation}\label{e.div-sig}
\nabla\cdot \sigma_{\xi}=a(\cdot,\xi+\nabla\phi_{\xi})-\bar a(\xi),
\end{equation}
where the divergence of a matrix field $\sigma$ is understood as $(\nabla \cdot \sigma_\xi)_i=\sum_{j=1}^d \partial_j \sigma_{\xi,ij}$.
\end{definition}
The proof of existence and uniqueness of $\sigma_\xi$ is essentially the same as in the linear setting in \cite{GNO-reg} when $p=2$, provided the adaptations of  \cite[Lemma~1]{bella2018liouville} for $p>2$. 

\medskip

Note that in the linear case \eqref{e.lin-discu}, we have the scaling relation $(\phi_{t\xi},\sigma_{t\xi})=t(\phi_\xi,\sigma_\xi)$ (in the nonlinear setting, the homogeneities of $\phi_\xi$ and of $\sigma_\xi$ with respect to $\xi$ are different, and not explicit since $a$ has no homogeneity due to the regularization at $0$, cf.~\eqref{e.def-a}).

The interest of the flux corrector is that it allows to put the remainder in the equation 
satisfied by the two-scale expansion error in conservative form (which is convenient to use energy estimates or montonicity). More precisely, in the linear setting, the two-scale expansion 
of $v_\e$ takes the simpler form
\begin{equation}\label{e.2sv}
\bar v^{2s}_{\e}:= \bar{v}+\e \phi_i (\tfrac \cdot\e)\partial_i \bar v,
\end{equation}
where $\bar v$ solves the homogenized equation $-\nabla \cdot \bar A \nabla \bar v=\nabla \cdot f$,  $\phi_i$ denotes the corrector in the canonical direction $e_i$, and where we use implicit summation on the repeated index $i$. Since $\phi_i (\tfrac x\e)\partial_i \bar v(x)=\phi_{\nabla \bar v(x)}(\tfrac x\e)$, \eqref{e.2sv} corresponds to \eqref{e.2s} in the limit $\delta \downarrow 0$.
In this case, one can prove (making a crucial use of the skew-symmetry of $\sigma$ -- see the proof of Theorem~\ref{th:2s} in our nonlinear setting) that $v_\e- \bar v^{2s}_{\e}$ satisfies the equation
\begin{equation}\label{e.2sc-errLin}
-\nabla \cdot A(\tfrac \cdot\e) \nabla (v_\e- \bar v^{2s}_{\e})
\,=\,\e \nabla \cdot ((A\phi_i-\sigma_i)(\tfrac \cdot \e) \nabla \partial_i \bar v)
\end{equation}
(the factor $\e$ comes by scaling since there is one gradient more in the right-hand side). 
This yields the bound \eqref{th:2sIneg} by an energy estimate provided
we control the growth of $(\phi,\sigma)$. 
In the nonlinear setting, we rather expect a term of the form $a_\xi \phi_\xi-\sigma_\xi$ 
in the right-hand side of \eqref{e.2sc-errLin} (see the proof of Theorem~\ref{th:2s} for the precise statement). Note that  $a_\xi \phi_\xi$ (with $a_\xi=Da(\cdot,\xi+\nabla \phi_\xi)$, cf.~\eqref{e.axi-details}) and $\sigma_\xi$  both scale like $(|\xi|^{2\& p})^\frac{p-1}{p}$ (which has to be compared to the difference of scalings of $\phi_\xi$ and $\sigma_\xi$ themselves, cf.~Definition~\ref{defsigmaNL}).

The main result on the extended nonlinear corrector $(\phi_\xi,\sigma_\xi)$ is as follows.
(Despite the above discussion, we do not make a difference in the scalings of $\phi_\xi$
and $\sigma_\xi$ wrt $\xi$. This dependence is indeed not explicit as one could have expected, due to the nonlinear nature of the problem.)
\begin{theorem}\label{th:corrNL}
Under Hypothesis~\ref{hypo},  there is an exponent $\alpha>0$ depending on $\lambda$, $p$,  and $d$ such that for all $K\ge 1$ and $\xi\in \R^d$ with $|\xi|\le K$ the stationary extended corrector gradient 
$\nabla (\phi_\xi, \sigma_\xi)$
 satisfies for some constant $c_{K}>0$ (additionally depending on $K$)
\begin{equation}\label{e.bdd-grad-corrNL}
\expec{\exp( c_{K}|\nabla (\phi_\xi,\sigma_\xi)|^\alpha
)}\le 2.
\end{equation}
For all $g \in L^2(\R^d)$, averages of $(\nabla \phi_\xi,\nabla \sigma_{\xi,ij})$ display the CLT scaling\footnote{Indeed, for $g=|B_R|^{-1}\mathds{1}_{B_R}$, the right-gand side of \eqref{e:corr-NL-CLT} scales like $R^{-\frac d2}$.} in the form  
\begin{equation}\label{e:corr-NL-CLT}
 \Big| \int g (\nabla \phi_\xi ,\nabla \sigma_{\xi})\Big|\,\le \,  C_{\xi,g} \Big(\int |g|^2\Big)^\frac12,
\end{equation}
where $C_{\xi,g}$ is a random variable with finite stretched exponential moment
$$
\expec{ \exp(c_{K} C_{\xi,g}^\alpha)}\le 2,
$$
for some exponent $\alpha>0$ depending on $p$, $\lambda$, and $d$, and some constant $c_{K}>0$ further depending on $K$ (but all independent of $g$).
This directly implies the following bounds on the growth of $(\phi_\xi,\sigma_\xi)$:
For all $x\in \R^d$,
\begin{equation}\label{e.growth-nlc}
 |(\phi_\xi,\sigma_\xi)(x)| \,\le\,  C_{x,\xi} \mu_d(x),
\end{equation}
where $\mu_d$ is defined in \eqref{e.def-mud} and $C_{x,\xi}$ is a random variable with the same stochastic integrability as $C_{\xi,g}$. 
\end{theorem}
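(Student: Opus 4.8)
The plan is to follow the three‑step scheme of quantitative homogenization — (i) pointwise stretched exponential moments for $|\nabla\phi_\xi|$, (ii) the CLT scaling \eqref{e:corr-NL-CLT} for spatial averages, (iii) the sublinear growth \eqref{e.growth-nlc} — and to run the probabilistic part through the functional inequalities available under Hypothesis~\ref{hypo}. Since $A=\chi*B(G)$ with $G$ Gaussian of integrable covariance, the law of $A$ satisfies a weighted functional inequality of logarithmic–Sobolev type, so that the stretched exponential moments of a random variable $F=F(A)$ are controlled (over scales) by those of its Malliavin/sensitivity derivative. I would therefore compute the sensitivity of $\phi_\xi$, equivalently of $\int g\,\nabla\phi_\xi$, with respect to $G$: a localized perturbation $\delta G$ produces $\delta A=\chi*(B'(G)\delta G)$, hence a perturbation $\delta\phi_\xi$ solving the \emph{linearized corrector equation}
$$
-\nabla\cdot\big(a_\xi\,\nabla\delta\phi_\xi\big)\,=\,\nabla\cdot\big(\delta A\,(1+|\nabla\phi_\xi+\xi|^{p-2})(\nabla\phi_\xi+\xi)\big),
$$
with $a_\xi:=Da(\cdot,\nabla\phi_\xi+\xi)$. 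The decisive structural point is that $a_\xi$ is \emph{uniformly elliptic from below} ($a_\xi\ge\lambda\,\mathrm{Id}$, thanks to the ``$+1$'' in \eqref{e.def-a}) but only \emph{unbounded from above}, $a_\xi\lesssim 1+|\nabla\phi_\xi+\xi|^{p-2}$, so that a priori $\mathbb{E}[|a_\xi|^{p/(p-2)}]<\infty$ only.

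To handle this linear-but-unbounded operator I would invoke the weighted (annealed) Meyers estimates in the large developed in this paper: above a random Meyers minimal radius with stretched exponential moments, they yield a deterministic, perturbative $L^{2+\delta_0}$ gain of integrability for solutions of equations governed by $a_\xi$. Representing $\int g\,\nabla\delta\phi_\xi$ by duality through the extended corrector of the \emph{linearized} operator (in the direction dual to $g$), the sensitivity estimate reduces to controlling a spatial average of $\delta A\,(1+|\nabla\phi_\xi+\xi|^{p-2})(\nabla\phi_\xi+\xi)$ tested against the linearized harmonic coordinate; the annealed Meyers gain combined with Sobolev embedding is exactly what is needed to make the Hölder bookkeeping close despite the $L^{p/(p-2)}$‑only integrability of $a_\xi$ and of the linearized corrector gradient. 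This is also where the restriction $2\le p<\frac{2(d-1)}{d-3}$ for $d>3$ enters. Differentiating once more (for the sensitivity of $|\nabla\phi_\xi|$ itself) produces a term essentially quadratic in the linearized corrector gradient that must be evaluated in $L^2$ on the unit scale; the local smoothness of $x\mapsto a(x,\xi)$ assumed in Hypothesis~\ref{hypo} allows one to replace this local $L^2$ norm by a local $L^1$ norm via interior regularity at scale $\mathcal{O}(1)$, which is controllable.

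Feeding the sensitivity estimate back into the functional inequality yields a nonlinear buckling inequality for the moments of $\int g\,\nabla\phi_\xi$: the output consists of the desired CLT‑scaled term $C\,(\int|g|^2)^{1/2}$ plus nonlinear remainders that carry a strictly smaller scaling factor. Using the \emph{central limit theorem scaling} (square‑root cancellation in averages of $\nabla\phi_\xi$), together with the qualitative moment bound \eqref{e.cormoment} and the nonlinear hole‑filling/Caccioppoli estimate as a baseline, these remainders can be absorbed; iterating the buckling upgrades \eqref{e.cormoment} to the stretched exponential bounds \eqref{e.bdd-grad-corrNL} and to the CLT scaling \eqref{e:corr-NL-CLT}. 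The flux corrector $\sigma_\xi$ is treated in parallel and more easily, since \eqref{e.Laplace-sig} is a constant‑coefficient Poisson equation whose right‑hand side is the curl of $a(\cdot,\xi+\nabla\phi_\xi)\lesssim 1+|\nabla\phi_\xi+\xi|^{p-1}$: the same machinery, now without an unbounded linearized operator, gives its sensitivity and moments. Finally, \eqref{e.growth-nlc} follows from \eqref{e:corr-NL-CLT} by the classical dyadic argument: writing $(\phi_\xi,\sigma_\xi)(x)$ as a telescoping sum over scales $R=2^k\le|x|$ of differences of local averages, each controlled via \eqref{e:corr-NL-CLT} with an averaging kernel of scale $R$ (so $\|g\|_{L^2}\sim R^{-d/2}$ and gradient integration costs a factor $R$) gives a per‑scale contribution $\sim R^{1-d/2}$; summing produces $\mu_d(x)$ ($\mathcal{O}(1)$ for $d>2$, $(\log|x|)^{1/2}$ for $d=2$ after square‑root summation of the near‑independent contributions, $|x|^{1/2}$ for $d=1$), while the sum of the random prefactors $C_{\xi,g}$ retains a stretched exponential moment.

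I expect the main obstacle to be precisely the closing of the sensitivity estimate for the linearized corrector equation with the unbounded coefficient $a_\xi$: making the annealed Meyers gain, the Sobolev embedding, and the CLT scaling fit together tightly enough to absorb all nonlinear and low‑integrability contributions. This is the heart of the matter and the reason the argument is delicate even for $d=3$.
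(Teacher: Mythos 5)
Your proposal follows essentially the same route as the paper: sensitivity calculus via the logarithmic–Sobolev inequality for the periodized Gaussian ensemble, the linearized corrector equation with coefficient $a_\xi=Da(\cdot,\xi+\nabla\phi_\xi)$ (bounded below, unbounded above), quenched and annealed Meyers estimates in the large based on the Meyers minimal radius $\rNL$ whose stretched exponential moments are obtained by a buckling argument, duality plus Sobolev embedding to close the CLT‑scaled estimate for $\int g\,\nabla\phi_\xi$ and then for $\int g\,\nabla\sigma_\xi$ (the latter via the Poisson equation and annealed maximal regularity for the Laplacian), and finally a dyadic telescoping to integrate \eqref{e:corr-NL-CLT} into the growth bound \eqref{e.growth-nlc}. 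One misattribution worth flagging: the ``quadratic term in the linearized corrector gradient, local $L^2$ vs.\ local $L^1$'' issue that you invoke to get \eqref{e.bdd-grad-corrNL} belongs in fact to the sensitivity analysis of the \emph{linearized} correctors (Theorem~\ref{th:corrL}); for Theorem~\ref{th:corrNL} itself no second‑order sensitivity is taken, and the pointwise stretched exponential moment bound is deduced instead by combining the deterministic Schauder/hole‑filling estimate $\|\xi+\nabla\phi_\xi\|_{C^\alpha(B(x))}\lesssim(1+|\xi|)\,\rNL(x)^{(d-\delta)/p}$ (Lemma~\ref{lem:supnablaphi}) with the moment bounds on $\rNL$ from Theorem~\ref{boundrNLprop}, and then passing to the limit $L\uparrow\infty$ via the coupling of Proposition~\ref{convergenceofperiodiccorrectors}.
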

\begin{remark}
Under Hypothesis~\ref{hypo0}, for $Q$-periodic matrix fields $A$, the nonlinear correctors are bounded in $C^{1,\alpha}(Q)$ (no restriction on $p\ge 2$).
\end{remark}
As pointed out in \cite{AFK-+,AFK-20} and also used in \cite{fischer2019optimal} for $p=2$,  controlling the growth of correctors is not enough in the nonlinear setting.
This should not come as a surprise when comparing \eqref{e.2sv} to \eqref{e.2s}.
The additional gradient on $\nabla \bar v$ in the right-hand side of \eqref{e.2sc-errLin} (at the origin of the factor $\e$)  indeed comes for local differences 
$$
\phi_{\nabla \bar v(x_1)} - \phi_{\nabla \bar v(x_2)}=|x_1-x_2| \phi_{ \frac{\nabla\bar v(x_1)-\nabla\bar v(x_2)}{|x_1-x_2|}},
$$
when reformulated by taking advantage of the linearity of the corrector.
In the nonlinear setting, this identity does not hold any longer for $\phi_\xi$ (and even less for $\sigma_\xi$ by homogeneity). It is however replaced by the following Lipschitz-continuity results.
\begin{corollary}[Control of nonlinear corrector differences]\label{coro:corr-diff}
Under Hypothesis~\ref{hypo}, for all $K\ge 1$ and for all $\xi_1,\xi_2 \in \R^d$ with
$|\xi_1|,|\xi_2|\le K$, 
we have for all $x\in \R^d$,
$$
|\nabla (\phi_{\xi_1}-\phi_{\xi_2}, \sigma_{\xi_1}-\sigma_{\xi_2})(x)|\,\le\, C_{x,\xi_1,\xi_2}|\xi_1-\xi_2|, \quad |(\phi_{\xi_1}-\phi_{\xi_2}, \sigma_{\xi_1}-\sigma_{\xi_2})(x)|\,\le\, C_{x,\xi_1,\xi_2} |\xi_1-\xi_2| \mu_d(x),
$$
where $C_{x,\xi_1,\xi_2}$ satisfies for some exponent $\alpha_K>0$ and some constant $c_{K}>0$ depending on $\lambda$, $p$, $d$, and $K$,
\begin{equation*}
\expec{\exp(c_K C_{x,\xi_1,\xi_2}^{\alpha_K})}\le 2.
\end{equation*}
In particular, $\xi \mapsto \bar a(\xi)$ is locally $C^{1,1}_\loc$.
\end{corollary}
\begin{remark}
Under Hypothesis~\ref{hypo0}, for $Q$-periodic matrix fields $A$,  corrector differences
are controlled by $|\xi_1-\xi_2|$ in $C^{1,\alpha}(Q)$ (no restriction on $p\ge 2$), and $\bar a$ is $C^{1,1}_\loc$ as well.
\end{remark}
To prove such a result, we have to analyze the  dependence of correctors with respect to the direction $\xi$, which leads us to the notion of linearized correctors.
The control of (nonlinear) corrector differences is obtained as a corollary of bounds
on these linearized correctors.

\medskip

The following lemma (which is only used in an approximation argument) defines these linearized correctors. It is a consequence of \cite[Section~4]{CD-16} and \cite[Lemma~1]{bella2018liouville} (which is devoted to the existence and uniqueness for linear corrector equations with unbounded and degenerate coefficients that are prescribed in advance). 
In the actual proofs, we shall only consider linearized correctors on bounded domains (see the discussion on periodization below), and the definition and control of 
the whole-space linearized corrector is given for completeness. 
\begin{lemma}\label{lem:def-lincorr}
Under Hypothesis~\ref{hypo}, for all $\xi \in \R^d$ set $a_\xi:=D a(\cdot,\xi+\nabla \phi_\xi)$ (cf.~\eqref{e.axi-details}).
For all $e \in \R^d$ there exists a unique 
random field $\tilde \phi_{\xi,e}$ that solves almost surely in the distributional sense in $\R^d$ the linearized corrector equation
\begin{equation}
-\nabla \cdot a_\xi (e+\nabla \tilde \phi_{\xi,e}) \,=\, 0,
\label{e.Lcorr} 
\end{equation}
anchored at the origin via $\int_B \tilde \phi_{\xi,e}=0$, and whose gradient $\nabla \tilde \phi_{\xi,e}$ is stationary, has
vanishing expectation $\mathbb E[{\nabla \tilde \phi_{\xi,e}}]=0$, and is bounded
in the sense of
\begin{equation*}
\expec{|\nabla \tilde \phi_{\xi,e}|^2(1+|\xi+\nabla \phi_\xi|)^{p-2}}\lesssim (1+|\xi|^{p-2}) |e|^2.
\label{e.Lboundcor}
\end{equation*}
In addition, there exists a skew-symmetric random matrix field $(\sigma_{\xi,e,ij})_{1\le i,j\le d}$,
which solves almost surely in the distributional sense in $\R^d$ the linearized flux corrector equation
\begin{equation}\label{e:eq-sigmaL}
-\triangle \tilde \sigma_{\xi,e,ij} \,=\, \partial_i (a_\xi(e+\nabla \tilde \phi_{\xi,e})\cdot e_j)-\partial_j(a_\xi(e+\nabla \tilde \phi_{\xi,e})\cdot e_i),
\end{equation}
which is anchored via $\int_B \tilde \sigma_{\xi,e}=0$ almost surely, whose gradient $\nabla \tilde \sigma_{\xi,e}$ is stationary and is bounded
in the sense
$$
\expec{|\nabla \tilde \sigma_{\xi,e}|^\frac{p}{p-1}}\lesssim (1+|\xi|^{p-2})^\frac{p}{p-1} |e|^\frac{p}{p-1},
$$
and which satisfies the property
$$
\nabla\cdot \tilde\sigma_{\xi,e}= a_\xi(e+\nabla \tilde \phi_{\xi,e})-\bar a_\xi e,
$$
where $\bar a_\xi e=\expec{a_\xi(e+\nabla \tilde \phi_{\xi,e})}$.
\end{lemma}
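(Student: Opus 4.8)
The plan is to establish existence, uniqueness, and the stated moment bounds for the linearized correctors $\tilde\phi_{\xi,e}$ and $\tilde\sigma_{\xi,e}$ by reducing everything to the abstract machinery of \cite[Lemma~1]{bella2018liouville}, using as input the quenched information on the nonlinear corrector provided by Theorem~\ref{th:corrNL}. The key observation is that $a_\xi=Da(\cdot,\xi+\nabla\phi_\xi)=A(\cdot)\big((1+|\xi+\nabla\phi_\xi|^{p-2})\id+(p-2)|\xi+\nabla\phi_\xi|^{p-4}(\xi+\nabla\phi_\xi)\otimes(\xi+\nabla\phi_\xi)\big)$ is a symmetric matrix field satisfying the two-sided bound $\omega(x)\,|v|^2\lesssim v\cdot a_\xi(x)v\lesssim (1+|\xi+\nabla\phi_\xi(x)|^{p-2})\omega(x)|v|^2$ with $\omega(x):=1+|\xi+\nabla\phi_\xi(x)|^{p-2}\ge 1$ playing the role of the degenerate-but-bounded-below weight; crucially $\omega\ge 1$ so the operator is actually nondegenerate, only unbounded. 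One then checks that the integrability requirements of \cite{bella2018liouville} are met: by \eqref{e.cormoment} we have $\mathbb E[\omega^{q}]<\infty$ for $q=\frac{p}{p-2}$ (so in particular $\omega\in L^1(d\mathbb P)$ and $\omega^{-1}\in L^\infty$), which is exactly the admissible weight class there, and the restriction $2\le p<\frac{2(d-1)}{d-3}$ is what guarantees enough integrability for the Sobolev/Caccioppoli buckling used in that lemma (this is flagged in the paper's discussion of the exponent).

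First I would invoke \cite[Lemma~1]{bella2018liouville} with coefficient field $a_\xi$ and weight $\omega$ to produce, for each $e\in\R^d$, a unique stationary-gradient solution $\tilde\phi_{\xi,e}$ of \eqref{e.Lcorr} with $\mathbb E[\nabla\tilde\phi_{\xi,e}]=0$, anchored by $\int_B\tilde\phi_{\xi,e}=0$, together with the natural weighted energy estimate $\mathbb E[|\nabla\tilde\phi_{\xi,e}|^2\,\omega]\lesssim \mathbb E[|e|^2\,\omega]=(1+|\xi|^{p-2})|e|^2$ — wait, more precisely one tests the corrector equation against $\tilde\phi_{\xi,e}$ itself and uses the lower bound $v\cdot a_\xi v\gtrsim\omega|v|^2$ together with the upper bound on the right-hand side term $e\cdot a_\xi\nabla\tilde\phi_{\xi,e}$ controlled by Young's inequality and $|a_\xi e|\lesssim\omega|e|$, which yields $\mathbb E[|\nabla\tilde\phi_{\xi,e}|^2\omega]\lesssim(1+|\xi|^{p-2})|e|^2$; since $\omega\ge 1+|\xi+\nabla\phi_\xi|^{p-2}$ this is the claimed bound. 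Uniqueness among stationary-gradient solutions with vanishing expectation is standard once one has the energy estimate: the difference of two solutions has stationary gradient of vanishing expectation, solves the homogeneous equation, and the weighted energy identity forces $\nabla(\tilde\phi_{\xi,e}-\tilde\phi'_{\xi,e})=0$ a.e., hence the two fields agree after anchoring.

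Next I would construct $\tilde\sigma_{\xi,e}$. Define the stationary flux $q:=a_\xi(e+\nabla\tilde\phi_{\xi,e})-\bar a_\xi e$, which by the corrector equation \eqref{e.Lcorr} and the definition of $\bar a_\xi e=\mathbb E[a_\xi(e+\nabla\tilde\phi_{\xi,e})]$ is divergence-free and has vanishing expectation. The integrability of $q$ is the delicate point: $|q|\lesssim\omega(|e|+|\nabla\tilde\phi_{\xi,e}|)$, and by Hölder with exponents $\frac{d+\gamma}{\gamma}$ and $\frac{d+\gamma}{d}$ — applied to $\omega^{2}$ and $\omega^{-1}|\nabla\tilde\phi_{\xi,e}|^2\omega=|\nabla\tilde\phi_{\xi,e}|^2$ split appropriately — one gets $\mathbb E[|q|^{\frac{2d}{d+2}}]\lesssim C_\xi|e|^{\frac{2d}{d+2}}$ with $C_\xi=1+\mathbb E[|\xi+\nabla\phi_\xi|^{\frac{d+\gamma}{2}(p-2)}]^{\frac{2}{d+\gamma}}$, the exponent $\frac{2d}{d+2}$ being precisely the Sobolev-dual exponent that makes the construction of the vector potential work. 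One then solves $-\triangle\tilde\sigma_{\xi,e,ij}=\partial_i q_j-\partial_j q_i$ componentwise by the standard Bogovskii/Helmholtz-type argument (as in \cite{GNO-reg}): the right-hand side is the distributional curl of a stationary vector field in $L^{\frac{2d}{d+2}}(d\mathbb P)$, the Calder\'on–Zygmund/annealed estimate for the Laplacian gives a stationary $\nabla\tilde\sigma_{\xi,e}$ with $\mathbb E[|\nabla\tilde\sigma_{\xi,e}|^{\frac{2d}{d+2}}]\lesssim C_\xi|e|^{\frac{2d}{d+2}}$ and $\mathbb E[\nabla\tilde\sigma_{\xi,e}]=0$, and skew-symmetry together with the identity $\nabla\cdot\tilde\sigma_{\xi,e}=q$ follows exactly as in the linear flux-corrector construction from the fact that $\nabla\cdot q=0$ and $\triangle(\partial_j\tilde\sigma_{\xi,e,ij}-q_i)=0$ for a stationary field of vanishing expectation, hence $=0$. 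Anchoring $\int_B\tilde\sigma_{\xi,e}=0$ fixes the additive constant. The main obstacle, and the one requiring genuine care rather than bookkeeping, is verifying that the hypotheses of \cite[Lemma~1]{bella2018liouville} are genuinely satisfied by $a_\xi$ and $\omega$ in the full range $2\le p<\frac{2(d-1)}{d-3}$ — that is, that the a priori moment $\mathbb E[\omega^{p/(p-2)}]<\infty$ from \eqref{e.cormoment} is exactly what that lemma demands for its weighted Caccioppoli/Liouville argument, and in particular that the bound $\mathbb E[|\nabla\tilde\phi_{\xi,e}|^2(1+|\xi+\nabla\phi_\xi|)^{p-2}]\lesssim(1+|\xi|^{p-2})|e|^2$ is self-improving/closing; the rest is a transcription of the linear flux-corrector theory of \cite{GNO-reg} to the weighted setting.
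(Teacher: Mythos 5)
Your overall blueprint is the same as the paper's: invoke \cite[Lemma~1]{bella2018liouville} for the existence, uniqueness, and weighted energy estimate for $\nabla\tilde\phi_{\xi,e}$, then build $\tilde\sigma_{\xi,e}$ by the standard divergence-free flux-corrector construction with the dual-Sobolev exponent $\frac{2d}{d+2}$. However, there is a genuine gap in the input you feed into that machinery, and the paper flags it explicitly in Remark~\ref{rem:whymoment}.

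You assert that the a~priori moment $\mathbb E[\omega^{p/(p-2)}]<\infty$ coming from \eqref{e.cormoment} is ``exactly what that lemma demands.'' That is acceptable for the existence and energy bound of $\tilde\phi_{\xi,e}$ (which really only needs $\omega\in L^1(d\mathbb P)$ to make the weighted Lax--Milgram space well-defined), but it does \emph{not} close the construction of $\tilde\sigma_{\xi,e}$. Your own H\"older step produces the constant $C_\xi = 1 + \mathbb E[|\xi+\nabla\phi_\xi|^{\frac{d+\gamma}{2}(p-2)}]^{\frac{2}{d+\gamma}}$, which is finite under \eqref{e.cormoment} alone only when $\frac{d+\gamma}{2}(p-2)\le p$, i.e.~essentially $p\le\frac{2d}{d-2}$. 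For $d=3$ and $p\ge 6$ (and similarly in the window $\frac{2d}{d-2}\le p<\frac{2(d-1)}{d-3}$ for $d\ge 4$), the energy estimate \eqref{e.cormoment} is strictly insufficient: one must invoke the stretched exponential moment bound \eqref{e.bdd-grad-corrNL} from Theorem~\ref{th:corrNL}, which gives all polynomial moments of $\nabla\phi_\xi$. This is precisely the content of Remark~\ref{rem:whymoment}, and it is why the paper describes Lemma~\ref{lem:def-lincorr} as ``combining \cite[Lemma~1]{bella2018liouville} with the moment bound \eqref{e.bdd-grad-corrNL}.'' Your proof makes no use of \eqref{e.bdd-grad-corrNL} and hence silently restricts to $p<\frac{2d}{d-2}$ for the flux corrector.

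A smaller point: you attribute the restriction $2\le p<\frac{2(d-1)}{d-3}$ to the buckling in \cite{bella2018liouville}. This is not where that condition comes from. It comes from the Meyers reverse--H\"older step of Section~\ref{mainresultNLunifL} of the paper (specifically the requirement $q=\frac{p}{p-2}>\frac{d-1}{2}$ in Lemma~\ref{lemmabella} and the surrounding exponent algebra), which is needed later to prove Theorem~\ref{boundrNLprop}; it is not a hypothesis of the existence lemma for linear corrector equations with prescribed degenerate coefficients.
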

The upcoming theorem gives further information on the linearized correctors, in line with Theorem~\ref{th:corrNL}
for the nonlinear correctors.
\begin{theorem}\label{th:corrL}
Under Hypothesis~\ref{hypo}, for all 
$K \ge 1$, $\xi,e \in \R^d$ with $|e|=1$ and $|\xi|\le K$, the stationary extended linearized corrector gradient $\nabla (\tilde \phi_{\xi,e}, \tilde\sigma_{\xi,e})$ satisfies
for some exponent $\alpha_K>0$ and some constant $c_{K}>0$ depending on $\lambda$, $p$, $d$, and $K$,
\begin{equation}\label{e.bdd-grad-corrL}
\expec{\exp(c_K |\nabla (\tilde\phi_{\xi,e},\tilde\sigma_{\xi,e})|^{\alpha_K})}\le 2.
\end{equation}
For all $g \in L^2(\R^d)$, averages of $(\nabla \tilde\phi_{\xi,e},\nabla \tilde\sigma_{\xi,e})$ display the CLT scaling in the form 
\begin{equation}\label{e:corr-L-CLT}
 \Big| \int g (\nabla \tilde \phi_{\xi,e} ,\nabla \tilde \sigma_{\xi,e})\Big|\,\le \,  C_{\xi,g} \Big(\int |g|^2\Big)^\frac12,
\end{equation}
where  $C_{\xi,g}$ is a random variable with stretched exponential moments 
$
\expec{\exp( c_{K} C_{\xi,g}^{\alpha_K})}\le 2,
$
for some exponent  $\alpha_K>0$ and some constant $c_{K}>0$ depending
on $p$, $\lambda$,  $d$, and  $K$.
This directly implies that for all $x\in \R^d$, we have $|(\tilde\phi_{\xi,e},\tilde\sigma_{\xi,e})(x)| \,\le\,   C_{x,\xi}\mu_d(x)$,
where $C_{x,\xi}$ is a random variable with the same moment bounds as $C_{\xi,g}$. 
\end{theorem}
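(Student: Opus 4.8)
The plan is to mirror the proof of Theorem~\ref{th:corrNL} for the nonlinear corrector, but now working with the \emph{linearized} equation~\eqref{e.Lcorr}, whose coefficient field $a_\xi = Da(\cdot, \xi + \nabla\phi_\xi)$ is random, unbounded (it scales like $1 + |\xi + \nabla\phi_\xi|^{p-2}$), and perturbatively non-degenerate (the Laplacian part of $a$ gives a lower bound $\gtrsim 1$). The three inputs are: (i) the moment bound~\eqref{e.bdd-grad-corrNL} on $\nabla(\phi_\xi, \sigma_\xi)$, which controls the randomness of $a_\xi$; (ii) the annealed weighted Meyers estimates in the large developed earlier in the paper, applied to $a_\xi$, which give slightly-more-than-$L^2$ integrability of $\nabla\tilde\phi_{\xi,e}$ above a random (stretched-exponentially bounded) Meyers radius; and (iii) Gaussian sensitivity (Malliavin) calculus applied to the functional $G \mapsto \int g \cdot \nabla\tilde\phi_{\xi,e}$, combined with the CLT scaling to absorb the quadratic nonlinear contributions.

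\medskip

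\noindent\emph{Step 1: CLT-scaled estimate for linear functionals of $\nabla\tilde\phi_{\xi,e}$.} I would first compute the Malliavin derivative $\partial_{G}\left(\int g \cdot \nabla\tilde\phi_{\xi,e}\right)$ at a point of the underlying $\R^d$. Differentiating~\eqref{e.Lcorr} with respect to $G$ produces two contributions: a ``linear'' one where the derivative hits the coefficient $a_\xi$ directly (via $\partial_G A$), and a ``nonlinear'' one where it hits $a_\xi$ through its dependence on $\nabla\phi_\xi$ (since $a_\xi = Da(\cdot,\xi+\nabla\phi_\xi)$ involves $D^2 a$ and $\partial_G\nabla\phi_\xi$). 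Testing the equation for $\partial_G\tilde\phi_{\xi,e}$ against the Green-function representation of $g$ under the linearized operator, one bounds the Malliavin derivative by a spatial average of $|\nabla\tilde\phi_{\xi,e}|$, $|\nabla\phi_\xi|$, and the adjoint solution, weighted by local $L^2$-masses. Plugging this into the logarithmic-Sobolev / spectral-gap inequality for the Gaussian field $G$ yields~\eqref{e:corr-L-CLT}, provided one can control the right-hand side in $L^2(d\mathbb{P})$. The quadratic term (essentially $|\nabla\tilde\phi_{\xi,e}|^2$ evaluated locally in $L^1$, using the assumed local regularity of $A$ at scale $\e$ to downgrade $L^2$ to $L^1$) is where the CLT scaling is crucial: it is absorbed into the left-hand side by a buckling argument exactly as for~\eqref{e:corr-NL-CLT}, using the annealed Meyers exponent to gain the needed excess integrability over $L^2$.

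\medskip

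\noindent\emph{Step 2: From CLT scaling to stretched-exponential moments.} With the linear bound in hand, I would promote it to the stretched-exponential moment bound on $C_{\xi,g}$ by the standard route: either iterate the sensitivity estimate (as in the $p=2$ theory) or, more simply here, apply the general principle that an estimate of the form ``the Malliavin derivative is $L^2$-small with CLT scaling'' self-improves to concentration, combined with the deterministic Meyers-radius moment bound. The exponent $\alpha_\xi$ will be dictated by the weakest link, namely the stretched-exponential exponent of the Meyers minimal radius for $a_\xi$, which in turn degrades with $p$ and $d$ through the stochastic integrability $\mathbb{E}[|a_\xi|^{p/(p-2)}] < \infty$ coming from~\eqref{e.cormoment}/\eqref{e.bdd-grad-corrNL} --- this is the same mechanism forcing the restriction $p < 2(d-1)/(d-3)$ in Hypothesis~\ref{hypo}. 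The bound~\eqref{e.bdd-grad-corrL} on $\nabla\tilde\phi_{\xi,e}$ itself (as opposed to its averages) follows from a Caccioppoli/hole-filling estimate for the linearized equation together with the annealed Meyers estimate and the moment bound on the Meyers radius, exactly paralleling the nonlinear case.

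\medskip

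\noindent\emph{Step 3: The flux corrector $\tilde\sigma_{\xi,e}$ and the growth bound.} The component $\nabla\tilde\sigma_{\xi,e}$ is handled via its defining Poisson equation~\eqref{e:eq-sigmaL}: its averages inherit the CLT scaling from those of $a_\xi(e+\nabla\tilde\phi_{\xi,e})$ (a product of the unbounded coefficient with $e + \nabla\tilde\phi_{\xi,e}$), for which one uses Hölder in probability splitting the high moments of $|\xi + \nabla\phi_\xi|^{p-2}$ (controlled by~\eqref{e.bdd-grad-corrNL}) against the CLT-scaled averages of $\nabla\tilde\phi_{\xi,e}$ from Step~1. Finally, the pointwise growth bound $|(\tilde\phi_{\xi,e},\tilde\sigma_{\xi,e})(x)| \le C_{x,\xi}\,\mu_d(x)$ follows from the CLT-scaled control of averages of the extended gradient by a now-classical deterministic argument (telescoping dyadic averages against the kernels $g = |B_R|^{-1}\mathds{1}_{B_R}$, summing the geometric/logarithmic/square-root series according to dimension, which is precisely what produces $\mu_d$), together with a union bound to transfer the moment control on each $C_{\xi,g}$ to $C_{x,\xi}$.

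\medskip

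\noindent\emph{Main obstacle.} The delicate point is Step~1: closing the buckling for the quadratic term $\sim |\nabla\tilde\phi_{\xi,e}|^2$ requires that the excess integrability provided by the annealed Meyers exponent for the \emph{unbounded} coefficient $a_\xi$ beats the loss incurred when trading the local $L^2$-norm for an $L^1$-norm and applying Sobolev/CLT. This is exactly where the poor stochastic integrability $p/(p-2)$ of $a_\xi$ collides with the Sobolev embedding, and the computation is tight --- in dimension $d=3$ it already forces the quantitative-ergodicity detour of Remark~\ref{rem:whymoment} (appealing to~\eqref{e.bdd-grad-corrNL} rather than merely the energy estimate), and in $d \ge 4$ it is the source of the upper bound on $p$.
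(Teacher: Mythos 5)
Your overall strategy -- sensitivity calculus for $G\mapsto\int g\cdot\nabla\tilde\phi_{\xi,e}$ via dual equations, annealed Meyers estimates for the unbounded linearized operator, log-Sobolev, then a buckling argument -- is the paper's route, and Step 3 (Poisson equation for $\tilde\sigma_{\xi,e}$, dyadic integration of the CLT estimate to get $\mu_d$) also matches. However, there is a genuine gap in Steps 1--2 at the most delicate point: you assert that the quadratic contribution ``is absorbed \ldots by a buckling argument exactly as for~\eqref{e:corr-NL-CLT}, using the annealed Meyers exponent,'' and in Step 2 you invoke ``the deterministic Meyers-radius moment bound.'' This conflates the Meyers minimal radius $\rNL$ (which controls where Meyers' estimate holds for the operator $-\nabla\cdot a_\xi\nabla$, and whose moments were already bounded in Theorem~\ref{boundrNLprop}) with a second, genuinely new random scale that must be introduced for the \emph{linearized corrector itself}. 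The paper explicitly flags that, because $a_\xi$ is unbounded, one cannot buckle on moments of $\nabla\tilde\phi_{\xi,e}$ directly as in the bounded-coefficient theory; instead it defines a distinct ``linear minimal scale'' $\rL$ (Definition~\ref{defminimalscaleL}) controlling the growth of $\fint_{B_R}|\nabla\tilde\phi_{\xi,e}|^2\mu_\xi$ against $\fint_{B_{2R}}\mu_\xi$. Proposition~\ref{prop:average-per-L} gives a CLT estimate that is \emph{conditional} on moments of $\sup_B|\nabla\tilde\phi_{\xi,e}+e|^2\mu_\xi$, and closing the loop (Proposition~\ref{prop:average-per-L+}) requires first converting pointwise control of $\nabla\tilde\phi_{\xi,e}$ into a bound by $\chi\cdot\rL^{d-\beta}$ via Schauder theory and the linear hole-filling (Lemma~\ref{smallscalereg}), then buckling on level sets of $\rL$, not of $\nabla\tilde\phi_{\xi,e}$ or of $\rNL$. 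Without introducing $\rL$ and running this two-scale buckling (with the small parameters $\theta$, $\eta_\circ$, $\kappa$ tuned against the linear hole-filling exponent $\beta$), your Step~1 cannot close: the Meyers exponent alone provides excess integrability in \emph{space} but does not by itself control the stochastic moments of $|\nabla\tilde\phi_{\xi,e}|^2\mu_\xi$ that appear on the right-hand side of the sensitivity estimate.

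A second, smaller omission: your sensitivity formula in Step~1 correctly notes the two ways $\partial_G$ hits $a_\xi$, but you should spell out that this produces a \emph{chain} of dual problems (two auxiliary equations $u_1,u_2$ for $\nabla\tilde\phi_{\xi,e}$, and three $v,w_1,w_2$ for $\nabla\tilde\sigma_{\xi,e}$), each of which must be fed through the annealed Meyers/Calder\'on--Zygmund estimates in sequence, losing a small amount of stochastic integrability at each step; the exponent bookkeeping is precisely what makes the constant $\alpha_\xi$ depend on $|\xi|$ and is not a routine iteration.
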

\begin{remark}
Under Hypothesis~\ref{hypo0}, for $Q$-periodic matrix fields $A$, the linearized correctors exist and are bounded in $C^{1,\alpha}(Q)$ (no restriction on $p\ge 2$).
\end{remark}

\medskip

The general strategy we described above is essentially the same as in \cite{fischer2019optimal} for $p=2$, itself very close to the strategy in the linear setting \cite{GNO-quant}.
In line with \cite{GO1,GO2,Gloria-Otto-10b,GNO1,Otto-Tlse,josien2020annealed}, we are after Meyers' type estimates for the linearized operator $-\nabla \cdot a_\xi \nabla$ (defined in Lemma~\ref{lem:def-lincorr}) .
The main difference between the present work and 
\cite{AFK-+,AFK-20,fischer2019optimal} is the way we obtain 
these estimates -- that is, in the annealed version of Theorem~\ref{th:annealedmeyers} below.
For $p=2$, annealed Meyers estimates (even without loss of stochastic integrability) 
follow rather directly from the boundedness of $a_\xi$ from above and below. 
In our genuinely nonlinear setting, these estimates are difficult to establish since  $a_\xi$ may be degenerate and unbounded (recall that  the original proof of Meyers' estimates argues by perturbation and requires that $\frac{\inf a_\xi}{\sup a_\xi}>0$, whereas we have  $\frac{\inf a_\xi}{\sup a_\xi}=0$ almost surely). 
There are two reasons why $\frac{\inf a_\xi}{\sup a_\xi}=0$: the degeneracy of $a_\xi$ and the unboundedness of $a_\xi$. 
The upcoming technical discussion points out the difficulties in the analysis, give hints on how to treat them, and explains how it leads to the above results.

\smallskip

First, in view of the difficulty to control the critical set of harmonic coordinates, we have imposed a non-degeneracy condition from the very beginning and assumed that $a(x,\cdot)$ satisfies \eqref{*coer+-} (which rules out the $p$-Laplacian, but not the $p$-Laplacian regularized at 0). Let us emphasize that this only yields the \emph{non-degeneracy} of the linearized operator in a \emph{perturbative} way (it disappears in the regime when the solution has a large gradient). Doing so, the main remaining (and most important) difficulty is the unboundedness of the coefficients of the linearized operator.

\smallskip

Meyers estimates are the object of Section~\ref{mainresultNLunifL}.
We start with discussing Subsections~\ref{sec:MR}~\&~\ref{perturbativeregsection} and the quenched Meyers estimates in the large.
The starting point is the energy estimate~\eqref{e.cormoment} in form of $\expec{|a_\xi|^\frac{p}{p-2}} \lesssim 1+|\xi|^p$ (cf.~\eqref{e.axi-details}), which 
yields two challenges: it is stochastically-averaged and gives a poor integrability for large $p$.
We begin with the stochastically-averaged part.
The idea is to relax the condition $\frac{\inf a_\xi}{\sup a_\xi}>0$ into the milder requirement $\frac{\inf \fint_{B_r(x)} a_\xi}{\sup \fint_{B_r(x)}a_\xi}>0$ for some $r>0$, which, in turn, would yield the weaker (yet sufficient) Meyers' estimate at  scale $r>0$ in form of: For $u,g$ related via $-\nabla \cdot a_\xi \nabla u=\nabla \cdot g$, we have
for some Meyers' exponent $m>2$%
\begin{equation}\label{e.LSMey}
\int_{\R^d}\Big(\fint_{B_{r}(x)}\vert\nabla u \vert^2  \Big)^{\frac{m}{2}}\dd x\lesssim \Big(\int_{\R^d}\Big(\fint_{B_{r}(x)}\vert\nabla u\vert^2 \Big)\,\dd x\Big)^{\frac{m}{2}}
+\int_{\R^d}\Big(\fint_{B_{r}(x)}\vert g \vert^2 \Big)^{\frac{m}{2}}\dd x.
\end{equation}
Such a weakening of Meyers' estimates still does not hold in our setting because estimate~\eqref{e.cormoment} cannot be turned into an almost sure bound (with a uniform choice of $r$). A further weakening consists in letting the radius of the ball $B_r(x)$ in \eqref{e.LSMey} be random and depend on the point $x\in \R^d$.
We thus introduce in  Definition~\ref{minimalscaleNL} the Meyers minimal radius $\r$, a random field on $\R^d$ which essentially ensures that, with the notation $B_\star(x):=B_{\r(x)}(x)$,  $\frac{\inf_x \fint_{B_{\star}(x)} a_\xi}{\sup_x \fint_{B_{\star}(x)}a_\xi} >0$ (with a deterministic positive lower bound).
Such a form of the Meyers' estimates with a random scale $\r$ was first used by Armstrong and Dario in \cite{AD-18} to deal with homogenization in percolation.
To obtain an estimate in the spirit of \eqref{e.LSMey} (with $B_r(x)$ replaced by $B_{\star}(x)$), we rely on the standard proof of Meyers' estimates going through a reverse H\"older inequality and Gehring's lemma.
For uniformly elliptic equations, the reverse H\"older inequality is a consequence of Caccioppoli's inequality and of the Sobolev embedding. 
Caccioppoli's inequality for $a_\xi$-harmonic functions $u$ (say, $-\nabla \cdot a_\xi \nabla u=0$ in the ball $B_{2R}$ centered at 0) typically takes the form 
$$
\int_{B_R} \nabla u \cdot a_\xi \nabla u \, \lesssim \, \frac1{R^2} \int_{B_{2R}} |u|^2 |a_\xi|.
$$
Assuming that $\int_{B_{2R}} u=0$, the next step is to appeal to the Poincar\'e-Sobolev inequality $\fint_{B_{2R}} |u|^2 \lesssim R^2 \Big(\fint_{B_{2R}} |\nabla u|^\frac{2d}{d+2}\Big)^\frac{d+2}{d}$. The random coefficient $|a_\xi| \sim 1+|\nabla \phi_\xi+\xi|^{p-2}$ is however unbounded (and not of the Muckenhoupt class), and the estimate~\eqref{e.cormoment}
only yields $\fint_{B_{2R}} |a_\xi|^\frac{p}{p-2}\sim \fint_{B_{2R}} 1+|\nabla \phi_\xi+\xi|^p \lesssim 1+|\xi|^p$ provided $2R \ge \r(0)$ (as a consequence of the definition of $\r$). We thus need to first appeal to H\"older's inequality with exponents $(\frac{p}{p-2},\frac{p}2)$ to upgrade
 the Caccioppoli inequality into 
$$
\fint_{B_R} \nabla u \cdot a_\xi \nabla u \, \lesssim \, \frac1{R^2}  \Big(\fint_{B_{2R}} |u|^p\Big)^\frac2p.
$$
Then, assuming that $p  < \frac{2d}{d-2}$, the Poincar\'e-Sobolev inequality yields
the desired reverse H\"older's inequality 
$
\fint_{B_R} \nabla u \cdot a_\xi \nabla u \, \lesssim \,  \Big(\fint_{B_{2R}} |\nabla u|^{p_*}\Big)^\frac2{p_*}
$
with exponent $p_*=\frac{d+p}{dp}<2$.
In dimension $d=3$, the condition on $p$ reads $p<6$.
To reach the larger range of exponents of Theorem~\ref{th:2s}, the main observation is that 
one can improve Caccioppoli's inequality by choosing wisely the cut-off function -- following an idea by Bella and Sch\"affner  \cite{bella2019local} (see Lemma~\ref{lemmabella} below). Doing so, we are able to use the Sobolev embedding in dimension $d-1$ rather than $d$, and therefore treat all exponents $p\ge 2$ in the physically-relevant  dimension $d=3$. This yields the large scale Meyers' estimates of Theorem~\ref{unweightmeyers} (with a condition on $p$ in dimensions $d\ge 4$).

\smallskip

In the form of Theorem~\ref{unweightmeyers}, the Meyers estimates are only useful if we have a good control of the Meyers minimal radius $\r$ (the larger $\r$, the weaker the estimate), which we obtain in Subsection~\ref{sec:MR-control}. Since $\r$ is a stationary random field, by control we mean moment bounds in probability. This is where our contribution further differs from other contributions of the literature on degenerate or unbounded coefficients: the statistics of $a_\xi$ (which drives the moments of $\r$) are not given a priori (as opposed to the percolation cluster in \cite{AD-18}, or to the moment bounds on $a_\xi$ in \cite{bella2018liouville}) but part of the problem -- estimate~\eqref{e.bdd-grad-corrNL}, which is essentially equivalent to the control of $\r$
in Theorem~\ref{boundrNLprop}, is the very output of the analysis.
Indeed, the coefficients $a_\xi$ are a function of $\nabla \phi_\xi$, which depends itself on the random input $A$ as the solution of the nonlinear corrector equation \eqref{e.cor-eq} (and therefore far from explicit).

\smallskip

Here comes the second ingredient to our approach: sensitivity calculus and concentration of measures (see Appendix~\ref{sec:FC}).
On the one hand, using the Meyers estimate in the large (in its improved weighted form of Theorem~\ref{largescalereg} based on the hole-filling estimate) and sensitivity calculus, we control the stochastic moments of averages of $\nabla \phi_\xi$ by the CLT scaling and moments of $\r$ -- see Proposition~\ref{weakNL}.
On the other hand, by Caccioppoli's inequality for the nonlinear corrector equation, we control super level sets of $\r$ by moments of averages of $\nabla \phi_\xi$. 
The desired control of the moments of $\r$ of Theorem~\ref{boundrNLprop} follows
by combining these two nonlinear estimates and taking advantage of the CLT scaling and the small room given by the hole-filling to buckle, single out, and control $\r$.

\smallskip

Once we have good control of $\r$, the quenched large scale Meyers estimates of Theorem~\ref{unweightmeyers} can finally be upgraded to the annealed Meyers estimates
of Theorem~\ref{th:annealedmeyers}  (as introduced by Duerinckx and Otto in \cite{DO-20}, using Shen's lemma \cite{Shen-07}), cf.~Subsection~\ref{sec:Annealed}.

\smallskip

Having these estimates at hand, Theorem~\ref{th:corrNL} follows from another application of sensitivity calculus, cf.~Section~\ref{sec:NL}.
The proof of Theorem~\ref{th:corrL} is similar, cf.~Section~\ref{sec:corr-diff}. Although we have Theorem~\ref{th:annealedmeyers}, we cannot use the elegant and efficient buckling argument of \cite{Otto-Tlse} for the linearized corrector either (due to unboundedness), and we have to pass again via the super level sets of another minimal radius. 
We then conclude with the routine proof of Theorem~\ref{th:2s} in Section~\ref{sec:2s}.
 
\bigskip

In order to establish these estimates on nonlinear and linearized correctors, we first use an approximation argument which allows us
to discard the long-range correlations induced by the elliptic character of the equation, and actually define the associated approximations of the nonlinear flux corrector and of the linearized correctors by elementary deterministic arguments.
In this contribution, we proceed by periodization in law, which has the advantage
to keep differential relations neat in the approximation (in particular the identity \eqref{e.div-sig}). For all $L>0$, we introduce in Definition~\ref{defi:PL} (see Appendix~\ref{append:per})  a probability measure $\mathbb P_L$ supported on $Q_L=[-\frac L2,\frac L2)^d$-periodic functions. The associated maps $x\mapsto a(x,\xi)$ are therefore $Q_L$-periodic $\mathbb P_L$-almost surely, and the corrector equations are posed on the bounded domain $Q_L$. 
The coupling between $\mathbb P$ and $\mathbb P_L$ given in Lemma~\ref{approxcoef} then allows  us to infer results on $\mathbb P$
from corresponding results on $\mathbb P_L$, see in particular Proposition~\ref{convergenceofperiodiccorrectors}.
The choice of periodization in law is convenient but it is not essential.
In the linear setting one often adds a massive term to the equation (which yields an exponential cut-off
for long-range interactions) \cite{GO1,GO2,Gloria-Otto-10b,GNO-reg,GNO-quant} or disintegrates scales via a semi-group approach \cite{GNO-quant,GO4,Clozeau-20}. 
All our estimates are proved for fixed periodization and the above results follow by letting the 
periodization parameter go to infinity.

\subsection{Towards large-scale regularity and a nonlinear theory of fluctuations}\label{sec:towards}

As discussed in Section~\ref{sec:p=2}, there are three main types of results in (quantitative) stochastic homogenization of the linear elliptic equation~\eqref{e.lin-discu}:
\begin{itemize}
\item Control of oscillations of the solution via a quantitative two-scale expansion (here in form of Theorem~\ref{th:2s} in the nonlinear setting);
\item Large-scale regularity for the operator $-\nabla \cdot A(\tfrac x\e) \nabla$ (both for solutions and for differences of solutions);
\item Control of the fluctuations of observables of the form $\int g \cdot \nabla u_\e$
and $\int g \cdot A(\tfrac \cdot\e) \nabla u_\e$ (using the so-called homogenization commutator).
\end{itemize}
Let us start with large-scale regularity.
The general principle \cite{Avellaneda-Lin-87,Avellaneda-Lin-91,AS,GNO-reg} is that the heterogeneous equation should possess the same regularity properties as the homogenized equation at the scale at which homogenization kicks in (characterized by the size of the corrector).  
In our range $2\le p < \frac{2(d-1)}{d-3}$ of exponents (for which we already control the growth of correctors) and if we restrict ourselves to scalar equations,  the results \cite{Bella_2020} by Bella and Sch\"affner (combined with
the $C^{1,1}_\loc$ regularity of $\xi \mapsto \bar a(\xi)$)  ensures that $-\nabla \cdot \bar a(\nabla)$ does possess nice regularity theory (both in terms $C^{1,\alpha}$ regularity and nonlinear Calder\'on-Zygmund theory), so that there is no obstruction to large-scale regularity in this setting. 
Next to the large-scale regularity for solutions, it is also necessary to have large-scale regularity for differences of solutions, which is more subtle.
Such large-scale regularity for differences would typically allow to upgrade the quantitative two-scale expansion of \eqref{th:2sIneg} stated in $L^2(\R^d$) to any $L^q(\R^d)$ with $1<q<\infty$ (with the same convergence rate in $\e$), and in particular cover the natural  $L^p(\R^d)$-norm.

\medskip

Next to the large-scale regularity for the nonlinear operator $-\nabla \cdot  a(\cdot,\nabla)$, one may wish to establish large-scale regularity for the linearized operator $-\nabla \cdot a_\xi \nabla$. In Section~\ref{mainresultNLunifL} below, large-scale Meyers estimates are proved (in their convenient annealed form of Theorem~\ref{th:annealedmeyers}). Although this perturbative result is enough to prove the quantitative two-scale expansion of Theorem~\ref{th:2s}, Theorem~\ref{th:annealedmeyers} should hold for all exponents $1<q<\infty$ by adapting the arguments of \cite{GNO-reg} to mildly unbounded coefficients ($a_\xi$ indeed has finite stretched exponential moments as a consequence of Lemma~\ref{lem:supnablaphi} and Theorem~\ref{boundrNLprop}) and using the bounds of Theorem~\ref{th:corrL} on the linearized correctors (to control the scale at which homogenization kicks in). 

\medskip

We now turn to fluctuations, the third main topic of stochastic homogenization.
In the linear setting, the theory of fluctuations relies on the quantity $\Xi_\e:=(A(\frac \cdot \e)-\bar A)\nabla u_\e$, called the homogenization commutator in \cite{DGO1} (this object first appeared in a  different form in \cite{AS} in the context of large-scale regularity). 
The homogenization commutator is a natural object to consider since observables of $\Xi_\e$ can be post-processed into observables of the field and flux (the two corner-stones of homogenization), that is, $\int g \cdot \nabla u_\e$ and $\int g \cdot A(\frac \cdot \e) \nabla u_\e$ -- see below for the argument in the nonlinear setting.
The theory splits into two parts.
On the one hand, fluctuations of the homogenization commutator $\Xi_\e$ can be accurately described by the fluctuations of its two-scale expansion based on the standard commutator $\Xi:=(A-\bar A)(\nabla \phi+\id)$, cf.~\cite{DGO1,DGO2,DO-20}. On the other hand, the standard commutator $\Xi$ behaves (in law) on large scales as a Gaussian random field, cf.~\cite{DGO1,DO-20,duerinckx2019scaling}.
The proofs of these results in the linear setting make heavy use of  large-scale regularity.
On top of large-scale regularity in the nonlinear setting, one  needs a suitable notion of nonlinear homogenization commutator. 
A na\"ive guess would be to define the nonlinear commutator as $a_\e(\nabla u_\e)-\bar a(\nabla u_\e)$. This quantity however does not weakly converge to zero since $\bar a$ is nonlinear. We have to devise a quantity that is compatible with weak convergence and encapsulates the diagram of Figure~\ref{fig:diag}.
To this aim we reformulate the constitutive law by linearizing (assuming that $\bar a$ is differentiable): Since $a_\e(0)=\bar a (0)=0$,
we have for all $\xi \in \R^d$
$$
a(\tfrac \cdot \e,\xi) = a(\tfrac \cdot \e,\xi)-a(\tfrac \cdot \e,0) = \Big(\int_0^1 Da(\tfrac \cdot \e,t\xi)dt\Big) \xi, \quad \bar a(\xi) = \bar a(\xi)-\bar a(0) = \Big(\int_0^1 D\bar a(t\xi)dt\Big) \xi,
$$
so that the diagram of Figure~\ref{fig:diag} takes the equivalent form of Figure~\ref{fig:diag2}.
\begin{figure*}[h]
$$
\begin{array}{ccc|c}
  \text{Gradient field} & \text{Constitutive law} & \text{Minus flux} & \text{Conservation law}\\
  \nabla u_\e & \stackrel{\text{multiply by }\int_0^1 Da(\frac \cdot \e,t\nabla u_\e)dt} \longrightarrow & q_\e= (\int_0^1 Da(\tfrac \cdot \e,t\nabla u_\e)dt) \nabla u_\e & -\nabla \cdot q_\e =   \nabla \cdot f
\\
  \downarrow & & \downarrow & \downarrow 
\\
  \nabla \bar u & \stackrel{\text{multiply by }\int_0^1 D\bar a(t\nabla \bar u)dt} \longrightarrow & \bar  q=(\int_0^1 D\bar a(t\nabla \bar u)dt)  \nabla \bar u&-\nabla \cdot \bar q =    \nabla \cdot f
\end{array}
$$
\caption{Reformulation of the commutative diagram}\label{fig:diag2}
\end{figure*}

In this way, homogenization 
can be concisely reduced to the single condition $(\int_0^1 Da(\tfrac \cdot \e,t\nabla u_\e)dt) \nabla u_\e- (\int_0^1 D\bar a(t\nabla \bar u)dt) \nabla u_\e   \rightharpoonup 0$,
and we define the nonlinear homogenization commutator of $u_\e$ as
\begin{equation}\label{e.def:HC}
\Xi_\e(f)\,:=\,\Big(\int_0^1 Da(\tfrac \cdot \e,t\nabla u_\e)dt\Big) \nabla u_\e- \Big(\int_0^1 D\bar a(t\nabla \bar u)dt\Big) \nabla u_\e .
\end{equation}
Let us argue that, as in the linear setting, the homogenization commutator contains both the fluctuations of the field $\nabla u_\e$ and of the flux $a(\frac \cdot \e, \nabla u_\e)$.
We start with $\nabla u_\e$ and consider fluctuations of the observable $\int g \cdot \nabla u_\e$. We introduce the auxiliary map $\bar v$ solution of the linear equation
\begin{equation}\label{e.def-barv}
\nabla \cdot  \Big(\int_0^1 D\bar a(t\nabla \bar u)dt\Big)^* \nabla \bar v = \nabla \cdot g,
\end{equation}
(which we assume to be well-posed in this discussion -- the notation $^*$ is used for the transposition).
Then we have
\begin{eqnarray*}
\int g \cdot \nabla u_\e &\stackrel{\eqref{e.def-barv}}=&- \int  \Big(\int_0^1 D\bar a(t\nabla \bar u)dt\Big)^* \nabla \bar v \cdot \nabla u_\e
\\
&\stackrel{\eqref{e.def:HC}}=& -\int    \nabla \bar v \cdot (\Xi_\e(f)-a(\tfrac \cdot \e,\nabla u_\e))
\\
&\stackrel{\eqref{e.eps-eq}}=& -\int    \nabla \bar v \cdot  \Xi_\e(f) -\int \nabla \bar v \cdot f,
\end{eqnarray*}
so that the fluctuations of $\int g \cdot \nabla u_\e$ are given by those of $-\int    \nabla \bar v \cdot  \Xi_\e(f)$ (since the additional term $ -\int \nabla \bar v \cdot f$ is deterministic).
Likewise, for the flux we introduce the solution $\bar w$ of
\begin{equation}\label{e.def-barw}
\nabla \cdot  \Big(\int_0^1 D\bar a(t\nabla \bar u)dt\Big)^* \nabla \bar w = \nabla \cdot \Big(\int_0^1 D\bar a(t\nabla \bar u)dt\Big)^* g,
\end{equation}
and obtain
\begin{eqnarray*}
{\int g \cdot a(\tfrac \cdot \e,\nabla u_\e) }
&\stackrel{\eqref{e.eps-eq}}=&
\int (g-\nabla \bar w) \cdot a(\tfrac \cdot \e,\nabla u_\e)-\int \nabla \bar w \cdot f
\\
&\stackrel{\eqref{e.def:HC}}=&\int (g-\nabla \bar w) \cdot \Xi_\e(f) + \int (g-\nabla \bar w) \cdot \Big(\int_0^1 D\bar a(t\nabla \bar u)dt\Big) \nabla u_\e -\int \nabla \bar w \cdot f
\\
&\stackrel{\eqref{e.def-barw}}=&\int (g-\nabla \bar w) \cdot\Xi_\e(f)   -\int \nabla \bar w \cdot f,
\end{eqnarray*}
so that the fluctuations of $\int g \cdot a(\tfrac \cdot \e,\nabla u_\e)$ are given by those of $-\int (g-\nabla \bar w) \cdot\Xi_\e(f) $ (since the additional term $ -\int \nabla \bar w \cdot f$ is deterministic).

\medskip

Next to the homogenization commutator of the solution, we introduce the standard homogenization commutator, associated with the corrector. For all $\xi \in \R^d$, we define
\begin{equation}\label{e.def:SHC}
\Xi_\xi\,:=\, \Big(\int_0^1 Da(\cdot,t(\xi+\nabla \phi_\xi))dt\Big) (\xi+\nabla \phi_\xi)- \Big(\int_0^1 D\bar a(t\xi)dt\Big) (\xi+\nabla \phi_\xi).
\end{equation}
In the linear setting, the pathwise structure of fluctuations implies that the homogenization commutator of the solution can be replaced at leading order in the fluctuation scaling by its two-scale expansion using the standard homogenization commutator (for fluctuations, the standard homogenization commutator plays the same role of the corrector for oscillations \eqref{e.2s} -- see \cite{DGO1,DGO2,DO-20} in the linear setting), and the scaling limit of the standard commutator is Gaussian (see \cite{DGO1,DO-20,duerinckx2019scaling} in the linear setting). We expect most of these results to extend to the nonlinear setting. 

\subsection{Extensions and limitations}\label{sec:extension}

Hypothesis~\ref{hypo} makes several assumptions on the monotone operator and the randomness:
\begin{itemize}
\item The underlying probability law is Gaussian with integrable correlations;
\item The monotone map $a(x,\xi)$ is a multiple of $(1+|\xi|^{p-2})\xi$, the randomness is multiplicative (in form of a random matrix field), and the admissible range of $p$ depends on $d$;
\item The spatial dependence $x \mapsto a(x,\xi)$ is smooth on a deterministic level;
\item If it admits a variational form, the operator is associated with a convex energy functional.
\end{itemize}
Several of these assumptions can be slightly relaxed, while others are crucial.
They are discussed in the following paragraphs.

\subsubsection{Probability laws}  

Consider the multiplicative model of the form~\eqref{e.def-a}, that is, $(x,\xi)\mapsto a(x,\xi)\,=\,A(x)(1+|\xi|^{p-2})\xi$.
Our approach is based on a sensitivity calculus which allows us to linearize 
quantities with respect to the randomness (say, wrt $A$) and on functional inequalities
which allow us to control variances using this sensitivity calculus. 
In Hypothesis~\ref{hypo} we consider a Gaussian random field with integrable covariance function, and one might wonder to what extent Gaussianity and the integrability of the covariance function are necessary. 
Our argument strongly relies on the CLT scaling $r^{-\frac d2}$ of spatial averages $\fint_{B_r} \nabla\phi_\xi$ of the corrector gradient, which essentially 
follows from the same property for $a(x,\xi)-\mathbb{E}[a(x,\xi)]$.
On the one hand, sensitivity calculus, functional inequalities, and CLT scaling are not limited to Gaussian fields: they can be developed
as soon as the stationary field $A$ is constructed via a ``hidden'' product structure. In particular, the random checkerboard
and various Poisson-based processes also enjoy such tools, and we refer the reader to \cite{DG1,DG2} for a systematic study 
of sensitivity calculus and (multiscale) functional inequalities for random fields commonly used in the mechanics of composite materials \cite{Torquato-02}.
Such models could be considered here as well.
On the other hand, the CLT scaling indeed requires the integrability of the covariance function. (Since there is some little room in the argument, one could consider a covariance function such that $\int_{\R^d} |c(x)|(1+|x|)^{-\beta}dx<\infty$ provided $0<\beta\ll 1$, but this is detail.)
In order to address Gaussian coefficients with heavier tail, one would first need to establish (nonlinear and linear) large-scale regularity for the random operator (and its linearized version), as in \cite{GNO-reg,GNO-quant,Clozeau-20}.
By \cite{Bella_2020} 	and \eqref{*coer+-}, there is no obstruction to this approach in the range $2 \le p < \frac{2(d-1)}{d-3}$ for scalar equations (but not for systems or for periodic operators in the range of exponents $p \ge \frac{2(d-1)}{d-3}$).

\medskip

Another setting would ensure that spatial averages of  $a(x,\xi)-\mathbb{E}[a(x,\xi)]$ decay at the CLT scaling: if $A$ has finite range of dependence  -- as addressed in \cite{AS,AFK-+,AFK-20} for $p=2$.
The quenched Meyers estimates of Theorem~\ref{unweightmeyers} (and its weighted version of Theorem~\ref{largescalereg}) proved below do hold for general stationary ergodic coefficients -- and therefore in the setting of finite range of dependence. They are however of little use without a good control of the Meyers minimal radius (provided by Theorem~\ref{boundrNLprop} for Gaussian coefficients with integrable covariance).
In  \cite{AS,AFK-+,AFK-20} for $p=2$,  estimates of moments of the corrector gradient (which would control the Meyers minimal radius) are obtained by combining 
a rate of convergence (any would do) for the Dirichlet problem with a Campanato argument
based on $C^{1,\alpha}$-regularity for the homogenized operator.
In particular, one would have to adapt the duality arguments of \cite{AS,AFK-+,AFK-20} to $p>2$
to prove convergence rates. Since the natural object considered in \cite{AS,AFK-+,AFK-20}
is close to the homogenization commutator for $p=2$, the nonlinear commutators we introduced in \eqref{e.def:HC} and \eqref{e.def:SHC} might be good objects to start with for $p>2$. 

\subsubsection{Form of the monotone map}

There are three different assumptions when considering a monotone map of the form~\eqref{e.def-a}: coercivity conditions, regularity with respect to $\xi$, and multiplicative character of the randomness.
To start with, we must assume that $\xi \mapsto a(x,\xi)$ is twice-differentiable (for all $x$) in order to apply sensitivity calculus to the linearized corrector.

\medskip

\textbf{Multiplicative models.}
The form of $a$ is such that one can easily differentiate $a$ with respect to the randomness.
This is not strictly necessary but quite convenient.
Any model having such a property would do, and we can consider 
coefficients of the form $a(x,\xi)=\rho(A(x),\xi)\xi$ provided $M\mapsto \rho(M,\cdot)$ satisfies 
$\vert D_M\rho(M,\xi)\vert\lesssim 1+\vert\xi\vert^{p-1}$ and $\vert D_M \partial_\xi\rho(M,\xi)\vert\lesssim 1+\vert\xi\vert^{p-2}$.
This holds for instance for 
\begin{equation}\label{e.2phase}
a(x,\xi)=\chi(x) a_1(\xi)+(1-\chi(x)) a_2(\xi),
\end{equation}
where $\chi:\R^d \to [0,1]$ is a smooth random field (with a sensitivity calculus and a suitable functional inequality) and $a_1$ and $a_2$ are two given (suitable) monotone maps.
This model is more in line with composite materials.

\medskip

\textbf{Coercivity conditions.}
What is crucial is the non-degeneracy \eqref{*coer+-} (strong monotonicity \eqref{*coer+} is not needed for local regularity for $p<\frac{2(d-1)}{p-3}$). This is forced upon us to rule out the degeneracy of the linearized operator (cf. the critical set of $\nabla \phi_\xi$). In particular, this condition does not hold for the $p$-Laplacian, to which our results do not apply. To our opinion, relaxing this condition constitutes a very challenging problem.

\medskip

\textbf{Restriction on $p$.}
In dimensions $d \ge 4$, we further impose the condition $2\le p < \frac{2(d-1)}{d-3}$.
This condition  does not only allow to prove the large-scale Meyers estimates of the present contribution for the linearized operator, but it also implies regularity theory for the homogenized operator using~\cite{Bella_2020}. Counter-examples to regularity do exist in high dimensions if $p$ is not close to $2$, and it is not clear to us whether there is a similar phenomenon for quantitative estimates in homogenization.

\subsubsection{Local regularity}

It is quite tempting to assert that quantitative homogenization is a matter of large scales (or say, low frequencies), and that local regularity assumptions might be convenient but are not necessary. 
This is indeed quite relevant provided small scales do not interact with large scales. 
A convincing counterexample of that is the quasiperiodic (and almost periodic) setting, where small and large scales indeed interact via a weak Poincar\'e inequality in a high-dimensional torus, cf.~\cite{AGK-16}. In our nonlinear setting, local regularity is not so much needed for the nonlinear correctors, but it seems unavoidable for the linearization part.
This regularity requirement could be weakened in several directions:
\begin{itemize}
\item Only a local $C^\alpha$-control of the spatial dependence of $a$ is needed for some $\alpha>0$, and the control of this local norm can be random itself provided the latter has good moment bounds. In particular, with the same notation as in Hypothesis~\ref{hypo}, this is the case for coefficients of the form
$A(y)=B(G(y))$ provided the (non-negative) Fourier transform $\hat c$ of the covariance function satisfies $\hat c(k) \le (1+|k|)^{-d-2\alpha'}$ (for some $\alpha'>\alpha$). Then $x\mapsto \|A\|_{C^\alpha(B(x))}$ is stationary and has finite Gaussian moments 
(as a slight quantification of \cite[Appendix~A.3]{josien2020annealed} shows).
All our arguments can be adapted to this setting.
\item In the proofs we use local regularity to control pointwise values of the (nonlinear and linear) corrector gradient by its local averages, and therefore control a local supremum by a local $C^\alpha$-norm. Such a control would also follow from a local broken $C^\alpha$-norm, so that one could in principle be able to deal with some $A$ (or $\chi$ in \eqref{e.2phase}) that would be piecewise smooth (and a fortiori piecewise constant
with smooth boundaries, covering the case of smooth inclusions in a background material). This constitutes a question of classical regularity theory. For linear equations and systems, this is proved in \cite{LiVog-00,LiNiren-03} and for monotone operators and $p=2$ in \cite{NeukSch-19}. The case $p>2$ constitutes an interesting independent problem.
\item The state of the art of local regularity is as follows.
For scalar equations, the structure can be quite general, and only requires the H\"older continuity of the map $x\mapsto a(x,\cdot)$ in the sense (see \cite[Theorem~13]{kuusi2014guide})
\begin{equation}\label{darkside}
\sup_{r>0}\int_{0}^r \frac{(\omega(\rho))^{\frac{2}{p}}}{\rho^{\alpha}}\frac{\dd\rho}{\rho}<+\infty,
\end{equation}
where 
$$\omega : r\in (0,+\infty)\mapsto \Big(\sup_{\xi\in\mathbb{R}^d, B_r(x)\subset\mathbb{R}^d}\fint_{B_r(x)}\left(\frac{a(y,\xi)-(a)_{x,r}(\xi)}{(\vert \xi\vert+1)^{p-1}}\right)^2\dd y\Big)^{\frac{1}{2}},$$
for some $\alpha>0$ and $(a)_{x,r}(\xi):=\fint_{B_r(x)} a(y,\xi)\dd y$. For systems however, we are restricted to quasi-diagonal structures of the form $a(x,\xi)=\rho(x,\vert\xi\vert)\xi$, for some $\rho : \mathbb{R}^d\times\mathbb{R}^d\rightarrow \mathbb{R}$ (the so-called Uhlenbeck structure, see \cite{Uhlenbeck}).
\end{itemize} 

\subsubsection{Non-convex energy functionals?}

It would be natural to try to extend these results to the setting of nonlinear elasticity, for which a large part of the qualitative theory has been established (cf.~\cite{Muller-87,Braides-85,Messaoudi-Michaille}, and \cite{DG-16b} for
the most general results in this context). Besides the much more delicate regularity theory (cf.~\cite{DarkSide}), non-convexity essentially prevents us from using the corrector equation efficiently (cf.~the counter-examples to the cell formula in the periodic
setting by M\"uller \cite{Muller-87}, see also \cite{Barchiesi-Gloria-10}), and may cause loss of ellipticity upon linearization 
(see \cite{GMT-93} at the nonlinear level, and \cite{G99,BF-15,FG-16,GR-19} at the linear level) -- except in the vicinity of the identity (cf. \cite{MN-11,GN-11}, and the further use of rigidity \cite{FJM-02}
to establish quantitative results in this regime \cite{NS-18}).  
Hence, quantitative results in homogenization of nonlinear nonconvex models of elasticity remain widely out of reach today.
%


\section{Perturbative regularity theory for the linearized operator}\label{mainresultNLunifL}

In this section we consider periodized random operators $a_L$ distributed according to
the law $\mathbb P_L$ given in Definition~\ref{defi:PL}. In particular, for all $L\ge 1$, $a_L$ is almost surely $Q_L$-periodic in its space variable, and remains random and stationary (this owes to the fact that we use periodization in law rather than naive periodization, cf.~Appendix~\ref{append:per}). 
This implies that $\phi_\xi$ and $\sigma_\xi$ are necessarily $Q_L$-periodic fields almost surely, so that the equations \eqref{e.cor-eq} and \eqref{e.Laplace-sig}
can be posed on $Q_L$ rather than $\R^d$ -- and likewise for the linearized correctors.
For all $L\ge 1$ we use the notation $H^1_\per(Q_L)$ (resp. $W^{1,p}_\per(Q_L)$)
for $Q_L$-periodic fields of $H^1_\loc(\R^d)$ (resp. $W^{1,p}_\loc(\R^d)$) with vanishing average.
Our aim is to prove regularity statements and bounds that are uniform in the periodization parameter $L\ge 1$.

\subsection{The Meyers minimal radius}\label{sec:MR}
In this paragraph we introduce the notion of Meyers minimal radius, a stationary random field which quantifies the scale at which Meyers' estimates hold for the linearized operator.
We start with a definition.
\begin{definition}[Meyers minimal radius]\label{minimalscaleNL}Let $\xi\in\mathbb{R}^d$, $L\geq 1$ and $c>0$. If it exists, the ($Q_L$-periodic) minimal radius  $\rNL(\cdot,c)$ is defined for all $x\in\mathbb{R}^d$ via
\begin{equation}
\rNL(\cdot,c): x\in\mathbb{R}^d\mapsto \inf_{y\in\mathbb{R}^d}\Big( \urNL(y,c)+\ell  \vert x-y \vert\Big),
\label{defr*NL}
\end{equation}
where $\ell = \frac 1{9C \sqrt{d}} \wedge \frac1{16}$ (with $C$ defined in Lemma~\ref{addfint*})
and for all $y\in\mathbb{R}^d$
\begin{equation}
\urNL(y,c)\,:=\,\inf_{r=2^N,N\in\mathbb{N}}\bigg\{ \forall R\geq r, \fint_{B_R(y)}\vert \nabla \corNL\vert^{p} \leq c (1+\vert \xi\vert^p) \bigg\}.
\label{defr*NL2}
\end{equation}
\end{definition}
\begin{remark}
By Jensen's inequality on the left-hand side and the inequality $|\xi|^2\lesssim 1+|\xi|^p$ on the right-hand side, one can replace the condition in \eqref{defr*NL2} by
$$
\fint_{B_R(y)}\vert \nabla \corNL\vert^{2\&p} \le c (1+|\xi|^{2\&p}).
$$
However, we cannot drop the $1$ in the right-hand side. By doing so, we would also need to consider linearized correctors, which we shall only do in a second step.
\end{remark}
We now argue that $\rNL(\cdot,c)$ is a well-defined bounded random field if $c$ is chosen large enough.
\begin{lemma}[Well posedness of $\rNL$]\label{uniformboundr*NL}
Let $(x,\xi)\in\mathbb{R}^d\times\mathbb{R}^d$ and $L\geq 1$. There exist two constants $c_1,c_2>0$ depending on $p$ and $d$ such that, $\mathbb P_L$-almost surely, 
 $\rNL$ satisfies  
\begin{equation}
\urNL(x,c_2)\leq \rNL(x,c_1)\leq \urNL(x,c_1),
\label{encadrementrNL}
\end{equation}
and
\begin{equation}
\urNL(x,c_1)\leq L.
\label{unifboundr*NLeq}
\end{equation}
\end{lemma}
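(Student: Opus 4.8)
The plan is to reduce the two claims to a \emph{quenched} (realization-wise) energy estimate for the periodized corrector, after which everything follows from elementary covering arguments; the point is that in the periodized setting no ergodicity input is needed, since the corrector equation \eqref{e.cor-eq} is genuinely posed on the bounded torus $Q_L$.

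First I would establish that $\mathbb{P}_L$-almost surely $\fint_{Q_L}|\nabla\corNL|^p\lesssim 1+|\xi|^p$, with a constant depending only on $d,p,\lambda$. Since $\corNL\in W^{1,p}_\per(Q_L)$, it is an admissible test function in \eqref{e.cor-eq}, so $\fint_{Q_L}a(x,\xi+\nabla\corNL)\cdot\nabla\corNL=0$; subtracting $\fint_{Q_L}a(x,\xi)\cdot\nabla\corNL$, using on the left the coercivity built into $a(x,\cdot)\in\calM(p,1,2,c_\lambda)$ and on the right the growth $|a(x,\xi)|\le(1+|\xi|^{p-2})|\xi|$ from Hypothesis~\ref{hypo0}, Hölder's and Young's inequalities yield $\fint_{Q_L}(1+|\nabla\corNL|)^{p-2}|\nabla\corNL|^2\lesssim 1+|\xi|^p$, hence the claim. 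This is the pathwise counterpart of \eqref{e.cormoment}, and it holds for each realization precisely because the domain is bounded.

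Next, using the $Q_L$-periodicity of $\corNL$, I would cover any ball $B_R(y)$ with $R\ge L/2$ by $\lesssim(R/L)^d$ translates of $Q_L$ along the lattice $L\mathbb{Z}^d$; since $|B_R|\sim R^d$ the $R$-dependence cancels and one gets $\fint_{B_R(y)}|\nabla\corNL|^p\le c_1(1+|\xi|^p)$ for all such $R$, with $c_1$ depending only on $d,p,\lambda$. As the smallest dyadic radius exceeding $L/2$ is at most $L$, this is exactly $\urNL(\cdot,c_1)\le L$, i.e.~\eqref{unifboundr*NLeq}, and it also shows that $\rNL(\cdot,c_1)$ in \eqref{defr*NL} is a well-defined bounded field. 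The right inequality $\rNL(x,c_1)\le\urNL(x,c_1)$ in \eqref{encadrementrNL} is then immediate by taking $y=x$ in \eqref{defr*NL}. For the left inequality I would fix $x$, pick $y$ achieving the infimum in \eqref{defr*NL} up to an arbitrarily small error (so $\urNL(y,c_1)+\ell|x-y|\approx\rNL(x,c_1)$), and observe that for every $R\ge\rNL(x,c_1)$ one then has $R\ge\urNL(y,c_1)$ and $R\ge\ell|x-y|$; combining the inclusion $B_R(x)\subset B_{R+|x-y|}(y)$, the defining property \eqref{defr*NL2} at the point $y$, and the volume-ratio bound $\big(\tfrac{R+|x-y|}{R}\big)^d\le(1+\tfrac1\ell)^d$, this gives $\fint_{B_R(x)}|\nabla\corNL|^p\le(1+\tfrac1\ell)^dc_1(1+|\xi|^p)=:c_2(1+|\xi|^p)$, whence $\urNL(x,c_2)\le\rNL(x,c_1)$ after passing to the infimum over dyadic radii.

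I do not anticipate a genuine obstacle here: the whole content is the pathwise energy estimate of the first step, which works because periodization-in-law makes $\corNL$ truly $Q_L$-periodic, so the equation can be tested directly on the torus. The only point requiring a little care is the bookkeeping of the dyadic radii $r=2^N$ in \eqref{defr*NL2} against the continuous Lipschitz quantity $\rNL$, which is harmless and gets absorbed by slightly enlarging $c_2$ (and by the smallness of $\ell$).
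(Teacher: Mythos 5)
Your proof is correct and follows essentially the same route as the paper: a pathwise energy estimate for $\nabla\corNL$ on $Q_L$ obtained by testing \eqref{e.cor-eq} directly on the torus, followed by a covering of $B_R$ by translates of $Q_L$ to get \eqref{unifboundr*NLeq}, and then the two inequalities in \eqref{encadrementrNL} via the ball inclusion $B_R(x)\subset B_{R+|x-y|}(y)$ together with the $\ell$-dependent volume ratio. The only cosmetic differences are the threshold $R\ge L/2$ versus $R\ge L$ in the covering step, and that the paper phrases the lower bound as an implication on super-level sets ($\rNL(0,c_1)\le R\implies\urNL(0,c_2)\le R$) while you pass through an approximate minimiser $y$; both are equivalent and equally elementary.
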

\begin{proof}
Without loss of generality, we may assume that $x=0$. We start with the proof of \eqref{unifboundr*NLeq}, and then turn to the proof of \eqref{encadrementrNL}.
We let $c$ denote a constant depending only on $d$, $\lambda$, and $p$, that may change from line to line.

\medskip

\step1 Proof of \eqref{unifboundr*NLeq}. 

\noindent From the defining equation \eqref{e.cor-eq} for $\phi_\xi$, we have
$$-\nabla\cdot (a(\cdot,\xi+\nabla\phi_{\xi})-a(\cdot,\xi))=\nabla\cdot a(\cdot,\xi)\text{ in $Q_L$},$$
so that by testing the equation with $\phi_\xi$ and using the monotonicity \eqref{*coer+-}
and boundedness \eqref{*cont}, 
we obtain for some constant $c$ depending on $\lambda$ and $d$ 
$$
\fint_{Q_L}\vert\nabla\corNL(x)\vert^2(1+|\xi|^{p-2}+\vert\xi+\nabla\corNL(x)\vert^{p-2})\dd x\leq c \fint_{Q_L}|\xi|(1+|\xi|^{p-2})|\nabla \phi_\xi|.
$$
By absorbing part of the right-hand side into the left-hand side, this yields
\begin{equation*}
\fint_{Q_L}\vert\nabla\corNL(x)\vert^2(1+|\xi|^{p-2}+\vert\xi+\nabla\corNL(x)\vert^{p-2})\dd x\leq c |\xi|^{2\& p}.
\end{equation*}
By the triangle inequality in form of $\vert\xi+\nabla\corNL(x)\vert^{p-2} \gtrsim \vert\nabla\corNL(x)\vert^{p-2}-\vert\xi \vert^{p-2}$, and using the above twice, 
we obtain
$$\fint_{Q_L}\vert\nabla\corNL(x)\vert^{2\& p} \dd x\leq c |\xi|^{2\& p}  .$$
Assume that $L$ is dyadic. 
Given now $R \ge L$, we cover $B_R$ by $N_{L,R}\le c_d  (\frac{R}L)^d$ translations of $Q_L$ (where $c_d$ only depends on dimension), which we denote by $Q_L^j$ for $1\le j \le N_{R,L}$. This yields 
\begin{eqnarray*}
{ \fint_{B_R(y)}\vert \nabla \corNL\vert^{2\&p}}
&\le &   \frac{L^d}{|B_R|} \sum_{j=1}^{N_{R,L}}  \fint_{Q_L^j}\vert \nabla \corNL\vert^{2\&p}
\\
&\le &c_d  \frac{R^d}{L^d}  \frac{L^d}{|B_R|}  c (1 +|\xi|^{p})
\,=\, c_1  (1+|\xi|^{p})
\end{eqnarray*}
for the choice $c_1:= c_d|B|^{-1}$, which only depends on $d$ and $\lambda$.
This yields \eqref{unifboundr*NLeq}.
If $L$ is not dyadic, we cover $B_R$ by cubes of sidelength $2^l$
with $l$ such that $2^l \le L < 2^{l+1}$, and obtain the result at the price of increasing~$c_1$.

\medskip

\step2 Proof of \eqref{encadrementrNL}. 

\noindent By definition~\eqref{defr*NL} of $\rNL$, we have $\rNL(0)\le \urNL(0)$
by testing the infimum problem with $y=0$.
Let us now prove that there exists $c_2$ such that for all $R \ge 1$ we
have the implication  $\rNL(0,c_1) \le R \implies   \urNL(0,c_2) \le R$, from which we deduce~\eqref{encadrementrNL}. By definition \eqref{defr*NL} of $\rNL$, if $\rNL(0,c_1)\le R$, there exists $y \in \R^d$ such that $|y| \le \frac R\ell$ and $\urNL(y,c_1)\le R$. This implies that $B_{R} \subset B_{\bar R}(y)$
with $\bar R:=(\frac 1\ell+1)R$ so that
\begin{equation*}
{\fint_{B_R} \vert\nabla \corNL\vert^{p}}
\,\le \, (\tfrac{\bar R}{R})^{d}\fint_{B_{\bar R}}\vert\nabla \corNL\vert^{p}\,\le \,(\tfrac 1\ell+1)^{d}c_1.
\end{equation*}
Hence, with $c_2:=(\tfrac 1\ell+1)^{d}c_1$, this yields $\urNL(y,c_2)\le R$,
and therefore~\eqref{encadrementrNL}. 
\end{proof}
In the rest of the paper, the notation $\rNL$ refers to the minimal scales $\rNL(\cdot,c_1)$ for which Lemma~\ref{uniformboundr*NL} holds. 
When no confusion occurs, we simply write $\r$ for $\rNL$, and use the short-hand notation $\Br(x)$ for $B_{\rNL(x)}(x)$.

\medskip

We conclude this paragraph by showing that the Meyers minimal radius controls local averages of the nonlinear corrector.
\begin{lemma}[Control of averages of the nonlinear correctors]\label{ctrlavNL}
There exists a nonlinear hole-filling exponent $0<\delta \le d$ depending on $d$, $p$, and $\lambda$ such that for all $(x,\xi)\in\mathbb{R}^d\times\mathbb{R}^d$, we have for all $r>0$
\begin{eqnarray}
\fint_{B_r(x)}\vert\xi+\nabla\corNL\vert^{2\& p}&\lesssim_{d,\lambda,p}&(1+\vert\xi\vert^p)\Big(\frac{\r(x) \vee r}{r}\Big)^{d-\delta}.
\label{controlunitball}
\end{eqnarray}
\end{lemma}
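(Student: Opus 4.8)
The plan is to combine two ingredients: first, the defining property of the Meyers minimal radius $\r(x)$, which gives a uniform bound on $\fint_{B_R(x)}|\nabla\corNL|^p$ for all radii $R\ge \r(x)$ (together with the analogous $L^2$ bound, both of order $1+|\xi|^p$); and second, a nonlinear hole-filling (reverse Hölder with gain) inequality that controls the Dirichlet energy of $\corNL$ on small balls in terms of its energy on larger balls. For radii $r\ge \r(x)$, there is nothing to prove: by definition of $\urNL$ and the sandwich \eqref{encadrementrNL} we directly have $\fint_{B_r(x)}|\xi+\nabla\corNL|^2+|\xi+\nabla\corNL|^p\lesssim (1+|\xi|^2)+c_1(1+|\xi|^p)\lesssim 1+|\xi|^p$, which is \eqref{controlunitball} with the factor $(\r(x)\vee r)/r=1$. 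So the whole content is the regime $r<\r(x)$, where we must propagate the good bound at scale $\r(x)$ down to scale $r$ while paying only the algebraic factor $(\r(x)/r)^{d-\delta}$.

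For that regime I would invoke the nonlinear hole-filling estimate for the corrector equation \eqref{e.cor-eq}. Testing \eqref{e.cor-eq} with $(\corNL-c)\zeta^2$ for a cutoff $\zeta$ between $B_\rho(x)$ and $B_{2\rho}(x)$ and using the coercivity \eqref{e.coer-below} and growth \eqref{e.coer-above}, one obtains a Caccioppoli inequality, and then Widman's hole-filling trick yields a constant $\theta=\theta(d,p,\lambda)\in(0,1)$ such that
\begin{equation}
\int_{B_\rho(x)}\bigl(|\xi+\nabla\corNL|^2+|\xi+\nabla\corNL|^p\bigr)\,\le\,\theta\int_{B_{2\rho}(x)}\bigl(|\xi+\nabla\corNL|^2+|\xi+\nabla\corNL|^p\bigr).
\end{equation}
Iterating this from scale $r$ up to the first dyadic scale exceeding $\r(x)$ — say $2^k r\sim \r(x)\vee r$, so that $k\sim\log_2(\r(x)/r)$ — gives
\begin{equation}
\int_{B_r(x)}\bigl(|\xi+\nabla\corNL|^2+|\xi+\nabla\corNL|^p\bigr)\,\le\,\theta^{k}\int_{B_{2^k r}(x)}\bigl(|\xi+\nabla\corNL|^2+|\xi+\nabla\corNL|^p\bigr)\,\lesssim\,\theta^{k}\,(2^k r)^d\,(1+|\xi|^p),
\end{equation}
where the last step uses that $2^k r\gtrsim\r(x)$ so the bound from the Meyers minimal radius applies on $B_{2^k r}(x)$. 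Setting $\delta:=d-\log_2(1/\theta)\wedge d\in(0,d]$ (the nonlinear hole-filling exponent), we have $\theta^k=2^{-k(d-\delta)}$, hence $\theta^k(2^kr)^d=r^d(2^k)^{\delta}=r^d(2^kr)^{\delta}r^{-\delta}\lesssim r^d(\r(x)/r)^{d-\delta}\cdot\text{(adjusting)}$; dividing by $|B_r|\sim r^d$ produces exactly the claimed factor $(\r(x)\vee r)/r)^{d-\delta}$ in \eqref{controlunitball}. One bookkeeping point: to absorb the translated centers cleanly one uses that $\r$ is $\ell$-Lipschitz by \eqref{defr*NL}, so $\r(x)$ at a nearby scale is comparable to itself; this is why the minimal radius is defined through the inf-convolution rather than the raw quantity $\urNL$.

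The main obstacle is the nonlinear hole-filling step itself: in the unbounded, $p$-growth setting one must be careful that the Caccioppoli inequality still closes despite the $(1+|\xi|^{p-2})$-type weights, i.e. that the coercivity $s>0$ in \eqref{e.coer-below} is genuinely used and that both the $L^2$ and $L^p$ pieces of the energy are controlled simultaneously with a single contraction factor $\theta$. Once the contraction is in hand, the iteration and the identification of $\delta$ are routine. I also expect a minor subtlety in that the hole-filling gain must be formulated for the full energy $|\xi+\nabla\corNL|^2+|\xi+\nabla\corNL|^p$ (not just $|\nabla\corNL|^p$), which is what \eqref{controlunitball} asks for and what feeds later into the corrector growth estimates.
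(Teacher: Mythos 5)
Your proposal is correct and takes essentially the same route as the paper. The paper's proof is a one-liner: apply the already-stated nonlinear hole-filling estimate of Lemma~\ref{HolefillingNL'} to $u(x)=\xi\cdot x+\phi_\xi(x)$ (which solves $-\nabla\cdot a(\cdot,\nabla u)=0$ by \eqref{e.cor-eq}, and for which $|\nabla u|^2(1+|\nabla u|^{p-2})=|\xi+\nabla\phi_\xi|^2+|\xi+\nabla\phi_\xi|^p$), passing from $B_r$ to $B_\rho$ with $\rho=\r\vee r$, and then bound $\fint_{B_\rho}$ by $(1+|\xi|^p)$ using the sandwich \eqref{encadrementrNL} and the definition \eqref{defr*NL2} of $\urNL$. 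You re-derive the hole-filling inline from Caccioppoli plus Widman's trick, which is exactly how Lemma~\ref{HolefillingNL'} is proved; citing the lemma or re-proving it amount to the same argument. Your case split ($r\ge\r$ trivial, $r<\r$ via iteration) is equivalent to the paper's $\rho=\r\vee r$ device. One small slip you already flag with ``(adjusting)'': with $\theta=2^{-\alpha}$ the iterated bound gives $\fint_{B_r}\lesssim\theta^k 2^{kd}(1+|\xi|^p)=2^{k(d-\alpha)}(1+|\xi|^p)\sim(\r/r)^{d-\alpha}(1+|\xi|^p)$, so the hole-filling exponent should be $\delta:=\alpha=\log_2(1/\theta)$ (capped at $d$), not $d-\log_2(1/\theta)$; your definition flips $\delta$ and $d-\delta$. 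The Lipschitz property of $\r$ you mention is not actually needed here (the center is fixed at $x$); it serves elsewhere in the paper.
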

\begin{proof}
Without loss of generality, we may assume that $x=0$.
We use the short-hand notation $\rho:=\r \vee r \ge \r$. By the hole-filling estimate \eqref{HolefillingNL} applied to the defining equation~\eqref{e.cor-eq} for $\phi_\xi$,   there exists $\delta>0$ depending on $d$ and $\lambda$ such that
\begin{equation*}
{\fint_{B_r}\vert\xi+\nabla\corNL\vert^{2\&p}}
\,\lesssim \, \Big(\frac{\rho}{r}\Big)^{d-\delta}\fint_{B_{\rho}}\vert\xi+\nabla\corNL\vert^{2\&p} \,\lesssim\,\Big(\frac{\rho}{r}\Big)^{d-\delta}\fint_{B_{\rho}}|\xi|^{2\& p}+\vert\nabla\corNL\vert^{2\& p}.
\end{equation*}
Using then \eqref{encadrementrNL} in form of $\rho \ge \r(0)\ge \urNL(0,c_2)$, the definition \eqref{defr*NL2}, and Jensen's inequality, this yields the reformulation of~\eqref{controlunitball} 
\begin{equation*}
{\fint_{B_r}\vert\xi+\nabla\corNL\vert^{2\& p} }
\,\lesssim \, \Big(\frac{\rho}{r}\Big)^{d-\delta}(c_2+1)(1+|\xi|^p).
\end{equation*}
\end{proof}

\subsection{Quenched perturbative regularity in the large}\label{perturbativeregsection}

\subsubsection{Quenched Meyers' estimate in the large}

Recall that $a_\xi:=D a(\cdot,\xi+\nabla \phi_\xi)$.
The elliptic operator $-\nabla\cdot\aL\nabla$ has unbounded coefficients,
whose growth depends on the nonlinear corrector $\nabla\corNL$: There exists $(c,C)\in\mathbb{R}_+\times \mathbb{R}_+$, depending on $\lambda$ and $p$, such that for all $h\in\mathbb{R}^d$
\begin{equation}
c\vert h\vert^2\mu_{\xi}\leq h\cdot \aL h\leq C\vert h\vert^2\mu_{\xi},
\label{growthconditionlineaxi}
\end{equation}
where 
\begin{equation}\label{e.def-mu}
\mu_{\xi}:=1+\vert\xi+\nabla\corNL\vert^{p-2}.
\end{equation}
In addition, 
by \eqref{controlunitball} in Lemma~\ref{ctrlavNL} we have for all $r\geq \r$
\begin{equation}
\|\mu_{\xi}\|_{L^{\frac{p}{p-2}}(B_r)}^{\frac{p}{p-2}} \lesssim_{d,\lambda,p}r^{d }(1+\vert\xi\vert^p).
\label{boundnormweight}
\end{equation}
The main result of this section is the following quenched Meyers estimate in the large (which is in the spirit of \cite[Proposition~3.8]{AD-18} for the Laplacian on the percolation cluster).
\begin{theorem}[Quenched Meyers' estimate in the large]\label{unweightmeyers}
Under Hypothesis~\ref{hypo}, for all $K\ge 1$, there exists $\bar m>2$ depending on $d,p,\lambda$, and $K$ such that for all $\xi \in \R^d$ with $|\xi|\le K$, all exponents $2 \le m \le \bar m$, and all $Q_L$-periodic functions $g$ and $u$ related via
\begin{equation}\label{LSMequationu}
-\nabla\cdot \aL\nabla u=\nabla \cdot (g\sqrt{\mu}_{\xi}),
\end{equation}
we have for  all $r>0$
\begin{equation}
\fint_{B_r}\Big(\fint_{B_{\star}(x)}\vert\nabla u \vert^2\mu_{\xi} \Big)^{\frac{m}{2}}\dd x\lesssim_{K}\Big(\fint_{B_{2r}}\Big(\fint_{B_{\star}(x)}\vert\nabla u\vert^2\mu_{\xi}\Big)\,\dd x\Big)^{\frac{m}{2}}
+\fint_{B_{2r}}\Big(\fint_{B_{\star}(x)}\vert g \vert^2 \Big)^{\frac{m}{2}}\dd x.
\label{unweightedmeyerslocal}
\end{equation}
In particular, 
\begin{equation}
\int_{Q_L}\Big(\fint_{B_{\star}(x)}\vert\nabla u\vert^2\mu_{\xi}\Big)^{\frac{m}{2}}\dd x\lesssim_{K} \int_{Q_L}\Big(\fint_{B_{\star}(x)}\vert g\vert^2\Big)^{\frac{m}{2}}\dd x.
\label{unweightedmeyers}
\end{equation}
The same result holds with $\aL$ replaced by its pointwise transpose field $\aL^*$.
\end{theorem}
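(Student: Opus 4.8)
The plan is to prove this quenched Meyers estimate in the large by a perturbative Gehring-type argument adapted to the "coarsened" field $x \mapsto \big(\fint_{B_\star(x)} |\nabla u|^2 \mu_\xi\big)^{1/2}$, which absorbs the unboundedness of the coefficients into the weight $\mu_\xi$ on scales above the Meyers minimal radius $\r$. First I would reduce \eqref{LSMequationu} to a constant-coefficient comparison on balls of radius $R \gtrsim \r(x)$: on such a ball the average $\fint_{B_R(x)} |\mu_\xi|^{p/(p-2)} \lesssim R^d (1+|\xi|^p)$ is under control by \eqref{boundnormweight}, and $a_\xi$ is uniformly comparable to $\mu_\xi$ by \eqref{growthconditionlineaxi}. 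The key observation is that although $a_\xi$ is not bounded above, the ratio $a_\xi / \mu_\xi$ \emph{is} bounded between $c$ and $C$, so one can freeze $\mu_\xi$ to its local average $\langle \mu_\xi \rangle_{B_R(x)}$ and treat $-\nabla \cdot \frac{a_\xi}{\langle \mu_\xi\rangle} \nabla$ as a genuinely uniformly elliptic operator, up to an error measured by $\|\mu_\xi - \langle \mu_\xi\rangle\|$ which, by \eqref{controlunitball} and a Poincaré–Sobolev inequality, has a small relative $L^{p/(p-2)}$ oscillation when the radius is not too large — giving a perturbative Meyers exponent $\bar m$ that deteriorates as $|\xi|$ grows.

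The heart of the argument is then a reverse Hölder inequality for $w(x) := \fint_{B_\star(x)} |\nabla u|^2 \mu_\xi$: I would show that for balls $B_R$ with $R \gtrsim \r$,
\begin{equation*}
\Big(\fint_{B_R} w^{q}\Big)^{1/q} \lesssim_{|\xi|} \fint_{B_{2R}} w + \Big(\fint_{B_{2R}} \big(\fint_{B_\star(x)} |g|^2\big)^{q}\Big)^{1/q}
\end{equation*}
for some $q = \bar m/2 > 1$, using the Caccioppoli inequality for \eqref{LSMequationu} (tested with $u \eta^2$, where the weight $\mu_\xi$ again cancels between the two sides), Sobolev on the left with exponent $\frac{2d}{d+2}$, and the deterministic Meyers/Gehring self-improvement. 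A subtle point is the passage from the pointwise quantity $w(x)$, which itself is an average over $B_\star(x)$, to genuine ball averages: here I would use that $x \mapsto \r(x)$ is $\ell$-Lipschitz (so $B_\star(x) \subset B_{CR}$ whenever $x \in B_R$ and $R \gtrsim \r$), hence $\fint_{B_R} w \lesssim \fint_{B_{CR}} |\nabla u|^2 \mu_\xi$ with a reverse bound of the same type, which lets the coarsened and non-coarsened quantities be compared freely on scales $\gtrsim \r$. Then Gehring's lemma (in the form that upgrades a reverse Hölder inequality into a higher-integrability estimate) yields \eqref{unweightedmeyerslocal} and, by summing over a covering of $Q_L$ and using \eqref{e.cormoment}/\eqref{boundnormweight} to discard the first (energy) term via the global $L^2$ bound on $\nabla u$, the global estimate \eqref{unweightedmeyers}.

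The main obstacle I expect is controlling the relative oscillation of $\mu_\xi$ uniformly in the periodization parameter $L$ and tracking how the resulting Meyers exponent $\bar m$ depends on $|\xi|$: the perturbation is not small in absolute terms — $\mu_\xi$ can be genuinely large — so one must exploit the \emph{multiplicative} structure, i.e. that freezing $\mu_\xi$ costs only a \emph{relative} error $\|\mu_\xi/\langle\mu_\xi\rangle - 1\|_{L^{p/(p-2)}}$, and that this relative error is made small by taking $R$ between $\r$ and a fixed multiple of $\r$ (not on arbitrarily large scales, where $\mu_\xi$ oscillates by order one). This forces the argument to run on a fixed dyadic range of scales adapted to $\r(x)$ and then propagate via the Lipschitz continuity of $\r$, rather than on all scales at once; getting the book-keeping of these scales right, while keeping all constants independent of $L$, is where the real work lies. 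Finally, the statement for $a_\xi^*$ follows verbatim since \eqref{growthconditionlineaxi} and \eqref{boundnormweight} are insensitive to transposition, the pointwise transpose field satisfying the same two-sided bound by $\mu_\xi$.
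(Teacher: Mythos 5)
Your overall skeleton—reverse H\"older for the coarsened field $x\mapsto\fint_{B_\star(x)}|\nabla u|^2\mu_\xi$, Lipschitz continuity of $\r$ to compare coarsened and ball averages, and Gehring to self-improve—is the right framework and matches the paper's structure. But the mechanism you propose for the reverse H\"older step has two genuine gaps.

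\emph{First, the ``freezing'' perturbation does not work.} You suggest replacing $\mu_\xi$ by its local average $\langle\mu_\xi\rangle_{B_R(x)}$ and treating $-\nabla\cdot\frac{a_\xi}{\langle\mu_\xi\rangle}\nabla$ as a uniformly elliptic operator with a small relative error $\|\mu_\xi/\langle\mu_\xi\rangle-1\|_{L^{p/(p-2)}}$. This fails for two reasons. The pointwise ratio $\mu_\xi/\langle\mu_\xi\rangle_{B_R}$ is not bounded: \eqref{boundnormweight} only controls its $L^{p/(p-2)}$ norm, so dividing by the average does not produce a uniformly elliptic operator in any pointwise sense. More importantly, the Meyers minimal radius $\r$ is defined in \eqref{defr*NL2} purely to control the \emph{size} of $\fint_{B_R}|\nabla\phi_\xi|^p$; it gives no control whatsoever on the \emph{oscillation} of $\mu_\xi$, which is generically of order one even on scales comparable to $\r$. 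So there is no regime of scales ``between $\r$ and a fixed multiple of $\r$'' where the relative error you rely on becomes perturbatively small; the hypothesis driving your freezing argument is simply unavailable. The paper's Lemma~\ref{improvedholder} proves the reverse H\"older inequality directly, with no freezing: it combines a Caccioppoli inequality carrying the weight $\mu_\xi$ (Lemma~\ref{cacciopounbounded}) with H\"older against the $L^{p/(p-2)}$ bound on $\mu_\xi$, keeping the oscillating weight explicitly in the estimate.

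\emph{Second, your Sobolev exponent is too weak.} You invoke Sobolev with exponent $\frac{2d}{d+2}$, i.e.\ Poincar\'e--Sobolev on a ball in $d$ dimensions. Combined with H\"older at exponent $q=\frac{p}{p-2}$, this would force a constraint of roughly $p<\frac{2d}{d-2}$ for $q_\ast<2$, ruling out the full range $2\le p<\infty$ already for $d=3$. The paper instead uses the Bella--Sch\"affner cut-off optimization (Lemma~\ref{lemmabella}): by minimizing over radial cut-offs in Caccioppoli, one can apply Poincar\'e--Sobolev on spheres (dimension $d-1$ rather than $d$), yielding the exponent $q_\ast$ of \eqref{exponentlemmabella}. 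This is precisely what recovers all $p\ge2$ in $d\le3$ and produces the restriction $p<\frac{2(d-1)}{d-3}$ of Hypothesis~\ref{hypo} for $d>3$. Your proposal contains no mechanism that would produce this dimensional threshold, which is a sign that the Sobolev step is not strong enough.
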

We follow the standard strategy based on a reverse H\"older inequality and Gehring's lemma to prove this Meyers estimate. We start with the reverse H\"older inequality:
\begin{lemma}[Reverse H\"older inequality]\label{improvedholder}
Let Hypothesis~\ref{hypo} hold. Set $q=\frac{p}{p-2}$. For all $K\ge 1$, all $\xi\in \R^d$ with $|\xi|\le K$, all $x\in\mathbb{R}^d$, $r \geq \r(x)$, and all $g$ and $u$  related via
\begin{equation}
-\nabla\cdot \aL\nabla u=\nabla \cdot (g\sqrt{\mu_\xi}) \text{ in $B_{\frac{17}{12}r}(x)$},
\label{reverseholderequationu}
\end{equation}
we have
\begin{equation}
\Big(\fint_{B_{\frac{67}{48} r}(x)}\vert \nabla u\vert^2\mu_{\xi}\Big)^{\frac{1}{2}}\lesssim (1+ K^p)^{\frac{p-2}{2p}}\Big(\fint_{B_{\frac{17}{12}r}(x)}\vert\nabla u \vert^{q_*}\Big)^{\frac{1}{q_*}}+\Big(\fint_{B_{\frac{17}{12}r}(x)}\vert g\vert^2 \Big)^{\frac{1}{2}},
\label{reverseholderesti}
\end{equation}
with $1\le q_*<2$ given by
\begin{equation}
\frac{1}{q_*} = \left\{
    \begin{array}{ll}
		1 & \text{ for $d=2$},\\
        \frac{1}{2}-\frac{1}{2q}+\frac{1}{d-1} & \text{ for $d\geq 3$}.
    \end{array}
\right.
\label{exponentlemmabella}
\end{equation}
(The choice of $\frac{67}{48}$
and $\frac{17}{12}$ is convenient for the sequel, but obviously not essential.)
The same result holds with $\aL$ replaced by its pointwise transpose field $\aL^*$.
\end{lemma}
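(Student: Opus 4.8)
The plan is to prove \eqref{reverseholderesti} by the standard two-step route --- a Caccioppoli inequality adapted to the degenerate weight $\mu_{\xi}$, followed by a weighted Sobolev--Poincar\'e inequality --- the only twist being that the Sobolev--Poincar\'e step is needed, and valid, \emph{only at scales above the Meyers minimal radius} $\r(x)$, where the weight has the deterministic average bound \eqref{boundnormweight}.

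\emph{Weighted Caccioppoli.} Normalising all balls to be centred at $x$, I would pick a cut-off $\eta$ with $\eta\equiv1$ on $B_{\frac{67}{48}r}(x)$, $\mathrm{supp}\,\eta\subset B_{\frac{17}{12}r}(x)$ and $|\nabla\eta|\lesssim r^{-1}$ (the $\tfrac1{48}r$ gap between the two radii is exactly what makes this possible), and test \eqref{reverseholderequationu} against $\eta^2(u-c)$ with $c=\fint_{B_{\frac{17}{12}r}(x)}u$. Using the two-sided weighted ellipticity \eqref{growthconditionlineaxi}, absorbing $2\eta(u-c)\nabla\eta\cdot\aL\nabla u$ into the energy by Young's inequality (keeping the weight $\mu_{\xi}$ on the $\nabla u$-part), and splitting the source term as $2\eta|\nabla\eta||u-c||g|\sqrt{\mu_{\xi}}\le|\nabla\eta|^2|u-c|^2\mu_{\xi}+\eta^2|g|^2$ --- which is precisely why the right-hand side of \eqref{reverseholderequationu} carries the factor $\sqrt{\mu_{\xi}}$, so that $g$ comes out weight-free --- I obtain
\begin{equation*}
\fint_{B_{\frac{67}{48}r}(x)}|\nabla u|^2\mu_{\xi}\,\lesssim\,\frac1{r^2}\fint_{B_{\frac{17}{12}r}(x)}|u-c|^2\mu_{\xi}+\fint_{B_{\frac{17}{12}r}(x)}|g|^2.
\end{equation*}
The same computation applies verbatim with $\aL$ replaced by $\aL^*$, since it uses only the symmetric bounds $\tfrac1C\mu_{\xi}|h|^2\le h\cdot\aL h$ and $|\aL h|\le C\mu_{\xi}|h|$, which $\aL^*$ shares.

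\emph{Weighted Sobolev--Poincar\'e in the large.} It then remains to prove, for $r\ge\r(x)$,
\begin{equation*}
\Bigl(\fint_{B_{\frac{17}{12}r}(x)}|u-c|^2\mu_{\xi}\Bigr)^{\frac12}\,\lesssim\,r\,(1+|\xi|^p)^{\frac{p-2}{2p}}\Bigl(\fint_{B_{\frac{17}{12}r}(x)}|\nabla u|^{q_*}\Bigr)^{\frac1{q_*}}.
\end{equation*}
The hypothesis $r\ge\r(x)$ enters through \eqref{boundnormweight}: on every concentric ball $B_R(x)$ with $R\ge\r(x)$ one has $\fint_{B_R(x)}\mu_{\xi}^q\lesssim1+|\xi|^p$ with $q=\tfrac p{p-2}$, and on smaller concentric balls $\int\mu_{\xi}^q\lesssim\r(x)^d(1+|\xi|^p)$, so $\mu_{\xi}\,\mathrm dx$ behaves like a $d$-dimensional measure down to scale $\r(x)$. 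A naive H\"older (moving $\mu_{\xi}$ into $L^q$) followed by the $d$-dimensional Sobolev--Poincar\'e inequality would only give the exponent $p_*=\tfrac{dp}{d+p}$, which is $\ge2$ as soon as $p\ge\tfrac{2d}{d-2}$ (e.g. already for $d=3$, $p\ge6$); to reach instead the strictly smaller, subcritical exponent $q_*$ of \eqref{exponentlemmabella} I would foliate the annulus $B_{\frac{17}{12}r}(x)\setminus B_{\frac{67}{48}r}(x)$ into spheres $\partial B_t(x)$, select by a mean-value argument a radius $t_0$ on which both $\fint_{\partial B_{t_0}}\mu_{\xi}^q$ and $\fint_{\partial B_{t_0}}|\nabla u|^{q_*}$ are controlled by their annular averages, bound $\int_{B_{\frac{67}{48}r}}|\nabla u|^2\mu_{\xi}$ by data on the good sphere $\partial B_{t_0}(x)$ via the trace/Caccioppoli mechanism used in \cite{bella2018liouville}, and finally estimate on the $(d-1)$-dimensional sphere $\partial B_{t_0}(x)$ by H\"older --- moving $\mu_{\xi}$ into $L^q(\partial B_{t_0})$, so that the dual exponent falls exactly on $|u-c|^{2q'}=|u-c|^p$ --- followed by Sobolev--Poincar\'e on the $(d-1)$-sphere, whose conjugate exponent obeys $\tfrac1p=\tfrac1{q_*}-\tfrac1{d-1}$. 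This last relation is the source both of the term $\tfrac1{d-1}$ in \eqref{exponentlemmabella} and of the degenerate value $q_*=1$ for $d=2$ (Sobolev--Poincar\'e on a circle embeds into $L^\infty$). Replacing the sphere average $(u)_{\partial B_{t_0}}$ by $c$ costs only $|(u)_{\partial B_{t_0}}-c|\lesssim r\fint_{B_{\frac{17}{12}r}(x)}|\nabla u|\lesssim r\bigl(\fint_{B_{\frac{17}{12}r}(x)}|\nabla u|^{q_*}\bigr)^{1/q_*}$ by Jensen, and raising $\fint\mu_{\xi}^q\lesssim1+|\xi|^p$ to the power $\tfrac1{2q}=\tfrac{p-2}{2p}$ yields exactly the prefactor in \eqref{reverseholderesti}. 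Inserting this into the weighted Caccioppoli inequality gives \eqref{reverseholderesti}, with the transpose case following identically.

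\emph{Main obstacle.} The hard part is the middle step: squeezing out the codimension-one Sobolev exponent $q_*$ while paying only the integrability $\mu_{\xi}\in L^{p/(p-2)}$ that the nonlinear corrector bound \eqref{controlunitball}--\eqref{boundnormweight} delivers. This is precisely where the dimensional restriction on $p$ bites --- one needs $q_*<2$ (automatic for $d\le3$, and equivalent to $2\le p<\tfrac{2(d-1)}{d-3}$ for $d>3$), which is exactly the condition under which Gehring's lemma can be applied in the proof of Theorem~\ref{unweightmeyers}. The specific radii $\tfrac{67}{48}r$ and $\tfrac{17}{12}r$ play no role beyond leaving room for the cut-offs and the sphere selection.
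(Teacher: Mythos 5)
Your proposal is essentially the paper's route: a Caccioppoli estimate followed by the Bella--Sch\"affner optimization over cut-offs, which accesses the codimension-one Sobolev exponent $q_*$ on a good sphere where the weight $\mu_\xi$ is controlled via \eqref{boundnormweight} for $r\ge\r(x)$. However, I want to flag a real inconsistency in your two-step framing. Your first paragraph commits to a \emph{fixed} cut-off with $|\nabla\eta|\lesssim r^{-1}$, which produces the term $r^{-2}\fint_{B_{17r/12}}|u-c|^2\mu_\xi$; you then pose as the remaining goal the ``weighted Sobolev--Poincar\'e''
\begin{equation*}
\Bigl(\fint_{B_{17r/12}}|u-c|^2\mu_{\xi}\Bigr)^{1/2}\lesssim r\,(1+|\xi|^p)^{\frac{p-2}{2p}}\Bigl(\fint_{B_{17r/12}}|\nabla u|^{q_*}\Bigr)^{1/q_*}.
\end{equation*}
As a free-standing inequality this is \emph{false}: $\mu_\xi\in L^{p/(p-2)}$ may concentrate near a single interior point, in which case $\int|u-c|^2\mu_\xi$ tests an essentially pointwise value of $u$, which $\|\nabla u\|_{L^{q_*}}$ with subcritical $q_*$ cannot control. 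The weight is not Muckenhoupt (the paper says so explicitly), and the codimension-one gain is only available because one may \emph{choose} the level set of the cut-off.

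The resolution --- which your second paragraph actually describes, thereby quietly abandoning the first paragraph's fixed cut-off --- is that the cut-off must be left free in the Caccioppoli step. The paper packages this as Lemma~\ref{cacciopounbounded}, whose conclusion is $\fint_{B_{67r/48}}|\nabla u|^2\mu_\xi\lesssim\mathcal{J}(\tfrac{67}{48}r,\tfrac{17}{12}r,\mu_\xi,u)+\fint|g|^2$ with $\mathcal{J}$ the infimum over admissible cut-offs, and as Lemma~\ref{lemmabella}, which estimates $\mathcal{J}$ by solving the one-dimensional minimization explicitly (the optimal radial $\eta$ is built from the harmonic mean of $t\mapsto\int_{S_t}\mu_\xi|u-c|^2$) and then applying H\"older plus $(d-1)$-dimensional Sobolev--Poincar\'e on the spheres $S_t$; the ``good sphere'' selection is absorbed into a $\gamma$-power average over radii rather than a mean-value choice of a single $t_0$, but this is cosmetic. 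The ingredients you identify (the annulus gap, $\mu_\xi$ going into $L^q$ on the selected sphere, $r\ge\r(x)$ feeding \eqref{boundnormweight}, the exponent relation $\tfrac1{q_*}=\tfrac12-\tfrac1{2q}+\tfrac1{d-1}$, and the role of $q_*<2$ for Gehring) are all correct; the point to internalize is that the sphere selection is part of the Caccioppoli step, not a subsequent Poincar\'e inequality.
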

Not surprisingly, this estimate follows from the Caccioppoli and the Poincar\'e-Sobolev inequalities. As opposed to the case of uniformly bounded coefficients, the weight $\mu_\xi$ complexifies the matter (and cannot be treated as a Muckenhoupt weight, which it is not). In order to get the entire range of
exponents $2\le p <\infty$ in dimension $d=3$, we have to be careful in the Caccioppoli inequality. Inspired by  \cite[Lemma 1]{bella2019local}, we optimize with respect to the cut-off in Caccioppoli's inequality, which allows us to appeal to Poincar\'e-Sobolev in dimension $d-1$ rather than $d$ (and therefore improve the integrability).
\begin{lemma}\label{lemmabella}
Let $q\in [1,+\infty)$, assume that $q>\frac{d-1}{2}$ if $d\geq 3$, and let $q_*$ be given by \eqref{exponentlemmabella}. For $0<\rho<\sigma<+\infty$, $v\in W^{1,q_*}(B_{\sigma})$ and $\mu\in L^{q}_{\text{loc}}(\mathbb{R}^d)$, the quantity
\begin{equation}
\mathcal{J}(\rho,\sigma,\mu,v):=\inf\Big\{\int_{B_{\sigma}}\mu  v^2\vert\nabla\eta\vert^2 \Big\vert\eta\in C^{1}_c(B_{\sigma}),\, 0\leq \eta\leq 1,\, \eta\equiv 1 \text{ in } B_{\rho}\Big\}
\label{defJinfbella}
\end{equation}
satisfies 
\begin{equation}
\mathcal{J}(\rho,\sigma,\mu,v)\lesssim (\sigma-\rho)^{-\frac{2d}{d-1}}\|\mu\|_{L^q(B_{\sigma}\backslash B_{\rho})}\left(\|\nabla v\|^2_{L^{q_*}(B_{\sigma}\backslash B_{\rho})}+\rho^{-2}\|v\|^2_{L^{q_*}(B_{\sigma}\backslash B_{\rho})}\right).
\label{estilemmabella}
\end{equation}
\end{lemma}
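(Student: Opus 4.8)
The plan is to prove Lemma~\ref{lemmabella} by exhibiting a near-optimal competitor $\eta$ for the variational problem \eqref{defJinfbella} and estimating the resulting integral via H\"older's inequality and a Sobolev inequality on spheres. First I would reduce to an annular region: since the infimum in \eqref{defJinfbella} only sees $v$ and $\mu$ on $B_\sigma\setminus B_\rho$ (the cut-off $\eta$ is constant, hence has vanishing gradient, inside $B_\rho$), it suffices to construct $\eta$ supported near $B_\sigma\setminus B_\rho$. The idea borrowed from \cite[Lemma~1]{bella2019local} is that one should \emph{not} take the naive radial cut-off with $|\nabla\eta|\sim(\sigma-\rho)^{-1}$ on the whole annulus; instead, one chooses $\eta$ to be constant along a shorter radial segment where $|v|$ is small and compensates elsewhere. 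Concretely, I would slice the annulus $B_\sigma\setminus B_\rho$ into the one-parameter family of spheres $\partial B_t$, $t\in(\rho,\sigma)$, and, using the fundamental theorem of calculus in the radial direction together with an averaging/pigeonhole argument, select a radius $t_0$ (or a weight profile $\eta=\eta(|x|)$) for which $\int_{\partial B_{t_0}}\mu\, v^2$ is controlled by the annular average; then $|\nabla\eta|$ lives on a set of radial thickness $\sim\sigma-\rho$ but the factor gained is dimensional.

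The key computation is the following: for a radial $\eta$ with $\eta'$ supported in $(\rho,\sigma)$ one has $\int_{B_\sigma}\mu v^2|\nabla\eta|^2 = \int_\rho^\sigma |\eta'(t)|^2 \big(\int_{\partial B_t}\mu v^2\big)\,dt$. On each sphere $\partial B_t$ apply H\"older with exponents $q$ and $q'=\frac{q}{q-1}$ to get $\int_{\partial B_t}\mu v^2 \le \|\mu\|_{L^q(\partial B_t)}\|v^2\|_{L^{q'}(\partial B_t)} = \|\mu\|_{L^q(\partial B_t)}\|v\|_{L^{2q'}(\partial B_t)}^2$, and then invoke the Sobolev/trace inequality on the $(d-1)$-dimensional sphere $\partial B_t$: the embedding $W^{1,q_*}(\partial B_t)\hookrightarrow L^{2q'}(\partial B_t)$ holds precisely when $\frac1{q_*}-\frac1{d-1}\le\frac1{2q'}=\frac12-\frac1{2q}$, i.e. $\frac1{q_*}\le \frac12-\frac1{2q}+\frac1{d-1}$, which is exactly the definition \eqref{exponentlemmabella} of $q_*$ (and the hypothesis $q>\frac{d-1}2$ is what makes $q_*<2$, so the right-hand side of \eqref{estilemmabella} is a genuine improvement over energy integrability). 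This gives $\|v\|_{L^{2q'}(\partial B_t)}^2 \lesssim \big(\|\nabla v\|_{L^{q_*}(\partial B_t)}+ t^{-1}\|v\|_{L^{q_*}(\partial B_t)}\big)^2$ with the scaling-correct weights. Finally, choosing $|\eta'|\sim(\sigma-\rho)^{-1}$ on the annulus and integrating in $t$, combined with H\"older in the $t$-variable to pass from $L^{q}$, $L^{q_*}$ norms on individual spheres to the corresponding norms on the solid annulus $B_\sigma\setminus B_\rho$, produces \eqref{estilemmabella} with the stated power $(\sigma-\rho)^{-\frac{2d}{d-1}}$ — the exponent $\frac{2d}{d-1}$ rather than $2$ being the price of converting codimension-one (spherical) norms back to solid norms via another H\"older step.

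I would be slightly careful about two bookkeeping points. The exponent $\frac{2d}{d-1}$: going from $\int_\rho^\sigma(\cdots)\,dt$ with an $L^q$-in-$t$ and an $L^{q_*}$-in-$t$ factor (each a norm on spheres) to solid-region norms $L^q(B_\sigma\setminus B_\rho)$, $L^{q_*}(B_\sigma\setminus B_\rho)$ costs, via H\"older in $t$ over an interval of length $\sigma-\rho$, a factor $(\sigma-\rho)$ to some power; together with the $(\sigma-\rho)^{-2}$ from $|\eta'|^2$ this must be reconciled to $-\frac{2d}{d-1}$, and one checks this is consistent once all the H\"older exponents (which depend on $q$, $q_*$, $d$) are tracked. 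The second point is the case $d=2$: then $q_*=1$ by \eqref{exponentlemmabella}, the "Sobolev inequality on $\partial B_t$" degenerates to the one-dimensional Sobolev embedding $W^{1,1}(S^1)\hookrightarrow L^\infty(S^1)$, and the argument still goes through (indeed $2q'=\infty$ is not needed; any $q'<\infty$ works since $q\ge1$ is free when $d=2$), so I would either treat $d=2$ as a direct elementary sub-case or note it follows by the same slicing with the endpoint embedding.

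The main obstacle I anticipate is purely the combinatorics of exponents: ensuring that the single choice of $q_*$ in \eqref{exponentlemmabella} simultaneously (i) makes the spherical Sobolev embedding $W^{1,q_*}(\partial B_t)\hookrightarrow L^{2q'}(\partial B_t)$ hold with the correct (scaling-invariant) constant uniformly in $t\in(\rho,\sigma)$, and (ii) makes the final H\"older-in-$t$ step close with exactly the claimed power $(\sigma-\rho)^{-\frac{2d}{d-1}}$ and no leftover dependence on $\rho,\sigma$ beyond what is displayed. There is no deep analytic difficulty — Caccioppoli, H\"older, and Sobolev on spheres are all standard — but the optimization over the cut-off profile $\eta$ (the Bella trick) has to be carried out so that one genuinely gains the codimension, rather than just re-deriving the crude bound $\mathcal J\lesssim(\sigma-\rho)^{-2}\|\mu\|_{L^q}\|v\|_{L^{2q'}}^2$ which would not suffice for the full range $p<\infty$ when $d=3$.
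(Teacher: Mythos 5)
Your slicing into spheres, the H\"older on $\partial B_t$ with exponents $(q,q')$, and the Poincar\'e--Sobolev embedding $W^{1,q_*}(\partial B_t)\hookrightarrow L^{2q'}(\partial B_t)$ all match the paper's Step~2, and your dimensional analysis of $(\sigma-\rho)^{-\frac{2d}{d-1}}$ is correct. However, there is a genuine gap in the middle paragraph, and it is precisely at the point where you need to ``genuinely gain the codimension'' (your own phrase). You propose to take $|\eta'|\sim(\sigma-\rho)^{-1}$ on the annulus and then close with a H\"older inequality in the $t$-variable. This cannot work. After the spherical H\"older and Sobolev steps, the $t$-integral has a factor $\|\mu\|_{L^q(\partial B_t)}$ (which must be raised to the $q$-th power to become a solid-annulus norm) and a factor $\|\nabla v\|_{L^{q_*}(\partial B_t)}^2$ (which must be raised to the power $q_*/2$); a H\"older-in-$t$ with those exponents requires $\frac1q+\frac{2}{q_*}\le 1$, but the definition \eqref{exponentlemmabella} gives $\frac1q+\frac2{q_*}=1+\frac{2}{d-1}>1$. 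You cannot close this by padding with the constant function, since that only produces a \emph{positive} power of $(\sigma-\rho)$, not the missing $(\sigma-\rho)^{-\frac{2}{d-1}}$. The pigeonhole idea you float earlier (selecting a good sphere or a good subinterval) only reproduces the crude $L^1$-average bound, which you correctly identify as insufficient.

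The missing ingredient is the \emph{exact} optimization over radial cutoffs, not a pigeonhole selection. Writing $f(r):=\int_{S_r}\mu v^2$, the functional $\int_\rho^\sigma \eta'(r)^2 f(r)\,\dd r$ over profiles with $\eta(\rho)=1$, $\eta(\sigma)=0$ has Euler--Lagrange equation $(\eta' f)'=0$, whose solution $\eta'(r)=-f(r)^{-1}\big/\int_\rho^\sigma f^{-1}$ gives the value $\big(\int_\rho^\sigma f^{-1}\big)^{-1}$ --- the \emph{harmonic mean} of $f$ over $(\rho,\sigma)$, up to a factor of the interval length. By the monotonicity of power means, the harmonic mean is dominated by the $\gamma$-power mean $\big(\fint_\rho^\sigma f^\gamma\big)^{1/\gamma}$ for any $\gamma>-1$, i.e.
\begin{equation*}
\mathcal{J}(\rho,\sigma,\mu,v)\leq (\sigma-\rho)^{-1-\frac{1}{\gamma}}\Big(\int_\rho^\sigma f(r)^\gamma\,\dd r\Big)^{1/\gamma}.
\end{equation*}
Now the spherical H\"older and Sobolev steps are applied to $f(r)^\gamma$, and the H\"older-in-$t$ must satisfy $\frac{\gamma}{q}+\frac{2\gamma}{q_*}=1$. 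Since $\frac1q+\frac2{q_*}=\frac{d+1}{d-1}$, this forces $\gamma=\frac{d-1}{d+1}$, which is in the admissible range $\gamma>-1$, and then $-1-\frac1\gamma=-\frac{2d}{d-1}$, giving exactly the claimed exponent. Without passing through the harmonic mean, the exponents do not add up and the argument falls back to the bound you already know is too weak.
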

The proof of Lemma~\ref{lemmabella}, which closely follows the proof of~\cite[Lemma 1]{bella2019local}, is postponed to Appendix~\ref{append:standard-ineq}.
We now prove Lemma~\ref{improvedholder}.
\begin{proof}[Proof of Lemma~\ref{improvedholder}]
Without loss of generality, we may assume  $x=0$ and $\int_{B_{\frac{17}{12}r}\backslash B_{\frac{67}{48} r}} u =0$. We first apply the Caccioppoli inequality \eqref{esticaccioppounbounded} with $\mu=\mu_{\xi}$ and $c_1=\frac{67}{48}<\frac{17}{12}=c_2$, and obtain with the notation \eqref{defJinfbella}
\begin{equation}
\int_{B_{\frac{67}{48} r}}\vert\nabla u \vert^2\mu_{\xi}\, \lesssim \, \mathcal{J}(\tfrac{67}{48} r,\tfrac{17}{12}r,\mu_{\xi},u)+\int_{B_{\frac{17}{12}r}}\vert g \vert^2 .
\label{lemmabellaapp}
\end{equation}
We then apply Lemma \ref{lemmabella} with exponent $q=\frac{p}{p-2}$ for $d\geq 3$ and $q=1$ for $d=2$, to the effect that
\begin{equation}
\mathcal{J}(\tfrac{67}{48} r,\tfrac{17}{12}r,\mu_{\xi},u)\lesssim r^{-\frac{2d}{d-1}}\|\mu_{\xi}\|_{L^q(B_{\frac{17}{12}r}\backslash{B_{\frac{67}{48} r}})}\left(\|\nabla u\|^2_{L^{q_*}(B_{\frac{17}{12}r}\backslash{B_{\frac{67}{48} r}})}+r^{-2}\|u\|^2_{L^{q_*}(B_{\frac{17}{12}r}\backslash{B_{\frac{67}{48} r}})}\right).
\label{lemmabellaapp2}
\end{equation}
Since $r\geq \r(0)$,  \eqref{boundnormweight} yields
$\|\mu_{\xi}\|_{L^q(B_{\frac{17}{12}r}\backslash{B_{\frac{67}{48} r}})}\lesssim (1+\vert \xi\vert^{p})^{\frac{p-2}{p}}r^{\frac{d}{q}}$, whereas
 Poincar\'e's inequality in $L^{q_*}(B_{2r}\backslash B_r)$ yields
$r^{-2}\|u\|^2_{L^{q_*}(B_{\frac{17}{12}r}\backslash{B_{\frac{67}{48} r}})}\lesssim \|\nabla u\|^2_{L^{q_*}(B_{\frac{17}{12}r}\backslash{B_{\frac{67}{48} r}})}$. Hence, \eqref{lemmabellaapp2} turns into
$$
\mathcal{J}(\tfrac{67}{48} r,\tfrac{17}{12}r,\mu_{\xi},u)\lesssim r^{-\frac{2d}{d-1}+\frac{d}{q}}(1+\vert\xi\vert^p)^{\frac{p-2}{p}}\|\nabla u\|^2_{L^{q_*}(B_{\frac{17}{12}r}\backslash{B_{\frac{67}{48} r}})}.
$$
Combined with  \eqref{lemmabellaapp}, this entails
$$
\fint_{B_{\frac{67}{48} r}}\vert\nabla u \vert^2\mu_{\xi}\,\lesssim \,r^{-\frac{2d}{d-1}+\frac{d}{q}-d+\frac{2d}{q_*}}(1+ \vert\xi\vert^p)^{\frac{p-2}{p}}\Big(\fint_{B_{\frac{17}{12}r}}\vert\nabla u\vert^{q_*} \Big)^{\frac{2}{q_*}}+\fint_{B_{\frac{17}{12}r}}\vert g \vert^2,
$$
which concludes the proof since, by definition \eqref{exponentlemmabella} of $q_*$, $-\frac{2d}{d-1}+\frac{d}{q}-d+\frac{2d}{q_*}=0$.
\end{proof}
Theorem~\ref{unweightmeyers} relies on the combination of Lemma~\ref{improvedholder} with Gehring's inequality in form of (see for instance \cite[Theorem 6.38]{giaquinta2013introduction})
\begin{lemma}[Gehring's lemma]\label{gehring}
Let $s>1$, and  let $f$ and $h$ be two non-negative measurable functions in $L^q_\loc(\mathbb{R}^d)$ such that there exists $C>0$ for which for all $r>0$ and $x\in\mathbb{R}^d$
$$
\Big(\fint_{B_r(x)} f^s\Big)^{\frac{1}{s}}\leq C\Big(\fint_{B_{2r}(x)}f+\Big(\fint_{B_{2r}(x)}h^s\Big)^{\frac{1}{s}}\Big).
$$
Then, there exists $\bar{s}> s$ depending on $q$ and $C$ such that for all $r>0$ and $x\in\mathbb{R}^d$, we have 
$$
\Big(\fint_{B_r(x)}f^{\bar{s}}\Big)^{\frac{1}{\bar{s}}}\lesssim \fint_{B_{2r}(x)}f+\Big(\fint_{B_{2r}(x)}h^{\bar{s}}\Big)^{\frac{1}{\bar{s}}}.
$$
\end{lemma}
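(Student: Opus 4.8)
The statement is the classical Gehring self-improvement lemma, so the plan is to follow the standard Calder\'on--Zygmund / distribution-function argument, paying attention to the two points that usually require care: the reverse H\"older inequality couples $B_r$ with $B_{2r}$ rather than living on a single ball, and the final absorption requires care. First I would raise the hypothesis to the power $s$ and set $w:=h^s$, turning it into the weak reverse H\"older inequality
\[
\fint_{B_r(x)}f^s\,\le\,C_s\Big(\big(\fint_{B_{2r}(x)}f\big)^{s}+\fint_{B_{2r}(x)}w\Big)\qquad\text{for all }r>0,\ x\in\mathbb{R}^d,
\]
with $C_s$ depending on $s$ and $C$. Since the target inequality, after raising to the power $\tfrac1{\bar s}$, follows from $\fint_{B_\rho(y)}f^{\bar s}\lesssim(\fint_{B_{2\rho}(y)}f)^{\bar s}+\fint_{B_{2\rho}(y)}w^{(s+\delta)/s}$ for every ball, and since this is scale- and translation-invariant, it suffices to produce $\delta=\delta(d,s,C)>0$ such that $\fint_{B_1}f^{s+\delta}\lesssim(\fint_{B_2}f)^{s+\delta}+\fint_{B_2}w^{(s+\delta)/s}$, after which one sets $\bar s:=s+\delta$. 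The a priori membership $f\in L^q_\loc$ will be used only to keep the integrals below finite through a truncation $f\wedge T$; this is the sole place $q$ enters.

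Next I would run the Calder\'on--Zygmund stopping-time decomposition of the superlevel sets of $f$ at each level. Working with dyadic cubes --- to which the inequality transfers up to harmless geometric constants, balls and cubes being comparable and the dilation factor $2$ immaterial --- for $t\ge t_0$, with $t_0$ a large base level comparable to $\fint_{B_2}f+(\fint_{B_2}w)^{1/s}$, I select the maximal dyadic cubes $\{Q_i\}$ inside $B_2$ with $\fint_{Q_i}f>t$; these are disjoint, cover $\{f>t\}\cap B_2$ up to a null set, and by maximality obey the two-sided bound $t<\fint_{Q_i}f\le2^{d}t$. Feeding the reverse H\"older inequality on a cube comparable to $Q_i$, splitting $f$ and $w$ at heights $\sim\eta t$ and $\sim\eta^s t^s$ respectively, absorbing the resulting $\eta t$- and $\eta^s t^s$-contributions for $\eta$ small, and summing over the disjoint $Q_i$ (with bounded overlap of the slightly enlarged cubes), one obtains the distributional estimate
\[
\int_{\{f>t\}\cap B_2}f^{s}\,\lesssim\,t^{s-1}\int_{\{f>\eta t\}\cap B_2}f+\int_{\{w>\eta^s t^s\}\cap B_2}w\qquad(t\ge t_0).
\]

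Finally I would multiply this by $t^{\delta-1}$, integrate over $t\in(t_0,\infty)$, and apply Fubini together with $\int_0^\infty t^{\delta-1}\big(\int_{\{f>t\}\cap B_2}f^{s}\big)\dd t=\tfrac1\delta\int_{B_2}f^{s+\delta}$: the left-hand side gives $\tfrac1\delta\int_{B_2}f^{s+\delta}$, the first term on the right gives a multiple of $\tfrac{\eta^{-(s+\delta-1)}}{s+\delta-1}\int_{B_2}f^{s+\delta}$, and the second a multiple of $\tfrac1\delta\int_{B_2}w^{(s+\delta)/s}$; multiplying through by $\delta$, the coefficient of $\int_{B_2}f^{s+\delta}$ on the right-hand side is $\sim\delta\,\eta^{-(s+\delta-1)}$, which becomes $<1$ once $\delta$ is chosen small enough. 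Carrying this out on concentric balls $B_r\subset B_{r'}\subset B_2$ with $1\le r<r'\le2$ (the gap $r'-r$ entering only through the base level $t_0$) and invoking the standard iteration lemma for $r\mapsto\int_{B_r}(f\wedge T)^{s+\delta}$ lets one absorb the bad term; letting $T\to\infty$ --- legitimate because $f\in L^q_\loc$ --- yields the claimed estimate with $\delta$, hence $\bar s=s+\delta$, and the implicit constant depending only on $d$, $s$, $C$, $q$. The main obstacle is precisely this self-referential absorption: it works only because integrating against $t^{\delta-1}$ leaves an extra factor $\delta$ in front of $\int f^{s+\delta}$, so that taking $\delta$ small genuinely produces a coefficient less than one, and it must be combined with the truncation and the radius-iteration lemma to be made rigorous; by contrast the covering steps and the cube/ball comparisons are routine.
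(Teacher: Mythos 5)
The paper states Gehring's lemma without proof: it is invoked as a classical tool (in the proof of Theorem~\ref{unweightmeyers}) and never established in the text, so there is no ``paper's argument'' to compare against. Your proposal is essentially the standard textbook proof of Gehring's self-improvement (Calder\'on--Zygmund stopping-time decomposition of the superlevel sets, a distribution-function estimate, multiplication by $t^{\delta-1}$ and Fubini, absorption via smallness of $\delta$, all made rigorous by truncation and the radius-iteration lemma), and it is correct. The small factor $\delta$ in front of $\int f^{s+\delta}$ coming from $\int_0^f t^{\delta-1}\,\dd t=f^\delta/\delta$ versus the factor $1/(s+\delta-1)$ on the right is indeed the crux of the absorption, and you identify it correctly.

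Two minor points worth tightening if you were to write this out in full. First, in passing from the CZ selection to the stated distributional estimate, the term $t^{s}\sum_i|Q_i|$ has to be turned into $t^{s-1}\int_{\{f>\eta t\}}f$ by splitting $\int_{Q_i}f$ at height $\eta t$ and absorbing the $\eta t|Q_i|$ piece using the lower bound $t<\fint_{Q_i}f$; this is where the threshold parameter $\eta$ actually earns its keep, and it is a step your sketch compresses into ``absorbing the resulting $\eta t$-contributions.'' Second, the enlarged cubes $\tilde Q_i$ needed to apply the reverse H\"older inequality may leave the ball, which is exactly why the iteration lemma on nested balls $B_r\subset B_{r'}$ is not cosmetic but structural; you do flag this, which is good. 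Note also that the paper's statement lists the dependence of $\bar s$ on ``$q$ and $C$'', whereas your argument (correctly) produces $\bar s=s+\delta$ with $\delta$ depending only on $d$, $s$, $C$; the exponent $q$ enters only to guarantee the a priori finiteness needed for the truncation-and-limit step, exactly as you say.
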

We are now in the position to prove Theorem \ref{unweightmeyers}. 
\begin{proof}[Proof of Theorem \ref{unweightmeyers}]
Let $1\le q_* <2$ be given by \eqref{exponentlemmabella}.
We first prove that for all $r>0$
\begin{equation}
\fint_{B_r}\Big(\fint_{\Br(x)}\vert\nabla u \vert^2\mu_{\xi}\Big)\dd x\lesssim \Big(\fint_{B_{2r}}\Big(\fint_{\Br(x)}\vert\nabla u\vert^2\mu_{\xi}\Big)^\frac{q_*}2 \dd x\Big)^{\frac{2}{q_*}}
+\fint_{B_{2r}}\Big(\fint_{\Br(x)}\vert g \vert^2\Big)\dd x .
\label{reverseholder}
\end{equation}
If $r\le 3 \r(0)$ this estimate follows from Lemma~\ref{reverse}, and it remains to treat the case $r\geq 3\r(0)$. 
We first use \eqref{fintout} with $f=\vert\nabla u\vert^2\mu_{\xi}$ to the effect of
\begin{equation}
\int_{B_r}\Big(\fint_{\Br(x)}\vert\nabla u\vert^2\mu_{\xi} \Big) \dd x\lesssim \fint_{B_{\frac{67}{48}r}}\vert\nabla u\vert^2\mu_{\xi}.
\label{propunweightmeyers1}
\end{equation}
Then, by the reverse H\"older inequality \eqref{reverseholderesti} followed by \eqref{fintint}, we obtain 
\begin{eqnarray}
\fint_{B_{\frac{67}{48}r}}\vert\nabla u \vert^2\mu_{\xi} &\stackrel{\eqref{reverseholderesti}}{\lesssim}& (1+ \vert\xi\vert^p)^{\frac{p}{p-2}}\Big(\fint_{B_{\frac{17}{12} r}}\vert\nabla u \vert^{q_*} \Big)^{\frac{2}{q_*}}+\fint_{B_{\frac{17}{12}r}}\vert g \vert^2 
\nonumber \\
&\stackrel{\eqref{fintint}}{\lesssim}&(1 +\vert\xi\vert^p)^{\frac{p}{p-2}}\Big(\fint_{B_{2r}}\Big(\fint_{\Br(x)}\vert\nabla u\vert^{q_*}\Big)\, \dd x\Big)^{\frac{2}{q_*}} +\fint_{B_{2r}}\Big(\fint_{\Br(x)}\vert g\vert^2\Big)\, \dd x\label{propunweightmeyers2}.
\end{eqnarray}
We then slightly reformulate the first right-hand side term using Jensen's inequality in the inner integral (since $q_*<2$) and the lower bound $\mu_{\xi}\geq 1$, so that
\begin{align}
\Big(\fint_{B_{2r}}\fint_{\Br(x)}\vert \nabla u\vert^{q_*}\, dx\Big)^{\frac{2}{q_*}}&\leq \Big(\fint_{B_{2r}}\Big(\fint_{\Br(x)}\vert \nabla u\vert^2 \Big)^{\frac{q_*}2} \dd x\Big)^{\frac{2}{q_*}}\nonumber\\
&\leq \Big(\fint_{B_{2r}}\Big(\fint_{\Br(x)}\vert \nabla u\vert^2 \mu_{\xi}\Big)^{\frac{q_*}2}  \dd x\Big)^{\frac{2}{q_*}}.
\label{propunweightmeyers3}
\end{align}
The combination of \eqref{propunweightmeyers1}, \eqref{propunweightmeyers2}, \eqref{propunweightmeyers3} yields the claimed estimate \eqref{reverseholder}.
To conclude, we apply Lemma~\ref{gehring} with 
$$f:x\mapsto \Big(\fint_{\Br(x)}\vert\nabla u\vert^{2}\mu_{\xi}\Big)^{\frac {q_*}{2}},
\quad h:x\mapsto \Big(\fint_{\Br(x)}\vert g \vert^2 \Big)^{\frac{q_*}2},
\quad s=\tfrac{2}{q_*}>1.
$$
This yields \eqref{unweightedmeyerslocal}, whereas
 \eqref{unweightedmeyers} follows by applying \eqref{unweightedmeyerslocal} for $B_r$
with $r=\frac{\sqrt{d}}2L$, and using Lemma~\ref{addfint*}, the bound \eqref{unifboundr*NLeq}, the periodicity of the quantities involved together with the plain energy estimate 
$
\int_{Q_L}\vert\nabla u \vert^2\mu_{\xi} \lesssim \int_{Q_L}\vert g \vert^2.
$
\end{proof}

\subsubsection{Quenched weighted Meyers' estimate in the large}

The main result of this paragraph is the following upgrade of Theorem~\ref{unweightmeyers}.
\begin{theorem}[Quenched weighted Meyers estimates in the large]\label{largescalereg}
Under Hypothesis~\ref{hypo}, for all $K\ge 1$, there exists $\beta >0$ depending only on $K$ and $d$ such that for all $\xi\in\mathbb{R}^d$ with $|\xi|\le K$ and $r>0$,  all $2\le m \le \bar m$  (cf.~Theorem~\ref{unweightmeyers}), $0\le 2\varepsilon \le \beta$ 
and all $Q_L$-periodic fields  $g$ and $u$ related via \eqref{LSMequationu},
we have
\begin{equation}
\int_{Q_L}\omega_{\e,r}(x)\Big(\fint_{\Br(x)}\vert\nabla u \vert^2\mu_{\xi} \Big)^{\frac{m}{2}}dx\lesssim_{K}\int_{Q_L}\omega_{2\varepsilon,r}(x)\Big(\fint_{\Br(x)}\vert g\vert^2 \Big)^{\frac{m}{2}}dx,
\label{weightedLp}
\end{equation}
where for all $x\in Q_L$
\begin{equation}
\omega_{\varepsilon,r}(x):=\Big(1+\frac{|x|+\r(0)}{r}\Big)^{\varepsilon}.
\label{weightmeyers}
\end{equation}
The same result holds with $\aL$ replaced by its pointwise transpose field $\aL^*$.
\end{theorem}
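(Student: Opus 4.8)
The plan is to build on the unweighted estimate of Theorem~\ref{unweightmeyers}, to exploit the \emph{linearity} of \eqref{LSMequationu} in $u$ together with a quenched hole-filling decay for the operator $-\nabla\cdot\aL\nabla$, and to pay the weight $\omega_{\varepsilon,r}$ shell by shell. We may assume $r\le L$ (for $r>L$ one has $\omega_{\varepsilon,r}\sim 1$ on $Q_L$ and \eqref{weightedLp} reduces to \eqref{unweightedmeyers}). Working on $Q_L$ with distances measured to the origin, set $\tilde M:=1+\r(0)/r$ and decompose $Q_L$ into the dyadic shells $S_0:=B_{\tilde M r}$ and $S_j:=B_{2^j\tilde M r}\setminus B_{2^{j-1}\tilde M r}$ for $j\ge 1$ (only finitely many are nonempty since $\r(0)\le L$). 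Because of the shift by $\r(0)$ in \eqref{weightmeyers}, the weight satisfies $\omega_{\varepsilon,r}\sim_\varepsilon\nu_j^{\varepsilon}$ on $S_j$ with $\nu_j:=2^j\tilde M$, and varies by at most a factor $3^\varepsilon$ on any ball of radius $\le r$. Accordingly split $g=\sum_j g_j$ with $g_j:=g\mathds{1}_{S_j}$, let $u_j$ solve \eqref{LSMequationu} with right-hand side $g_j\sqrt{\mu_\xi}$, and for $x\in\R^d$ put $N_j(x):=\fint_{\Br(x)}|\nabla u_j|^2\mu_\xi$, $F_j(x):=\fint_{\Br(x)}|g_j|^2$ and $N(x):=\fint_{\Br(x)}|\nabla u|^2\mu_\xi$; by linearity $u=\sum_j u_j$, and by Minkowski's inequality $N^{1/2}\le\sum_j N_j^{1/2}$ pointwise.

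The whole argument hinges on one estimate: there is $\theta=\theta(d,p,\lambda,|\xi|)\in(0,1)$ such that, uniformly in $L\ge 1$ and in $2\le m\le\bar m$,
\[
\int_{S_j}N_k^{m/2}\;\lesssim_{|\xi|}\;\theta^{|j-k|}\int_{Q_L}F_k^{m/2}\qquad\text{for all }j,k.
\]
For bounded $|j-k|$ this is Theorem~\ref{unweightmeyers} applied to $u_k$, together with the fact that (since $|g_k|\le|g|$ and $\r$ is comparable to the width of $S_k$ on $S_k$) the coarsened field $F_k$ is supported in a bounded dyadic fattening $\widehat S_k\supset S_k$ where $F_k^{m/2}\le F^{m/2}$, so $\int_{Q_L}F_k^{m/2}\le\int_{\widehat S_k}F^{m/2}=:G_k$. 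For large $|j-k|$, $g_k$ is supported away from $S_j$, so $u_k$ is $\aL$-harmonic on the corresponding inner or outer annular region; combining the Caccioppoli inequality \eqref{esticaccioppounbounded} with Lemma~\ref{lemmabella} and the weight bound \eqref{boundnormweight} (valid precisely above the scale $\r$, which is the raison d'être of the Meyers minimal radius) yields, by the hole-filling argument used for the nonlinear equation in the proof of Lemma~\ref{ctrlavNL}, a geometric energy decay $\int_{(\text{shell }j)}|\nabla u_k|^2\mu_\xi\lesssim_{|\xi|}\theta^{|j-k|}\int_{Q_L}|\nabla u_k|^2\mu_\xi$; feeding this into the reverse–Hölder/self-improvement mechanism behind Theorem~\ref{unweightmeyers} (to pass from the $L^1$ to the $L^{m/2}$ average of $N_k$, the extra scaling factors only improving the decay since $m\ge 2$) and using the energy estimate $\int_{Q_L}|\nabla u_k|^2\mu_\xi\lesssim\int_{Q_L}|g_k|^2$ gives the displayed inequality. \textbf{Establishing this quenched, $L$-uniform hole-filling decay for the \emph{unbounded} linearized operator} --- i.e.\ running the hole-filling scheme while controlling $\mu_\xi$ only through \eqref{boundnormweight} and Lemma~\ref{lemmabella} --- \textbf{is the crux of the proof}; everything else is bookkeeping.

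To conclude, Minkowski's inequality in $L^m(S_j)$ followed by a discrete Hölder inequality gives $\int_{S_j}N^{m/2}\le\big(\sum_k(\int_{S_j}N_k^{m/2})^{1/m}\big)^m\lesssim_{|\xi|}\sum_k\theta^{|j-k|/2}G_k$, whence, using $\omega_{\varepsilon,r}\sim_\varepsilon\nu_j^\varepsilon$ on $S_j$,
\[
\int_{Q_L}\omega_{\varepsilon,r}\,N^{m/2}\;\lesssim_\varepsilon\;\sum_k G_k\sum_j\nu_j^{\varepsilon}\,\theta^{|j-k|/2}.
\]
Writing $\nu_j=2^j\tilde M$ and splitting the $j$-sum at $2^j\sim\tilde M$, an elementary computation --- using $ab\le\max(a,b)^2$ on the mixed terms, which is exactly where the doubled exponent on the right is produced --- shows that, provided $2^\varepsilon\theta^{1/2}<1$, i.e.\ $2\varepsilon<\log_2(1/\theta)=:\beta$ (the hypothesis $0\le 2\varepsilon\le\beta$, with $\beta$ depending only on $d,p,\lambda,|\xi|$), one has $\sum_j\nu_j^\varepsilon\theta^{|j-k|/2}\lesssim_\varepsilon\nu_k^{2\varepsilon}$, uniformly in $L$ and in the finite range of indices. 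Since $\nu_k^{2\varepsilon}\sim_\varepsilon\omega_{2\varepsilon,r}$ on $\widehat S_k$ and the $\widehat S_k$ have bounded overlap, summing in $k$ yields $\int_{Q_L}\omega_{\varepsilon,r}\,N^{m/2}\lesssim_{|\xi|}\int_{Q_L}\omega_{2\varepsilon,r}\,F^{m/2}$, that is \eqref{weightedLp}. The statement for $\aL^*$ follows verbatim, since Theorem~\ref{unweightmeyers}, \eqref{esticaccioppounbounded}, \eqref{boundnormweight} and Lemma~\ref{lemmabella} all hold for the transpose field.
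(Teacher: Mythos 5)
Your overall strategy matches the paper's: you decompose $g$ dyadically in shells centred at $0$, exploit a geometric cross-scale energy decay for the pieces $u_k$, and sum against the weight, and you correctly identify the cross-scale decay as the crux. However, that crux is precisely what you do not establish, and the route you sketch has a genuine obstruction.

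The problematic point is the decay $\int_{S_j}N_k^{m/2}\lesssim\theta^{|j-k|}\int F_k^{m/2}$ for $j\gg k$, i.e.\ shells \emph{outside} the support of $g_k$. In that regime $u_k$ is $\aL$-harmonic only on the exterior $Q_L\setminus B_{2^{k+1}\tilde M r}$, which is not a ball (or cube) containing $S_j$, so the interior hole-filling scheme of Lemma~\ref{ctrlavNL} (which bounds the energy on a \emph{small} ball by the energy on a \emph{larger} concentric ball) does not apply. What you need is the opposite statement -- decay of energy \emph{away from} a compactly supported source -- and this is \emph{not} a hole-filling estimate for $u_k$ itself. The paper gets it by duality (Step~1 of the proof of Theorem~\ref{largescalereg}): one writes $\int_{Q_L\setminus Q_R}|\nabla u|^2\mu_\xi$ as a pairing against a test vector $h$ supported in $Q_L\setminus Q_R$, solves $-\nabla\cdot\aL^*\nabla v=\nabla\cdot(h\sqrt{\mu_\xi})$, and applies the \emph{interior} hole-filling of Corollary~\ref{Lholefilling} to $v$, which is $\aL^*$-harmonic on $Q_R$; the decay for $u$ away from the source is inherited from the interior decay for $v$ towards the origin. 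Without this duality step (or an explicit exterior Caccioppoli/hole-filling iteration on annuli, which you neither carry out nor check closes in the unbounded setting), your estimate for $j\gg k$ is an assertion, not a proof. Note also that the paper's linear hole-filling Corollary~\ref{Lholefilling} is itself proved \emph{from} the unweighted Meyers estimate, not from Caccioppoli and Lemma~\ref{lemmabella} alone, precisely because one needs the higher integrability to close the iteration; this is a structural difference from the nonlinear Lemma~\ref{HolefillingNL'}.

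Two smaller issues: (i) the upgrade from the $L^2$ decay of $u_k$ to the $L^{m/2}$ control of $N_k$ on each shell requires covering the annulus by cubes at scale comparable to the shell width and applying the local Meyers estimate~\eqref{unweightedmeyerslocal} on each, with some care about the $\ell$-Lipschitz property of $\r$ so that the cubes stay above the minimal scale -- this is exactly the bookkeeping in the paper's Step~2, and it is not automatic; (ii) in the discrete H\"older/Minkowski step the geometric factor you should obtain is $\theta^{|j-k|/m}$, not $\theta^{|j-k|/2}$ (for $m>2$ the exponent $1/m<1/2$ makes the decay \emph{slower}), which changes the threshold on $\varepsilon$ and hence your definition of $\beta$; the conclusion survives, but the constant tracking needs to be redone.
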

We proceed in two steps: From Theorem~\ref{unweightmeyers} we first prove a suitable linear hole-filling estimate which we use in turn to upgrade Theorem~\ref{unweightmeyers} into Theorem~\ref{largescalereg}.
\begin{corollary}[Linear hole-filling estimate in the large]\label{Lholefilling}
Under Hypothesis~\ref{hypo}, for all $K\ge 1$ there exist an exponent $\beta >0$, depending only on $d$, $p$ and $K$, 
and a constant $c_d\ge 1$ with the following properties.
Let $\xi \in \R^d$ with $|\xi|\le K$, and let $u$ be a $Q_L$-periodic function  which is $\aL$-harmonic in $Q_R(x)$ for some 
$x\in \R^d$ and $L\ge R \ge c_d \r(x)$, that is
$$-\nabla\cdot \aL\nabla u= 0 \text{ in $Q_R(x)$}.$$
Then for all $\r(x)\le r \le R$,
\begin{equation}
\int_{Q_r(x)}\vert \nabla u \vert^2\mu_{\xi} \lesssim_{K} (\tfrac{r}{R})^{\beta}\int_{Q_R(x)}\vert\nabla u \vert^2\mu_{\xi} .
\label{Lholefillingesti}
\end{equation}
The same result holds with $\aL$ replaced by its pointwise transpose field $\aL^*$.
\end{corollary}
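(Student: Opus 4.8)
The plan is to follow the classical Widman hole-filling argument, but localized on scales above the Meyers minimal radius $\r(x)$, using Theorem~\ref{unweightmeyers} as the substitute for the (unavailable, since the coefficients are unbounded) pointwise Caccioppoli-type self-improvement. First I would fix $x$ (which we may take to be the origin) and, given a dyadic scale $\rho$ with $c_d\r(0) \le \rho \le R/2$, apply the Caccioppoli inequality \eqref{esticaccioppounbounded} together with Lemma~\ref{lemmabella} to the $\aL$-harmonic function $u$ (suitably anchored, e.g.\ subtracting its average over the annulus $Q_{2\rho}\setminus Q_\rho$) on the annulus between $Q_\rho$ and $Q_{2\rho}$. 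This produces a reverse-type bound controlling $\int_{Q_\rho}|\nabla u|^2\mu_\xi$ by $\int_{Q_{2\rho}\setminus Q_\rho}(\cdots)$ in a weaker norm; then Theorem~\ref{unweightmeyers} (or rather its local form \eqref{unweightedmeyerslocal}, applied on balls comparable to $Q_{2\rho}$, which is legitimate precisely because $\rho \gtrsim \r(0)$) upgrades the weaker right-hand side back to an $L^1$ (in the measure $|\nabla u|^2\mu_\xi$) quantity over the annulus. The net output is an inequality of the shape
\begin{equation*}
\int_{Q_\rho}|\nabla u|^2\mu_\xi \;\le\; \theta_\xi \int_{Q_{2\rho}}|\nabla u|^2\mu_\xi,
\end{equation*}
for some constant $\theta_\xi<1$ depending on $|\xi|$, $d$, $p$ (this is the ``hole'' being filled: $\theta<1$ comes from adding the good term on $Q_\rho$ to both sides and dividing).

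Once this one-step contraction is in hand, I would iterate it. Writing $r \le R$ with both above $\r(0)$, choose the integer $N$ with $2^N r \le R < 2^{N+1} r$ and apply the one-step estimate $N$ times along the dyadic chain $Q_r \subset Q_{2r} \subset \cdots \subset Q_{2^N r} \subset Q_R$, to get
\begin{equation*}
\int_{Q_r(x)}|\nabla u|^2\mu_\xi \;\le\; \theta_\xi^{\,N}\int_{Q_{2^N r}(x)}|\nabla u|^2\mu_\xi \;\le\; \theta_\xi^{\,N}\int_{Q_R(x)}|\nabla u|^2\mu_\xi.
\end{equation*}
Since $N \gtrsim \log_2(R/r) - 1$, we have $\theta_\xi^{N} \lesssim (r/R)^{\beta}$ with $\beta := \log_2(1/\theta_\xi) > 0$, which is exactly \eqref{Lholefillingesti}; the constant $c_d$ (and the requirement $R \ge c_d\r(x)$) is what guarantees there is at least one admissible dyadic step, i.e.\ that the iteration is non-vacuous, and it also absorbs the switch between cubes $Q_\rho$ and the balls on which Theorem~\ref{unweightmeyers} is phrased. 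The transpose statement is identical since Theorem~\ref{unweightmeyers} and the Caccioppoli inequality both hold for $\aL^*$.

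The main obstacle, and the place requiring genuine care rather than bookkeeping, is the first step: producing the contraction $\theta_\xi<1$ uniformly in $L$ and in the scale $\rho$ despite the unbounded weight $\mu_\xi$. In the uniformly elliptic case the annulus-Caccioppoli estimate feeds directly into a reverse Hölder inequality with a constant depending only on ellipticity; here the weight $\mu_\xi$ sits inside, and we must invoke the deterministic bound \eqref{boundnormweight} on $\|\mu_\xi\|_{L^{p/(p-2)}}$ (valid for radii $\ge \r$) to keep the $\|\mu_\xi\|_{L^q}$ factor coming out of Lemma~\ref{lemmabella} under control — this is precisely why the statement is restricted to scales above $c_d\r(x)$. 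One then has to check that after applying the Meyers self-improvement \eqref{unweightedmeyerslocal} to promote the $L^{q_*}$-norm of $\nabla u$ back to an $L^2(\mu_\xi)$-norm, the resulting constant times the geometric gain from the annulus (the $(\sigma-\rho)^{-2d/(d-1)}$ factor, here $\sim \rho^{-2d/(d-1)}$, times the scaling of the other norms) still yields, after the hole-filling rearrangement, a multiplicative constant strictly below $1$ on a fixed fraction of the way down — equivalently, that iterating finitely many dyadic steps (bundled into one ``macro-step'' if needed) beats the constant. This is standard in spirit but the $|\xi|$-dependence of $\bar m$ and of the Meyers constant must be tracked so that $\beta$ and the implicit constants depend only on $d$, $p$, $|\xi|$ as claimed.
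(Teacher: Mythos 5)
Your blueprint --- one-step contraction from an annulus-Caccioppoli estimate, then dyadic iteration --- is the classical Widman hole-filling route, and it \emph{is} a genuinely different path from the paper's. The paper performs a single direct comparison across scales using the Meyers higher integrability: after rewriting $\int_{Q_r}|\nabla u|^2\mu_\xi$ as the doubly-averaged quantity $r^d\fint_{B_{2cr}}\big(\fint_{\Br(x)}|\nabla u|^2\mu_\xi\big)\,\dd x$, it applies Jensen with exponent $\bar m>2$, enlarges from $B_{2cr}$ to $B_{R/4}$ (paying $(R/r)^{d/\bar m}$), invokes \eqref{unweightedmeyerslocal} to drop the $L^{\bar m}$-average back to the $L^1$-average (the $g$-term vanishes because $u$ is $a_\xi$-harmonic on $Q_R$), and recovers $\int_{B_{R/2}}|\nabla u|^2\mu_\xi$. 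The gain $(r/R)^\beta$ with $\beta=d(1-\frac1{\bar m})$ falls out with no iteration: Meyers' surplus exponent $\bar m-2>0$ \emph{is} the hole-filling exponent. Your iterative version would yield the same qualitative conclusion with a different (typically smaller) $\beta$ determined by the one-step contraction factor; both are fine for the statement.

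However, the pivotal step of your plan as written does not close. You assert that Theorem~\ref{unweightmeyers} ``applied on balls comparable to $Q_{2\rho}$'' promotes the weak ($L^{q_*}$) norm back to an $L^2(\mu_\xi)$ quantity ``\emph{over the annulus}''. The local Meyers estimate \eqref{unweightedmeyerslocal} compares averages over concentric balls $B_r\subset B_{2r}$; it has no mechanism to confine the output to $B_{2\rho}\setminus B_\rho$. Once you invoke it you have re-spread the right-hand side over all of $B_{2\rho}$, the hole is no longer there to fill, and you recover only the vacuous bound $\int_{B_\rho}\lesssim\int_{B_{2\rho}}$ with a multiplicative constant that need not be $<1$.

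The repair is that Meyers is not needed to close your one-step contraction; H\"older suffices. With $A:=B_{2\rho}\setminus B_\rho$, Caccioppoli \eqref{esticaccioppounbounded} (with $g=0$) and Lemma~\ref{lemmabella}, after subtracting the mean of $u$ on $A$ and applying Poincar\'e in $L^{q_*}(A)$, give
\[
\int_{B_\rho}|\nabla u|^2\mu_\xi\;\lesssim\;\rho^{-\frac{2d}{d-1}}\,\|\mu_\xi\|_{L^q(A)}\,\|\nabla u\|^2_{L^{q_*}(A)}.
\]
Now use H\"older in the form $\|\nabla u\|^2_{L^{q_*}(A)}\le|A|^{\frac{2-q_*}{q_*}}\int_A|\nabla u|^2\mu_\xi$, which is legitimate thanks to the trivial pointwise bound $\mu_\xi\ge1$, together with $\|\mu_\xi\|_{L^q(A)}\lesssim\rho^{d/q}(1+|\xi|^p)^{1/q}$ from \eqref{boundnormweight} (valid since $2\rho\ge\r(0)$). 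The total power of $\rho$ is $-\frac{2d}{d-1}+\frac dq-d+\frac{2d}{q_*}$, which vanishes identically by the very definition \eqref{exponentlemmabella} of $q_*$ --- the same exponent identity the paper verifies at the end of the proof of Lemma~\ref{improvedholder}. Hence $\int_{B_\rho}|\nabla u|^2\mu_\xi\le C_\xi\int_{B_{2\rho}\setminus B_\rho}|\nabla u|^2\mu_\xi$ with $C_\xi$ depending only on $d,p,|\xi|$, and the standard filling gives $\theta_\xi=\frac{C_\xi}{1+C_\xi}<1$ uniformly in $\rho\ge\r(0)$ and in $L$. Iterating over dyadic scales then yields \eqref{Lholefillingesti} with $\beta=\log_2(1/\theta_\xi)$. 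In short: your route works, but the engine of the one-step contraction should be H\"older with $\mu_\xi\ge1$ and the exponent cancellation built into $q_*$, not a Meyers upgrade on the annulus, which is unavailable.
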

\begin{proof}[Proof of Corollary \ref{Lholefilling} ] Without loss of generality, we may assume that $x=0$, $r \ge \r(0)$, and that $2c r \le \frac R8$ with $c= 3 \vee \frac{\sqrt d}{2}$.
By \eqref{fintout}, \eqref{fintint}, the H\"older inequality with exponents $(\bar{m} ,\frac{\bar m}{\bar m -1})$ (with $\bar m$ as in Theorem \ref{unweightmeyers}) and the unweighted Meyers estimate~\eqref{unweightedmeyerslocal}, we have with  $\beta :=d(1-\frac{1}{\bar m})$
\begin{eqnarray*}
\int_{Q_r}\vert \nabla u\vert^2\mu_{\xi} & \le & \int_{B_{cr}}\vert \nabla u\vert^2\mu_{\xi}
\\
&\stackrel{\eqref{fintint}}{\lesssim}&r^d\fint_{B_{2cr}}\Big(\fint_{\Br(x)}\vert\nabla u\vert^2\mu_{\xi} \Big)\,\dd x\\
&\leq &r^d\Big(\fint_{B_{2cr}}\Big(\fint_{\Br(x)}\vert\nabla u\vert^2\mu_{\xi}  \Big)^{\bar m}\dd x\Big)^{\frac{1}{\bar m}}\\
&\leq& r^d(\tfrac{R}{r})^{\frac{d}{\bar m}} \Big(\fint_{B_{\frac R8}}\Big(\fint_{\Br(x)}\vert\nabla u\vert^2\mu_{\xi}  \Big)^{\bar m}\dd x\Big)^{\frac{1}{\bar m}}\\
&\stackrel{\eqref{unweightedmeyerslocal}}{\lesssim}&r^d(\tfrac{R}{r})^{\frac{d}{\bar m}}\fint_{B_{\frac R4}}\Big(\fint_{\Br(x)}\vert\nabla u\vert^2\mu_{\xi}\Big)\,\dd x
\,\stackrel{\eqref{fintout}}{\lesssim} \,(\tfrac{r}{R})^{\beta }\int_{B_{\frac R2}}\vert\nabla u \vert^2\mu_{\xi} \le (\tfrac{r}{R})^{\beta }\int_{Q_R}\vert\nabla u \vert^2\mu_{\xi}.
\end{eqnarray*}
\end{proof}
We now prove Theorem \ref{largescalereg}. 
\begin{proof}[Proof of Theorem \ref{largescalereg}]
We split the proof into four steps. In the first step, we show that for  right-hand sides $g$ compactly supported in $Q_L$ the solution gradient decays algebraically fast away from the source in $L^2$, based on hole-filling. We then upgrade this $L^2$ estimate into an $L^m$ estimate for some $m>2$ using Meyers' estimate~\eqref{unweightedmeyerslocal}. In the third step, we remove the assumption that $g$ be compactly supported by using a dyadic decomposition of scales.
In the last step we exploit the algebraic decay  to add the desired weight.
Since the proof relies on a dyadic decomposition of the torus, it is convenient to work with cubes rather than balls when taking averages (which makes constants slightly cumbersome).
 
\medskip

\step1 $L^2$ algebraic decay rate.

\noindent We prove that for all $L\ge R \ge r\geq c_d\r(0)$ and all $g$ compactly supported in $Q_r$ we have
\begin{equation}
\int_{Q_L \backslash Q_R}\vert\nabla u \vert^2\mu_K \lesssim (\tfrac{r}{R})^{\beta}\int_{Q_r}\vert g \vert^2 ,
\label{holefil}
\end{equation}
where $\beta>0$ is defined in Corollary~\ref{Lholefilling}.\\
We proceed by duality, and write
\begin{equation}\label{e.dua1}
\int_{Q_L \backslash Q_R}\vert\nabla u\vert^2\mu_\xi \,=\,
\Big(\sup_{h }
\int_{Q_L \backslash Q_R}h \cdot \nabla u \sqrt{\mu_\xi }\Big)^2 ,
\end{equation}
where the supremum runs over functions $h \in  L^2(Q_L \backslash Q_R)^d$ with $\|h\|_{ L^2(Q_L \backslash Q_R)^d}=1$.
Consider such a test function $h$ (implicitly extended by zero on $Q_R$)
and denote by $v$  the unique weak solution in $H^1_\per(Q_L)$ of
\begin{equation}
-\nabla \cdot \aL^*\nabla v=\nabla\cdot (h\sqrt{\mu_{\xi}}),
\label{LSMequationdualv}
\end{equation}
which is well-posed since $\mu_{\xi}$ is bounded on $Q_L$ almost surely by Lemma~\ref{regestiNL}.
By testing \eqref{LSMequationdualv} with $u$ and \eqref{LSMequationu} with $v$, we obtain
by Cauchy-Schwarz' inequality and the support condition on $g$
\begin{equation}
\Big|\int_{Q_L}h\cdot \nabla u\sqrt{\mu_{\xi} }\Big|=\Big|\int_{Q_{L}}g \cdot \nabla v \sqrt{\mu_{\xi} } \Big|\leq \Big(\int_{Q_r}\vert g \vert^2 \Big)^{\frac{1}{2}}\Big(\int_{Q_r}\vert\nabla v \vert^2\mu_{\xi} \Big)^{\frac{1}{2}}.
\label{weightedmeyerstep11}
\end{equation}
Since $h$ vanishes on $Q_R$, $v$ is $\aL^*$-harmonic in $Q_R$, and 
the hole filling estimate \eqref{Lholefillingesti} with exponent $\beta$ yields in combination with the plain energy estimate $\int_{Q_{L}}\vert \nabla v \vert^2\mu_{\xi} \lesssim \int_{Q_{L}}\vert h \vert^2 $ and the assumption $\int_{Q_L}|h|^2=1$
\begin{equation}
\int_{Q_r}\vert\nabla v\vert^2\mu_{\xi} \lesssim (\tfrac{r}{R})^{\beta}\int_{Q_R}\vert\nabla v\vert^2\mu_{\xi} \lesssim (\tfrac{r}{R})^{\beta}\int_{Q_{L}}\vert h \vert^2 =(\tfrac{r}{R})^{\beta}.
\label{weightedmeyerstep12}
\end{equation}
The claim \eqref{holefil}  now follows from \eqref{e.dua1}, \eqref{weightedmeyerstep11} and 
\eqref{weightedmeyerstep12}.

\medskip

\step2 $L^m$ algebraic decay rate for $2 \le m \le \bar m$. 

\noindent In this step, we prove that, with $C_d=4C \vee c_d \vee 16$ (and $C\ge 1$ as 
in \eqref{fintoutC}), for all  $L> R \ge 2 r \geq 2C_d\r(0)$, and all $g$ compactly supported in $Q_r$, we may upgrade \eqref{holefil}  to 
\begin{equation}
\int_{Q_{L}\backslash Q_R}\Big(\fint_{\Br(x)}\vert\nabla u\vert^2 \mu_{\xi}\Big)^{\frac{m}{2}}\dd x\lesssim R^{d(1-\frac{m}{2})}(\tfrac{r}{R})^{\beta \frac{m}{2}}\Big(\int_{Q_r}\vert g\vert^2\Big)^{\frac{m}{2}}
\label{holefillingLp}
\end{equation}
for all $2\le m \le \bar m$,  where $\bar{m}$ is the Meyers exponent of Theorem~\ref{unweightmeyers}.

Let $J \in \N$ be such that $2^JR < L \le 2^{J+1}R$.
By writing $Q_L \setminus Q_R= (Q_L \setminus Q_{2^JR})\cup \cup_{j=1}^J (Q_{2^{j}R}\setminus Q_{2^{j-1}R})$ (with the convention that the second union is empty if $J=0$), it is enough to prove that for all  $1\le j\le J+1$, we have  
\begin{equation}
\int_{Q_{(2^{j}R) \wedge L}\backslash Q_{2^{j-1}R}}\Big(\fint_{\Br(x)}\vert \nabla u \vert^2 \mu_\xi \Big)^{\frac{m}{2}}\dd x\lesssim (2^jR)^{d(1-\frac{m}{2})}(\tfrac{r}{2^jR})^{\beta \frac{m}{2}}\Big(\int_{Q_r}\vert g\vert^2 \Big)^{\frac{m}{2}}.
\label{dyadicLp}
\end{equation}
Indeed, for all $m\ge 2$, $d(1-\frac{m}{2})-\beta  \frac m 2 \le -\beta$, so that the dyadic terms sum to \eqref{holefillingLp}.

We now prove~\eqref{dyadicLp}. To start with, reverting from balls to cubes, one may reformulate Theorem~\ref{unweightmeyers} with cubes instead of balls, and replace $B_r$ and $B_{2r}$
by $Q_r$ and $Q_{C_1 r}$, respectively (for some $C_1$ depending only on $d$).
Let $1\le j\le J$ be fixed (the case $j=J+1$ can be treated similarly). 
We partition $Q_{2^{j}R}\setminus Q_{2^{j-1}R}$ into the union of cubes $\{Q^k\}_{k=1,\dots,N}$ of side-length
$\frac1{C_2} 2^jR$ for some $C_2$ to be fixed later (the number $N$ of such cubes then depends on $d$ and $C_2$, but not on $j$ or $R$), to the effect that 
for all $m>2$ we have
\begin{equation}
\int_{Q_{2^{j}R}\backslash Q_{2^{j-1}R}}\Big(\fint_{\Br(x)}\vert \nabla u \vert^2 \mu_\xi\Big)^{\frac{m}{2}}\dd x= \sum_{k=1}^{N}\int_{Q^k}\Big(\fint_{\Br(x)}\vert \nabla u \vert^2 \mu_\xi  \Big)^{\frac{m}{2}}\dd x.
\label{decomposeint1}
\end{equation}
By Theorem \ref{unweightmeyers}, for all $2\le m \le \bar m$ and $1\le k \le N$,
\begin{equation}
\fint_{Q^k}\Big(\fint_{\Br(x)}\vert \nabla u \vert^2 \mu_\xi \Big)^{\frac{m}{2}}\dd x\lesssim \, \Big(\fint_{\bar Q^k}\Big(\fint_{\Br(x)}\vert \nabla u \vert^2 \mu_\xi\Big) \, \dd x\Big)^{\frac{m}{2}}+\fint_{\bar Q^k}\Big(\fint_{\Br(x)}\vert g \vert^2 \Big)^{\frac{m}{2}}\dd x,
\label{meyersesti1}
\end{equation}
where $\bar Q^k\supset Q^k$ denotes the cube of side-length $\frac{C_1}{C_2} 2^jR$ centered at the center $x_k \in Q_{2^jR}\setminus Q_{2^{j-1}R}$ of $Q^k$.
We now control the two right-hand side terms of \eqref{meyersesti1}.
On the one hand, by the $\ell$-Lipschitz property of $\r$ and the assumption $R \ge 2C_d\r(0)$, for all $x \in \bar Q^k$, we have 
$|x|\le |x_k|+\frac{\sqrt{d}}{2}\frac{C_1}{C_2} 2^j R \le \frac{\sqrt{d}}{2}(1+\frac{C_1}{C_2})2^j R$, and therefore
\begin{equation} \label{e.ugly}
\r(x)\le \r(0)+ \ell |x| \,\le \, R(\tfrac1{2C_d} 
+\ell \tfrac{\sqrt d}{2}2^{j} (1+\tfrac{C_1}{C_2})).
\end{equation}
Recall the constant $C$ in \eqref{fintoutC} and that $C_1$ only depends on dimension. We now choose $C_2:=8C_1$.
For our choice $C_d = 4C\vee c_d \vee 16$  and $R \ge 2C_d \r(0)$, and since $0<\ell = \frac 1{9C \sqrt{d}} \wedge \frac1{16}$, we have
$\frac{C_1}{C_2}2^j R= 2^{j-3}R$ and 
$$
C\Big(\frac1{2C_d}+\frac{\ell\sqrt d}{2}2^j (1+\frac{C_1}{C_2})\Big) \le C(\frac1{8C}+\frac1{18C}2^j(1+\frac18)) \le 2^{j-3},
$$
which, by \eqref{e.ugly}, entails $\frac{C_1}{C_2}2^j R \ge C\r(x)$,
condition under which \eqref{fintoutC}  yields
\begin{equation*}
\int_{\bar Q^k}\Big(\fint_{\Br(x)}\vert \nabla u \vert^2 \mu_\xi\Big)\, \dd x\lesssim \int_{\tilde Q^k}\vert\nabla u \vert^2 \mu_\xi ,
\end{equation*}
where $\tilde Q^k$ denotes the cube of side-length $\frac{C_1}{C_2} 2^{j+1}R=2^{j-2}R$ centered at $x_k$, so
that $\tilde Q^k \mod L\Z^d \subset Q_{2^{j+1}R \wedge L}\backslash Q_{2^{j-2}R}$. Hence, by \eqref{holefil}, 
\begin{equation}\label{e.lp1}
\int_{\bar Q^k}\Big(\fint_{\Br(x)}\vert \nabla u \vert^2 \mu_\xi\Big)\, \dd x\lesssim \int_{Q_{2^{j+1}R \wedge L}\backslash Q_{2^{j-2}R}}\vert\nabla u \vert^2 \mu_\xi  \, \lesssim\,  (\tfrac{r}{2^jR})^{\beta}\int_{Q_r}\vert g \vert^2 .
\end{equation}
On the other hand, the same argument implies
\begin{equation}\label{e.lp2}
\int_{\bar Q^k}\Big(\fint_{\Br(x)}\vert g \vert^2  \Big)\, \dd x\lesssim \int_{Q_{2^{j+1}R \wedge L}\backslash Q_{2^{j-2}R}}\vert g \vert^2   = 0,
\end{equation}
where we used that $g$ is supported in $Q_r$ and $r\le \frac R2$.
The claim \eqref{dyadicLp} then follows from \eqref{decomposeint1}, \eqref{meyersesti1}, \eqref{e.lp1}, and \eqref{e.lp2}, and the identity $|Q^k|=(2^{j-3}R)^d$. 

\medskip

\step3 Extension to general $g$.

\noindent In this step, we relax the support assumption on $g$ in \eqref{holefillingLp}, and claim that for all $L\ge R \ge 2C_d \r(0)$ and all $2\le m \le \bar m$,
\begin{multline}
\Big(\int_{Q_{L}\backslash Q_R}\Big(\fint_{\Br(x)}\vert\nabla u \vert^2\mu_{\xi} \Big)^{\frac{m}{2}}\dd x\Big)^{\frac{1}{m}}\,
\lesssim\, \Big(\int_{Q_{L}\backslash Q_{\frac R4}}\Big(\fint_{\Br(x)}\vert g\vert^2\Big)^{\frac{m}{2}}\dd x\Big)^{\frac{1}{m}}
\\
+\Big(\int_{Q_{R}}(\tfrac{|x|+\r(0)}R)^{\frac{\beta m}{4}}\Big(\fint_{\Br(x)}\vert g \vert^2 \Big)^{\frac{m}{2}}\dd x\Big)^{\frac{1}{m}}.
\label{holefilling2}
\end{multline}
Let $N \in \N$ be such that $2^{N}C_d\r(0) \le R < 2^{N+1}C_d\r(0)$ (note that $N\ge 1$ since 
$R \ge 2 C_d \r(0)$).
We decompose $g$ as $g=\sum_{i=0}^{N} g_i$ with 
$g_0:=g \mathds{1}_{Q_{C_d \r(0)}}$, 
$g_i:= g \mathds{1}_{Q_{2^i C_d \r(0)} \setminus Q_{2^{i-1} C_d \r(0)} }$
for all $1\le i \le N-1$, and $g_{N}:=g \mathds{1}_{Q_{L}\setminus Q_{2^{N-1}C_d \r(0)}}$.
By linearity (and uniqueness of the solution) of the equation, we have $u = \sum_{i=0}^{N} u_i$ where $u_i$ 
denotes the (unique) weak solution in $H^1_\per(Q_L)$ of  
$$
-\nabla\cdot \aL\nabla u_{i}=\nabla\cdot (g_{i}\sqrt{\mu_{\xi}}).
$$
By the triangle inequality, we then have for $2 \le m \le \bar m$,
\begin{equation}
\Big(\int_{\
Q_{L}\backslash Q_R}\Big(\fint_{\Br(x)}\vert\nabla u \vert^2\mu_{\xi} \Big)^{\frac{m}{2}}\dd x\Big)^{\frac{1}{m}}
\,
\le\,\sum_{i=0}^{N}\Big(\int_{Q_{L}\backslash Q_R}\Big(\fint_{\Br(x)}\vert\nabla u_{i}(y)\vert^2\mu_{\xi} \Big)^{\frac{m}{2}}\dd x\Big)^{\frac{1}{m}}.\label{decompdyadic}
\end{equation}
We start by estimating the term for $i=N$, for which we use the Meyers estimate~\eqref{unweightedmeyers} to the effect that
$$
\Big(\int_{Q_L} \Big(\fint_{\Br(x)} |\nabla u_{N} |^2 \mu_\xi \Big)^\frac m2 \dd x \Big)^\frac 1m \,\stackrel{\eqref{unweightedmeyers}}\lesssim \, \Big(\int_{Q_L} \Big(\fint_{\Br(x)} |g_{N}|^2  \Big)^\frac m2 \dd x\Big)^\frac1m.
$$
We then reformulate the right-hand side using the support condition on $g_{N}$. 
For $x \in Q_{2^{N-2}C_d \r(0)}$, since $\ell = \frac 1{9C \sqrt{d}} \wedge \frac1{16}$,
$C\ge 1$, $N\ge 0$, and $C_d \ge 16$, we have
$$
\r(x)\le \r(0)+\ell |x| \le \r(0)(1+\ell \tfrac{\sqrt{d}}{2}2^{N-2}C_d) \le 
2^{N-2} C_d\r(0)  (\tfrac14 + \tfrac19 ) \le 2^{N-3} C_d \r(0),
$$
so that we have the implication 
$$
y \in \Br(x) \,\implies \, y \in Q_{2^{N-2} C_d \r(0)}(x) \,\implies \, y\in Q_{2^{N-1}C_d \r(0)}(0)\,\implies \, g_N(y)=0.
$$
Since $R < 2^{N+1}C_d\r(0)=4\times 2^{N-1}C_d\r(0)$, $Q_{\frac R4} \subset Q_{2^{N-1}C_d \r(0)}$,  and the above implies
\begin{equation}\label{holefilling2-2}
\Big(\int_{Q_L} \Big(\fint_{\Br(x)} |\nabla u_{N} |^2 \mu_\xi \Big)^\frac m2 \dd x \Big)^\frac 1m \, \lesssim \, \Big(\int_{Q_L\setminus Q_{\frac R4}} \Big(\fint_{\Br(x)} |g|^2\Big)^\frac m2 \dd x\Big)^\frac1m.
\end{equation}
We then turn to the contributions for $0\le i\le N-1$, for which we appeal to \eqref{holefillingLp} with 
$r=2^i C_d \r(0)\ge C_d \r(0)$ and $R\ge 2^NC_d\r(0)\ge 2r$, and obtain
\begin{eqnarray*}
\int_{Q_{L}\backslash Q_R}\Big(\fint_{\Br(x)}\vert\nabla u_i\vert^2 \mu_{\xi}\Big)^{\frac{m}{2}}\dd x&\stackrel{\eqref{holefillingLp}}\lesssim &R^{d(1-\frac{m}{2})}(\tfrac{r}{R})^{\beta \frac{m}{2}}\Big(\int_{Q_r\setminus Q_{r/2}}\vert g\vert^2 \Big)^{\frac{m}{2}}
\\
&\lesssim &R^{d(1-\frac{m}{2})}(\tfrac{r}{R})^{\beta \frac{m}{4}}
\Big(\int_{Q_r\setminus Q_{r/2}}(\tfrac{|y|+\r(0)}{R})^\frac \beta 2\vert g(y)\vert^2\dd y\Big)^{\frac{m}{2}}.
\end{eqnarray*}
We then appeal to \eqref{fintintC} (which holds for $r$ since $r=2^i C_d \r(0) \ge 2^i C\r(0)$ by definition of $C_d$), to Jensen's inequality, and to the Lipschitz regularity of $\r$ in form of $\r(x)\le \r(0)+|x|$, and get 
\begin{eqnarray}
\lefteqn{\Big(\int_{Q_{L}\backslash Q_R}\Big(\fint_{\Br(x)}\vert\nabla u_i\vert^2 \mu_{\xi}\Big)^{\frac{m}{2}}\dd x\Big)^\frac1m}
\nonumber \\
&\stackrel{\eqref{fintintC} }\lesssim& R^{d \frac{2-m}{2m}} (\tfrac rR)^\frac \beta 4 \Big(\int_{Q_{2r}}\fint_{\Br(x)}(\tfrac{|y|+\r(0)}{R})^\frac \beta 2\vert g(y)\vert^2\dd y \dd x\Big)^{\frac{1}{2}}
\nonumber \\
& \le& R^{d \frac{2-m}{2m}} (\tfrac rR)^\frac \beta 4 
(2r)^{d(\frac12-\frac1m)}\Big(\int_{Q_{2r}}\Big(\fint_{\Br(x)}(\tfrac{|y|+\r(0)}{R})^\frac \beta 2\vert g(y)\vert^2\dd y\Big)^\frac m2 \dd x\Big)^{\frac{1}{m}}
\nonumber \\
&\lesssim&   (\tfrac rR)^{\frac \beta 4 +d\frac{m-2}{2m}} \Big(\int_{Q_{R}}(\tfrac{|x|+\r(0)}{R})^\frac {\beta m}4\Big(\fint_{\Br(x)}\vert g \vert^2 \Big)^\frac m2 \dd x\Big)^{\frac{1}{m}}
\nonumber\\
&\le & (2^{\frac \beta 4 +d\frac{m-2}{2m}})^{i-N} \Big(\int_{Q_{R}}(\tfrac{|x|+\r(0)}{R})^\frac {\beta m}4\Big(\fint_{\Br(x)}\vert g \vert^2 \Big)^\frac m2 \dd x\Big)^{\frac{1}{m}}.
\label{holefilling2-1}
\end{eqnarray}
The claimed estimate \eqref{holefilling2} then follows from \eqref{decompdyadic}, \eqref{holefilling2-2}, and \eqref{holefilling2-1}.

\medskip

\step4 Proof of  \eqref{weightedLp}. 

\noindent 
If $L \le 2C_d\r(0) \le 2C_d L$, then the weight is essentially constant: for all $x\in Q_L$, $\omega_{r,\e} (x)\simeq (1+\frac{L}{r})^\e$, and the conclusion~\eqref{weightedLp} is obviously satisfied. 
In the rest of this step we thus assume that $L>2C_d\r(0)$.
Let $2C_d\r(0)< r\le L$ (the case $0<r\le 2C_d\r(0)$ reduces to the case $r=2C_d\r(0)$ by homogeneity).  
Let $N \in \N$ be such that $2^N C_d\r(0) \le L < 2^{N+1} C_d \r(0)$ and let $N_0 \le N$ be such that $2^{N_0} C_d\r(0) \le r < 2^{N_0+1}C_d\r(0)$.
We then have
\begin{multline}
\int_{Q_L}\omega_{\frac{\varepsilon}{2},r}(x)\Big(\fint_{\Br(x)}\vert\nabla u \vert^2\mu_{\xi} \Big)^{\frac{m}{2}}\dd x
\,= \,\int_{Q_{2^{N_0}C_d\r(0)}}\omega_{\frac{\varepsilon}{2},r}(x)\Big(\fint_{\Br(x)}\vert\nabla u \vert^2\mu_{\xi} \Big)^{\frac{m}{2}}\dd x
\\
+\sum_{i=N_0}^{N-1} \int_{Q_{2^{i+1}C_d\r(0)}\setminus Q_{2^{i}C_d\r(0)}}\omega_{\frac{\varepsilon}{2},r}(x)\Big(\fint_{\Br(x)}\vert\nabla u \vert^2\mu_{\xi} \Big)^{\frac{m}{2}}\dd x
\\
+ \int_{Q_{L}\setminus Q_{2^{N}C_d\r(0)}}\omega_{\frac{\varepsilon}{2},r}(x)\Big(\fint_{\Br(x)}\vert\nabla u \vert^2\mu_{\xi} \Big)^{\frac{m}{2}}\dd x.
\label{e.weight-ant0}
\end{multline}
We then control each right-hand side term separately.
For the first right-hand side term of \eqref{e.weight-ant0}, we have by definition of~$N_0$
$$
\sup_{Q_{2^{N_0}C_d \r(0)}} \omega_{\frac \e 2, r} \lesssim \omega_{\frac \e 2,r}(0) \le \omega_{\frac\e 2,r}(x) \quad \forall \, x \in Q_L,
$$
so that by Theorem~\ref{unweightmeyers} 
\begin{eqnarray}
\int_{Q_{2^{N_0}C_d\r(0)}}\omega_{\frac{\varepsilon}{2},r}(x)\Big(\fint_{\Br(x)}\vert\nabla u \vert^2\mu_{\xi} \Big)^{\frac{m}{2}}\dd x
&\lesssim& \omega_{\frac \e 2,r}(0)
\int_{Q_L} \Big(\fint_{\Br(x)}\vert g \vert^2 \Big)^{\frac{m}{2}}\dd x
\nonumber
\\
&\lesssim& \int_{Q_L} \omega_{\frac \e 2,r}(x)\Big(\fint_{\Br(x)}\vert g \vert^2 \Big)^{\frac{m}{2}}\dd x.
\label{e.weight-ant1}
\end{eqnarray}
We now estimate the right-hand side sum of \eqref{e.weight-ant0}.
For all $N_0\le i\le N-1$, we combine
the bound $\omega_{\frac \e2, r}|_{Q_{2^{i+1}C_d \r(0)}\setminus Q_{2^{i}C_d \r(0)}} \, \simeq \,2^{\frac \e2(i-N_0)}$
with \eqref{holefilling2} to the effect that (using that $2\e \le \beta$)
\begin{eqnarray}
\lefteqn{\Big( \int_{Q_{2^{i+1}C_d\r(0)}\setminus Q_{2^{i}C_d\r(0)}}\omega_{\frac{\varepsilon}{2},r}(x)\Big(\fint_{\Br(x)}\vert\nabla u \vert^2\mu_{\xi} \Big)^{\frac{m}{2}}\dd x\Big)^\frac1m}
\nonumber\\
&\lesssim &2^{\frac{\e}{2m}(i-N_0)}
\Big(\int_{ Q_{L} \setminus Q_{2^{i}C_d\r(0)}} \Big(\fint_{\Br(x)}\vert\nabla u \vert^2\mu_{\xi} \Big)^{\frac{m}{2}}\dd x\Big)^\frac1m
\nonumber\\
&\stackrel{ \eqref{holefilling2},2\e \le \beta}\lesssim&2^{\frac{\e}{2m}(i-N_0)}\Big(\int_{Q_{L}\setminus Q_{2^{i-2}C_d\r(0)}} \Big(\fint_{\Br(x)}\vert g \vert^2  \Big)^{\frac{m}{2}}\dd x\Big)^\frac1m
\nonumber
\\
&&+2^{\frac{\e}{2m}(i-N_0)}
\Big(\int_{Q_{2^{i}C_d\r(0)}}(\tfrac{|x|+\r(0)}{2^{i}C_d\r(0)})^{\frac{\e m}{2}}\Big(\fint_{\Br(x)}\vert g \vert^2 \Big)^{\frac{m}{2}}\dd x\Big)^{\frac{1}{m}}.
\label{e.weight-ant2}
\end{eqnarray}
For the first right-hand side term of \eqref{e.weight-ant2}, we use that 
for all $x \in Q_L\setminus Q_{2^{i-2}C_d \r(0)}$ we have 
$2^{\frac{\e}{2}(i-N_0)} \lesssim 2^{-\frac{\e}{2}(i-N_0)}  \omega_{\e,r}(x)$, so that 
\begin{equation}\label{e.weight-ant3}
2^{\frac{\e}{2m}(i-N_0)}\Big(\int_{Q_{L}\setminus Q_{2^{i-2}C_d\r(0)}} \Big(\fint_{\Br(x)}\vert g\vert^2  \Big)^{\frac{m}{2}}\dd x\Big)^\frac1m\,\lesssim\, 2^{-\frac{\e}{2m}(i-N_0)}\Big(\int_{Q_{L}} \omega_{\e,r}\Big(\fint_{\Br(x)}\vert g\vert^2  \Big)^{\frac{m}{2}}\dd x\Big)^\frac1m.
\end{equation}
For the second right-hand side term of \eqref{e.weight-ant2}, we rather use that 
for all $x \in  Q_{2^{i}C_d \r(0)}$ we have 
by definition of $N_0$  and since $m\ge 2$
\begin{eqnarray*}
2^{\frac{\e}{2}(i-N_0)}(\tfrac{|x|+\r(0)}{2^{i}C_d\r(0)})^{\frac{\e m}{2}} &\lesssim &
2^{\frac{\e}{2}(i-N_0)}(\tfrac{|x|+\r(0)}{2^{i}C_d\r(0)})^{\e}
\\
&\lesssim & 2^{\frac{\e}{2}(i-N_0)} 2^{-\e (i-N_0)}  (\tfrac{|x|+\r(0)}{r})^{\e}
\lesssim\, 2^{-\frac{\e}{2}(i-N_0)} \omega_{\e,r}(x),
\end{eqnarray*}
so that 
\begin{equation}\label{e.weight-ant4}
2^{\frac{\e}{2m}(i-N_0)}
\Big(\int_{Q_{2^{i}C_d\r(0)}}(\tfrac{|x|+\r(0)}{2^{i}C_d\r(0)})^{\frac{\e m}{2}}\Big(\fint_{\Br(x)}\vert g\vert^2\Big)^{\frac{m}{2}}\dd x\Big)^{\frac{1}{m}}
\,
\lesssim\, 2^{-\frac{\e}{2m}(i-N_0)}\Big(\int_{Q_{L}} \omega_{\e,r}\Big(\fint_{\Br(x)}\vert g\vert^2 \Big)^{\frac{m}{2}}\dd x\Big)^\frac1m.
\end{equation}
Summing \eqref{e.weight-ant2}--\eqref{e.weight-ant4} over 
$i$ form $N_0$ to $N-1$ we then obtain
\begin{equation}\label{e.weight-ant5}
\sum_{i=N_0}^{N-1} \int_{Q_{2^{i+1}C_d\r(0)}\setminus Q_{2^{i}C_d\r(0)}}\omega_{\frac{\varepsilon}{2},r}(x)\Big(\fint_{\Br(x)}\vert\nabla u\vert^2\mu_{\xi}\Big)^{\frac{m}{2}}\dd x
\,
\lesssim \,  \int_{Q_{L}} \omega_{\e,r}\Big(\fint_{\Br(x)}\vert g \vert^2  \Big)^{\frac{m}{2}}\dd x.
\end{equation}
Controlling the last right-hand side term of \eqref{e.weight-ant0} the same way, \eqref{weightedLp} follows from \eqref{e.weight-ant1} and  \eqref{e.weight-ant5}.
\end{proof}

\subsection{Control of the Meyers minimal radius: sensitivity estimate and buckling}\label{sec:MR-control}

The main result of this section is the following control of the Meyers minimal radius.
\begin{theorem}\label{boundrNLprop}
Under Hypothesis~\ref{hypo},  there exists an exponent $\gamma>0$ depending on $d$, $\lambda$, and $p$, and  for all $K\ge 1$ there exists a constant $c_{K}>0$ depending additionally on $K$ (and all independent of $L\ge 1$) such that  for all $\xi \in \R^d$,
\begin{equation}
\expecL{ \exp( c_K \rNL^{\gamma}) } \,\le\,2.
\label{boundrNL}
\end{equation}
\end{theorem}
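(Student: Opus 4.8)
The plan is to reduce the stretched‑exponential bound \eqref{boundrNL} to a concentration estimate for spatial averages of $|\nabla\phi_\xi|^p$, and to establish this concentration by combining a sensitivity (Malliavin) calculus with the weighted Meyers estimate in the large of Theorem~\ref{largescalereg}, the nonlinear hole‑filling of Lemma~\ref{ctrlavNL}, and a buckling argument powered by the central‑limit‑theorem scaling. \emph{Reduction.} By \eqref{encadrementrNL} one has $\rNL(0)\le\urNL(0,c_1)$ $\mathbb{P}_L$‑almost surely, and by \eqref{unifboundr*NLeq} $\urNL(0,c_1)\le L$ almost surely, so it suffices to bound $\mathbb{E}_L[\exp(c_\xi\,\urNL(0,c_1)^\gamma)]$ uniformly in $L$. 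By the definition \eqref{defr*NL2} of $\urNL$, for dyadic $r$ one has, up to harmless dimensional constants,
\begin{equation}\label{e.plan-union}
\{\urNL(0,c_1)>r\}\ \subset\ \bigcup_{\substack{R\ge r\\ R\ \mathrm{dyadic}}}\Big\{\fint_{B_R}|\nabla\phi_\xi|^p>c_1(1+|\xi|^p)\Big\}.
\end{equation}
By stationarity and Step~1 of the proof of Lemma~\ref{uniformboundr*NL}, $\mathbb{E}_L\big[\fint_{B_R}|\nabla\phi_\xi|^p\big]=\mathbb{E}_L[|\nabla\phi_\xi(0)|^p]\le c(1+|\xi|^p)$; choosing $c_1:=2c$, it is enough to prove, for some $\gamma>0$ depending on $d,\lambda,p$ and some $c_\xi>0$ depending additionally on $|\xi|$, and uniformly in $L\ge R$,
\begin{equation}\label{e.plan-conc}
\mathbb{P}_L\Big[\fint_{B_R}|\nabla\phi_\xi|^p\ge c_1(1+|\xi|^p)\Big]\ \le\ C\exp\big(-\tfrac1{c_\xi}R^{\gamma}\big)\qquad\text{for all dyadic }R\ge1,
\end{equation}
since a union bound in \eqref{e.plan-union} then gives $\mathbb{P}_L[\urNL(0,c_1)>r]\lesssim\exp(-\tfrac1{c_\xi}r^\gamma)$ and hence \eqref{boundrNL}.

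\emph{Sensitivity calculus.} Under Hypothesis~\ref{hypo}, $A=\chi*B(G)$ is a Lipschitz image of a Gaussian field with integrable covariance, hence satisfies a (multiscale) logarithmic Sobolev inequality; for $X=X(A)$ this gives, schematically,
\begin{equation}\label{e.plan-lsi}
\log\mathbb{E}_L[e^X]-\mathbb{E}_L[X]\ \lesssim\ \int_{\R^d}\mathbb{E}_L\Big[\Big(\int_{B_1(z)}|\partial_{A,y}X|\,\dd y\Big)^2\Big]\dd z,
\end{equation}
together with its $\ell^q$‑variant (ultimately responsible for $\gamma<1$). Apply \eqref{e.plan-lsi} to $X=\lambda F_R$, $F_R:=\fint_{B_R}|\nabla\phi_\xi|^p$. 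Differentiating the corrector equation \eqref{e.cor-eq} with respect to $A$, the derivative $\partial_{A,z}\phi_\xi$ solves the linearized corrector equation with coefficient $a_\xi=Da(\cdot,\xi+\nabla\phi_\xi)$ and a right‑hand side localized near $z$ (smeared at scale $O(1)$ by $\chi$), so that $\partial_{A,z}F_R=p\fint_{B_R}|\nabla\phi_\xi|^{p-2}\nabla\phi_\xi\cdot\nabla(\partial_{A,z}\phi_\xi)$. Dualizing, let $w_R$ solve $-\nabla\cdot a_\xi^{*}\nabla w_R=\nabla\cdot\big(|B_R|^{-1}\mathds{1}_{B_R}|\nabla\phi_\xi|^{p-2}\nabla\phi_\xi\big)$; then $\partial_{A,z}F_R$ reduces, up to the $O(1)$‑scale smearing, to a pointwise product near $z$ of $\nabla w_R$ with $(1+|\xi+\nabla\phi_\xi|^{p-2})(\xi+\nabla\phi_\xi)$, and the right‑hand side of \eqref{e.plan-lsi} becomes, up to constants, an integral of terms of the type $\big(\fint_{\Br(z)}|\nabla w_R|^2\mu_\xi\big)\big(\fint_{B_1(z)}|\xi+\nabla\phi_\xi|^{2(p-1)}\big)$, with $\mu_\xi$ as in \eqref{e.def-mu}.

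\emph{Meyers in the large, hole‑filling, and the CLT gain.} The gradient $\nabla w_R$ is controlled by the annealed version of the weighted Meyers estimate in the large: Theorem~\ref{largescalereg} applied to $-\nabla\cdot a_\xi^{*}\nabla$ with $g=|B_R|^{-1}\mathds{1}_{B_R}|\nabla\phi_\xi|^{p-2}\nabla\phi_\xi/\sqrt{\mu_\xi}$ bounds weighted averages of $|\nabla w_R|^2\mu_\xi$ (above $\rNL$) by those of $|g|^2\lesssim|B_R|^{-2}\mathds{1}_{B_R}|\nabla\phi_\xi|^{2(p-1)}$. The prefactor $|B_R|^{-2}$ is the central‑limit‑theorem scaling: integrated against the $O(1)$‑scale averages in \eqref{e.plan-lsi} and summed over the $O(R^d)$ unit cells meeting $B_R$, it leaves a net gain $R^{-d}$. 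The residual nonlinear factor $\fint_{B_R}|\nabla\phi_\xi|^{2(p-1)}$ — which a priori is only controlled in $\mathbb{E}_L$ in $L^{p/(p-2)}$ by energy estimates — is first reduced by a local regularity ($L^2\!\to\!L^1$) argument on $x\mapsto A(x)$ at scale $O(1)$, and then bounded by $\fint_{B_R}(1+|\xi+\nabla\phi_\xi|^p)$, which by the nonlinear hole‑filling \eqref{controlunitball} of Lemma~\ref{ctrlavNL} is $\lesssim(1+|\xi|^p)\big((\rNL\vee R)/R\big)^{d-\delta}$. The Hölder/Sobolev bookkeeping at this step is exactly what forces the restriction $2\le p<\tfrac{2(d-1)}{d-3}$ in dimensions $d>3$.

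\emph{Buckling and conclusion.} Collecting these estimates, \eqref{e.plan-lsi} yields a bound of the form
\begin{equation}\label{e.plan-buckle}
\log\mathbb{E}_L\big[e^{\lambda(F_R-\mathbb{E}_L F_R)}\big]\ \lesssim_{|\xi|}\ \lambda^2 R^{-d}\,\mathbb{E}_L\Big[(1+F_R)^{\theta}\big((\rNL/R)\vee1\big)^{d-\delta}\Big]
\end{equation}
for some bounded power $\theta$. Choosing $\lambda\simeq R^{\gamma}$ with $\gamma>0$ small, the gain $R^{-d}$ dominates $\lambda^2$ and the degradations coming from the nonlinear power $F_R^\theta$ and from $(\rNL/R)^{d-\delta}$, so that, by Young's inequality, these contributions are absorbed into (stretched) exponential moments of $F_R$ and of $\rNL$ and \eqref{e.plan-buckle} closes. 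The occurrence of $\rNL$ on the right‑hand side is handled by the usual buckling device: one runs the argument for the truncated radius $\rNL\wedge T$, initializes with the deterministic bound $\rNL\le L$ from \eqref{unifboundr*NLeq}, checks that $c_\xi$ does not deteriorate along the iteration, and lets $T\to\infty$. This gives $\mathbb{E}_L[\exp(c_\xi R^{\gamma}(F_R-\mathbb{E}_L F_R))]\le2$, hence \eqref{e.plan-conc}, and then \eqref{boundrNL} follows by summing over dyadic $R\ge r$ as in the Reduction step. The main obstacle is precisely this last balance: making rigorous that the $R^{-d}$ central‑limit scaling simultaneously beats the loss of stochastic integrability of the unbounded linearized coefficient $\mu_\xi$ (controlled only in $L^{p/(p-2)}$), the super‑linear growth $|\nabla\phi_\xi|^{2(p-1)}$ produced by the sensitivity calculus, and the hole‑filling factor $(\rNL\vee R)^{d-\delta}$ — keeping all exponents consistent through the weighted annealed Meyers estimate of Theorem~\ref{largescalereg} is the delicate point and is the source of the dimensional constraint on $p$.
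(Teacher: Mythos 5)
Your overall architecture is aligned with the paper (LSI sensitivity calculus, weighted Meyers in the large, nonlinear hole-filling, CLT scaling, buckling), but the implementation diverges in two ways that each create a real gap.

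\textbf{The nonlinear functional $F_R$ versus linear averages.} You apply the sensitivity calculus directly to $F_R=\fint_{B_R}|\nabla\phi_\xi|^p$. Its Fr\'echet derivative produces, in the dual equation, the right-hand side $g_R=|B_R|^{-1}\mathds1_{B_R}|\nabla\phi_\xi|^{p-2}\nabla\phi_\xi$, which is \emph{random} and involves the corrector gradient to the power $p-1$. This is precisely the quadratic-type obstruction the paper is careful to avoid at the nonlinear level: the paper's Step~1 (Caccioppoli plus Poincar\'e, \eqref{boundnabla2}--\eqref{boundbanbla3}) converts the event $\{\urNL=R\}$ into a condition on linear averages $\fint_{B_{\theta R}(x)}\nabla\phi_\xi$ of the gradient, using both the upper bound at scale $2R$ and the lower bound at scale $R/2$ so that the $\theta^p\fint|\nabla\phi_\xi|^p$ term can be absorbed. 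Then Proposition~\ref{weakNL} is stated for $\calF=\int g\cdot\nabla\phi_\xi$ with a \emph{deterministic} $g$, so the dual equation $-\nabla\cdot a_\xi^*\nabla u=\nabla\cdot g$ has a deterministic right-hand side and the weighted Meyers estimate gives a clean bound. Your reduction \eqref{e.plan-union} keeps only the lower bound, which is insufficient for the absorption, and your dual equation's random $g_R$ reinjects $\fint_{B_R}|\nabla\phi_\xi|^p$ into the bound in an uncontrolled way. The ``$L^2\to L^1$ via local regularity'' trick you invoke is the paper's device for the \emph{linearized} corrector (where it is genuinely needed for the quadratic term), not for the nonlinear corrector; porting it here is not discussed in the paper and is not obviously compatible with the weighted Meyers bookkeeping.

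\textbf{Herbst versus polynomial moments.} You aim for exponential concentration of $F_R$ from the LSI. With an unbounded, random carr\'e-du-champ (yours depends on $\rNL$ and on $F_R$ itself), the Herbst argument does not close: the LSI controls $\mathrm{Ent}(e^{\lambda F_R})$ by $\lambda^2\E_L\big[e^{\lambda F_R}\int|\partial F_R|^2\big]$, a \emph{tilted} expectation, not the plain $\E_L[(1+F_R)^\theta((\rNL/R)\vee1)^{d-\delta}]$ appearing in your \eqref{e.plan-buckle}. You would need a boundedness or quantitative integrability statement for the derivative at every step of the truncation, which is not supplied. The paper sidesteps this entirely by using the $L^{2q}$ form of the LSI (Lemma~\ref{SGp}) to get polynomial moment growth $\lesssim q^\gamma$ for the linear functionals, then feeds this into Markov on the level sets of $\urNL$, closes the resulting moment inequality by Young, and only at the very end upgrades polynomial moments to a stretched exponential via Lemma~\ref{momentexp}. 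That detour through polynomial moments is not cosmetic: it is what makes the buckling with the random, unbounded $\rNL$-dependence rigorous.

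In short, the ingredients you list are the right ones, but you are missing the Caccioppoli--Poincar\'e reduction to linear averages (so that the dual right-hand side is deterministic) and the polynomial-moments-plus-Markov route (so that the buckling closes despite the unbounded carr\'e-du-champ). As written, \eqref{e.plan-conc} and \eqref{e.plan-buckle} are not reachable by the tools you cite.
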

The proof of Theorem~\ref{boundrNLprop} relies on the combination of  the following sensitivity estimate (based on the quenched weighted Meyers estimate of Theorem~\ref{largescalereg}) with the Caccioppoli inequality via a buckling argument.
\begin{proposition}\label{weakNL}
Under Hypothesis~\ref{hypo}, for all $K\ge 1$, denote by $\bar m>2$, $\delta>0$, and $\beta>0$ the Meyers and nonlinear and linear hole-filling exponents, respectively (cf.~Theorem~\ref{unweightmeyers}, Lemma~\ref{ctrlavNL}, and Corollary~\ref{Lholefilling}). Then, for all $\xi \in \R^d$ with $|\xi|\le K$, and all $r\ge 1$
and $0<\tau <1$, the random variable 
$\calF_\xi:=\fint_{B_r}\nabla\corNL$
satisfies
\begin{equation}
\expecL{\vert\calF_\xi\vert^{2q}}^{\frac{1}{q}}\lesssim_{K}qr^{-d}\expecL{ \rNL^{\frac{d-\delta}{1-\tau}q}}^{\frac{1-\frac \tau2}{q}}
\label{sensimomentboundNL}
\end{equation}
for all $q\geq 1+\frac{d+1}{\varepsilon}$, where 
\begin{equation}\label{e-def-eps}
\e\,:=\, (\tfrac \beta 2) \wedge (\tfrac{(d+1)(\bar m-2)}{2}) \wedge (\tfrac{\tau(d-\delta)}{4(1-\tau)}).
\end{equation}
\end{proposition}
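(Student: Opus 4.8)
The plan is to combine a Gaussian functional (spectral-gap) inequality, which linearizes $\calF$ with respect to the randomness, with the duality representation of the linearized corrector, the control of nonlinear corrector averages (Lemma~\ref{ctrlavNL}), and the quenched weighted Meyers estimate (Theorem~\ref{largescalereg}). Since $\mathbb{E}_L[\nabla\phi_\xi]=0$ we have $\mathbb{E}_L[\calF]=0$, so the $L^{2q}$-version of the functional inequality for a Gaussian field with integrable covariance (cf.~\cite{DG1,DG2}) gives
\begin{equation*}
\expecL{|\calF|^{2q}}^{\frac1q}\lesssim q\,\expecL{\Big(\int_{Q_L}\Big(\int_{B_1(y)}\Big|\tfrac{\partial\calF}{\partial G(z)}\Big|\,\dd z\Big)^2\dd y\Big)^q}^{\frac1q},
\end{equation*}
and it remains to estimate the localized Malliavin derivative $\int_{B_1(y)}\big|\tfrac{\partial\calF}{\partial G(z)}\big|\,\dd z$ in a way amenable to taking moments.

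First I would compute the derivative by differentiating the corrector equation \eqref{e.cor-eq} (posed on $Q_L$) in $G$. Since $a(x,\eta)=A(x)(1+|\eta|^{p-2})\eta$ and $A=\chi*B(G)$, a perturbation of $G$ at $z$ gives $\partial_{G(z)}A(x)=\chi(x-z)B'(G(z))$, so $\psi_z:=\partial_{G(z)}\phi_\xi$ solves $-\nabla\cdot(\aL\nabla\psi_z)=\nabla\cdot h_z$ where $h_z$ is supported in a ball of radius $O(1)$ about $z$ and satisfies $|h_z|\lesssim 1+|\xi+\nabla\phi_\xi|^{p-1}$ there. Fixing a unit vector $e$, writing $e\cdot\calF=\int_{Q_L}g_r\cdot\nabla\phi_\xi$ with $g_r:=|B_r|^{-1}\mathds{1}_{B_r}e$, and introducing the adjoint solution $w$ of $-\nabla\cdot(\aL^*\nabla w)=\nabla\cdot g_r$ on $Q_L$ (well posed since $\mu_\xi$ is a.s.\ bounded on $Q_L$), the usual duality computation yields $\partial_{G(z)}(e\cdot\calF)=\int_{Q_L}h_z\cdot\nabla w$. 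Splitting $1+t^{p-1}=\mu_\xi^{-1/2}(1+t^{p-1})\cdot\mu_\xi^{1/2}$ (with $t=|\xi+\nabla\phi_\xi|$ and $\mu_\xi^{-1}(1+t^{p-1})^2\lesssim(1+t)^p$) and applying a local Cauchy--Schwarz inequality together with Lemma~\ref{ctrlavNL} at unit scale (using $\r\ge1$), I would obtain $\int_{B_1(y)}\big|\partial_{G(z)}(e\cdot\calF)\big|\,\dd z\lesssim_{|\xi|}\r(y)^{(d-\delta)/2}\big(\int_{B_c(y)}|\nabla w|^2\mu_\xi\big)^{1/2}$. It is crucial here that the right-hand side $h_z$ of the linearized equation, whose growth exponent $p-1$ only gives it the integrability $\tfrac{p}{p-1}$ (worse than that of $\mu_\xi$), is absorbed through the \emph{nonlinear hole-filling} bound of Lemma~\ref{ctrlavNL}. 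Squaring, integrating in $y$, using Fubini and the $\ell$-Lipschitz, $\ge1$ property of $\r$ (so $\int_{B_c(x)}\r(y)^{d-\delta}\,\dd y\lesssim\r(x)^{d-\delta}$), the whole estimate reduces to a deterministic bound on $\int_{Q_L}\r^{d-\delta}|\nabla w|^2\mu_\xi$.

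Finally I would prove a deterministic bound $\int_{Q_L}\r^{d-\delta}|\nabla w|^2\mu_\xi\lesssim_{|\xi|}r^{-d}\times(\text{polynomial in the values of }\r)$ and conclude by Hölder in the probability space. One first uses the $\fint$-in/out lemmas to pass from $|\nabla w|^2\mu_\xi$ to $\fint_{\Br(x)}|\nabla w|^2\mu_\xi$ (legitimate because $\r^{d-\delta}$ is slowly varying on the scale $\r$), then separates the typically-$O(1)$ weight $\r^{d-\delta}$ from the Meyers quantity by a Hölder inequality (with exponents tied to the Meyers exponent $m\in(2,\bar m]$, and to $\tfrac1{1-\tau}$ and $\tfrac1\tau$), inserts the algebraic weight $\omega_{\e,r}$, and invokes \eqref{weightedLp}; the resulting right-hand side is then explicit since $\tilde g:=g_r/\sqrt{\mu_\xi}$ is supported in $B_r$ with $|\tilde g|\le|B_r|^{-1}$, which (together with the $L^2$ algebraic decay already used inside the proof of Theorem~\ref{largescalereg}) produces the factor $r^{-d}$ up to a power of $1+\r(0)/r$. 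Taking the $q$-th power, the expectation, stationarity of $\r$, and a last Hölder inequality in the probability space isolating $\expecL{\r^{\frac{d-\delta}{1-\tau}q}}$ from a remainder with bounded moments gives \eqref{sensimomentboundNL}. The three terms defining $\e$ in \eqref{e-def-eps} are precisely what makes this bookkeeping close: $\e\le\tfrac\beta2$ is the range of validity of \eqref{weightedLp}; the constraint $q\ge1+\tfrac{d+1}\e$ together with $\e\le\tfrac{(d+1)(\bar m-2)}2$ keeps the Meyers exponent (chosen as a function of $q$) inside $(2,\bar m]$ and makes the $r$-powers balance to exactly $r^{-d}$; and $\e\le\tfrac{\tau(d-\delta)}{4(1-\tau)}$ is what yields the inflated exponent $\tfrac{d-\delta}{1-\tau}$ with external power $\tfrac{1-\tau/2}q$. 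The main obstacle is exactly this last step: since $\r^{d-\delta}$ may grow along its Lipschitz bound faster than the weight $\omega_{\e,r}$ (with $\e\le\tfrac\beta2<\tfrac d2$) can decay, one cannot extract $\r^{d-\delta}$ as a supremum on dyadic shells; it must be kept inside a Hölder inequality paired with the large-scale Meyers-integrable quantity, which is the origin both of the moment inflation $\tfrac1{1-\tau}$ and of the restriction on the admissible range of $q$.
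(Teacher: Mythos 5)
Your proposal is correct and follows essentially the same approach as the paper: a logarithmic-Sobolev/spectral-gap inequality linearizes $\calF$, sensitivity calculus via the adjoint solution $w$ gives the Malliavin derivative by duality, Cauchy--Schwarz plus the nonlinear hole-filling of Lemma~\ref{ctrlavNL} produces the $\r^{d-\delta}$ factor, and the quenched weighted Meyers estimate \eqref{weightedLp} combined with H\"older in space (with exponents tied to a Meyers exponent $m=2+\frac{2\e}{d+1}$) and H\"older in probability (with exponents $\frac1{1-\tau},\frac1\tau$) does the final bookkeeping, including your correct reading of the role of each of the three terms in \eqref{e-def-eps}. The only cosmetic difference is that you phrase the derivative directly with respect to the Gaussian field $G$, whereas the paper works with the abstract vertical derivative $\partial_x$ with respect to compactly supported perturbations $\delta A$ of the coefficient field; since $A=\chi*B(G)$ these are interchangeable.
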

We start with the proof of Theorem~\ref{boundrNLprop}, and then turn to the proof of  
Proposition~\ref{weakNL}.
\begin{proof}[Proof of Theorem~\ref{boundrNLprop}]
We use the short-hand notation $\rb:= \urNL(0,c_1)$ (cf.~\eqref{defr*NL2} and
Lemma~\ref{uniformboundr*NL}).
We split the proof into two steps. In the first step, we control the probability of the level set $\{\rb=R\}$ for all dyadic $R\in [1,L]$ using averages of $\nabla \phi_\xi$, which we combine with Proposition~\ref{weakNL} and the bound $\r \le \rb$ to buckle on moments of $\rb$, 
and therefore on $\r$ in the second step.

\medskip

\step1 We claim that there exist $\theta \in (0,1)$ and $c>0$, depending on $p$, $d$ and $\lambda$ such that for all dyadic $R\in [1,L]$ and all exponents $q\ge 1$, 
\begin{equation}
\mathbb P_L[\rb =R]\, \le \, c^q \expecL{ \Big\vert\fint_{B_{\theta R}}\nabla\corNL\Big\vert^{pq}}.
\label{estiprobar*}
\end{equation}
Assume that $\rb =R$. 
By the definition~\eqref{defr*NL2} of $\rb$, we then have
\begin{eqnarray}
c_2 (1+|\xi|^p)& \ge& \fint_{B_{2R}} \vert \nabla \corNL\vert^{p},
\label{boundnabla2}
\\ 
\fint_{B_{\frac R2}} \vert \nabla \corNL\vert^{p} &>& c_2 (1+\vert \xi\vert^p).
\label{boundnabla1.0}
\end{eqnarray}
By the Caccioppoli inequality \eqref{cacciopoNLesti},~\eqref{boundnabla1.0} turns into
\begin{equation}
\inf_{\eta\in\R} \fint_{B_R} \tfrac{1}{R^2} \vert\corNL(x)-\eta\vert^2
+\tfrac1{R^p}\vert\corNL(x)-\eta\vert^{p} \,\gtrsim\, 1 +\vert \xi\vert^p,
\label{boundnabla1.1}
\end{equation}
which we shall use in the stronger form
\begin{equation}
\inf_{\eta\in\R} \fint_{B_R}\tfrac1{R^p}\vert\corNL(x)-\eta\vert^{p} \,\gtrsim\, 1 +\vert \xi\vert^p.
\label{boundnabla1}
\end{equation}
Indeed, by Jensen's inequality, with the short-hand notation $\alpha:=\inf_{\eta\in\R} \fint_{B_R} \tfrac{1}{R^p} \vert\corNL(x)-\eta\vert^p$, \eqref{boundnabla1.1} yields 
$\alpha^\frac2p+\alpha  \gtrsim 1+\vert \xi\vert^p$ so that $\alpha \gtrsim 1$, 
which implies $\alpha \gtrsim \alpha^\frac 2p$, whence the reformulation~\eqref{boundnabla1}.

\medskip 

Let $\theta\in (0,1)$ (the value of which we shall choose below), and set $c_R:=\fint_{B_R}\fint_{B_{\theta R}(x)}\corNL(y)\dd y\, \dd x$.
By the triangle inequality and Poincar\'e's inequality in $L^p(B_{R})$,
we obtain 
\begin{eqnarray}
 \inf_{\eta\in\R} \fint_{B_R} \tfrac{1}{R^p} \vert\corNL-\eta\vert^p
&\lesssim &\fint_{B_{R}}\tfrac{1}{R^p}\Big\vert\corNL(x)-\fint_{B_{\theta R}(x)}\corNL\Big\vert^p\dd x+\fint_{B_{R}}\tfrac{1}{R^p}\Big\vert\fint_{B_{\theta R}(x)}\corNL-c_R\Big\vert^p\dd x\nonumber\\
&\lesssim &\theta^p \fint_{B_{2R}}\vert\nabla\corNL(x)\vert^p\dd x+\fint_{B_{R}}\Big\vert\fint_{B_{\theta R}(x)}\nabla\corNL \Big\vert^p\dd x.\label{boundbanbla31}
\end{eqnarray} 
Combined with \eqref{boundbanbla31}, \eqref{boundnabla1} turns into
\begin{equation}
1+\vert \xi\vert^p \, \lesssim \, \theta^p \fint_{B_{2R}} \vert\nabla\corNL\vert^p+\fint_{B_{R}}   \Big\vert\fint_{B_{\theta R}(x)}\nabla\corNL\Big\vert^p\dd x.\label{boundbanbla3}
\end{equation} 
Using now \eqref{boundnabla2}, we may absorb the first right-hand side term into the left-hand side for $\theta$ small enough (independent of $R$), 
and therefore conclude that for some $c>0$ (depending only on $d$, $p$, $\lambda$)
$$\{\rb =R\} \subset \left\{\fint_{B_{R}}\Big\vert\fint_{B_{\theta R}(x)}\nabla\corNL\Big\vert^p \dd x \ge \frac 1c(1+\vert\xi\vert^p)\right\},$$
which yields \eqref{estiprobar*} by Markov's inequality and the stationarity of $\nabla\corNL$.

\medskip

\step2 Buckling argument. 

\noindent 
Fix $\tau := 1-\frac{d-\delta}{d-\frac{\delta}{2}}=\frac\delta{2d-\delta}>0$, to the effect that 
$$
\frac{d-\delta}{1-\tau}=d-\frac{\delta}{2}, \quad 1-\frac \tau 2= 1-\frac\delta{2(2d-\delta)},
\quad \e :=   (\tfrac{(d+1)(\bar m-2)}{2}) \wedge (\tfrac{\delta}8 ).
$$
For all dyadic $1\le R \le L$, by~\eqref{estiprobar*} and by Proposition~\ref{weakNL} with this choice of $\tau$ and $r=\theta R$, we obtain  for
all $q$ with $q \frac p2 \ge 1+\frac{d+1}\e$,
\begin{equation}
\mathbb P_L[\rb=R ]\,\le\, c_{K}^q   q^{\frac{p}{2}q}R^{-d\frac{p}{2}q}\expecL{\r^{(d-\frac{\delta}{2})\frac p2q}}^{ 1-\frac\delta{2(2d-\delta)}}
\,\stackrel{\eqref{encadrementrNL}}{\le} \,
c_{K}^q   q^{\frac{p}{2}q}R^{-d\frac{p}{2}q}\expecL{\rb^{(d-\frac{\delta}{2})\frac p2q}}^{ 1-\frac\delta{2(2d-\delta)}}.
\label{boundproba2}
\end{equation}
Therefore, using a dyadic decomposition (the sum is actually finite since $\r \le L$), we deduce that (up to changing the value of $c_{K}$)
\begin{eqnarray*}
\expecL{\rb^{(d-\frac{\delta}{2})\frac{p}{2}q}}&\leq &1+\sum_{n=1}^{+\infty} (2^{n})^{(d-\frac{\delta}{2})\frac{p}{2}q}\mathbb P[ \rb=2^n ]\\
&\stackrel{\eqref{boundproba2}}{\le}& 1 + c_{K}^qq^{q\frac{p}{2}}\expecL{\rb^{(d-\frac{\delta}{2})\frac p2q}}^{ 1-\frac\delta{2(2d-\delta)}}\sum_{n=1}^{+\infty}\underbrace{2^{(d-\frac{\delta}{2})q\frac{p}{2}n}2^{-dq\frac{p}{2}n}}_{\displaystyle = 2^{-\frac \delta 4 qpn}}\\
&\le& 1+ c_{K}^qq^{q\frac{p}{2}}\expecL{\rb^{(d-\frac{\delta}{2})\frac p2q}}^{ 1-\frac\delta{2(2d-\delta)}}.
\end{eqnarray*}
Since both terms of this inequality are finite, this gives by Young's inequality
provided $q \frac p2 \ge 1+\frac{d+1}\e$%
$$
\expecL{\rb^{(d-\frac{\delta}{2})\frac{p}{2}q}}^\frac1q \,\lesssim\, c_{K}q^{p \frac{2d-\delta}{\delta}},
$$
from which the stretched exponential moment bound~\eqref{boundrNL} follows
with $\gamma:=\frac\delta 8$ (cf.~Lemma~\ref{momentexp}), which is not expected to be sharp.
\end{proof}
We conclude this section with the proof of Proposition~\ref{weakNL}.
\begin{proof}[Proof of Proposition~\ref{weakNL}]
We split the proof into three steps. In the first step, we compute the functional derivative of $\mathcal{F}$, in the sense of \eqref{deffunctioderivper}, and apply the logarithmic-Sobolev inequality in the second step to control moments of $\calF$. In the third step, we then control these moments by suitable moments of $\r$ using the quenched weighted Meyers estimate in the large of Theorem~\ref{largescalereg}.

\medskip

\step1 Sensitivity calculus. 

\noindent 
In this step, we take a slightly more general version of $\calF_\xi$ (this will be further
used in the proof of Theorem~\ref{th:corrNL}), which we define, for some given $g \in L^2(Q_L)^d$ (extended by periodicity on $\R^d$), by
$$
\calF_\xi:=\int_{Q_L} \nabla \phi_\xi \cdot g.
$$
We then argue that for all $x\in Q_L$,
\begin{equation}
\partial_x\mathcal{F}_\xi = \int_{B(x)}\vert \aa(\xi+\nabla\phi_{\xi})\otimes \nabla u\vert,
\label{functionalderivesensiNLformula}
\end{equation}
with the short-hand notation $\aa(\zeta):=(1+|\zeta|^{p-2})\zeta$ and 
where $u$ is the unique weak $Q_L$-periodic solution (with zero average) of 
\begin{equation}
-\nabla\cdot \aL^*\nabla u=\nabla\cdot g
\label{equationdual}
\end{equation}
(recall that $\aL^*$ is bounded from above and below, since $a$ is assumed to be smooth, and satisfies \eqref{growthconditionlineaxi}, cf.~Lemma~\ref{regestiNL}).
Let denote by $h$ a sequence that goes to zero and by $\delta A$ a coefficient field supported in $B(x)$ (and extended by $Q_L$-periodicity) such that $\|\delta A\|_{L^{\infty}(\mathbb{R}^d)}\leq 1$. We let $h$ be small enough so that $A+h\delta A$ is uniformly elliptic, and define
\begin{eqnarray}
\delta^{h}\calF_\xi&:=&\frac{\calF_\xi(A+h\delta A)-\calF_\xi(A)}{h},\nonumber\\
\delta^{h}\phi_{\xi}&:=&\frac{\phi_{\xi}(A+h\delta A )-\phi_{\xi}(A)}{h},\label{functionderivNLnota1}\\
\aL^{h}&:=&\int_{0}^1 D a(\cdot,\xi+t\nabla\phi_{\xi}(A+h\delta A)+(1-t)\nabla\phi_{\xi}(A))\dd t.\label{functionderivNLnota2}
\end{eqnarray}
By the definition  of $\calF_\xi$, we have $\delta^{h}\calF_\xi=\int_{Q_L}\nabla\delta^{h}\corNL\cdot g$, and we need to characterize $\delta^{h}\corNL$. By the defining equation \eqref{e.cor-eq}, we obtain
\begin{equation}\label{e.NLgr-1}
-\nabla \cdot \Big(a(\cdot, \xi+\nabla \phi_\xi +h \nabla \delta^h \phi)-a(\cdot,\xi+\nabla \phi_\xi)\Big)
\,=\,h \nabla \cdot \delta A \aa(\xi+\nabla \phi_\xi(A+h\delta A)),
\end{equation}
which we rewrite, by the fundamental theorem of calculus and the definition of $\aL^{h}$, as
\begin{equation}
-\nabla\cdot \aL^h\nabla\delta^h\phi_{\xi}=\nabla\cdot \delta A \aa(\xi+\nabla\phi_{\xi}(A+h\delta A)).
\label{sensiNLequation1}
\end{equation}
Assume that $\delta^h\phi_{\xi}$ converges weakly in $H^1_\per(Q_L)$
to the solution $\delta\phi_{\xi} \in H^1_\per(Q_L)$ of
\begin{equation}
-\nabla\cdot \aL \nabla\delta\phi_{\xi}=\nabla\cdot \delta A \aa(\xi+\nabla\phi_{\xi}).
\label{sensiNLequation1+}
\end{equation}
Then $\lim_{h\downarrow 0} \delta^h \calF_\xi = \delta \calF_\xi=\int g \cdot  \nabla\delta\phi_{\xi}$,
which we now rewrite by duality.
Testing  \eqref{equationdual} with $\delta \phi_{\xi}$ and then \eqref{sensiNLequation1+} with $u$,
we obtain
$$
\delta \calF_\xi = \int \nabla u \cdot \delta A \aa(\xi+\nabla \phi_\xi),
$$
and the claim \eqref{functionalderivesensiNLformula} follows by taking the supremum over $\delta A$.
It remains to argue in favor of the convergence of $\delta^h\phi_{\xi}$ to $\delta\phi_{\xi}$, which actually holds in $C^{1,\alpha}(Q_L)$.
First, recall that $\{\phi_\xi(A+h\delta A)\}_{h}$ is a bounded set in $C^{1,\alpha}(Q_L)$ by Lemma~\ref{regestiNL}. 
By testing \eqref{e.NLgr-1} with $h\delta^h \phi_\xi$, we obtain by monotonicity 
$$
\int_{Q_L} |\nabla \phi_\xi(A+h\delta A)-\nabla \phi_\xi(A)|^2 + |\nabla \phi_\xi(A+h\delta A)-\nabla \phi_\xi(A)|^p \lesssim h^2,
$$
so that $\nabla \phi_\xi(A+h\delta A) \to \nabla \phi_\xi(A)$ in $L^p(Q_L)$, and therefore $\phi_\xi(A+h\delta A) \to  \phi_\xi(A)$ in $C^{1, \alpha}(Q_L)$ by Arzela-Ascoli's theorem as claimed.

\medskip

\step2  Application of the logarithmic-Sobolev inequality: For all $q\ge 1$, 
\begin{equation}\label{e.sensi-estimNL-ant}
\expecL{\vert\calF_\xi\vert^{2q}}^{\frac{1}{q}}\,\lesssim\,  q(1+K^p)\expecL{\Big(\int_{Q_L}\r(x)^{d-\delta}\Big(\int_{B(x)}\vert\nabla u\vert^2\mu_{\xi}\Big) \dd x\Big)^q}^{\frac{1}{q}}.
\end{equation}
Since $\mathbb E_L[{\nabla \phi_\xi}]=0$, by \eqref{SGinegp1} and \eqref{functionalderivesensiNLformula}, we have for all $q\ge 1$
\begin{equation}
\expecL{\calF_\xi^{2q}}^{\frac{1}{q}}\, \lesssim \,q  \expecL{\Big(\int_{Q_L}\Big(\int_{B(x)}\vert \aa(\xi+\nabla\corNL)|| \nabla u\vert \Big)^2\dd x \Big)^q}^{\frac{1}{q}}.
\label{SGsensitibityNL}
\end{equation}
By Cauchy-Schwarz' inequality, the definition~\eqref{e.def-mu}
 of $\mu_\xi$, and \eqref{controlunitball}, we have for all $x\in Q_L$
\begin{eqnarray*}
\lefteqn{\Big(\int_{B(x)}\vert \aa(\xi+\nabla\corNL)|| \nabla u\vert\Big)^2}\\
&\lesssim &\int_{B(x)}\vert\xi+\nabla\corNL\vert^2(1+\vert\xi+\nabla\corNL\vert^{p-2}) \int_{B(x)}\vert\nabla u\vert^2\mu_{\xi} \\
&\stackrel{\eqref{controlunitball}}{\lesssim}& (1+\vert\xi\vert^p)\r(x)^{d-\delta}\int_{B(x)}\vert\nabla u\vert^2\mu_{\xi}.
\end{eqnarray*}
The claim \eqref{e.sensi-estimNL-ant} then follows in combination with~\eqref{SGsensitibityNL}.

\medskip

\step3 Proof of \eqref{sensimomentboundNL}. 

\noindent 
For $0<\tau <1$ given, we define $\e$ as in \eqref{e-def-eps}, 
and set $m:=2+\frac{2\varepsilon}{d+1}$,
to the effect that $m\le \bar m$ and $\frac{2\varepsilon}{m-2}=d+1$.
Since $\r$ is $\frac{1}{16}$-Lipschitz and $\r \ge 1$, we have 
$$
\int_{Q_L}\r(x)^{d-\delta}\Big(\fint_{B(x)}\vert\nabla u\vert^2\mu_{\xi}\Big) \dd x \,\lesssim\, \int_{Q_L} \Big(\fint_{B(x)}\r^{d-\delta}\vert\nabla u\vert^2\mu_{\xi} \Big) \dd x,
$$
so that by \eqref{fintintC} combined with the estimate $\r \le L$, with periodicity, 
and using again the Lipschitz-continuity of $\r$, we obtain
$$
\int_{Q_L}\r(x)^{d-\delta}\Big(\fint_{B(x)}\vert\nabla u\vert^2\mu_{\xi} \Big) \dd x \,\lesssim\, \int_{Q_L}\r(x)^{d-\delta} \Big(\fint_{\Br(x)} \vert\nabla u\vert^2\mu_{\xi} \Big) \dd x.
$$
Inserting the weight $(1+\frac{|x|}{r})^\frac{2\e}{m}(1+\frac{|x|}{r})^{-\frac{2\e}{m}}$, and using H\"older's inequality in space with exponents $(\frac m2,\frac{m}{m-2})$ followed by H\"older's inequality in probability
with exponents $(\frac1{1-\tau},\frac1\tau)$, \eqref{e.sensi-estimNL-ant} turns into
\begin{align}
&{\frac{1}{q(1+|\xi|^p)} \expecL{\vert\calF_\xi\vert^{2q}}^{\frac{1}{q}}} \label{term0}
\\
\lesssim\,& 
\expecL{\Big(\int_{Q_L}(1+\tfrac{|x|}{r})^{-d-1}\r(x)^{\frac{m}{m-2}(d-\delta)}\, \dd x\Big)^{q\frac{m-2}{m}} \Big(\int_{Q_L}(1+\tfrac{|x|}{r})^{\varepsilon}\Big(\fint_{\Br(x)}|\nabla u|^2\mu_{\xi}\Big)^{\frac{m}{2}} \dd x\Big)^{q\frac2m}}^\frac1q
\nonumber\\
\le \,& \expecL{\Big(\int_{Q_L}(1+\tfrac{|x|}{r})^{-d-1}\r(x)^{\frac{m}{m-2}(d-\delta)}\, \dd x\Big)^{q\frac{m-2}{m(1-\tau)}}}^{\frac{1-\tau}{q}}\expecL{\Big(\int_{Q_L}(1+\tfrac{|x|}{r})^{\varepsilon}\Big(\fint_{\Br(x)}|\nabla u|^2\mu_{\xi} \Big)^{\frac{m}{2}} \dd x\Big)^{q\frac{2}{m\tau}}}^{\frac{\tau}{q}}.\nonumber
\end{align}
By the change of variables $\frac x r \leadsto x$, Jensen's inequality in space provided $q\geq \frac{m}{m-2} = 1+\frac{d+1}\e$, and the stationarity of $\r$, we control the first right-hand side term of \eqref{term0} by
\begin{equation}
\expecL{\Big(\int_{Q_L}(1+\tfrac{|x|}{r})^{-d-1} \r(x)^{\frac{m}{m-2}(d-\delta)}\, \dd x\Big)^{q\frac{m-2}{m(1-\tau)}}}^{\frac{1-\tau}{q}}\, \lesssim \, r^{d\frac{m-2}{m}}\expecL{ \r^{q\frac{d-\delta}{1-\tau}}}^{\frac{1-\tau}{q}}.
\label{firstterm}
\end{equation}
For the second right-hand side term of \eqref{term0}, we appeal to the quenched weighted Meyers estimate~\eqref{weightedLp}, which we may apply to equation~\eqref{equationdual} (rewriting the right-hand side as $\frac{1}{\sqrt{\mu_{\xi}}}g\sqrt{\mu_{\xi}}$) with weight $\omega_{\e,r}$ since $\e \le \frac \beta 2$:
\begin{eqnarray*}
\expecL{\Big(\int_{Q_L}(1+\tfrac{|x|}{r})^{\varepsilon}\Big(\fint_{\Br(x)}|\nabla u|^2\mu_{\xi}\Big)^{\frac{m}{2}} \dd x\Big)^{\frac{2}{m\tau}q}}^{\frac{\tau}{q}} &\lesssim_{K}&\expecL{\Big(\int_{Q_L}\omega_{2\varepsilon,r}(x)\Big(\fint_{\Br(x)}| g|^2 \tfrac{1}{\mu_{\xi}}\Big)^{\frac{m}{2}} \dd x\Big)^{\frac{2}{m\tau}q}}^{\frac{\tau}{q}}.
\end{eqnarray*}
Using that $\frac1{\mu_\xi} \le 1$, Jensen's inequality for the local integrals, the Lipschitz continuity of $\r$ in the form of $\sup_{\Br(x)} \omega_{2\varepsilon,r}\lesssim \inf_{\Br(x)} \omega_{2\varepsilon,r}$, and \eqref{eq-integrals}, we have
\begin{equation*}
{\int_{Q_L}\omega_{2\varepsilon,r}(x)\Big(\fint_{\Br(x)}| g|^2 \tfrac{1}{\mu_{\xi}}\Big)^{\frac{m}{2}} \dd x}
\,\lesssim\, \int_{Q_L}\fint_{\Br(x)}\omega_{2\varepsilon,r}| g|^m\dd x \,\simeq \, \int_{Q_L}\omega_{2\varepsilon,r}| g|^m
\,\lesssim \, r^{-d} \int_{B_r} \r^{2\e}.
\end{equation*} 
By stationarity of $\r$ and Jensen's inequality in probability, this entails
\begin{eqnarray}
\expecL{\Big(\int_{Q_L}(1+\tfrac{|x|}{r})^{\varepsilon}\Big(\fint_{\Br(x)}|\nabla u|^2\mu_{\xi}\Big)^{\frac{m}{2}} \dd x\Big)^{\frac{2}{m\tau}q}}^{\frac{\tau}{q}} &\lesssim_{K} 
& r^{-2d+\frac{2}{m}d}\expecL{ \r^{\frac{4\varepsilon}{m\tau}q}}^{\frac{\tau}{q}}\label{secondterm}.
\end{eqnarray}
By \eqref{e-def-eps} and our choice $m=2+\frac{2 \e}{d+1}$, $\frac{4\varepsilon}{m\tau}\leq \frac{d-\delta}{1-\tau}$,
we have  $\expecL{\r^{\frac{4\varepsilon}{m\tau}q}}^{\frac{\tau}{q}}\leq \expecL{\r^{q\frac{d-\delta}{1-\tau}}}^{\frac{4\e(1-\tau)}{q m(d-\delta)}}$
by H\"older's inequality. Using \eqref{e-def-eps} again, this time in form of $\frac{4\e(1-\tau)}{m(d-\delta)}\le \frac \tau 2$, and the lower bound $\r \ge 1$, \eqref{secondterm} finally turns into
\begin{equation}
\expecL{\Big(\int_{Q_L}(1+\tfrac{|x|}{r})^{\varepsilon}\Big(\fint_{\Br(x)}|\nabla u|^2\mu_{\xi}\Big)^{\frac{m}{2}} \dd x\Big)^{\frac{2}{m\tau}q}}^{\frac{\tau}{q}}\, \lesssim_{\vert\xi\vert}\, r^{-2d+\frac{2}{m}d}\expecL{\r^{q\frac{d-\delta}{1-\tau}}}^{\frac{\tau}{2q}}\label{secondterm+}.
\end{equation}
The claim \eqref{sensimomentboundNL} then follows from \eqref{term0}, \eqref{firstterm} and \eqref{secondterm+}.
\end{proof}

\subsection{Annealed Meyers' estimate}\label{sec:Annealed}

The annealed Meyers (or perturbative Calder\'on-Zygmund) estimates  introduced by Duerinckx and Otto in \cite{DO-20} (see also \cite{josien2020annealed}) constitute a very versatile upgrade of their quenched counterpart in stochastic homogenization.
In the present setting the annealed Meyers estimates take the following form.
\begin{theorem}[Annealed Meyers' estimate]\label{th:annealedmeyers}
Under Hypothesis~\ref{hypo}, for all $K\ge 1$, set  $\kappa := \frac{(\bar m -2)\wedge 1}{8}>0$ (where $\bar m$ is the Meyers exponent of Theorem~\ref{unweightmeyers} depends on $d,p,\lambda$, and $K$).  For all $\xi \in \R^d$ with $|\xi|\le K$, and for all $Q_L$-periodic random fields  $g$ and $u$ related via \eqref{LSMequationu},
we have for all exponents $2-\kappa \le q,m \le 2+\kappa$ and $0< \delta \le \frac12$,
\begin{equation}
\int_{Q_L}\expecL{\Big(\fint_{B(x)}\vert\nabla u\vert^2\mu_{\xi}\Big)^{\frac{q}{2}}}^\frac mq dx\lesssim_{K}\, \delta^{-\frac14} |\log \delta |^\frac12\int_{Q_L}\expecL{\Big(\fint_{B(x)}\vert g \vert^2\Big)^{\frac{q(1+\delta)}{2}}}^\frac m{q(1+\delta)} dx.
\label{annealedmeyers}
\end{equation}
The same result holds with $\aL$ replaced by its pointwise transpose field $\aL^*$.
\end{theorem}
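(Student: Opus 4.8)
The plan is to follow the strategy of Duerinckx and Otto~\cite{DO-20} (see also~\cite{josien2020annealed}): estimate~\eqref{annealedmeyers} is an \emph{annealed} upgrade, in the mixed norm $L^m(Q_L;L^q(\Omega))$ (with a tiny loss $\delta$ in the stochastic integrability and a prefactor $\delta^{-1/4}|\log\delta|^{1/2}$), of the \emph{quenched} Meyers estimate in the large of Theorem~\ref{unweightmeyers}, and the bridge between the two is the stretched-exponential moment bound on the Meyers minimal radius~$\r$ from Theorem~\ref{boundrNLprop}. The underlying mechanism is that, for every fixed realization, $-\nabla\cdot\aL\nabla$ satisfies a genuine weighted Meyers estimate with a \emph{deterministic} exponent $\bar m>2$ and a \emph{deterministic} constant on all scales $\ge\r$, while on the unit scale only the $L^2$ energy estimate is available; the mismatch produces errors that are polynomial in $\r(x)$, hence harmless in any stochastic $L^s$-norm by Theorem~\ref{boundrNLprop}. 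Interpolating between the quenched Meyers exponent $\bar m$ and the $L^2$ endpoint --- and paying the $\delta$-loss to make the interpolation quantitative --- then yields~\eqref{annealedmeyers} in the window $|q-2|,|m-2|\le\kappa$ with $\kappa$ proportional to $\bar m-2$.

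I would carry this out in four steps. \emph{Step~1 (reduction to $\Br$-averages).} Replace the unit-ball averages $\fint_{B(x)}$ in~\eqref{annealedmeyers} by the $\Br(x)$-averages of Theorem~\ref{unweightmeyers}, up to factors $\r(x)^{O(1)}$. On the data side this rests on $\fint_{B(x)}|g|^2\le\r(x)^d\fint_{\Br(x)}|g|^2$ (since $\r\ge1$) together with $\fint_{\Br(x)}|g|^2\lesssim\fint_{B_{2\r(x)}(x)}\big(\fint_{B(y)}|g|^2\big)\dd y$ and a maximal-function bound (admissible since the exponents at play are $>1$); on the solution side one additionally invokes a deterministic local (unit-scale) Caccioppoli/Meyers estimate for~\eqref{LSMequationu} and the bound $\|\mu_{\xi}\|_{L^{p/(p-2)}(\Br(x))}\lesssim_{|\xi|}\r(x)^{d(p-2)/p}$ coming from~\eqref{boundnormweight}. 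By stationarity of $\r$ and Hölder's inequality in probability, these polynomial $\r$-factors cost only a \emph{bounded} $\r$-moment (Theorem~\ref{boundrNLprop}) at the price of an arbitrarily small loss of stochastic integrability, which can be absorbed into $\kappa$; this reduces~\eqref{annealedmeyers} to the analogous estimate with $\fint_{\Br(x)}$ on both sides.

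\emph{Step~2 (quenched input).} For every realization and all $2\le m\le\bar m$, Theorem~\ref{unweightmeyers} together with the energy estimate gives $\big\|(\fint_{\Br(\cdot)}|\nabla u|^2\mu_{\xi})^{1/2}\big\|_{L^m(Q_L)}\lesssim_{|\xi|}\big\|(\fint_{\Br(\cdot)}|g|^2)^{1/2}\big\|_{L^m(Q_L)}$, the endpoint $m=2$ being the plain energy estimate. \emph{Step~3 (stochastic interpolation, $q,m\ge2$).} With $N_u(x):=(\fint_{\Br(x)}|\nabla u|^2\mu_{\xi})^{1/2}$ and $N_g(x):=(\fint_{\Br(x)}|g|^2)^{1/2}$, interpolate the $\bar m$-estimate of Step~2 against the $L^2$ energy estimate to control $\int_{Q_L}\mathbb E_L[N_u(x)^q]^{m/q}\dd x=\int_{Q_L}\|N_u(x)\|_{L^q(\Omega)}^m\dd x$. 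Because moving the stochastic exponent from $2$ to $q\le2+\kappa$ with $\kappa\le(\bar m-2)/8$ costs only a bounded $\r$-moment (Theorem~\ref{boundrNLprop}), the Gehring-type self-improvement/buckling behind the Duerinckx--Otto interpolation lemma can be run, and optimizing in the splitting parameter produces the loss $\delta^{-1/4}|\log\delta|^{1/2}$ and the bump from $q$ to $q(1+\delta)$ on the right-hand side. \emph{Step~4 ($q,m\le2$, and the adjoint).} The remaining window $2-\kappa\le q,m\le2$ follows by duality from Step~3 applied to $-\nabla\cdot\aL^*\nabla$ --- which satisfies the same quenched Meyers and energy estimates (last sentence of Theorem~\ref{unweightmeyers}) --- via the pairing of~\eqref{LSMequationu} with its adjoint problem; the same argument gives the statement with $\aL$ replaced by $\aL^*$.

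The main obstacle is Step~3: performing the real interpolation between the quenched exponent $\bar m$ and the $L^2$ endpoint in this weighted, unbounded-coefficient setting while producing \emph{precisely} the window $\kappa=((\bar m-2)\wedge1)/8$ and the loss $\delta^{-1/4}|\log\delta|^{1/2}$, that is, making the Duerinckx--Otto buckling quantitative in all the relevant exponents. A secondary difficulty is Step~1: since $\mu_{\xi}$ is \emph{not} a Muckenhoupt weight, the passage between the unit scale and the scale $\r$ cannot rely on standard weighted theory and must instead go through~\eqref{boundnormweight} and the local regularity of $u$.
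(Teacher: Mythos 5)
Your scaffolding is sound in outline — quenched Meyers on scales $\ge\r$, moment bounds on $\r$ to pay a tiny stochastic loss, passage from $\Br$-averages to unit-ball averages via the Duerinckx--Otto post-processing lemma (Lemma~\ref{lem:postproc}), duality for the sub-$2$ window, and real interpolation to close the range of exponents. But the central step, your Step~3, has a genuine gap: the mixed annealed norm $\int_{Q_L}\E_L[N_u(x)^q]^{m/q}\dd x$ with $q\ne m$ \emph{cannot} be obtained by real-interpolating the quenched $L^{\bar m}_x$ Meyers estimate against the $L^2$ energy estimate. Interpolating two \emph{diagonal} estimates (quenched $L^2_x$ and $L^{\bar m}_x$, each uniform in $\omega$, then integrating in $\omega$) only produces bounds on $\|\,\|N_u\|_{L^m_x}\,\|_{L^q_\omega}$. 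For the range $q<m$ — which is precisely the range where the paper's argument starts — Minkowski gives $\|\,\|N_u\|_{L^q_\omega}\,\|_{L^m_x}\le\|\,\|N_u\|_{L^m_x}\,\|_{L^q_\omega}$, but the resulting right-hand side $\|\,\|N_g\|_{L^m_x}\,\|_{L^q_\omega}$ sits on the \emph{wrong side} of Minkowski relative to the target $\|\,\|N_g\|_{L^q_\omega}\,\|_{L^m_x}$, so the chain does not close. No amount of $\r$-moments or $\delta$-loss fixes this, because the obstruction is deterministic (Fubini/Minkowski), not probabilistic.

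What is actually needed — and what the paper uses — is the Caffarelli--Peral/Shen refined Calder\'on--Zygmund lemma (Lemma~\ref{lem:Shen}), applied to the scalar functions $F(x):=\E_L[(\fint_{\Br(x)}|\nabla u|^2\mu_\xi)^{q_1/2}]^{1/q_1}$, $G(x):=\E_L[(\fint_{\Bt(x)}|g|^2)^{q_1/2}]^{1/q_1}$, together with a decomposition $u=u_{D,1}+u_{D,2}$ on each ball $D$, where $u_{D,1}$ solves the equation with the truncated data $g\sqrt{\mu_\xi}\mathds 1_{\Dr}$ and $u_{D,2}$ is $a_\xi$-harmonic on $\Dr$. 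The $u_{D,1}$-piece absorbs the quenched Meyers estimate \eqref{unweightedmeyers} (after taking $\E_L$), while the $u_{D,2}$-piece produces the reverse H\"older hypothesis of Lemma~\ref{lem:Shen} via the local Meyers estimate \eqref{unweightedmeyerslocal}, \emph{with} a careful ordering of expectation and spatial integration (Minkowski and Jensen in the right directions are crucial there). This ball-by-ball good-$\lambda$-type mechanism, not interpolation of diagonal endpoints, is what produces the off-diagonal $L^m_x(L^q_\omega)$ gain. Real interpolation and duality enter only afterwards, to transport the estimate from the window $2\le q<m<\bar m$ (and its dual window) to the full symmetric window $2-\kappa\le q,m\le 2+\kappa$. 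A secondary remark: the "Gehring-type self-improvement'' you invoke is what underlies the quenched exponent $\bar m>2$ in Theorem~\ref{unweightmeyers} (Lemma~\ref{gehring}), and the Duerinckx--Otto post-processing Lemma~\ref{lem:postproc} only exchanges $\Br(x)$ for $B(x)$ at the cost of $\delta^{-1/4}|\log\delta|^{1/2}$ and the bump $q\leadsto q(1+\delta)$; neither supplies the mixed-norm estimate on its own.
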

The proof is based on the quenched Meyers estimate in the large of Theorem~\ref{unweightmeyers}, on the moment bounds of Theorem~\ref{boundrNLprop} on the Meyers minimal radius (which allows us to use duality at the price of a loss of stochastic integrability), real interpolation,
and the following refined dual version of the Calder\'on-Zygmund lemma due to Shen~\cite[Theorem~3.2]{Shen-07}, based on ideas by Caffarelli and Peral~\cite{CP-98}.
\begin{lemma}[\cite{CP-98,Shen-07}]\label{lem:Shen}
Given $1\le q<m\le\infty$, let $F,G\in L^{q}\cap L^{m}(Q_L)$ be nonnegative $Q_L$-periodic functions and let $C_0,C_1\ge 1$. Assume that for all balls $D$ (of radius $\lesssim L$) there exist measurable functions $F_{D,1}$ and $F_{D,2}$ such that $F\le F_{D,1}+F_{D,2}$ and $F_{D,2}\le F+F_{D,1}$ on $D$, and such that
\begin{equation*}
\Big(\fint_{D}F_{D,1}^{q}\Big)^\frac1{q}\,\le\,C_1\Big(\fint_{C_0D}G^{q}\Big)^\frac1{q},\quad
\Big(\fint_{\frac1{C_0}D}F_{D,2}^{m}\Big)^\frac1{m}\,\le\,C_1\Big(\fint_{D}F_{D,2}^{q}\Big)^\frac1{q}.
\end{equation*}
Then,
for all $q<s<m$,
\[\Big(\int_{Q_L}F^s\Big)^\frac1s\,\lesssim_{C_0,C_1,q,s,m}\,\Big(\int_{Q_L}G^s\Big)^\frac1s.\]
\end{lemma}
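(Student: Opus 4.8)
The plan is to reconstruct, on the torus, the real-variable (Calder\'on--Zygmund) argument of Caffarelli and Peral in the dual form of Shen. Throughout I would fix a dyadic/Whitney decomposition of $Q_L$ compatible with its sidelength, so that the stopping-time covering below is automatic and all the dilates $C_0D$ and $\tfrac1{C_0}D$ of the balls $D$ entering the hypotheses keep radius $\lesssim L$. The assumption $F,G\in L^q\cap L^m(Q_L)$ with $q<s<m$ already gives $F,G\in L^s$ by interpolation; this a priori finiteness is what legitimizes the absorption steps at the end and is the structural reason for imposing $q<s<m$.

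The crux is a \emph{good-$\lambda$ inequality}. For a level $\lambda$ above a suitable multiple of $\bigl(\fint_{Q_L}F^q\bigr)^{1/q}$, I would perform the Calder\'on--Zygmund decomposition of the open set $\{M(F^q)>\lambda^q\}$, where $M$ denotes the (dyadic) Hardy--Littlewood maximal operator on $Q_L$: this produces disjoint dyadic cubes $\{D_k\}$ with $\fint_{D_k}F^q\lesssim\lambda^q$ while the predecessor of $D_k$ still meets $\{M(F^q)\le\lambda^q\}$. On the ball $D$ circumscribed about $D_k$ apply the hypotheses to split $F\le F_{D,1}+F_{D,2}$ with $F_{D,2}\le F+F_{D,1}$ on $D$, and for $N\gg1$ bound $|\{x\in D:F(x)>N\lambda\}|$ by $|\{F_{D,1}>\tfrac{N\lambda}{2}\}\cap D|+|\{F_{D,2}>\tfrac{N\lambda}{2}\}\cap D|$. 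Chebyshev at exponent $q$ and the first hypothesis control the first term by $\lesssim N^{-q}\lambda^{-q}|D|\fint_{C_0D}G^q$; for the second term, Chebyshev at the higher exponent $m$, the second hypothesis and $F_{D,2}\le F+F_{D,1}$ with $\fint_D F^q\lesssim\lambda^q$ give $\lesssim N^{-m}|D|\bigl(1+\lambda^{-q}\fint_{C_0D}G^q\bigr)^{m/q}$ --- the key being the gain $N^{-m}$ with $N$ at our disposal. Summing over $k$ and using the stopping-time property yields, for all such $\lambda$,
\[
|\{F>N\lambda\}|\ \lesssim\ N^{-m}\,\bigl|\{(MF^q)^{1/q}>\lambda\}\bigr|\ +\ (N\lambda)^{-q}\!\int_{\{(MF^q)^{1/q}>\lambda\}}\!M(G^q).
\]

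It then remains to multiply by $s\lambda^{s-1}$ and integrate, using the trivial bound $|\{F>\lambda\}|\le|Q_L|$ at low levels. At high levels the first term integrates --- via the $L^{s/q}$-boundedness of $M$, valid since $s/q>1$ --- to $\lesssim N^{-m}\int_{Q_L}F^s$, and the second, by Fubini followed by H\"older with exponents $(\tfrac sq,\tfrac s{s-q})$ and the maximal inequality again, to $\lesssim N^{-q}\|G\|_{L^s}^q\|F\|_{L^s}^{s-q}$ (here $s>q$ is needed both for the $\lambda$-integral to converge and for these H\"older exponents to be admissible). Collecting the pieces,
\[
\int_{Q_L}F^s\ \lesssim\ |Q_L|^{1-\frac sq}\|F\|_{L^q}^s\ +\ \|G\|_{L^s}^s\ +\ N^{-m}\int_{Q_L}F^s\ +\ N^{-q}\|G\|_{L^s}^q\|F\|_{L^s}^{s-q},
\]
so that for $N=N(C_0,q,s,m)$ large the third term is absorbed (legitimate because $\|F\|_{L^s}<\infty$ a priori) and Young's inequality absorbs the fourth into $\tfrac12\int_{Q_L}F^s+C\|G\|_{L^s}^s$. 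This gives $\|F\|_{L^s}\lesssim|Q_L|^{\frac1s-\frac1q}\|F\|_{L^q}+\|G\|_{L^s}$, and the $\|F\|_{L^q}$ term is eliminated by running the same scheme once more at the coarsest scale --- where the $L^m$-hypothesis is a global reverse H\"older inequality --- together with the $Q_L$-periodicity of $F$ and $G$.

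The main obstacle is the geometric bookkeeping typical of such real-variable lemmas: carrying out the stopping-time/Vitali covering on the torus while keeping all dilations within the admissible range of radii, using $F_{D,1}$ and $F_{D,2}$ only on the balls for which the hypotheses supply them, and --- most delicately --- checking that the absorption constant is genuinely $<1$. That last point rests squarely on the a priori membership $F\in L^s$ (interpolation from $F\in L^q\cap L^m$, using $q<s<m$) and on the freedom to take $N$ arbitrarily large in the good-$\lambda$ inequality; removing the residual $\|F\|_{L^q}$ contribution is the one step where periodicity is genuinely exploited.
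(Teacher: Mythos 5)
The paper does not prove this lemma; it is quoted directly from Shen~\cite{Shen-07} (Theorem~3.2), building on Caffarelli--Peral~\cite{CP-98}, so there is no internal argument to compare against. Your plan correctly reconstructs the real-variable good-$\lambda$ approach of those references, and you have put your finger on the right structural points: the Calder\'on--Zygmund stopping time on $\{M(F^q)>\lambda^q\}$, the role of $q<s$ (boundedness of the maximal function at exponent $s/q>1$, admissibility of the H\"older exponents), the role of $s<m$ (the $N^{-m}$ gain that makes absorption possible), and the fact that the a priori membership $F\in L^s$ -- by interpolation from $F\in L^q\cap L^m$ -- is what legitimizes absorbing $N^{-m}\int F^s$ at the end.

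There is, however, one step that does not go through as written, and it is not merely bookkeeping. Your per-cube Chebyshev bound for $F_{D,2}$ produces the factor $\bigl(1+\lambda^{-q}\fint_{C_0D}G^q\bigr)^{m/q}$, which is nonlinear (exponent $m/q>1$) in $\fint_{C_0D}G^q$; summed over the stopping cubes it does \emph{not} collapse to the two linear terms $N^{-m}\bigl|\{(MF^q)^{1/q}>\lambda\}\bigr|$ and $(N\lambda)^{-q}\int_{\{(MF^q)^{1/q}>\lambda\}}M(G^q)$ that your displayed good-$\lambda$ inequality asserts (indeed, expanding $(1+a)^{m/q}\lesssim 1+a^{m/q}$ leaves a term $\propto\int (M(G^q))^{m/q}$, an $L^m$-type quantity, not the $L^s$-type quantity you need). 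The standard remedy -- and the actual content of Shen's proof -- is to introduce a second small parameter $\gamma>0$ and split the stopping cubes according to whether $\fint_{C_0D}G^q\le\gamma\lambda^q$ or not: on the good cubes the offending factor is $\lesssim 1$, yielding the contribution $(N^{-q}\gamma+N^{-m})|D_k|$, while the bad cubes are absorbed wholesale into $\{M(G^q)>c\gamma\lambda^q\}$. One obtains $|\{F>N\lambda\}|\lesssim(N^{-q}\gamma+N^{-m})\bigl|\{(MF^q)^{1/q}>\lambda\}\bigr|+\bigl|\{M(G^q)>c\gamma\lambda^q\}\bigr|$; integrating against $s\lambda^{s-1}\,\dd\lambda$ and then choosing first $N$ large (using $s<m$) and afterwards $\gamma$ small produces the absorption and the controlled $\gamma^{-s/q}\|G\|_{L^s}^s$ term. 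A smaller point: the reverse-H\"older hypothesis only controls $F_{D,2}$ on $\tfrac1{C_0}D$, so the ball $D$ around the stopping cube $D_k$ must be taken large enough that $D_k\subset\tfrac1{C_0}D$, and the Chebyshev estimate for $F_{D,2}$ performed on $D_k$ rather than on all of $D$ as your text suggests.
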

Before we prove Theorem~\ref{th:annealedmeyers}, let us note that Theorem~\ref{boundrNLprop} allows one to pass from averages on $\Br(x)$ to averages on $B(x)$ using \cite[Lemma~6.7]{DO-20} in the (slightly more general) form of 
\begin{lemma}\label{lem:postproc}
Let $\r$ be a stationary random field satisfying $\expecL{\exp(c \r^\alpha)}\le 2$ for some $\alpha>0$ and $c\simeq 1$. Set $\Br(x):=B_{\r(x)}(x)$ for all $x \in Q_L$.
For all $f\in C^\infty_\per(Q_L;L^\infty(d\mathbb{P}_L))$ and $1\le q_1\le q_2<\infty$, we have
\begin{itemize}
\item[(i)] for all $r>q_1$,
\begin{equation*}
\Big(\int_{Q_L}\E_L\Big[\Big(\fint_{\Br(x)}|f|^2\Big)^\frac{q_1}2\Big]^\frac{q_2}{q_1}dx\Big)^\frac1{q_2}
\,\lesssim\,(\tfrac1{q_1}-\tfrac1r)^{-(\frac{1}{q_1}-\frac{1}2)_+}\zeta(\tfrac1{q_1}-\tfrac1r)^{\frac{1}{q_1}-\frac1{q_2}}\Big(\int_{Q_L}\E_L\Big[\Big(\fint_{B(x)}|f|^2\Big)^\frac{r}2\Big]^\frac{q_2}{r}\Big)^\frac1{q_2};
\end{equation*}
\item[(ii)] for all $r<q_1$,
\begin{equation*}
\Big(\int_{Q_L}\E_L\Big[\Big(\fint_{\Br(x)}|f|^2\Big)^\frac{q_1}2\Big]^\frac{q_2}{q_1}dx\Big)^\frac1{q_2}
\,\gtrsim\,(\tfrac1r-\tfrac1{q_1})^{(\frac12-\frac1{q_2})_+}\zeta(\tfrac1r-\tfrac1{q_1})^{-(\frac1{q_1}-\frac1{q_2})}\Big(\int_{Q_L}\E_L\Big[\Big(\fint_{B(x)}|f|^2\Big)^\frac{r}2\Big]^\frac{q_2}{r}\Big)^\frac1{q_2};
\end{equation*}
\end{itemize}
where we have set $\zeta(t):=\log(2+\frac1t)$, and the multiplicative constants depend on 
$q_1,q_2,\alpha$.
\end{lemma}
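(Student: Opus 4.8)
The plan is to follow the proof of \cite[Lemma~6.7]{DO-20} essentially verbatim; the only genuinely new feature is that two integrability exponents $1\le q_1\le q_2$ appear instead of one, and this extra spatial exponent will be absorbed by Minkowski's inequality at the very end. We use throughout that, by stationarity of $\r$, the law of $\r(x)$ — and in particular the bound $\prL{\r(x)\ge t}\le 2\exp(-\tfrac c2 t^\alpha)$ that follows from the stretched-exponential moment assumption via Markov's inequality — does not depend on $x$.

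\emph{Reduction to an averaging operator.} Since $\r\ge1$, one may cover $\Br(x)$ by $\lesssim\r(x)^d$ unit balls with bounded overlap and centers in $\Br(x)$, which yields the deterministic pointwise bound $\fint_{\Br(x)}|f|^2\lesssim\fint_{B_{2\r(x)}(x)}h$ with $h:=\fint_{B(\cdot)}|f|^2\ge0$; conversely $\fint_{B(x)}|f|^2=h(x)$, and a Fubini computation gives $\fint_{B_{2\r(x)}(x)}h\simeq\fint_{\Br(x)}|f|^2$ up to a bounded dilation of the radius. Writing $M_\r h(x):=\fint_{B_{2\r(x)}(x)}h$, both (i) and (ii) thus reduce — up to harmless dilations of the unit ball, which only affect constants — to comparing the norm of $(M_\r h)^{1/2}$ in $L^{q_2}_x(L^{q_1}_\omega)$ with the norm of $h^{1/2}$ in $L^{q_2}_x(L^{r}_\omega)$, i.e.\ to an upper bound when $r>q_1$ and a lower bound when $r<q_1$.

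\emph{Upper bound (i), $r>q_1$.} Fixing $x$ and decomposing $\Omega=\bigcup_{j\ge0}\{2^j\le\r(x)<2^{j+1}\}$, one has $M_\r h(x)\lesssim\fint_{B_{2^{j+2}}(x)}h$ on the $j$-th event, and Hölder's inequality in $\omega$ with exponents $(\tfrac{r}{q_1},\tfrac{r}{r-q_1})$ gives
\[
\E_L\!\Bigl[\mathds{1}_{\{\r(x)\ge2^j\}}\,(M_\r h(x))^{q_1/2}\Bigr]\ \lesssim\ \prL{\r(x)\ge2^j}^{\,1-q_1/r}\,\E_L\!\Bigl[\bigl(\textstyle\fint_{B_{2^{j+2}}(x)}h\bigr)^{r/2}\Bigr]^{q_1/r}.
\]
Summing over $j$, the prefactors $(2e^{-\frac c2 2^{j\alpha}})^{1-q_1/r}$ are summable, with sum $\lesssim_\alpha\log\bigl(2+(1-\tfrac{q_1}r)^{-1}\bigr)\simeq\zeta(\tfrac1{q_1}-\tfrac1r)$, the only scales $j$ contributing being those with $2^{j\alpha}\lesssim(1-q_1/r)^{-1}$. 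When $q_1\ge2$ one then commutes $\E_L$ with the spatial average (Jensen, legitimate since $r/2\ge1$), reverts $B_{2^{j+2}}(x)$ to $B(x)$ by Young/Minkowski in $L^{q_2/q_1}_x$, and concludes at no cost. When $1\le q_1<2$ this commutation is unavailable; one instead interpolates the $L^{q_1}_\omega$ estimate against the corresponding $L^2_\omega$ estimate (obtained along the same lines), which is exactly what produces the factor $(\tfrac1{q_1}-\tfrac1r)^{-(\frac1{q_1}-\frac12)}$. The precise power $\tfrac1{q_1}-\tfrac1{q_2}$ on $\zeta$ emerges from optimizing the way the dyadic sum interacts with the outer $L^{q_2/q_1}_x$-integration, exactly as in \cite{DO-20}.

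\emph{Lower bound (ii), $r<q_1$, and the main obstacle.} Here the averaging operator $M_\r$ enters unfavorably, so the plan is to argue by duality: because $\r$ is $\ell$-Lipschitz, the kernel of $M_\r$ is symmetric up to a bounded enlargement of the radius, $M_\r^{\ast}g\lesssim M_{c\r}g$ pointwise, so that dualizing the $L^{q_2/2}_x(L^{r/2}_\omega)$-norm of $h$ reduces (ii) to an inequality of the same type as in (i) with the roles of $q_1$ and $r$ played by the conjugate exponents; the stretched-exponential moments of $\r$ again sum the relevant dyadic series and generate the factors $(\tfrac1r-\tfrac1{q_1})^{(\frac12-\frac1{q_2})_+}$ and $\zeta(\tfrac1r-\tfrac1{q_1})^{-(\frac1{q_1}-\frac1{q_2})}$. (Alternatively, one repeats the dyadic decomposition of the previous paragraph directly, as in \cite[Lemma~6.7]{DO-20}.) The only delicate point in the whole argument is the sub-quadratic regime $q_1<2$ (equivalently $q_2\le2$): there the expectation cannot be exchanged with the spatial average, and the interpolation against the $L^2_\omega$ endpoint must be combined with an optimal splitting of the dyadic sum in order to land exactly on the stated powers of $\zeta$ and of $\tfrac1{q_1}-\tfrac1r$. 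This bookkeeping is carried out verbatim in \cite[proof of Lemma~6.7]{DO-20}, and apart from inserting the outer exponent $q_2$ through Minkowski's inequality no new idea is required.
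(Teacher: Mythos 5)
The paper's ``proof'' of this lemma is a one-line pointer to \cite[Lemma~6.7]{DO-20}, merely noting that the Gaussian-moment exponent $d$ can be replaced by any $\alpha>0$ and $\R^d$ by $Q_L$. Your proposal reconstructs the actual argument from that reference in the right shape: reduce to the averaging operator $M_\r$ acting on $h=\fint_{B(\cdot)}|f|^2$, decompose $\Omega$ dyadically in $\r(x)$, apply H\"older in $\omega$ with exponents $(r/q_1,\,r/(r-q_1))$ to trade the indicator $\mathds1_{\{\r(x)\ge 2^j\}}$ for a summable probability factor, recombine over $j$ (where the stretched-exponential tail bounds control the number of contributing scales, producing $\zeta$), and then carry the outer $L^{q_2/q_1}_x$-exponent through via Minkowski/Young. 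This is indeed the structure of the cited proof, so the approach is the same.

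Two caveats. First, in the duality route you sketch for (ii) you invoke that $\r$ is ``$\ell$-Lipschitz'' to argue $M_\r^*g\lesssim M_{c\r}g$; that property is nowhere in the hypotheses of the present lemma (only stationarity and a stretched-exponential moment bound are assumed), so the duality alternative you propose would not cover the statement as formulated. Your parenthetical fallback to the direct dyadic decomposition, which is what \cite{DO-20} actually does, is what saves this. Second, the precise powers $\zeta(\tfrac1{q_1}-\tfrac1r)^{\frac1{q_1}-\frac1{q_2}}$ and $(\tfrac1{q_1}-\tfrac1r)^{-(\frac1{q_1}-\frac12)_+}$ are not derived in your write-up; a naive term-by-term summation over $j$ followed by Minkowski in $x$ overshoots the $\zeta$-exponent (it would give $\tfrac1{q_1}$ rather than $\tfrac1{q_1}-\tfrac1{q_2}$, and in particular would not vanish when $q_1=q_2$), so the interaction between the dyadic sum and the outer spatial integral genuinely needs the finer bookkeeping of \cite[Lemma~6.7]{DO-20}. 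Since you flag both points and defer to that reference, this is acceptable at the level of rigor at which the paper itself treats the lemma.
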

The proof of this result is identical to that of \cite[Lemma~6.7]{DO-20}, noting that 
the assumption $\expecL{\exp(\frac1C \r^d)}\le 2$ can be weakened to $\expecL{\exp(\frac1C \r^\alpha)}\le 2$ for any $\alpha>0$ at the price of adding a dependence on $\alpha$ in the multiplicative factors in the estimates, and $\R^d$ can be replaced by $Q_L$.

\begin{proof}[Proof of Theorem~\ref{th:annealedmeyers}]
We split the proof into three steps.
In the first step, we upgrade Theorem~\ref{unweightmeyers} by adding expectations
using Lemma~\ref{lem:Shen} in a suitable way.
At the price of a loss of stochastic integrability we then remove the local averages at scale $\r$ in Step~2 by using Lemma~\ref{lem:postproc}.
The formulation with local averages at unit scale allows us to conclude using a standard duality argument, and real interpolation.

\medskip 

\step1 Proof that for all $2 \le q < m < \bar m$, we have
\begin{equation}
\int_{Q_L}\expecL{\Big(\fint_{\Br(x)}\vert\nabla u\vert^2\mu_{\xi}\Big)^{\frac{q}{2}}}^\frac mq dx\lesssim_{K}  \int_{Q_L}\expecL{\Big(\fint_{\Bt(x)}\vert g \vert^2\Big)^{\frac{q}{2}}}^\frac m{q} dx
\label{annealed-1.1}
\end{equation}
with the short hand notation $\Bt(x):=B_{5\r(x)}(x)$.

\noindent Let $2\le q_1 \le m_1 \le \bar m$.
Let $D$ be a ball centered at $x \in Q_L$ and of radius $0<r_D \lesssim L$, we define $\Dr:=B_{r_D \vee (2\r(x))}(x)$, and let $N$ be the smallest integer so that $\Dr \subset Q_{NL}$. We then decompose $u$ as $u=u_{D,1}+u_{D,2}$, where 
$u_{D,1}$ is the $Q_{NL}$-periodic solution of $-\nabla\cdot a_\xi \nabla u_{D,1}=\nabla \cdot g \sqrt{\mu_\xi} \mathds 1_{\Dr}$. Note that $u_{D,2}$ is $a_\xi$-harmonic on $\Dr$.
We start with the control of $u_{D,1}$ and claim that 
\begin{equation}\label{annealed-1.2}
\int_D \expecL{\Big(\fint_{\Br(y)}|\nabla u_{D,1}|^2 \mu_\xi\Big)^\frac{q_1}{2}}\dd y \,\lesssim\, \expecL{\int_{8D} \Big(\fint_{\Bt(y)} |g|^2 \Big)^\frac{q_1}{2}\dd y}.
\end{equation}
Assume first that $r_D \ge   2\r(x)$, so that $\Dr=D$.
By taking the expectation in Theorem~\ref{unweightmeyers}, we have
\begin{multline*}
\int_D \expecL{\Big(\fint_{\Br(y)}|\nabla u_{D,1}|^2 \mu_\xi\Big)^\frac{q_1}{2}}\dd y
\,\le\, \expecL{\int_{Q_{NL}} \Big(\fint_{\Br(y)}|\nabla u_{D,1}|^2 \mu_\xi\Big)^\frac{q_1}{2}\dd y}
\\
\stackrel{\eqref{unweightedmeyers}}\lesssim \,  \expecL{\int_{Q_{NL}} \Big(\fint_{\Br(y)} |g|^2 \mathds 1_{D}\Big)^\frac{q_1}{2}\dd y}.
\end{multline*}
By the $\frac1{16}$-Lipschitz property of $\r$, we have the implication for all $z \in \Br(y)$
\begin{align*}
|y-x|\ge 2r_D \,\implies \, |z-x|\ge |y-x|-\r(y) & \ge |y-x|-(\r(x)+\tfrac1{16}|y-x|) 
\\
&\ge \frac{15}{16}|y-x|-\r(x) \ge (\tfrac{15}{8}-\tfrac12)r_D \ge r_D
\,
\implies \, \mathds{1}_D(z)=0,
\end{align*}
so that \eqref{annealed-1.2} follows for $r_D \ge   2\r(x)$ in the stronger form
\begin{equation*}
\int_D \expecL{\Big(\fint_{\Br(y)}|\nabla u_{D,1}|^2 \mu_\xi\Big)^\frac{q_1}{2}}\dd y \,\lesssim\, \expecL{\int_{2D} \Big(\fint_{\Br(y)} |g|^2 \Big)^\frac{q_1}{2}\dd y}.
\end{equation*}
If $r_D \le 2 \r(x)$, then $\sup_{D} \r \lesssim \inf_D \r$, $\Dr=B_{2\r(x)}(x)=:\Brr(x)$, and a plain energy estimate yields
\begin{multline*}
\int_D \expecL{\Big(\fint_{\Br(y)}|\nabla u_{D,1}|^2 \mu_\xi\Big)^\frac{q_1}{2}}\dd y
\,\lesssim\, \expecL{|D| \r(x)^{-d\frac {q_1}2}\Big(\int_{Q_{NL}}  |\nabla u_{D,1}|^2 \mu_\xi\Big)^\frac{q_1}{2}}
\\
\lesssim \,  \expecL{|D| \r(x)^{-d\frac {q_1}2}\Big(\int_{\Dr} |g|^2 \Big)^\frac{q_1}{2}} \,\lesssim\,
 \expecL{|D| \Big(\fint_{\Brr(x)} |g|^2 \Big)^\frac{q_1}{2}} ,
\end{multline*}
and it remains to turn the right-hand side into an integral over $D$.
For all $y \in D$, we have $\r(y)\ge \r(x)-\frac{1}{16} r_D \ge \frac78 \r(x)$, and therefore 
for all $z\in \Brr(x)$, $|z-y|\le |z-x|+|x-y|\le 4\r(x) \le 5\r(y)$, to the effect that $\Brr(x) \subset \Bt(y)$. Recalling that $\sup_{D} \r \lesssim \inf_D \r$, this implies the following stronger form
of~\eqref{annealed-1.2}
\begin{equation*}
\int_D \expecL{\Big(\fint_{\Br(y)}|\nabla u_{D,1}|^2 \mu_\xi\Big)^\frac{q_1}{2}}\dd y
\lesssim \,  \expecL{ \int_D \Big(\fint_{\Bt(y)} |g|^2 \Big)^\frac{q_1}{2}\dd y}  .
\end{equation*}
We now turn to the control of $u_{D,2}$, and claim that 
\begin{equation}\label{annealed-1.2b}
\Big(\fint_{\frac18D} \expecL{\Big(\fint_{\Br(y)}|\nabla u_{D,2}|^2 \mu_\xi\Big)^\frac{q_1}{2}}^\frac{m_1}{q_1}\dd y \Big)^\frac1{m_1} \,\lesssim\, \Big(\fint_{D} \expecL{\Big(\fint_{\Br(y)} |\nabla u_{D,2}|^2 \mu_\xi \Big)^\frac{q_1}{2}}\dd y\Big)^\frac{1}{q_1}.
\end{equation}
The starting point is the Minkowski inequality: Since $\frac{m_1}{q_1}\ge 1$,
\begin{equation}\label{annealed-1.3}
\Big(\fint_{\frac18D} \expecL{\Big(\fint_{\Br(y)}|\nabla u_{D,2}|^2 \mu_\xi\Big)^\frac{q_1}{2}}^\frac{m_1}{q_1}\dd y \Big)^\frac1{m_1} \,\le\, \expecL{\Big(\fint_{\frac18D}  \Big(\fint_{\Br(y)}|\nabla u_{D,2}|^2 \mu_\xi\Big)^\frac{m_1}{2}\dd y\Big)^\frac{q_1}{m_1}}^\frac{1}{q_1}.
\end{equation}
We then appeal to the local Meyers estimate~\eqref{unweightedmeyerslocal}
to bound the right-hand side
\begin{equation*}
\fint_{\frac18D}\Big(\fint_{\Br(y)} |\nabla u_{D,2}|^2\mu_{\xi}\Big)^{\frac{m_1}{2}}\dd y \, \lesssim_{K}\, \Big(\fint_{\frac14D}\Big(\fint_{\Br(y)} |\nabla u_{D,2}|^2\mu_{\xi}\Big) \,\dd y\Big)^{\frac{m_1}{2}}
+\fint_{\frac14D}\Big(\fint_{\Br(y)}| g|^2 (1-\mathds{1}_{\Dr})\Big)^{\frac{m_1}{2}}\dd y.
\end{equation*}
Since for all $y \in \frac14D$, one has $\r(y)\le \r(x)+\frac{1}{16} \frac14 r_D \le \frac34 (r_D \vee (2\r(x)))$, $\Br(y) \subset \Dr$ and the second right hand side term vanishes identically.
Combined with \eqref{annealed-1.3} and Jensen's inequality in space (using that $\frac{q_1}2\ge 1$), this entails
\begin{eqnarray*}
\Big(\fint_{\frac18D} \expecL{\Big(\fint_{\Br(y)}|\nabla u_{D,2}|^2 \mu_\xi\Big)^\frac{q_1}{2}}^\frac{m_1}{q_1}\dd y \Big)^\frac1{m_1} &\lesssim& \expecL{\Big(\fint_{\frac14D}\Big(\fint_{\Br(y)} |\nabla u_{D,2}|^2\mu_{\xi}\Big) \,\dd y\Big)^\frac{q_1}{2}}^\frac{1}{q_1} \nonumber 
\\
&\le &\expecL{\fint_{\frac14D}\Big(\fint_{\Br(y)} |\nabla u_{D,2}|^2\mu_{\xi}\Big)^\frac{q_1}{2} \,\dd y}^\frac{1}{q_1},
\end{eqnarray*}
from which \eqref{annealed-1.2b} follows.

\noindent We are in the position to conclude. 
Setting $F:x \mapsto \expecL{\Big(\fint_{\Br(x)} |\nabla u|^2\mu_\xi\Big)^\frac{q_1}{2}}^\frac1{q_1}$, $G:x\mapsto \expecL{\Big(\fint_{\Bt(x)} |g|^2\Big)^\frac{q_1}2}^\frac{1}{q_1}$,
$F_{D,1}:x \mapsto \expecL{\Big(\fint_{\Br(x)}|\nabla u_{D,1}|^2 \mu_\xi \Big)^\frac{q_1}{2}}^\frac1{q_1}$, and $F_{D,2}:x \mapsto  \expecL{\Big(\fint_{\Br(x)}|\nabla u_{D,2}|^2 \mu_\xi \Big)^\frac{q_1}{2}}^\frac1{q_1}$, the assumptions of  Lemma~\ref{lem:Shen} are satisfied, and 
the claimed estimate \eqref{annealed-1.1} follows.

\medskip

\step2 Reformulation of \eqref{annealed-1.1}.

\noindent Since both $\r$ and $5\r$ satisfy stretched exponential moment bounds, Lemma~\ref{lem:postproc} allows us to reformulate~\eqref{annealed-1.1}
as: For all $2 \le q < m < \bar m$ and $0<r \le \frac 12$,
\begin{equation}
\int_{Q_L}\expecL{\Big(\fint_{B(x)}\vert\nabla u\vert^2\mu_{\xi}\Big)^{\frac{q}{2}}}^\frac m{q} \dd x\lesssim_{K}  r^{-\frac{m-2}{2m}} |\log r |^\frac{2(m-q)}{qm}\int_{Q_L}\expecL{\Big(\fint_{B(x)}\vert g \vert^2\Big)^{\frac{q+r}{2}}}^\frac m{q+r} \dd x.
\label{annealed-2.1}
\end{equation}

\medskip

\step3 Proof of \eqref{annealedmeyers}.

\noindent First,  we show that 
for all $\bar m' <m< q \le 2$ and $0<r \ll 1$,
\begin{equation}
\int_{Q_L}\expecL{\Big(\fint_{B(x)}\vert\nabla u\vert^2\mu_{\xi}\Big)^{\frac{q}{2}}}^\frac m{q} \dd x\lesssim_{K}
 r^{-\frac{2-m}{2m}} |\log r |^\frac{2(q-m)}{qm}  \int_{Q_L}\expecL{\Big(\fint_{B(x)}\vert g \vert^2\Big)^{\frac{q-r}{2}}}^\frac m{q-r} \dd x.
\label{annealed-3.1}
\end{equation}
Indeed, by duality we have
\begin{equation*}
\Big(\int_{Q_L}\expecL{\Big(\fint_{B(x)}\vert\nabla u\vert^2\mu_{\xi}\Big)^{\frac{q}{2}}}^\frac m{q} \dd x\Big)^\frac1m\,
\,=\, \sup_{h}\Big\{\expecL{\int_{Q_L}\nabla u \cdot h \sqrt{\mu_\xi}} \Big\},
\end{equation*}
where the supremum runs over maps $h \in C^\infty_\per(Q_L,L^\infty(d\mathbb P_L))^d$
such that $\int_{Q_L}\expecL{\Big(\fint_{B(x)}|h|^2\Big)^{\frac{q'}{2}}}^\frac {m'}{q'} \dd x  =1$.
For such $h$, denote by $v_h$ the unique $Q_L$-periodic solution of 
$
-\nabla \cdot a_\xi^* \nabla v_h =\nabla \cdot (h\sqrt{\mu_\xi}).
$
Testing this equation with $u$ and the defining equation \eqref{LSMequationu} for $u$ by $v_h$, we obtain (using periodicity in the last equality)
$$
\int_{Q_L}\nabla u \cdot h \sqrt{\mu_\xi}=\int_{Q_L}\nabla v_h \cdot g \sqrt{\mu_\xi}
\,=\, \int_{Q_L} \Big(\fint_{B(x)} \nabla v_h \cdot g \sqrt{\mu_\xi} \Big)\dd x.
$$
By Cauchy-Schwarz' inequality on $B(x)$, followed by H\"older's inequality with exponents $(q-r,\frac{q-r}{q-r-1})$ on $Q_L$ and with exponent $(m,m')$ in probability, this yields 
\begin{equation*}
{\Big|\expecL{\int_{Q_L}\nabla v_h \cdot g \sqrt{\mu_\xi}}\Big|}
\,\le \,\Big(\int_{Q_L}\expecL{\Big(\fint_{B(x)}\vert g \vert^2\Big)^{\frac{q-r}{2}}}^\frac m{q-r} \dd x\Big)^\frac1m\Big(\int_{Q_L}\expecL{\Big(\fint_{B(x)}\vert \nabla v_h \vert^2 \mu_\xi\Big)^{\frac{(q-r)'}{2}}}^\frac {m'}{(q-r)'} \dd x\Big)^\frac1{m'}.
\end{equation*}
Since $(q-r)'-q'=\frac{r}{(q-1)(q-1-r)}$, we may apply \eqref{annealed-2.1} to $\nabla v_h$
to the effect that
\begin{eqnarray*}
\lefteqn{\Big|\expecL{\int_{Q_L}\nabla v_h \cdot g \sqrt{\mu_\xi}}\Big|}
\\
&\lesssim& r^{-\frac{m'-2}{2m'}} |\log r |^\frac{2(m'-q')}{q'm'} \Big(\int_{Q_L}\expecL{\Big(\fint_{B(x)}\vert g \vert^2\Big)^{\frac{q-r}{2}}}^\frac m{q-r} \dd x\Big)^\frac1m\Big(\int_{Q_L}\expecL{\Big(\fint_{B(x)}|h|^2\Big)^{\frac{q'}{2}}}^\frac {m'}{q'} \dd x\Big)^\frac1{m'},
\end{eqnarray*}
from which \eqref{annealed-3.1} follows by the arbitrariness of $h$ and the identities $\frac{m'-2}{2m'}=\frac{2-m}{2m}$
and $\frac{2(m'-q')}{q'm'} =\frac{2(q-m)}{qm}$.

\medskip

Replacing $r$ by $qr$ in \eqref{annealed-2.1} and \eqref{annealed-3.1}, and using the bounds
$\frac{m-2}{2m} \le \frac14$ and $\frac{2(m-q)}{qm} \le \frac 12$ for $2\le q \le m \le 3$ and  $\frac{2-m}{2m} \le \frac14$ and $\frac{2(q-m)}{qm} \le \frac 12$ for $\frac 32 \le m \le q \le 2$, we have thus proved that \eqref{annealedmeyers} holds for all $2 \le q < m < \bar m \wedge 3$
and for all $\bar m' \vee \frac32<m< q \le 2$.
By choosing $\kappa=\frac{(\bar m-2)\wedge 1}8$,  the validity of  \eqref{annealedmeyers} in the full range of exponents 
$2-\kappa \le m,q\le 2+\kappa$ then follows by real interpolation.
\end{proof}
We conclude this subsection by the annealed version of the maximal regularity for the Laplacian.
\begin{theorem}\label{th:annealed-lap}
Let $L\ge 1$. For all $Q_L$-periodic random fields $g$ and $u$ related via
\begin{equation*} 
-\triangle u=\nabla \cdot g,
\end{equation*}
we have for all exponents $1<m,q<\infty$,
\begin{equation}
\Big(\int_{Q_L}\expecL{\Big(\fint_{B(x)}\vert\nabla u\vert^2 \Big)^{\frac{q}{2}}}^\frac mq \dd x \Big)^\frac1m \lesssim_{m,q} \, \Big( \int_{Q_L}\expecL{\Big(\fint_{B(x)}\vert g \vert^2\Big)^{\frac{q }{2}}}^\frac m{q} \dd x\Big)^\frac1m.
\label{annealedmeyers-lap}
\end{equation}
\end{theorem}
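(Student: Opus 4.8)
The plan is to reduce the estimate to the classical vector‑valued Calder\'on--Zygmund theory, exploiting that the Laplacian has constant coefficients. First I would record that the solution map $g\mapsto \nabla u$ of $-\triangle u=\nabla\cdot g$ on the torus is the translation‑invariant operator $\mathcal R$ given, in Fourier series, by the matrix multiplier $\xi\mapsto -\tfrac{\xi\otimes\xi}{|\xi|^2}$ (indeed $\widehat{\nabla u}(\xi)=-\tfrac{\xi\otimes\xi}{|\xi|^2}\hat g(\xi)$, while $\fint_{Q_L}\nabla u=0$ fixes the zero mode). This symbol is bounded, homogeneous of degree zero, and smooth off the origin, hence satisfies the Mikhlin--H\"ormander condition; equivalently the periodized kernel on $Q_L$ (the whole‑space second‑order Riesz kernel plus an $L$‑uniformly smooth and bounded remainder on $\{|x-y|<L/2\}$) is a Calder\'on--Zygmund kernel with constants independent of $L$. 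In particular $\mathcal R$ is bounded on $L^m(Q_L)$ for all $1<m<\infty$.

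Next I would lift this scalar bound so as to carry the inner local $L^2$-average over unit balls and the $L^q$-integrability in probability at once. Set $Y:=L^q(\Omega;L^2(B))$ ($L^2$ with normalized measure), which for $1<q<\infty$ is a UMD Banach space, being an $L^q$-space with $1<q<\infty$ over a Hilbert space. For a random field $f$ define its lift $Sf$ as the $Y$-valued field on $Q_L$ with $Sf(x):=\big(\omega\mapsto(h\mapsto f(x+h,\omega))\big)$, so that
$$\Big(\int_{Q_L}\|Sf(x)\|_Y^m\,\dd x\Big)^{1/m}\;=\;\Big(\int_{Q_L}\expecL{\Big(\fint_{B(x)}|f|^2\Big)^{q/2}}^{m/q}\dd x\Big)^{1/m}.$$
Because $\mathcal R$ commutes with translations in $x$, one checks at the level of Fourier series that $S\circ\mathcal R=(\mathcal R\otimes\mathrm{Id}_Y)\circ S$, where $\mathcal R\otimes\mathrm{Id}_Y$ denotes $\mathcal R$ applied in the $x$-variable to $Y$-valued fields. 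Since $Y$ is UMD, the operator‑valued Mikhlin theorem (equivalently, the UMD‑valued Calder\'on--Zygmund theorem, in the spirit of Burkholder and Bourgain) gives that $\mathcal R\otimes\mathrm{Id}_Y$ is bounded on $L^m(Q_L;Y)$ for all $1<m<\infty$, with norm depending only on $m$, $q$, $d$. Applying this bound to $Sg$ and reading off the two sides of the displayed identity with $f=\nabla u$ on the left and $f=g$ on the right yields \eqref{annealedmeyers-lap}.

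I expect the only genuine points needing care to be: the $L$-uniformity of the periodic Calder\'on--Zygmund bound (handled by transference from $\R^d$ or by the explicit splitting of the periodized kernel just described), the UMD property of $Y=L^q(\Omega;L^2(B))$ (standard), and the commutation $S\circ\mathcal R=(\mathcal R\otimes\mathrm{Id}_Y)\circ S$ (immediate from translation invariance). None is hard; the content is entirely the classical vector‑valued singular integral estimate, and indeed this statement is a special, constant‑coefficient, instance of the annealed Calder\'on--Zygmund estimates of Duerinckx and Otto \cite{DO-20}. If one prefers to stay strictly within the toolbox already used above, one can alternatively prove it for $m=q=2$ by Plancherel, extend it to $2\le q=m$ by the real‑variable Calder\'on--Zygmund method applied to the $L^2(B)$-valued kernel, and then reach all $1<m,q<\infty$ by duality (the operator being essentially skew‑self‑adjoint) together with real interpolation, exactly as in the proof of Theorem~\ref{th:annealedmeyers}.
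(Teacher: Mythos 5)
Your proposal is correct, and it supplies an actual argument where the paper itself only cites \cite[Section~7.1]{josien2020annealed} and then remarks in passing that a simpler proof could be based on Calder\'on--Zygmund estimates for Hilbert-valued operators and interpolation. Your primary route is a stronger one-shot version of that remark: instead of taking $Y$ Hilbert and then moving in the $q$-index by duality/interpolation, you lift directly to the UMD space $Y=L^q(\Omega;L^2(B))$ and invoke the UMD-valued Mikhlin/Calder\'on--Zygmund theorem for the scalar kernel $-\xi\otimes\xi/|\xi|^2$, which gives both the outer $L^m(Q_L)$ and the inner $L^q(\Omega)$ integrability simultaneously. The pieces you flag as needing care are indeed the right ones and all standard: $L$-uniformity of the periodized multiplier bound follows by transference (de Leeuw) from $\R^d$ or by the explicit splitting of the periodized kernel you describe; $L^q(\Omega;L^2(B))$ is UMD for $1<q<\infty$ as an $L^q$ of a Hilbert space; and the commutation $S\circ\mathcal R=(\mathcal R\otimes\mathrm{Id}_Y)\circ S$ is immediate from translation invariance of $\mathcal R$ and the change of variables $z=y+h$ inside the lift. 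The only trade-off versus the route the paper hints at is that the UMD machinery does not directly yield the explicit constant of order $m+m'+q+q'$ mentioned in the paper's remark; your fallback argument (Plancherel for $m=q=2$, real-variable CZ with $L^2(B)$-valued kernel for $2\le q=m$, then duality and real interpolation, mirroring the proof of Theorem~\ref{th:annealedmeyers}) is exactly the sketch the paper has in mind and would recover that finer dependence. Both routes are valid; neither has a gap.
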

A proof of this result can be found in \cite[Section~7.1]{josien2020annealed}.
A simpler argument (based on CZ estimates for Hilbert-valued operators and interpolation) would show that the multiplicative constant in \eqref{annealedmeyers-lap} is of the order $m+m'+q+q'$ (this finer result will not be used here).

\section{Control of correctors: Proof of Theorem~\ref{th:corrNL}}\label{sec:NL}

The proof relies on the following upgrade of Proposition~\ref{weakNL}
based on Theorem~\ref{boundrNLprop} and on Theorem~\ref{th:annealedmeyers}.
\begin{corollary}\label{cor:average-per-NL}
Under Hypothesis~\ref{hypo}, there exists $\gamma>0$ such that for all $K\ge 1$, all $\xi \in \R^d$ with $|\xi|\le K$, all $L \ge 1$, and all $g \in L^2(\R^d)$ compactly supported in $Q_L$, the random field
$\calF:=\int_{Q_L}g(\nabla\phi_\xi,\nabla \sigma_\xi)$
satisfies for all $q \ge 1$
\begin{equation}\label{e.cor:average-per-NL}
\expecL{|\calF|^{2q}}^{\frac{1}{q}}\lesssim_{K} q^\gamma  \int_{Q_L}|g|^2.
\end{equation}
\end{corollary}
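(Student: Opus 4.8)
The plan is to prove Corollary~\ref{cor:average-per-NL} by combining the sensitivity estimate of Proposition~\ref{weakNL} (more precisely the estimate \eqref{e.sensi-estimNL-ant} from its Step~2, which is stated for general $g$) with the annealed Meyers estimate of Theorem~\ref{th:annealedmeyers} and the stretched exponential moment bound on $\rNL$ from Theorem~\ref{boundrNLprop}. The point is that in Proposition~\ref{weakNL} the factor $\r(x)^{d-\delta}$ was crudely pulled out of the spatial integral using Hölder; here we instead keep the analysis annealed throughout, so that $\r$ only enters via its moments, which are controlled by Theorem~\ref{boundrNLprop}, and the weighted Meyers estimate is replaced by the cleaner annealed Meyers estimate (at the price of a harmless loss of stochastic integrability absorbed in the exponent $\gamma$).

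Concretely, I would first treat the corrector $\phi_\xi$. Starting from \eqref{e.sensi-estimNL-ant} with the general right-hand side $g$ and the associated dual solution $u$ of \eqref{equationdual}, I would bound, using $\mu_\xi \ge 1$ and Hölder in space with exponents close to $1$,
\[
\int_{Q_L}\r(x)^{d-\delta}\Big(\fint_{B(x)}|\nabla u|^2\mu_\xi\Big)\,\dd x \,\le\, \Big(\int_{Q_L}\r(x)^{(d-\delta)s'}\,\dd x\Big)^{\frac1{s'}}\Big(\int_{Q_L}\Big(\fint_{B(x)}|\nabla u|^2\mu_\xi\Big)^{s}\,\dd x\Big)^{\frac1s}
\]
for $s=1+\kappa/2$ (with $\kappa$ from Theorem~\ref{th:annealedmeyers}), then take the $q$-th moment, apply Hölder in probability to separate the two factors, control the $\r$-factor by Theorem~\ref{boundrNLprop} (which gives all algebraic moments with at most a $q^{\gamma}$ growth via Lemma~\ref{momentexp}), and control the $\nabla u$-factor by the annealed Meyers estimate \eqref{annealedmeyers} applied to \eqref{equationdual} (rewritten with right-hand side $\frac{1}{\sqrt{\mu_\xi}}g\sqrt{\mu_\xi}$), which yields $\lesssim_{|\xi|} \int_{Q_L}|g|^2$ since $g$ is deterministic, compactly supported, and $\mu_\xi\ge 1$. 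Tracking the $q$-dependence through the logarithmic-Sobolev inequality (the explicit $q$ in \eqref{SGinegp1}/\eqref{e.sensi-estimNL-ant}), through the moment bound on $\r$, and through the real-interpolation constants in Theorem~\ref{th:annealedmeyers}, all contributions are at most polynomial in $q$, giving \eqref{e.cor:average-per-NL} with some $\gamma>0$.

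For the flux corrector $\sigma_\xi$, I would use \eqref{e.Laplace-sig}: since $-\triangle\sigma_{\xi,ij}=\partial_i(a(\cdot,\xi+\nabla\phi_\xi)\cdot e_j)-\partial_j(\cdots)$, a test function $g$ against $\nabla\sigma_\xi$ can be rewritten, after solving an auxiliary Laplace equation $-\triangle w=\nabla\cdot g$ and integrating by parts, as a spatial average of $(a(\cdot,\xi+\nabla\phi_\xi)-\bar a(\xi))$ against $\nabla w$; I would then run the same sensitivity calculus as in Step~1 of Proposition~\ref{weakNL} (the derivative of $\nabla\sigma_\xi$ with respect to $A$ involves both the direct dependence of $a$ on $A$ and the indirect dependence through $\nabla\phi_\xi$, i.e. a term structurally identical to \eqref{functionalderivesensiNLformula} plus a term $\aa(\xi+\nabla\phi_\xi)$ tested against $\nabla w$), apply the log-Sobolev inequality, use Theorem~\ref{th:annealed-lap} for the Laplacian dual solution $w$ together with Theorem~\ref{th:annealedmeyers} for the linearized-operator dual solution, and conclude as before. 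The main obstacle I anticipate is bookkeeping the interplay of the three exponents (the log-Sobolev $q$, the small Hölder loss needed to fit into the annealed Meyers window $[2-\kappa,2+\kappa]$, and the stochastic-integrability loss when passing from $\Br$-averages to unit-scale averages via Lemma~\ref{lem:postproc}), ensuring that after optimizing the Hölder split the net $\r$-moment exponent stays within reach of Theorem~\ref{boundrNLprop} and the overall $q$-dependence remains polynomial; the analytic inputs themselves are all already in place.
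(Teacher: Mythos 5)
Your proposal for the $\phi_\xi$-part hits a genuine obstruction in the order in which you apply H\"older. You propose to split
\begin{equation*}
\int_{Q_L}\r(x)^{d-\delta}\Big(\fint_{B(x)}|\nabla u|^2\mu_\xi\Big)\,\dd x \,\le\, \Big(\int_{Q_L}\r(x)^{(d-\delta)s'}\,\dd x\Big)^{\frac1{s'}}\Big(\int_{Q_L}\Big(\fint_{B(x)}|\nabla u|^2\mu_\xi\Big)^{s}\,\dd x\Big)^{\frac1s}
\end{equation*}
by H\"older in \emph{space} with $s=1+\kappa/2$, and only afterwards take moments and apply H\"older in probability. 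But since $\r\ge1$ pointwise, the first factor is deterministically bounded below by $|Q_L|^{1/s'}=L^{d/s'}$; after the annealed Meyers estimate the second factor is of order $\big(\int_{Q_L}(\fint_{B(x)}|g|^2)^s\big)^{1/s}$, which for a fixed compactly supported $g$ does not produce a compensating $L^{-d/s'}$. The right-hand side therefore diverges as $L\uparrow\infty$, whereas \eqref{e.cor:average-per-NL} must be uniform in $L$. The paper avoids this by dualizing \emph{in probability first}: it writes $\expecL{|Y|^q}^{1/q}=\sup_X\expecL{X^2 Y}$ with $\mathbb{E}_L[|X|^{2q'}]=1$ and $X$ independent of $x$, uses Tonelli to push the expectation inside $\int_{Q_L}$, and only then applies H\"older in probability \emph{pointwise in $x$}. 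Stationarity of $\r$ then lets the $\r$-moment factor out as a constant (independent of $x$), and the remaining spatial integral of the $\nabla(Xu)$-term is controlled globally by Theorem~\ref{th:annealedmeyers}. This is not cosmetic book-keeping: it is precisely the mechanism through which the volume factor $L^{d/s'}$ is eliminated, and it is the reason the duality trick of \cite{DO-20} is invoked.

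The sketch for the $\sigma_\xi$-part is structurally sound and matches the paper's route: introducing the Laplacian dual solution $u_1$ and then a second dual solution $u_2$ for the linearized operator to express the indirect dependence of $a(\cdot,\xi+\nabla\phi_\xi)$ on $A$ through $\nabla\phi_\xi$, as in \eqref{e.sens-sigma}, then using Theorem~\ref{th:annealed-lap} and Theorem~\ref{th:annealedmeyers}. However, "conclude as before'' would import the same misordered H\"older, so the same fix is needed there. Also note that Lemma~\ref{lem:postproc} plays no role in this proof: the sensitivity estimate \eqref{e.sensi-estimNL-ant} and the annealed estimates are already stated with unit-scale averages $\fint_{B(x)}$, so no $\Br$-to-$B$ conversion is required at this stage.
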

For future reference, we state the following consequence of local regularity and of the hole-filling estimate.
\begin{lemma}\label{lem:supnablaphi}
Under Hypothesis~\ref{hypo}, with $0<{\delta}\le d$ the nonlinear hole-filling exponent of Lemma~\ref{ctrlavNL}, we have
for all $\xi \in \R^d$ and $x\in \R^d$
\begin{equation}
\|\xi+\nabla\corNL\|_{\text{C}^{\alpha}(B(x))}\, \lesssim \, (1+|\xi|)  (\r(x))^{\frac{d-{\delta}}{p}}.
\label{deterboundproof1}
\end{equation}
\end{lemma}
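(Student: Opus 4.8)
The plan is to combine interior local regularity for the nonlinear corrector equation \eqref{e.cor-eq} with the control of averages of $\nabla\corNL$ afforded by the Meyers minimal radius, i.e.~with \eqref{controlunitball} in Lemma~\ref{ctrlavNL}. Since the statement is local and translation covariant (one may shift so that $x=0$), it suffices to work on $B(0)=B_{\r(0)}(0)$. First I would invoke the local $C^{1,\alpha}$-regularity theory for scalar equations of the form $-\nabla\cdot a(\cdot,\xi+\nabla\corNL)=0$ with $a(x,\cdot)\in\calM(p,1,2,c_\lambda)$ and $x\mapsto a(x,\cdot)$ smooth (this is available under Hypothesis~\ref{hypo} via the regularity estimate referenced as Lemma~\ref{regestiNL}, cf.~\cite{kuusi2014guide,DarkSide}). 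Applied on a ball, this gives a De Giorgi--Nash--Moser-type bound of the form
\begin{equation*}
\|\xi+\nabla\corNL\|_{\C^{\alpha}(B_{\rho/2}(x_0))}\,\lesssim\,\Big(\fint_{B_{\rho}(x_0)}|\xi+\nabla\corNL|^{p}\Big)^{\frac1p}+(1+|\xi|),
\end{equation*}
for any ball $B_\rho(x_0)$, after the appropriate scaling of the $C^\alpha$-seminorm (the smoothness of $x\mapsto a(x,\cdot)$ and the non-degeneracy $s=1$ make the additive constant $1+|\xi|$ harmless). The key point is that the right-hand side involves only an $L^p$ average of $\xi+\nabla\corNL$, which is exactly the quantity controlled by the minimal radius.

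Second, I would cover $B(0)=B_{\r(0)}(0)$ by boundedly many balls $B_{\r(0)/2}(x_0)$ with $x_0\in B(0)$ and apply the above regularity estimate on each, with $\rho\sim\r(0)$. By \eqref{controlunitball} with $r=\r(0)$ (so that $\frac{\r(0)\vee r}{r}=1$... more precisely one uses $r$ comparable to $\r(0)$, which gives a bounded multiplicative factor), one has
\begin{equation*}
\fint_{B_{C\r(0)}(x_0)}|\xi+\nabla\corNL|^{p}\,\lesssim\,(1+|\xi|^p),
\end{equation*}
and hence the $C^\alpha$-seminorm on each small ball is bounded, after accounting for the scaling in $\rho=\r(0)$, by $(1+|\xi|)\,\r(0)^{-\alpha}$, while the sup-part is bounded by $(1+|\xi|)$; patching, $\|\xi+\nabla\corNL\|_{\C^\alpha(B(0))}\lesssim(1+|\xi|)\,\r(0)^{?}$. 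A slightly cleaner route, which matches the exponent $\frac{d-\delta}p$ stated, is to directly apply the regularity estimate on the single ball $B(0)=B_{\r(0)}(0)$ (or $B_{2\r(0)}$) and feed in \eqref{controlunitball} with $r$ of order $1$: then $\fint_{B_{\r(0)}}|\xi+\nabla\corNL|^p\lesssim(1+|\xi|^p)(\r(0))^{d-\delta}$, and the pointwise/Hölder bound inherits the factor $((1+|\xi|^p)(\r(0))^{d-\delta})^{1/p}=(1+|\xi|)(\r(0))^{(d-\delta)/p}$, which is exactly \eqref{deterboundproof1}.

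The main obstacle is bookkeeping the scaling of the $C^\alpha$-seminorm correctly so that the exponent comes out as $\frac{d-\delta}{p}$ rather than something larger, and making sure that the additive constant $(1+|\xi|)$ from the inhomogeneity/non-degeneracy is genuinely dominated by $(1+|\xi|)(\r(0))^{(d-\delta)/p}$ (which holds since $\r(0)\ge1$ and $d-\delta\ge0$). One also has to be slightly careful that the local regularity estimate is applied at the scale $\r(x)$ where, by definition of the minimal radius and \eqref{controlunitball}, the $L^p$-average of $\xi+\nabla\corNL$ is under control; below that scale no deterministic bound is available, which is precisely why the statement is phrased on $B(x)$. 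Once the scaling is tracked, the proof is a one-line combination of Lemma~\ref{regestiNL} (local regularity) and Lemma~\ref{ctrlavNL} (control of averages via the Meyers minimal radius).
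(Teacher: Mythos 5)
Your instinct is exactly right and the final computation is essentially the paper's one-liner: combine the deterministic $C^{1,\alpha}$-regularity of Lemma~\ref{regestiNL} with the control of $L^p$-averages via the Meyers minimal radius in Lemma~\ref{ctrlavNL}. However, there is a real notational confusion that matters for the bookkeeping you were worried about: in the paper $B(x)$ denotes the unit ball $B_1(x)$, whereas the ball at the random scale is written $\Br(x)=B_{\rNL(x)}(x)$. Because of this, the covering argument in your first paragraph (balls of radius $\r(0)/2$) is both unnecessary and problematic: the constant in Lemma~\ref{regestiNL} depends on the radius $R$ (through $R$ and $\|A\|_{C^\alpha(B_{4R})}$), so applying that lemma at scale $\r(0)$ introduces an uncontrolled dependence on $\r(0)$ that does not appear in the stated bound. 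The paper avoids this entirely by applying Lemma~\ref{regestiNL} at the \emph{fixed unit scale} (say $R=\tfrac12$, so the right-hand side involves $\fint_{B_2(x)}$), yielding a constant depending only on $d$, $p$, $\lambda$ and $\|A\|_{C^{0,\alpha}}$.

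In the same vein, the displayed intermediate estimate in your ``cleaner route'' is inconsistent with its derivation: you announce feeding \eqref{controlunitball} with $r$ of order $1$, but write $\fint_{B_{\r(0)}}|\xi+\nabla\corNL|^p\lesssim(1+|\xi|^p)\r(0)^{d-\delta}$. At scale $r=\r(0)$, \eqref{controlunitball} (or simply the definition \eqref{defr*NL2}) gives $\fint_{B_{\r(0)}}|\xi+\nabla\corNL|^p\lesssim 1+|\xi|^p$ with no extra factor. The estimate you need, matching ``$r$ of order $1$'', is
$$
\fint_{B_2(x)}|\xi+\nabla\corNL|^p\,\lesssim\,(1+|\xi|^p)\Big(\frac{\r(x)\vee 2}{2}\Big)^{d-\delta}\,\lesssim\,(1+|\xi|^p)\,\r(x)^{d-\delta},
$$
where the last step uses $\r\geq 1$. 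Feeding this into the unit-scale version of Lemma~\ref{regestiNL}, $\|\xi+\nabla\corNL\|_{C^\alpha(B(x))}\lesssim(\fint_{B_2(x)}|\xi+\nabla\corNL|^p)^{1/p}$, gives exactly \eqref{deterboundproof1}. Once the role of $B(x)$ vs.\ $\Br(x)$ is straightened out, the proof collapses to the two-line chain in the paper; the covering argument and the discussion of the additive constant can be dropped.
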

\begin{proof}
By the deterministic regularity theory of Lemma~\ref{regestiNL} applied to the equation~\eqref{e.cor-eq} combined with the estimate \eqref{controlunitball}, we  indeed have
\begin{align*}
\|\xi+\nabla\corNL\|_{\text{C}^{\alpha}(B(x))}&\lesssim_{\|A\|_{C^{0,\alpha}(\mathbb{R}^d)}} \Big(\fint_{B_{2}(x)}\vert \xi+\nabla\corNL \vert^p\Big)^{\frac{1}{p}}\nonumber\\
&\stackrel{\eqref{controlunitball}}{\leq}_{\|A\|_{C^{0,\alpha}(\mathbb{R}^d)}}(1+|\xi|)  (\r(x))^{\frac{d-{\delta}}{p}}.
\end{align*}
\end{proof}
Before we turn to the proof of Corollary~\ref{cor:average-per-NL}, let us quickly argue that it yields Theorem~\ref{th:corrNL}.
\begin{proof}[Proof of Theorem~\ref{th:corrNL}]
By \eqref{deterboundproof1} and Theorem~\ref{boundrNLprop}, assumption~\eqref{convergenceofthecor-hyp} in Proposition~\ref{convergenceofperiodiccorrectors} is satisfied for $\nabla \phi_\xi$.
Let us  show that this also yields assumption~\eqref{convergenceofthecor-hyp} for $\nabla \sigma_\xi$. Indeed, by maximal regularity for the Laplacian applied to
equation \eqref{e.Laplace-sig} we have for all $q> 1$,
$
\Big(\int_{Q_L} |\nabla \sigma_\xi|^q\Big)^\frac1q \lesssim q \Big(\int_{Q_L} |\xi+\nabla \phi_\xi|^q\Big)^\frac1q,
$
so that assumption~\eqref{convergenceofthecor-hyp} for $\nabla \sigma_\xi$ follows from taking the expectation of the $q$-th power of this estimate and using the stationarity of the extended corrector gradient together with the moment bound on $\nabla \phi_\xi$.
By \eqref{convergenceofthecor}, we can then pass to the limit $L\uparrow +\infty$ in the moment bounds on the extended corrector gradient for the periodized ensemble, and obtain~\eqref{e.bdd-grad-corrNL}.
Likewise, the claimed estimate~\eqref{e:corr-NL-CLT} follows from Corollary~\ref{cor:average-per-NL} for $g$ compactly supported by passing to the limit $L\uparrow \infty$ using \eqref{convergenceofthecor}. The result for general $g \in L^2(\R^d)$ is then obtained by approximation.
The control \eqref{e.growth-nlc} of the growth of the extended corrector is a direct consequence of~\eqref{e:corr-NL-CLT} by ``integration'' (see for instance \cite[Proof of Theorem~4.2, Step~3]{DG-21} -- the argument is also displayed in the proof of Corollary~\ref{coro:corr-diff}). 
\end{proof}
It remains to prove Corollary~\ref{cor:average-per-NL}.
\begin{proof}[Proof of Corollary~\ref{cor:average-per-NL}]
We split the proof into two steps, first treat averages of $\nabla \phi_\xi$ and then turn to averages of $\nabla \sigma_\xi$.

\medskip

\step1 Averages of $\nabla \phi_\xi$.

\noindent In this step we set $\calF:=\int_{Q_L}g \cdot \nabla\phi_\xi$ for some $g \in L^2(\R^d)^d$ compactly supported in $Q_L$.
The starting point is the estimate~\eqref{e.sensi-estimNL-ant} in the proof of Proposition~\ref{weakNL}, which takes the form for all $q\ge 1$ of
\begin{equation*} 
\expecL{\vert\calF\vert^{2q}}^{\frac{1}{q}}\,\lesssim\,  q K^p\expecL{\Big(\int_{Q_L}\r(x)^{d-\delta}\Big(\fint_{B(x)}\vert\nabla u\vert^2\mu_{\xi}\Big) \dd x\Big)^q}^{\frac{1}{q}},
\end{equation*}
where $u$ is the unique weak $Q_L$-periodic solution (with zero average) of \eqref{equationdual}, that is, $-\nabla\cdot \aL^*\nabla u=\nabla\cdot g$.
By duality,  we may reformulate the right-hand side as
\begin{eqnarray*}
\lefteqn{\expecL{\Big(\int_{Q_L}\r(x)^{d-\delta}\Big(\fint_{B(x)}\vert\nabla u\vert^2\mu_{\xi}\Big) \dd x\Big)^q}^{\frac{1}{q}}}
\\
&=&\sup_{\mathbb E_L[|Y|^{q'}]=1} \expecL{Y \int_{Q_L}\r(x)^{d-\delta}\Big(\fint_{B(x)}\vert\nabla  u\vert^2\mu_{\xi}\Big) \dd x}
\\
&=&\sup_{Y\ge 0,\mathbb E_L[Y^{q'}]=1} \expecL{Y \int_{Q_L}\r(x)^{d-\delta}\Big(\fint_{B(x)}\vert\nabla  u\vert^2\mu_{\xi}\Big) \dd x}
\\
&=& \sup_{\mathbb E_L[|X|^{2q'}]=1} \expecL{\int_{Q_L}\r(x)^{d-\delta}\Big(\fint_{B(x)}\vert\nabla X u\vert^2\mu_{\xi}\Big) \dd x},
\end{eqnarray*}
where the supremum runs over random variables $X \in  L^{2q'}(d\mathbb P_L)$ which are independent of the space variable (which allows us to put $X$ inside the gradient).
Let $0<\eta<1$ be some exponent (to be fixed later) small enough so that $\frac{q'}{1+\eta}>1$.
We then appeal to H\"older's inequality with exponents $(\frac{q'}{q'-1-\eta},\frac{q'}{1+\eta})$ and to the stationarity of $\r$  to the effect that 
\begin{equation*}
\expecL{\int_{Q_L}\r(x)^{d-\delta}\Big(\fint_{B(x)}\vert\nabla X u\vert^2\mu_{\xi}\Big) \dd x}
\, \le \,  \expec{\r^{\frac{q'}{q'-1-\eta}(d-\delta)}}^\frac{q'-1-\eta}{q'}   \int_{Q_L} \expecL{\Big(\fint_{B(x)}\vert\nabla X u\vert^2\mu_{\xi}\Big)^{\frac{q'}{1+\eta}}}^\frac{1+\eta}{q'} \dd x.
\end{equation*}
Provided $2q'\le 2+\kappa$, we may appeal to Theorem~\ref{th:annealedmeyers} on the second right-hand side factor, which yields (recall that $X$ does not depend on the space variable, that $\mathbb E_L[|X|^{2q'}]=1$, and that $\mu_\xi \ge 1$) 
\begin{multline*}
\int_{Q_L} \expecL{\Big(\fint_{B(x)}\vert\nabla X u\vert^2\mu_{\xi}\Big)^{\frac{q'}{1+\eta}}}^\frac{1+\eta}{q'} \dd x
\\
\lesssim \, \eta^{-\frac14}|\log(\eta)|^\frac12 \int_{Q_L} \expecL{|X|^{2q'}\Big(\fint_{B(x)}\vert g \vert^2\tfrac{1}{\mu_{\xi}}\Big)^{q'}}^\frac{1}{q'} \dd x
\,
\le  \,  \eta^{-\frac14}|\log(\eta)|^\frac12 \int |g|^2.
\end{multline*}
The choice $\eta=\frac12(q'-1)=\frac1{2(q-1)}$ is legitimate provided $q\gg 1$, in which case
the above combined with the moment bound on $\r$ of Theorem~\ref{boundrNLprop}
yields 
\begin{equation*} 
\expecL{\vert\calF\vert^{2q}}^{\frac{1}{q}}\,\lesssim\,  q^\nu K^p \int |g|^2.
\end{equation*}
for some exponent $\nu>0$ independent of $q$. 
This entails \eqref{e.cor:average-per-NL} for $\nabla \phi_\xi$ for a suitable exponent $\gamma>0$ (depending only on $\nu$).

\medskip

\step2 Averages of $\nabla \sigma_\xi$.

\noindent Fix $1\le i,j \le d$. We proceed as for $\nabla \phi_\xi$: We first derive a representation formula for the sensitivity of $\calF:= \int_{Q_L}g \cdot \nabla \sigma_{\xi,ij}$
with respect to changes of the coefficient $A$, and then use the annealed estimates of
Theorems~\ref{th:annealedmeyers} and \ref{th:annealed-lap}, and the moment bounds on $\r$ to conclude.

\medskip

\substep{2.1} Sensitivity calculus.

\noindent Recall the defining equation for $\sigma_{\xi,ij}$
\begin{equation*}
-\triangle \sigma_{\xi,ij} \,=\, \partial_i (a(\cdot,\xi+\nabla\phi_{\xi})\cdot e_j)-\partial_j(a(\cdot,\xi+\nabla\phi_{\xi})\cdot e_i).
\end{equation*}
As in Step~1 of the proof of Proposition~\ref{weakNL}, we proceed by duality. This time we introduce two auxiliary functions $u_1$ and $u_2$ as $Q_L$-periodic solutions of 
$$
-\triangle u_1 = \nabla \cdot g, \quad -\nabla \cdot a_\xi^* \nabla u_2 = \nabla \cdot a_\xi^*(\partial_i u_1 e_j-\partial_j u_1 e_i),
$$
and claim that 
\begin{equation}
\delta_x \calF = \int_{B(x)}\vert \aa(\xi+\nabla\phi_{\xi})\otimes (\nabla  u_2+\partial_i u_1 e_j-\partial_j u_1 e_i)\vert.
\label{e.sens-sigma}
\end{equation}
Let us quickly argue in favor of \eqref{e.sens-sigma}. 
With the notation of Step~1 of the proof of Proposition~\ref{weakNL}, and $\delta A$ an increment of $A$ localized in $B(x)$, we have
by the defining equations for $\sigma_{\xi,ij}$ and $u_1$ 
$$
\delta^h \calF := \frac{\calF(A+h\delta A)-\calF(A)}{h}
\,=\, \int (\partial_i u_1 e_j-\partial_j u_1 e_i) \cdot \delta^h \Big(a(\xi+\nabla \phi_\xi)\Big),
$$
where
\begin{eqnarray*}
\delta^h \big(a(\xi+\nabla \phi_\xi)\big)&=& \frac{(A+h\delta A)\aa(\xi+\nabla \phi_\xi(A+h\delta A))-A\aa(\xi+\nabla \phi_\xi)}{h}
\\
&=&\delta A \aa(\xi+\nabla \phi_\xi(A+h\delta A))+ a^h_\xi \nabla \delta^h \phi_\xi.
\end{eqnarray*}
Passing to the limit $h\downarrow 0$, and testing  the equation for $u_2$ with $\delta \phi_\xi$ and equation \eqref{sensiNLequation1+} with $u_2$,
we obtain 
\begin{eqnarray*}
\delta \calF \,=\,\lim_{h\downarrow 0} \delta^h \calF 
&=& \int  (\partial_i u_1 e_j-\partial_j u_1 e_i) \cdot \Big(\delta A \aa(\xi+\nabla \phi_\xi)+a_\xi \nabla \delta \phi_\xi\Big)
\\
&=&\int  (\nabla u_2+\partial_i u_1 e_j-\partial_j u_1 e_i) \cdot  \delta A \aa(\xi+\nabla \phi_\xi),
\end{eqnarray*}
and the claim follows by taking the supremum over $\delta A$.

\medskip

\substep{2.2} Proof of \eqref{e.cor:average-per-NL}.

\noindent Combining \eqref{e.sens-sigma} with the logarithmic-Sobolev inequality,  we obtain for all $q\ge 1$
\begin{equation*}
\expecL{|\calF|^{2q}}^{\frac{1}{q}}\,\lesssim\,  q \expecL{\Big(\int_{Q_L} \Big(\fint_{B(x)}|\aa(\xi+\nabla \phi_\xi)|\vert\nabla  u_2+\partial_i u_1 e_j-\partial_j u_1 e_i\vert \Big)^2 \dd x\Big)^q}^{\frac{1}{q}}.
\end{equation*}
We treat differently the terms involving $u_1$ and $u_2$. For $u_2$ we proceed as in Step~3 of the proof of Proposition~\ref{weakNL} (using the definition~\eqref{e.def-mu}
 of $\mu_\xi$ and \eqref{controlunitball}), whereas for $u_1$ we directly use \eqref{deterboundproof1}.
This yields
\begin{multline*}
\expecL{|\calF|^{2q}}^{\frac{1}{q}}\,\lesssim_{K}\,  q \expecL{\Big(\int_{Q_L}\r(x)^{d-\delta}\Big(\int_{B(x)}\vert\nabla  u_2\vert^2\mu_{\xi}\Big) \dd x\Big)^q}^{\frac{1}{q}}
\\
+q \expecL{\Big(\int_{Q_L}\r(x)^{\frac{2(p-1)}p(d-\delta)}\Big(\int_{B(x)}\vert\nabla u_1\vert^2\Big) \dd x\Big)^q}^{\frac{1}{q}}.
\end{multline*}
As in Step~1, this entails
\begin{multline*}
\expecL{|\calF|^{2q}}^{\frac{1}{q}}\,\lesssim_{K}\,q  \sup_{\mathbb E_L[|X|^{2q'}]=1} \expecL{\int_{Q_L}\r(x)^{d-\delta}\Big(\int_{B(x)}\vert \nabla X  u_2\vert^2\mu_{\xi}\Big) \dd x}
\\
+q  \sup_{\mathbb E_L[|X|^{2q'}]=1}  \expecL{ \int_{Q_L}\r(x)^{\frac{2(p-1)}p(d-\delta)}\Big(\int_{B(x)}\vert\nabla Xu_1\vert^2\Big) \dd x }.
\end{multline*}
For the second right-hand side term, we proceed as in Step~1 (using Theorem~\ref{th:annealed-lap} in place of Theorem~\ref{th:annealedmeyers}),
and it remains to treat the first right-hand side term. 
We   use H\"older's inequality with exponents  $(\frac{q'}{q'-(1+\eta)^2},\frac{q'}{(1+\eta)^2})$ for some $0<\eta<1$ (that satisfies $q'>(1+\eta)^2$) to be chosen below to the effect that
\begin{multline*}
\expecL{\int_{Q_L}\r(x)^{d-\delta}\Big(\int_{B(x)}\vert\nabla X  u_2\vert^2\mu_{\xi}\Big) \dd x}
\\
\le \,  \expec{\r^{\frac{q'}{q'-(1+\eta)^2}(d-\delta)}}^\frac{q'-(1+\eta)^2}{q'}   \int_{Q_L} \expecL{\Big(\fint_{B(x)}\vert\nabla X  u_2\vert^2\mu_{\xi}\Big)^{\frac{q'}{(1+\eta)^2}}}^\frac{(1+\eta)^2}{q'} \dd x.
\end{multline*}
We then appeal to  the annealed Meyers estimate of Theorem~\ref{th:annealedmeyers} under the additional condition 
that $2\le \frac{2q'}{(1+\eta)^2}\le 2+\kappa$, and obtain 
\begin{equation*}
\int_{Q_L} \expecL{\Big(\fint_{B(x)}\vert\nabla X  u_2\vert^2\mu_{\xi}\Big)^{\frac{q'}{(1+\eta)^2}}}^\frac{(1+\eta)^2}{q'} \dd x
\,
\lesssim \,\eta^\frac14 |\log \eta|^\frac12 \int_{Q_L} \expecL{\Big(\fint_{B(x)}\vert \mu_\xi \nabla Xu_1  \vert^2\tfrac{1}{\mu_{\xi}}\Big)^{\frac{q'}{1+\eta}}}^\frac{1+\eta}{q'}
\dd x
\end{equation*}
since under the assumption $0<\eta<\frac12$, we have $(1+\eta)^2-1\lesssim \eta$.
Bounding $\mu_\xi$ by $\r^{\frac{p-2}p(d-\delta)}$ (cf.~Lemma~\ref{ctrlavNL}) and using H\"older's inequality
with exponents $(\frac{1+\eta}{\eta},1+\eta)$, the integral in the right-hand side is controlled by
$$
\int_{Q_L} \expecL{\Big(\fint_{B(x)}\vert \mu_\xi \nabla Xu_1  \vert^2\tfrac{1}{\mu_{\xi}}\Big)^{\frac{q'}{1+\eta}}}^\frac{1+\eta}{q'}
\,\lesssim \, \expecL{\r^{\frac{q'}{\eta} \frac{p-2}p(d-\delta) }}^\frac{\eta}{q'}
\int_{Q_L} \expecL{\Big(\fint_{B(x)}\vert   \nabla Xu_1  \vert^2 \Big)^{ q' }}^\frac{1}{q'}.
$$
We finally estimate the integral term by Theorem~\ref{th:annealed-lap}, which yields (since there is no loss in the stochastic exponent, $g$ is deterministic, and $1\le q'\le 2$)
$$
\int_{Q_L} \expecL{\Big(\fint_{B(x)}\vert   \nabla Xu_1  \vert^2 \Big)^{ q' }}^\frac{1}{q'}
\,\lesssim\, \expecL{|X|^{2q'}} \int_{Q_L}|g|^2 = \int_{Q_L}|g|^2.
$$
The conclusion follows by choosing $\eta=\frac14(q'-1)$ and $q\gg 1$,
and using  the moment bound on $\r$ of Theorem~\ref{boundrNLprop}.
\end{proof}

\section{Control of corrector differences: Proof of Theorem~\ref{th:corrL}}\label{sec:corr-diff}

\subsection{Reduction argument}

As for nonlinear correctors, by Proposition~\ref{convergenceofperiodiccorrectors}  it is enough to prove estimates for $L$-periodic ensembles
that are uniform with respect to $L$.
We split the version of Corollary~\ref{cor:average-per-NL} for the linearized corrector into two statements: Proposition~\ref{prop:average-per-L} below shows that averages of the gradient of the extended linearized corrector decay at the CLT scaling provided we have good control of moments of $\nabla \tilde \phi_{\xi,e}$, whereas Proposition~\ref{prop:average-per-L+} provides the latter.
\begin{proposition}\label{prop:average-per-L}
Under Hypothesis~\ref{hypo},  for all $K\ge 1$ and all $0<\theta<1$, there exists $\gamma>0$   (depending on $K$ and $\theta$) such that for all $L \ge 1$, all $\xi \in \R^d$ with $|\xi|\le K$, all $g \in L^2(\R^d)$ compactly supported in $Q_L$, and all 
unit vectors $e \in \R^d$, the random field
$\calF:=\int_{Q_L}g\cdot (\nabla\tilde \phi_{\xi,e},\nabla \tilde \sigma_{\xi,e})$
satisfies for all $q \ge 1$ such that  $2q'\le 2+\kappa$ (where $\kappa>0$ is as in Theorem~\ref{th:annealedmeyers})
\begin{equation}\label{e.prop:average-per-L}
\expecL{|\calF|^{2q}}^{\frac{1}{q}}\lesssim_{K,\theta} q^\gamma \expecL{\Big(\sup_B |\nabla\tilde\phi_{\xi,e}+e|^2 \mu_\xi\Big)^{q(1+\theta)}}^\frac1{q(1+\theta)} \Big(\int_{Q_L}|g|^2\Big)  
\end{equation}
\end{proposition}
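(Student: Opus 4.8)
The plan is to follow the proof of Corollary~\ref{cor:average-per-NL} line by line, treating separately the contributions of $\nabla\corL$ and of $\nabla\sigL_{\xi,e,ij}$ to $\calF$. The only genuinely new feature compared with the nonlinear correctors is that the coefficient field $\aL=Da(\cdot,\xi+\nabla\corNL)$ of the linearized corrector equation~\eqref{e.Lcorr} depends on the randomness both explicitly, through $A$, and implicitly, through the nonlinear corrector $\nabla\corNL$. As a consequence the sensitivity calculus produces, on top of the terms already present in the nonlinear case, one extra contribution which is quadratic in $\nabla\corL+e$, and it is this contribution that is responsible for the shape of the right-hand side of~\eqref{e.prop:average-per-L}.

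I would start with the sensitivity calculus. Fixing $x\in Q_L$, perturbing $A$ into $A+h\delta A$ with $\delta A$ supported in $B(x)$ and $\|\delta A\|_{L^\infty}\le 1$, and recalling $\aa(\zeta):=(1+|\zeta|^{p-2})\zeta$ so that $\aL=A\,D\aa(\xi+\nabla\corNL)$, one finds that the sensitivity $\delta\corL$ solves, in the limit $h\downarrow 0$ and with $\delta\corNL$ the sensitivity of $\corNL$ given by~\eqref{sensiNLequation1+},
\begin{equation*}
-\nabla\cdot\aL\nabla\delta\corL=\nabla\cdot\Big(\big(\delta A\,D\aa(\xi+\nabla\corNL)+A\,D^2\aa(\xi+\nabla\corNL)\nabla\delta\corNL\big)(e+\nabla\corL)\Big).
\end{equation*}
Proceeding by duality, with $u$ the $\aL^*$-dual of $g$ (that is $-\nabla\cdot\aL^*\nabla u=\nabla\cdot g$) and $w$ the auxiliary $\aL^*$-problem whose source is the vector field $D^2\aa(\xi+\nabla\corNL)[\nabla u,\,e+\nabla\corL]$, and testing these equations against $\delta\corL$ and $\delta\corNL$ respectively (using~\eqref{sensiNLequation1+} to re-express the implicit term), I expect a representation of the form
\begin{equation*}
\partial_x\calF\,\lesssim\,\int_{B(x)}\Big(|D\aa(\xi+\nabla\corNL)|\,|e+\nabla\corL|\,|\nabla u|\;+\;|\aa(\xi+\nabla\corNL)|\,|\nabla w|\Big).
\end{equation*}
For the flux-corrector part $\calF=\int_{Q_L}g\cdot\nabla\sigL_{\xi,e,ij}$ one repeats Substep~2.1 of the proof of Corollary~\ref{cor:average-per-NL}, adding a Laplacian dual $u_1$ ($-\triangle u_1=\nabla\cdot g$) and the associated $\aL^*$-duals; this inserts one Laplacian layer but yields the same type of formula with $\nabla u$ replaced by $\nabla u_2+\partial_iu_1e_j-\partial_ju_1e_i$.

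Next I would apply the logarithmic-Sobolev inequality as in Step~2 of the proof of Proposition~\ref{weakNL}, then Cauchy-Schwarz on $B(x)$ together with $|D\aa(\zeta)|\lesssim\mu_{\xi}$ at $\zeta=\xi+\nabla\corNL$, the identity $|\aa(\zeta)|^2\mu_{\xi}^{-1}\lesssim|\zeta|^2\mu_{\xi}$, and the control of averages of the nonlinear corrector~\eqref{controlunitball}. This reduces~\eqref{e.prop:average-per-L} (up to the analogous $\sigL$ terms) to estimating, for $q\ge 1$,
\begin{align*}
&q\,\expecL{\Big(\int_{Q_L}\big(\sup_{B(x)}\mu_{\xi}|e+\nabla\corL|^2\big)\Big(\fint_{B(x)}|\nabla u|^2\mu_{\xi}\Big)\,\dd x\Big)^q}^{1/q}\\
&\qquad+\;q\,\expecL{\Big(\int_{Q_L}\r(x)^{d-\delta}\Big(\fint_{B(x)}|\nabla w|^2\mu_{\xi}\Big)\,\dd x\Big)^q}^{1/q},
\end{align*}
where passing from the local $L^2$-average of $|e+\nabla\corL|^2$ to its local supremum uses the deterministic local regularity of Lemma~\ref{regestiNL} (this is where the local smoothness of $A$ enters). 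For the $w$-term, using that $w$ solves a $\mu_{\xi}$-uniformly elliptic equation and applying the annealed Meyers estimate of Theorem~\ref{th:annealedmeyers} once more transfers the estimate onto the source $D^2\aa(\xi+\nabla\corNL)[\nabla u,\,e+\nabla\corL]$, which — by the structure of $\aa$ and again Lemma~\ref{regestiNL} — is controlled by a bound of the same type as the first term (modulo the harmless extra factor $\r(x)^{d-\delta}$, absorbed through Theorem~\ref{boundrNLprop}).

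Finally I would conclude exactly as in Step~1 and Substep~2.2 of the proof of Corollary~\ref{cor:average-per-NL}: dualizing the $q$-th power by suprema over random variables $X$ independent of the space variable with $\ExpecL{|X|^{2q'}}=1$, and then successively using H\"older's inequality in probability, the stretched-exponential moment bound on $\r$ (Theorem~\ref{boundrNLprop}), the annealed Meyers estimate (Theorem~\ref{th:annealedmeyers}, legitimate since $2q'\le 2+\kappa$) for $u$ and $w$ (resp.\ $u_2$), and the annealed maximal regularity of Theorem~\ref{th:annealed-lap} for $u_1$. The stationary factor $\sup_{B(x)}\mu_{\xi}|e+\nabla\corL|^2$ is decoupled from the rest by one further H\"older inequality in probability, which is precisely where a loss in the stochastic exponent is incurred: tuning the corresponding exponent in terms of $\theta$, and taking $q$ in the admissible range, yields~\eqref{e.prop:average-per-L} with $\gamma=\gamma(|\xi|,\theta)$. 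The step I expect to be the main obstacle is the sensitivity calculus of the linearized corrector together with the treatment of the resulting quadratic-in-$(\nabla\corL+e)$ term: one must organize carefully the cascade of auxiliary dual problems triggered by the implicit $\nabla\corNL$-dependence of $\aL$, and verify that the source of the $w$-problem has exactly the $\mu_{\xi}$-weighted integrability needed to re-enter the annealed Meyers machinery — this is what ultimately dictates the form of the right-hand side fed into the buckling of Proposition~\ref{prop:average-per-L+}.
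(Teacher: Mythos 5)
Your proposal follows essentially the same route as the paper: the cascade of auxiliary dual problems triggered by the implicit $\nabla\corNL$-dependence of $\aL$ (two layers for $\nabla\corL$, three for $\nabla\tilde\sigma_{\xi,e}$), the log-Sobolev inequality, Cauchy--Schwarz on $B(x)$ with $|D\aa|\lesssim\mu_\xi$ and $|\aa|^2\mu_\xi^{-1}\lesssim|\xi+\nabla\corNL|^2\mu_\xi$, then duality in probability plus annealed Meyers/Laplacian estimates and H\"older with $\theta$-dependent exponents. One minor imprecision: passing from the local $L^2$-average of $\mu_\xi|e+\nabla\corL|^2$ to its local supremum is the trivial inequality $\fint_B\le\sup_B$ and does not require Lemma~\ref{regestiNL}; the supremum is forced rather by the need to decouple the quadratic-in-$(e+\nabla\corL)$ source of the second-layer auxiliary problem (the $w_2$-type equation) from the dual solution gradients inside the same local average, and the deterministic regularity only enters later, in Lemma~\ref{smallscalereg} within the proof of Proposition~\ref{prop:average-per-L+}, to produce moment bounds on this supremum.
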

The proof of Proposition~\ref{prop:average-per-L} relies on a sensitivity estimate by duality combined with the annealed Meyers estimate of Theorem~\ref{th:annealedmeyers}.
\begin{proposition}[Control of moments]\label{prop:average-per-L+}
Under Hypothesis~\ref{hypo},  for all $K\ge 1$ there exists $\gamma>0$ (depending on $K$) such that for all $L \ge 1$, all $\xi \in \R^d$ with $|\xi|\le K$,  and all 
unit vectors $e \in \R^d$ we have
\begin{equation}\label{e.prop:moment-per-L+}
\expecL{\Big(\sup_B |\nabla\tilde \phi_{\xi,e}+e|^2 \mu_\xi\Big)^{q}}^\frac1{q} \,\lesssim \, q^\gamma.
\end{equation}
\end{proposition}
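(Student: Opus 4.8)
The plan is to reduce the pointwise estimate to a moment bound on a local, weighted energy average of the linearized corrector, and to obtain the latter from a sensitivity estimate closed by a buckling, in the spirit of the proofs of Theorem~\ref{boundrNLprop} and Corollary~\ref{cor:average-per-NL}. By Proposition~\ref{convergenceofperiodiccorrectors} it suffices to argue for the periodized ensemble with constants uniform in $L\ge1$, and by stationarity we may anchor the base point at the origin. Throughout we write $\aa(\zeta):=(1+|\zeta|^{p-2})\zeta$, so that $a(x,\xi)=A(x)\aa(\xi)$ and $\aL=A\,D\aa(\xi+\nabla\corNL)$.

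First I would use \emph{deterministic local regularity}. Since $A$ is smooth on a deterministic level and $\nabla\corNL\in C^{1,\alpha}_{\loc}$ with $\|\xi+\nabla\corNL\|_{C^{\alpha}(B(x))}\lesssim_{|\xi|}\r(x)^{\frac{d-\delta}{p}}$ (Lemma~\ref{lem:supnablaphi}), the coefficient $\aL$ is Hölder continuous and uniformly elliptic on $B(x)$, with $\|\aL\|_{C^{\alpha}(B(x))}$ and ellipticity ratio both $\lesssim_{|\xi|}\r(x)^{C}$. Applying the interior De~Giorgi–Nash–Moser and Schauder estimates (scalar case; the Uhlenbeck structure for systems) to the $\aL$-harmonic function $y\mapsto\corL(y)+e\cdot y$, together with $\sup_{B(x)}\mu_{\xi}\lesssim_{|\xi|}\r(x)^{C}$ (again Lemma~\ref{lem:supnablaphi}) and $\mu_{\xi}\ge1$, this gives
$$\sup_{B(x)}|\nabla\corL+e|^2\mu_{\xi}\ \lesssim_{|\xi|}\ \r(x)^{C}\,\fint_{\Brr(x)}|\nabla\corL+e|^2\mu_{\xi},$$
reducing the claim to moment bounds on the weighted energy average on the right. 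A non-optimal such bound, valid for $q$ in a bounded range $1\le q\le q_0$ (with $q_0>1$ governed by the Meyers exponent $\bar m$), follows at once from the quenched Meyers estimate in the large (Theorem~\ref{unweightmeyers}) applied to $y\mapsto\corL(y)+e\cdot y$ — which carries no right-hand side — together with the a priori energy estimate $\int_{Q_L}|\nabla\corL+e|^2\mu_{\xi}\lesssim\int_{Q_L}\mu_{\xi}$, Lemma~\ref{ctrlavNL} (to bound $\fint_{\Brr(x)}\mu_{\xi}$), and the stretched-exponential moments of $\nabla\corNL$ from Theorem~\ref{th:corrNL} (to bound $\expecL{\mu_{\xi}^{m/2}}$).

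To reach all exponents I would then invoke the \emph{sensitivity calculus on averages} $\calF=\int_{Q_L}g\cdot\nabla\corL$, as in Proposition~\ref{weakNL} and Corollary~\ref{cor:average-per-NL}. An increment $\delta A$ localized in $B(x)$ affects $\aL$ both directly and through $\nabla\corNL$, so the functional derivative — obtained by duality via the adjoint linearized solution $u$ (with $-\nabla\cdot\aL^*\nabla u=\nabla\cdot g$) and a second adjoint solution $\psi$ encoding the chain-rule contribution — has the shape $\partial_x\calF\lesssim\int_{B(x)}\mu_{\xi}|\nabla\corL+e|\,|\nabla u|+\int_{B(x)}|\aa(\xi+\nabla\corNL)|\,|\nabla\psi|$. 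In the first term one factors out $\sup_{B(x)}\big(|\nabla\corL+e|\sqrt{\mu_{\xi}}\big)=\big(\sup_{B(x)}|\nabla\corL+e|^2\mu_{\xi}\big)^{1/2}$ — only to the first power — leaving an $L^1$-type quantity: this is exactly where local regularity lets one replace the local $L^2$-norm of the linearized corrector by a local $L^1$-norm, taming what is otherwise a quadratic dependence. In the second term one factors out $\sup_{B(x)}|\aa(\xi+\nabla\corNL)|\lesssim_{|\xi|}\r(x)^{C}$ (Lemma~\ref{lem:supnablaphi}), leaving no linearized corrector behind. Feeding this into the logarithmic-Sobolev inequality, peeling off the powers of $\r$ by Hölder's inequality in probability (controlled by Theorem~\ref{boundrNLprop}), and estimating the remaining dual energies by the annealed Meyers estimate (Theorem~\ref{th:annealedmeyers}, applicable since $2q'\le2+\kappa$ for $q\gg1$), the annealed Laplacian estimate (Theorem~\ref{th:annealed-lap}), and Lemma~\ref{lem:postproc} (to pass from $\Br$- to $B$-averages), one arrives — this is precisely Proposition~\ref{prop:average-per-L} — at a bound of $\expecL{|\calF|^{2q}}^{1/q}$ by $q^{\gamma}\,\expecL{\big(\sup_B|\nabla\corL+e|^2\mu_{\xi}\big)^{q(1+\theta)}}^{1/(q(1+\theta))}\int_{Q_L}|g|^2$ for all $\theta\in(0,1)$ and $q\gg1$. ``Integrating'' this CLT scaling into a sublinear-growth bound for $(\corL,\tilde\sigma_{\xi,e})$ (as in the proof of Theorem~\ref{th:corrNL}) and plugging it back into the local regularity estimate yields a self-consistent inequality linking $\expecL{\big(\sup_B|\nabla\corL+e|^2\mu_{\xi}\big)^{q}}^{1/q}$ to its value at a slightly larger exponent, which — together with the bounded-range bound and the Meyers self-improvement, choosing $\theta$ suitably at each stage — closes the buckling and delivers $\lesssim_{|\xi|}q^{\gamma}$ for all $q\ge1$.

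The hard part will be precisely this last buckling: the sensitivity estimate controls the $q$-th moment only in terms of the $q(1+\theta)$-th moment, so closing the loop uniformly in $L$ is delicate and requires carefully balancing the loss $q\mapsto q(1+\theta)$, the $\theta$-dependence of the constants coming from the annealed Meyers estimate, and the polynomial-in-$q$ moments of the Meyers minimal radius from Theorem~\ref{boundrNLprop}. It is also here that the dimensional restriction $2\le p<\frac{2(d-1)}{d-3}$ is felt, through the stochastic integrability $\expecL{\mu_{\xi}^{q}}<\infty$ needed for the dual energies to be meaningful — in particular for the linearized flux corrector $\tilde\sigma_{\xi,e}$ of Lemma~\ref{lem:def-lincorr} to exist (cf.~Remark~\ref{rem:whymoment}).
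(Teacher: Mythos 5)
Your proposal correctly identifies the two main structural ingredients: reducing the pointwise supremum to a local weighted energy via deterministic local regularity, and feeding the CLT‑scaled sensitivity estimate of Proposition~\ref{prop:average-per-L} into a buckling. You also correctly identify where the "$L^2$‑to‑$L^1$" trade paid for by local regularity enters the sensitivity formula. But the buckling step as you describe it does not close, and this is not a matter of "carefully balancing the loss": it is the very obstruction the paper names explicitly when it writes that one \emph{cannot} buckle on moments of $\nabla\tilde\phi_{\xi,e}$ themselves (because the linearized coefficients are unbounded) and must instead go through level sets of a minimal radius.

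Concretely, Proposition~\ref{prop:average-per-L} gives
\begin{equation*}
\expecL{|\calF|^{2q}}^{\frac1q}\ \lesssim_{|\xi|,\theta}\ q^{\gamma}\,\expecL{\Big(\sup_B|\nabla\tilde\phi_{\xi,e}+e|^2\mu_\xi\Big)^{q(1+\theta)}}^{\frac1{q(1+\theta)}}\int_{Q_L}|g|^2,
\end{equation*}
and your local regularity estimate reduces the $q$-th moment of $X:=\sup_B|\nabla\tilde\phi_{\xi,e}+e|^2\mu_\xi$ to an expression containing the $q(1+\theta)$-th moment of $X$ with exponent exactly $\tfrac1{q(1+\theta)}$. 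Writing $Y_q:=\expecL{X^q}^{1/q}$, this reads $Y_q\lesssim q^{\gamma} Y_{q(1+\theta)}$: the exponent on the right is not strictly smaller than on the left, the recursion has no gain, and iterating it only loses factors — it never terminates in a uniform-in-$L$ bound. In the paper's treatment of the Meyers radius (Theorem~\ref{boundrNLprop}) the analogous loop \emph{does} close, but only because the nonlinear hole-filling exponent $\delta>0$ shows up so that the right-hand side involves a strictly smaller power than the left.

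The missing ingredient is the introduction of the linear minimal scale $\tilde r_{\star,\xi,e,L}$ (Definition~\ref{defminimalscaleL}), which detects the random scale at which $\fint_{B_R}|\nabla\tilde\phi_{\xi,e}|^2\mu_\xi$ is comparable to $\fint_{B_{2R}}\mu_\xi$, is a priori bounded by $L$, and — via the linear hole-filling estimate of Corollary~\ref{Lholefilling} — delivers the quantitative gain in Lemma~\ref{smallscalereg}:
\begin{equation*}
\sup_B|\nabla\tilde\phi_{\xi,e}+e|^2\mu_\xi\ \le\ \chi\,\tilde r_{\star,\xi,e,L}^{\,d-\beta},
\end{equation*}
with $\beta>0$ and $\chi$ controlled by $r_{\star,\xi,L}$. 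One then runs a Caccioppoli–Markov argument on the level sets $\{\tilde r_{\star,\xi,e,L}=R\}$ — exactly as for $r_{\star,\xi,L}$ in Theorem~\ref{boundrNLprop} — and uses Proposition~\ref{prop:average-per-L} to control the resulting corrector averages; the hole-filling gain $\beta$ produces a strict contraction $(d-\beta)(1+\theta)^3<d-\tfrac{\beta}{K}$ for $\theta$ small, and Young's inequality closes the loop (Proposition~\ref{boundrLprop}). Your bound $\sup_{B(x)}|\nabla\tilde\phi_{\xi,e}+e|^2\mu_\xi\lesssim r_\star(x)^{C}\fint_{B_{2\star}(x)}|\nabla\tilde\phi_{\xi,e}+e|^2\mu_\xi$ contains no such decay factor, precisely because $r_{\star,\xi,L}$ controls the nonlinear energy density $\fint_{B_R}|\nabla\phi_\xi|^p$ but not the \emph{linearized} density $\fint_{B_R}|\nabla\tilde\phi_{\xi,e}|^2\mu_\xi$, which is what must be tamed at a random scale here.
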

The proof of Proposition~\ref{prop:average-per-L+} is based on Proposition~\ref{prop:average-per-L} and a buckling argument. Because the linearized corrector equation has unbounded coefficients, we cannot use the elegant approach of \cite{Otto-Tlse} (see also \cite[Proposition~4.5]{DG-21}) to buckle on moments of $\nabla \tilde \phi_{\xi,e}$ themselves. Instead, as we did for $\rNL$, we have to go through the super levelsets of some minimal radius controlling the growth of averages of $|\nabla \tilde \phi_{\xi,e}|^2\mu_\xi$.

\medskip

Before we turn to the proofs, let us show how bounds on linearized correctors allow us to derive bounds on nonlinear corrector differences in the form of Corollary~\ref{coro:corr-diff}.
\begin{proof}[Proof of Corollary~\ref{coro:corr-diff}]
For simplicity, we only treat $\phi_\xi$.

\medskip

\step1 Statement for differences of corrector gradients.

\noindent By \eqref{convergenceofthecor} in Proposition~\ref{convergenceofperiodiccorrectors} in form of (note the difference of expectations)
$$
\expec{|\nabla (\phi_\xi-\phi_{\xi'})|^q}^\frac1q \,=\, \lim_{L\uparrow +\infty}\expecL{\vert \nabla (\phi_\xi-\phi_{\xi'})\vert^q}^\frac1q,
$$
it suffices to prove the statement for the periodized ensemble.
By  Lemma~\ref{lemmadiffcor}, $\mathbb P_L$-almost surely, $\xi \mapsto \nabla \phi_\xi$ is differentiable and we have by the fundamental theorem of calculus (with implicit sum on the repeated index $i$)
\begin{equation}\label{e.good-diff}
\nabla \phi_\xi-\nabla \phi_{\xi'}= \int_0^1 \nabla \tilde \phi_{\xi+t(\xi'-\xi),e_i}  (\xi-\xi')_i\dd t,
\end{equation}
so that by taking the $q$-th moment and using Proposition~\ref{prop:average-per-L+}, one obtains
\begin{equation}\label{e.approxL-per-cor+}
\expecL{|\nabla \phi_\xi-\nabla \phi_{\xi'}|^q}^\frac1q \,\le \, |\xi-\xi'| \sum_i \int_0^1 \expecL{|\nabla \phi_{\xi+t(\xi'-\xi),e_i}|^q}^\frac1q\dd t  \, \lesssim \, q^\gamma |\xi-\xi'|,
\end{equation}
which yields the claim by taking the limit $L\uparrow \infty$.

\medskip

\step2 Statement for corrector differences.

\noindent By \eqref{convergenceofthecor+}, since $\int_B \phi_\xi =0$, for all $x\in \R^d$ we have 
for all $q\ge 1$  
\begin{equation}\label{e.approxL-per-cor}
\expec{\Big(\int_{B(x)} |\phi_\xi-\phi_{\xi'}|^2\Big)^\frac{q}2}^\frac1q
=\lim_{L\uparrow \infty}\expecL{\Big(\int_{B(x)} \Big|\phi_\xi-\phi_{\xi'}-\fint_B \phi_\xi-\phi_{\xi'}\Big|^2\Big)^\frac{q}2}^\frac1q.
\end{equation}
To control the right-hand side, we shall  bound moments of periodic random fields $\zeta$ by moments of averages of their gradients $\nabla \zeta$.
Indeed, by Poincar\'e's inequality on $B(x)$ for $x \in Q_L$, we have for $c=\fint_{B} \zeta$
(recall that $B$ denotes the unit ball centered at the origin)
\begin{equation}\label{eq:phi-bnd0}
\expecL{ \Big( \int_{B(x)} (\zeta-c)^2 \Big)^\frac q2}^\frac1q \,\lesssim\, \expecL{|\nabla \zeta|^q}^\frac1q +\expecL{\Big| \fint_{B(x)} \zeta -c \Big|^{q}}^\frac1q,
\end{equation}
and it remains to estimate the second right-hand side term.
For that purpose, we write
\[\fint_{B(x)}\zeta-\fint_{B}\zeta=\int_{Q_L}\nabla\zeta\cdot\nabla h_{x},\]
where $h_x$ denotes the unique weak periodic solution in $Q_L$ of
$-\triangle h_x=\tfrac{1}{|B|}(\mathds1_{B(x)}-\mathds1_B)$.
We apply this to $\zeta=\phi_\xi-\phi_{\xi'}$ and 
rewrite the gradient as $\nabla \zeta = \int_0^1 \nabla \phi_{\xi+t(\xi'-\xi),e_i}(\xi-\xi')_i\dd t$, to the effect that 
$$
\fint_{B(x)}(\phi_\xi-\phi_{\xi'})-\fint_{B}(\phi_\xi-\phi_{\xi'})=(\xi-\xi')_i\int_0^1 \Big(\int_{Q_L}\nabla \phi_{\xi+t(\xi'-\xi),e_i}\cdot\nabla h_{x}\Big)\dd t.
$$
Using Propositions~\ref{prop:average-per-L} and~\ref{prop:average-per-L+}, this yields
$$
\expecL{\Big|\fint_{B(x)}(\phi_\xi-\phi_{\xi'})-\fint_{B}(\phi_\xi-\phi_{\xi'})\Big|^q}^\frac1q
\,\le_{K} \,|\xi-\xi'| q^\gamma \Big(\int_{Q_L} |\nabla h_x|^2\Big)^\frac12.
$$
A direct computation with Green's kernel gives $\|\nabla h_{x}\|_{L^2(Q_L)}\,\lesssim\,\mu_d(x)$, 
and thus
$$
\expecL{\Big|\fint_{B(x)}(\phi_\xi-\phi_{\xi'})-\fint_{B}(\phi_\xi-\phi_{\xi'})\Big|^q}^\frac1q
\,\le_{K} \,|\xi-\xi'| q^\gamma \mu_d(x).
$$
Combined with \eqref{eq:phi-bnd0}, \eqref{e.approxL-per-cor}, and \eqref{e.approxL-per-cor+}, this entails
$
\expec{\Big(\int_{B(x)} |\phi_\xi-\phi_{\xi'}|^2\Big)^\frac{q}2}^\frac1q \, \lesssim\, |\xi-\xi'|q^\gamma \mu_d(x),
$
from which the claim follows using local regularity in form of Lemma~\ref{regestiNL} and
\eqref{e.approxL-per-cor+} in the limit $L\uparrow +\infty$. 

\medskip

\step3 Regularity of $\xi \mapsto \bar a(\xi)$.

\noindent The starting point is the definition $\bar a(\xi):=\expec{a(\xi+\nabla \phi_\xi)}=\expec{A(0)\aa(\xi+\nabla \phi_\xi(0))}$ and of its approximation by periodization 
$\bar a_L(\xi):=\expecL{A(0)\aa(\xi+\nabla \phi_\xi(0))}$ for all $L\ge 1$.
Since $\bar a_L(\xi) \to \bar a(\xi)$ as $L\uparrow +\infty$, it is enough to prove that $D\bar a_L$ is Lipschitz-continuous uniformly with respect to $L$ and given for all $\xi,e \in \R^d$ by
$$
D\bar a_L(\xi) e:= \bar a_{L,\xi} e=\expecL{A(0)D\aa(\xi+\nabla \phi_\xi(0))(e+\nabla \tilde \phi_{\xi,e}(0))}.
$$
The differentiability of $\xi \mapsto \bar a_L(\xi)$ and the formula for $D \bar a_L(\xi)$ follow from \eqref{e.good-diff}, the continuity of $\xi \mapsto \nabla \corL$, and the moment bounds on $\nabla \corL$. It remains to argue that $\xi \mapsto D\bar a_L(\xi)$ is Lipschitz-continuous. Since $\xi \mapsto \nabla \phi_\xi$ is continuously differentiable with stretched exponential moment bounds, it is enough to prove that $\xi \mapsto \nabla \tilde \phi_{\xi,e}$ is itself Lipschitz-continuous in $L^2(d\mathbb P_L)$. This is a direct consequence of the defining equation~\eqref{e.Lcorr} in the form for all $\xi,\xi' \in \R^d$ of
\begin{equation*}
-\nabla \cdot D a(\cdot,\xi+\nabla \phi_\xi)\nabla (\tilde \phi_{\xi,e}-\tilde \phi_{\xi',e}) \,=\,
\nabla \cdot (D a(\cdot,\xi+\nabla \phi_\xi)-D a(\cdot,\xi'+\nabla \phi_{\xi'})) (e+\nabla \tilde \phi_{\xi',e})
\end{equation*}
combined with the differentiability of $\xi \mapsto \nabla \phi_\xi$, uniform moment bounds on $\nabla\tilde \phi_{\xi',e}$ and $\nabla \phi_{\xi}$, and an energy estimate.
\end{proof}

\subsection{CLT-scaling: Proof of Proposition~\ref{prop:average-per-L}}

In this paragraph, we fix $e$ and $\xi$, and use the short-hand notation $\r$ for $\rNL$, $\phi$ for $\phi_\xi$, $\mu$ for $\mu_\xi$,  $\tilde \phi$ for $\tilde \phi_{\xi,e}$, $\tilde \sigma$ for $\tilde \sigma_{\xi,e}$.
We split the proof into three steps. In the first two steps, we derive sensitivity estimates for averages of $\nabla \tilde \phi$ and of $\nabla \tilde \sigma$, respectively, and then conclude in the third step using Theorems~\ref{th:annealedmeyers} and~\ref{th:annealed-lap}.

\medskip

\step1 Sensivity formula for $\nabla \tilde \phi$: The random 
variable $\calF_1:= \int_{Q_L} g \cdot \nabla \tilde \phi$ (where $g$ abusively denotes $g e'$ for some unit vector $e' \in \R^d$) satisfies for all $x\in Q_L$
\begin{equation}
\delta_x \calF_1\,=\, \int_{B(x)}\vert D\aa(\xi+\nabla\phi)(e+\nabla\phiL)\otimes \nabla u_1+\aa(\xi+\nabla\phi)\otimes \nabla u_2\vert,
\label{functioderivL}
\end{equation}
where we recall that $\aa: \xi\in\mathbb{R}^d\mapsto (1+\vert\xi\vert^{p-2})\xi$, and where $u_1,u_2\in H^1_\per(Q_L)$ are the unique weak solutions of 
\begin{equation}
-\nabla\cdot\TaL\nabla u_1=\nabla\cdot g 
\label{equationdualL1}
\end{equation}
and  (with an implicit sum over the repeated index $k$)
\begin{equation}
-\nabla\cdot\TaL\nabla u_2=\partial_k(D^2a(\xi+\nabla\phi)(e+\nabla\phiL)e_k\cdot \nabla u_1).
\label{equationdual2}
\end{equation}
(These equations are well-posed since the $Q_L$-periodic maps $\nabla \phi$ and $\TaL$ are bounded almost surely.)

\medskip

Let us give the quick argument. Following Step~1 of the proof of 
Proposition~\ref{weakNL}, we let $\delta A$ be an increment of $A$ localized in $B(x)$
and consider for $h$ small enough
\begin{align*}
\delta^h\mathcal{F}_{1}&:=\frac{\mathcal{F}_{1}(A+h\delta A)-\mathcal{F}_{1}(A)}{h}=\int_{Q_L}g\cdot \nabla\delta^h\phiL,\quad
\delta^h\phiL:=\frac{\phiL(A+h\delta A)-\phiL(A)}{h},\\
b^h_\xi&:=A\int_{0}^1D^2 \aa(\xi+t\nabla\phi(A+h\delta A)+(1-t)\nabla\phi(A))  \dd t ,
\end{align*}
and recall the notation \eqref{functionderivNLnota1} and \eqref{functionderivNLnota2}. 
By the defining equation~\eqref{e.Lcorr} for the linearized corrector, we obtain
\begin{align*}
-\nabla\cdot\aL\nabla\delta^h\phiL=&\nabla\cdot \delta A D\aa(\xi+\nabla \phi(A+h\delta A))(e+\nabla\phiL(A+h\delta A))\nonumber\\
&+\frac{1}{h}\nabla\cdot A(D\aa(\xi+\nabla\phi(A+h\delta A))-D\aa(\xi+\nabla\phi(A))(e+\nabla\phiL(A+h\delta A)), 
\end{align*}
which we rewrite, by the fundamental theorem of calculus and the definition of $b^h_{\xi}$, as
\begin{equation*}
-\nabla\cdot \aL\nabla\delta^h\phiL=\nabla\cdot \delta A D\aa(\xi+\nabla \phi(A+h\delta A))(e+\nabla\phiL(A+h\delta A))+\nabla\cdot b^h_\xi \nabla\delta^h\phi(e+\nabla\phiL(A+h\delta A)).
\end{equation*}
As in Step~1 of the proof of 
Proposition~\ref{weakNL}, we can pass to the limit as $h\downarrow 0$ and obtain 
that $\delta^h\phiL$ converges in $C^{1,\alpha}(Q_L)$ to the solution $\delta\phiL \in H^1_\per(Q_L)$ of
\begin{equation}
-\nabla\cdot \aL\nabla\delta\phiL=\nabla\cdot \delta A D\aa(\xi+\nabla \phi )(e+\nabla\phiL )+\nabla\cdot b_\xi \nabla\delta\phi(e+\nabla\phiL),
\label{sensiLequation1+}
\end{equation}
with $b_\xi:=D^2 a(\xi+ \nabla\phi)$.  

We now proceed by duality.
First, we test \eqref{sensiLequation1+} with $u_1$ and \eqref{equationdualL1} with $\delta\phiL$ to obtain
\begin{equation}
\delta\calF_{1}=\lim_{h\downarrow 0} \delta^h \calF_1=\int_{Q_L}\nabla u_1\cdot \delta A D\aa(\xi+\nabla \phi)(e+\nabla\phiL)+\int_{Q_L}\nabla u_1\cdot b_\xi \nabla\delta \phi(e+\nabla\phiL ).
\label{sensiLequation3}
\end{equation}
Second, we test \eqref{sensiNLequation1+} with $u_2$ and \eqref{equationdual2} with $\delta\phi$ to get
\begin{equation}
\int_{Q_L}\nabla u_1\cdot b_\xi \nabla\delta \phi(e+\nabla\phiL )=\int_{Q_L}\nabla u_2\cdot \delta A \aa(\xi+\nabla\phi ).
\label{sensiLequation4}
\end{equation} 
The combination of \eqref{sensiLequation3} and \eqref{sensiLequation4} then entails
the claim \eqref{functioderivL} by taking the supremum over $\delta A$.

\medskip

\step2 Sensitivity formula for $\nabla \tilde \sigma_{ij}$ (for $i,j$ fixed): The random 
variable $\calF_2:= \int_{Q_L} g \cdot \nabla \tilde \sigma_{ij}$ 
satisfies for all $x\in Q_L$
\begin{equation}
\delta_x \calF_2\,=\, \int_{B(x)}\vert  D\aa(\xi+\nabla \phi)(e+\nabla  \phiL) \otimes  (\nabla w_1+\partial_i v e_j-\partial_j v e_i)+\aa(\xi+\nabla \phi)\otimes \nabla w_2  \vert ,
\label{functioderiv-sigL}
\end{equation}
where the functions $v,w_1,w_2 \in H^1_\per(Q_L)$ solve (with an implicit sum over the repeated index $k$)
\begin{eqnarray}
-\triangle v&=&\nabla \cdot g,\label{e.sens-ant-v}
\\
-\nabla \cdot a_\xi^* \nabla w_1&=& \nabla \cdot  a_\xi^* (\partial_i  v e_j-\partial_j v e_i),\label{e.sens-ant-w1}
\\
-\nabla \cdot a_\xi^* \nabla w_2&=&\partial_k\big(D^2a(\xi+\nabla\phi)(e+\nabla\phiL)e_k\cdot (\nabla w_1+\partial_i v e_j-\partial_j v e_j)\big).\label{e.sens-ant-w2}
\end{eqnarray}
We only display the algebra of the argument (passing already to the limit $h\downarrow 0$,
which entails that $\delta=\lim_{h\downarrow 0}\delta^h$ satisfies the Leibniz rule).
Recall the defining equation for $\tilde \sigma_{ij}$ with the notation 
$a_\xi=D a(\xi+\nabla \phi)$
\begin{equation*}
-\triangle \tilde \sigma_{ij} \,=\, \partial_i (a_\xi(e+\nabla  \phiL)\cdot e_j)-\partial_j(a_\xi(e+\nabla \phiL)\cdot e_i).
\end{equation*}
First, by \eqref{e.sens-ant-v},
$$
\delta \calF_2 = \int  (\partial_i v e_j-\partial_j v e_i)  \cdot \delta \Big( D a(\xi+\nabla \phi)(e+\nabla  \phiL)\Big).
$$
Since $\delta$ satisfies the Leibniz rule, we have
\begin{equation*}
\delta \big( D a(\xi+\nabla \phi)(e+\nabla  \phiL)\big)
\,=\, \delta A D\aa(\xi+\nabla \phi)(e+\nabla  \phiL)+ D^2 a(\xi+\nabla \phi) \nabla \delta \phi (e+\nabla  \phiL)+a_\xi \nabla \delta \phiL. 
\end{equation*}
The first right-hand term directly gives the right-hand side contribution of \eqref{functioderiv-sigL} involving $\nabla v$.
For the second term, we introduce the solutions $w_{2,1}$ and $w_{2,2}$ of 
$-\nabla \cdot a_\xi^* \nabla w_{2,1}=\partial_k\big(D^2a(\xi+\nabla\phi)(e+\nabla\phiL)e_k\cdot (\partial_i v e_j-\partial_j v e_j)\big)$
and $-\nabla \cdot a_\xi^* \nabla w_{2,2}=\partial_k\big(D^2a(\xi+\nabla\phi)(e+\nabla\phiL)e_k\cdot \nabla w_1 \big)$
to the effect that $w_2=w_{2,1}+w_{2,2}$.
By using \eqref{sensiNLequation1+}, we obtain
$$
 \int  (\partial_i u e_j-\partial_j u e_i)  \cdot D^2 a(\xi+\nabla \phi) \nabla \delta \phi (e+\nabla  \phiL)\,=\, \int \nabla w_{2,1} \cdot \delta A \aa(\xi+\nabla \phi).
$$
This yields part of the right-hand side contribution of \eqref{functioderiv-sigL} involving $\nabla w_2$.
We conclude with the third term.
Using first \eqref{e.sens-ant-w1} we obtain 
$$
\int  (\partial_i u e_j-\partial_j u e_i)  \cdot a_\xi \nabla \delta \phiL = -\int  \nabla w_1\cdot a_\xi \nabla \delta \phiL,
$$
and therefore using \eqref{sensiLequation1+}
$$
\int  (\partial_i u e_j-\partial_j u e_i)  \cdot a_\xi \nabla \delta \phiL = \int  \nabla w_1\cdot 
\Big(\delta A D\aa(\xi+\nabla \phi )(e+\nabla\phiL )+ D^2a(\xi+\nabla \phi) \nabla\delta\phi(e+\nabla\phiL)\Big).
$$
The first right-hand side term yields the right-hand side contribution of \eqref{functioderiv-sigL} involving $\nabla w_1$. For the second term, we use $w_{2,2}$, and conclude using \eqref{sensiNLequation1+}
that
$$
 \int  \nabla w_1\cdot D^2a(\xi+\nabla \phi) \nabla\delta\phi(e+\nabla\phiL)\,=\,
  \int  \nabla w_{2,2} \cdot \delta A \aa(\xi+\nabla \phi).
$$
This gives the second part of the right-hand side contribution of \eqref{functioderiv-sigL} involving $\nabla w_2$, recalling that $\nabla w_2=\nabla w_{2,1}+\nabla w_{2,2}$.

\medskip

\step3 Proof of \eqref{e.prop:average-per-L}.

\noindent From the logarithmic-Sobolev inequality, and Steps~1 and~2, we deduce
by the triangle inequality that for all $q\ge 1$,
\begin{multline*}
\expecL{ |\calF|^{2q}}^\frac1q \,\lesssim \, \underbrace{q \expecL{\Big(\int_{Q_L} \Big(\int_{B(x)}| D\aa(\xi+\nabla\phi)||e+\nabla\phiL|(|\nabla u_1|+|\nabla v|+|\nabla w_1|)\Big)^2\dd x\Big)^q}^\frac1q}_{\displaystyle =:I_1}
\\
+\underbrace{q \expecL{\Big(\int_{Q_L} \Big(\int_{B(x)}|\aa(\xi+\nabla\phi)|(|\nabla u_2|+|\nabla w_2|)\Big)^2\dd x\Big)^q}^\frac1q}_{\displaystyle =:I_2}.
\end{multline*}
To control these terms we proceed as in the proof of Corollary~\ref{cor:average-per-NL}: using duality in probability and Theorems~\ref{th:annealedmeyers} and~\ref{th:annealed-lap}.
We treat the two right-hand sides separately.
(In what follows, $\gamma$ denotes finite positive exponents independent of $q$, the precise value of which we are not interested in.) 

\medskip

\substep{3.1} Proof of 
\begin{equation}\label{e.sens-ant-3.1}
I_1 \,\lesssim \, q^\gamma \expecL{\Big(\int_{B(x)} |e+\nabla\phiL|^2 \mu_\xi\Big)^{q(1+\theta)}}^\frac{1}{q(1+\theta)} \int_{Q_L} |g|^2.
\end{equation}
The most technical term to treat is the one involving $w_1$ (which is defined by solving two equations successively, whereas $u_1$ and $v$ are defined by solving one equation only). 
By Cauchy-Schwarz' inequality, and the definitions of $\aa$ and $\mu_\xi$,
\begin{multline*}
\expecL{\Big(\int_{Q_L} \Big(\int_{B(x)}| D\aa(\xi+\nabla\phi)||e+\nabla\phiL||\nabla w_1|\Big)^2\dd x\Big)^q}^\frac1q\\
\lesssim \, 
\expecL{\Big(\int_{Q_L} \Big(\int_{B(x)} |e+\nabla\phiL|^2 \mu_\xi\Big)\Big(\int_{B(x)} |\nabla w_1|^2 \mu_\xi\Big)\dd x\Big)^q}^\frac1q.
\end{multline*}
By duality (in probability), this entails
\begin{eqnarray*}
\lefteqn{\expecL{\Big(\int_{Q_L} \Big(\int_{B(x)}| D\aa(\xi+\nabla\phi)||e+\nabla\phiL||\nabla w_1|\Big)^2\dd x\Big)^q}^\frac1q}\\
&\lesssim & \sup_{X} \expecL{\int_{Q_L} \Big(\int_{B(x)} |e+\nabla\phiL|^2 \mu_\xi\Big)\Big(\int_{B(x)} |\nabla X w_1|^2 \mu_\xi\Big)\dd x},
\end{eqnarray*}
where the supremum runs over random variables $X$ (independent of the space variable) such that $\expec{|X|^{2q'}}=1$.
To obtain the claimed dependence on the moments of $\int_{B(x)} |e+\nabla\phiL|^2 \mu_\xi$, we set $\eta_\circ:=\frac{\theta}{(1+\theta)(q-1)}$, to the effect that $q'>1+\eta_\circ$ and 
$\frac{q'}{q'-(1+\eta_\circ)}=q(1+\theta)$, and use H\"older's inequality with exponents
$(\frac{q'}{q'-(1+\eta_\circ)},\frac{q'}{1+\eta_\circ})$, so that the above turns into
\begin{eqnarray*}
\lefteqn{\expecL{\Big(\int_{Q_L} \Big(\int_{B(x)}| D\aa(\xi+\nabla\phi)||e+\nabla\phiL||\nabla w_1|\Big)^2\dd x\Big)^q}^\frac1q}\\
&\lesssim & \expecL{\Big(\int_{B(x)} |e+\nabla\phiL|^2 \mu_\xi\Big)^{q(1+\theta)}}^\frac{1}{q(1+\theta)} \sup_{X} \int_{Q_L}\expecL{\Big(\int_{B(x)} |\nabla X w_1|^2 \mu_\xi\Big)^\frac{q'}{1+\eta_\circ}}^\frac{1+\eta_\circ}{q'}\dd x.
\end{eqnarray*}
For convenience, we rewrite $1+\eta_\circ$ as $(1+\eta)^2$,
and apply Theorem~\ref{th:annealedmeyers} to \eqref{e.sens-ant-w1}, which yields provided $
2q' \le 2+ \kappa$, 
$$
\int_{Q_L}\expecL{\Big(\int_{B(x)} |\nabla X w_1|^2 \mu_\xi\Big)^\frac{q'}{(1+\eta)^2}}^\frac{(1+\eta)^2}{q'}\dd x\,\lesssim \, \zeta(\eta_\circ) \int_{Q_L}\expecL{\Big(\int_{B(x)}   |\nabla X v|^2 \mu_\xi\Big)^\frac{q'}{1+\eta}}^\frac{1+\eta}{q'}\dd x, 
$$
where $\zeta:t \mapsto t^{-\frac14}|\log t|^\frac12$ (since for $0<\eta_\circ<\frac12$, $\zeta(\eta)=\zeta(\sqrt{1+\eta_\circ}-1) \lesssim \zeta(\eta_\circ)$).
By the bound $\mu_\xi\lesssim \r^{(d-\delta)\frac{p-2}{p}}$ and H\"older's inequality with exponents $(\frac{1+\eta}{\eta},1+\eta)$, followed by Theorem~\ref{th:annealed-lap} applied to \eqref{e.sens-ant-v} (with exponent $q' \lesssim 1$)
we further have  
\begin{eqnarray*}
{\int_{Q_L}\expecL{\Big(\int_{B(x)}   |\nabla X v|^2 \mu_\xi\Big)^\frac{q'}{1+\eta}}^\frac{1+\eta}{q'}\dd x}
&\lesssim & \expecL{ \r^{\frac{q'}{\eta}(d-\delta)\frac{p-2}{p}}}^\frac{\eta}{q'}
\int_{Q_L}\expecL{\Big(\int_{B(x)} |\nabla X v|^2  \Big)^{q'}}^\frac{1}{q'}\dd x
\\
&\lesssim&  \expecL{ \r^{ \frac{q'}{\eta} (d-\delta)\frac{p-2}{p}}}^\frac{\eta}{q'}
\expecL{|X|^{2q'}}^\frac{1}{q'}\int_{Q_L} |g|^2
\\
&=&   \expecL{ \r^{ \frac{q'}{\eta} (d-\delta)\frac{p-2}{p}}}^\frac{\eta}{q'}
 \int_{Q_L} |g|^2,
\end{eqnarray*}
where we used that $g$ is deterministic and $\expec{|X|^{2q'}}=1$.
We have thus proved  that 
\begin{multline*}
\expecL{\Big(\int_{Q_L} \Big(\int_{B(x)}| D\aa(\xi+\nabla\phi)||e+\nabla\phiL||\nabla w_1|\Big)^2\dd x\Big)^q}^\frac1q\\
\lesssim \, 
 \expecL{\Big(\int_{B(x)} |e+\nabla\phiL|^2 \mu_\xi\Big)^{q(1+\theta)}}^\frac{1}{q(1+\theta)}
 \zeta(\eta_\circ)  \expecL{ \r^{\frac{q'}{\sqrt{1+\eta_\circ}-1}\frac{p-2}{p}(d-\delta)}}^\frac{\sqrt{1+\eta_\circ}-1}{q'}
 \int_{Q_L} |g|^2.
\end{multline*}
Since $\eta_\circ=\frac{\theta}{(1+\theta)(q-1)}$, by definition of $\zeta$ and by the moment bounds on $\r$ of Theorem~\ref{boundrNLprop}, 
$$
 q\zeta(\eta_\circ)  \expecL{ \r^{ \frac{q'}{\sqrt{1+\eta_\circ}-1}\frac{p-2}{p}(d-\delta)}}^\frac{\sqrt{1+\eta_\circ}-1}{q'} \,\lesssim \, q^\gamma
$$
for some exponent $\gamma>0$ independent of $q$.
This entails the claimed estimate \eqref{e.sens-ant-3.1}.

\medskip

\substep{3.2} Proof of 
\begin{equation}\label{e.sens-ant-3.2}
I_2\,
\lesssim\, q^\gamma  \expecL{\sup_{B}\{ |e+\nabla\phiL|^2 \mu_\xi\}^{q(1+\theta)}}^\frac{1}{q(1+\theta)} 
\int_{Q_L}|g|^2.
\end{equation}
We only display the argument for the term involving $\nabla w_2$, which is defined by solving three equations successively (which will compel us to appeal to Theorem~\ref{th:annealedmeyers} twice in a row, and then to Theorem~\ref{th:annealed-lap}).
By Cauchy-Schwarz' inequality, and the definition of $\aa$ and $\mu_\xi$,
$$
\expecL{\Big(\int_{Q_L} \Big(\int_{B(x)}|\aa(\xi+\nabla\phi)||\nabla w_2|\Big)^2\dd x\Big)^q}^\frac1q
\,\lesssim \, \expecL{\Big(\int_{Q_L} \Big(\int_{B(x)} \mu_\xi\Big)  \Big( \int_{B(x)}\mu_\xi |\nabla w_2|^2\Big)\dd x\Big)^q}^\frac1q.
$$
By duality and the bound $\mu_\xi\lesssim \r^{(d-\delta)\frac{p-2}{p}}$, we have
\begin{equation*}
{\expecL{\Big(\int_{Q_L} \Big(\int_{B(x)}|\aa(\xi+\nabla\phi)||\nabla w_2|\Big)^2\dd x\Big)^q}^\frac1q}
\,\lesssim\,\sup_X\expecL{\int_{Q_L}  \r^{(d-\delta)\frac{p-2}{p}} \Big( \int_{B(x)}\mu_\xi |\nabla Xw_2|^2\Big)  \dd x},
\end{equation*}
where the supremum runs over random variables $X$ (thus independent of the space variable) such that $\expec{|X|^{2q'}}=1$. 
We now introduce exponents: $\eta_2:=\frac{1}{q-1} \frac{\theta}{8(1+\theta)}$
and $\eta_1:=\frac{1}{(q-1)(1+\eta_2)^2(1+\theta)}$ which are chosen so that
$\frac{q'}{(1+\eta_2)^2\eta_1}=q(1+\theta)$ and $\frac{q'}{(1+\eta_2)^3(1+\eta_1)}>1$.
Let us quickly check the second property:
$$
(1+\eta_2)^3(1+\eta_1)=(1+\eta_2)^3+\frac{1+\eta_2}{(q-1)(1+\theta)}
\le 1+(7+\frac1{(q-1)(1+\theta)})\eta_2+\frac1{(q-1)(1+\theta)} <1+\frac{1}{q-1}=q'.
$$
With these exponents at hands, we first use H\"older's inequality with exponents 
$(\frac{q'}{q'-(1+\eta_2)^3(1+\eta_1)},\frac{q'}{(1+\eta_2)^3(1+\eta_1)})$ together with the stationarity of $\r$, and obtain
\begin{multline*}
\expecL{\int_{Q_L}  \r^{(d-\delta) \frac{p-2)}{p}} \Big( \int_{B(x)}\mu_\xi |\nabla Xw_2|^2\Big)  \dd x}
\\
\lesssim\, \expecL{\r^{\frac{q'}{q'-(1+\eta_2)^3(1+\eta_1)}{(d-\delta)\frac{p-2}{p}}}}^\frac{q'-(1+\eta_2)^3(1+\eta_1)}{q'}
 \int_{Q_L}   \expec{\Big( \fint_{B(x)}\mu_\xi |\nabla Xw_2|^2\Big)^{\frac{q'}{(1+\eta_2)^3(1+\eta_1)}}}^\frac{(1+\eta_2)^3(1+\eta_1)}{q'} \dd x.
\end{multline*}
Provided $2q'\le 2+\kappa$, Theorem~\ref{th:annealedmeyers}  applied to~\eqref{e.sens-ant-w2} yields  
\begin{multline*}
 \int_{Q_L}   \expec{\Big( \fint_{B(x)}\mu_\xi |\nabla Xw_2|^2\Big)^{\frac{q'}{(1+\eta_2)^3(1+\eta_1)}}}^\frac{(1+\eta_2)^3(1+\eta_1)}{q'} \dd x
\\
\lesssim \, \zeta(\eta_2) \int_{Q_L}   \expecL{\Big( \fint_{B(x)}\mu_\xi^{-1}|D^2a(\xi+\nabla\phi)|^2|e+\nabla\phiL|^2(|\nabla Xw_1|^2+|\nabla Xv|^2)\Big)^{\frac{q'}{(1+\eta_2)^2(1+\eta_1)}}}^\frac{(1+\eta_2)^2(1+\eta_1)}{q'} \dd x.
\end{multline*}
Since $\mu_\xi\ge 1$ and $|D^2a(\xi+\nabla\phi)|\le \mu_\xi$, this yields
\begin{multline*}
\int_{Q_L}   \expec{\Big( \fint_{B(x)}\mu_\xi |\nabla Xw_2|^2\Big)^{\frac{q'}{(1+\eta_2)^3(1+\eta_1)}}}^\frac{(1+\eta_2)^3(1+\eta_1)}{q'} \dd x
\\
\lesssim \, \zeta(\eta_2) \int_{Q_L}   \expecL{\sup_{B(x)}\{ |e+\nabla\phiL|^2 \mu_\xi\}^{\frac{q'}{(1+\eta_2)^2(1+\eta_1)}}\Big( \fint_{B(x)}(|\nabla Xw_1|^2+|\nabla Xv|^2)\Big)^{\frac{q'}{(1+\eta_2)^2(1+\eta_1)}}}^\frac{(1+\eta_2)^2(1+\eta_1)}{q'} \dd x.
\end{multline*}
We only treat the term involving $w_1$, which is the most subtle of the two.
We then apply H\"older's inequality with exponents $(\frac{1+\eta_1}{\eta_1},1+\eta_1)$, and use the stationarity of $x\mapsto \sup_{B(x)}\{ |e+\nabla\phiL|^2 \mu_\xi\}$ and the definition of 
$\eta_1$ and $\eta_2$ to the effect that 
\begin{multline*}
 \int_{Q_L} \expecL{\sup_{B(x)}\{ |e+\nabla\phiL|^2 \mu_\xi\}^{\frac{q'}{(1+\eta_2)^2(1+\eta_1)}}\Big( \fint_{B(x)}(|\nabla Xw_1|^2+|\nabla Xv|^2)\Big)^{\frac{q'}{(1+\eta_2)^2(1+\eta_1)}}}^\frac{(1+\eta_2)^2(1+\eta_1)}{q'}\dd x
\\
\le \, \expecL{\sup_{B}\{ |e+\nabla\phiL|^2 \mu_\xi\}^{q(1+\theta)}}^\frac{1}{q(1+\theta)} \int_{Q_L}   \expecL{ \Big( \fint_{B(x)}|\nabla Xw_1|^2\Big)^{\frac{q'}{(1+\eta_2)^2}}}^\frac{(1+\eta_2)^2}{q'} \dd x.
\end{multline*}
In view of equation~\eqref{e.sens-ant-w1}, one may appeal to Theorem~\ref{th:annealedmeyers}, and obtain  
$$
\int_{Q_L}   \expecL{ \Big( \fint_{B(x)}|\nabla Xw_1|^2\Big)^{\frac{q'}{(1+\eta_2)^2}}}^\frac{(1+\eta_2)^2}{q'} \dd x\,\lesssim \, \zeta(\eta_2) \int_{Q_L}   \expecL{ \Big( \fint_{B(x)}\mu_\xi |\nabla X v|^2\Big)^{\frac{q'}{1+\eta_2}}}^\frac{1+\eta_2}{q'} \dd x.
$$
We finally bound $\mu_\xi$ using $\r$, use H\"older's inequality with exponents $(\frac{1+\eta_2}{\eta_2},1+\eta_2)$ and we apply Theorem~\ref{th:annealed-lap} to equation~\eqref{e.sens-ant-v}
\begin{eqnarray*}
\lefteqn{ \int_{Q_L}   \expecL{ \Big( \fint_{B(x)}\mu_\xi |\nabla X v|^2\Big)^{\frac{q'}{1+\eta_2}}}^\frac{1+\eta_2}{q'} \dd x}
\\
&\le&\expecL{\r^{\frac{q'}{\eta_2}{(d-\delta)\frac{p-2}{p}}}}^\frac{\eta}{q'}
\int_{Q_L} \expecL{ \Big( \fint_{B(x)}  |\nabla X v|^2\Big)^{q'}}^\frac1{q'} \dd x
\\
&\lesssim &\expecL{\r^{\frac{q'}{\eta}{(d-\delta)\frac{p-2}{p}}}}^\frac{\eta_2}{q'}
\expecL{|X'|^{2q'}} \int_{Q_L} |g|^2 = \expecL{\r^{\frac{q'}{\eta_2}{(d-\delta)\frac{p-2}{p}}}}^\frac{\eta_2}{q'}\int_{Q_L} |g|^2.
\end{eqnarray*}
As in Substep~3.1, the above estimates combine to \eqref{e.sens-ant-3.2}
using Theorem~\ref{boundrNLprop} and our choice of $\eta_2$.

\subsection{Control of level sets: Proof of Proposition~\ref{prop:average-per-L+}}

As mentioned above, we do not buckle on moments of $\nabla \tilde \phi_{\xi,e}$ but rather
on a minimal scale that controls the growth of $R\mapsto \fint_{B_R}|\nabla \tilde \phi_{\xi,e}|^2 \mu_\xi$ by the growth of $\fint_{B_{2R}} \mu_\xi$.
\begin{definition}[Linear minimal scale]\label{defminimalscaleL}
Let $\xi\in\mathbb{R}^d$, $L\geq 1$, $\vert e\vert=1$ and $C>0$. 
For all $x\in Q_L$, we define the linear minimal scale $\rL(x,C)$ via
\begin{equation}
\rL(x,C): = \inf_{r=2^N, N\in\mathbb{N}}\left\{\forall R\geq r\,:\,\fint_{B_R}|\nabla \tilde \phi_{\xi,e}|^2 \mu_\xi \le C \fint_{B_{2R}} \mu_\xi \right\}.
\label{defr*L}
\end{equation}
\end{definition}
As for the Meyers minimal radius, $\rL(\cdot,C)$ is bounded by $L$ as soon as $C$ is large enough, due to periodicity and to the plain energy estimates  for  $\tilde \phi_{\xi,e}$  in form of 
$
\int_{Q_L} |\nabla \tilde \phi_{\xi,e}|^2 \mu_\xi  \,\lesssim \, \int_{Q_L} \mu_\xi.
$
In what follows we fix such a constant $C$, fix $e$ and $\xi$, and use the short-hand notation $\rr$ for $\rL(\cdot,C)$, $\r$ for $\rNL$, $\phi$ for $\phi_\xi$, and $\tilde \phi$ for $\tilde \phi_{\xi,e}$.
The upcoming lemma uses local regularity and hole-filling to control $\sup_B |\nabla\tilde \phi +e|^2 \mu_\xi$ by $\rr$ and $\r$.
\begin{lemma}[Quenched bounds on the linearized correctors]\label{smallscalereg}
For all $K\ge 1$ there exist two exponents  $0<\beta\le d$ (the linear hole-filling exponent of Lemma~\ref{ctrlavNL}) and $\gamma>0$, and a non-negative stationary random field $\chi$ (depending on $\r$, $\|A\|_{C^{0,\alpha}(\mathbb{R}^d)}$ and $K$)  with the following properties: For all $\xi \in \R^d$ with $|\xi|\le K$ and all $x\in\mathbb{R}^d$ 
\begin{equation}
\sup_{B(x)}  |e+\nabla\tilde \phi|^2 \mu_\xi \le \chi(x) (\rr(x))^{d-\beta},
\label{deterboundL}
\end{equation}
and all $q \ge 1$
\begin{equation}
\mathbb E_L[\chi^q]^{\frac{1}{q}}\, \lesssim_{K}\, q^{\gamma}.
\label{momentchireg}
\end{equation}
\end{lemma}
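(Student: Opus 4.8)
The plan is to combine the deterministic local regularity theory for the linearized corrector equation with the linear hole-filling estimate in the large (Corollary~\ref{Lholefilling}) and the defining property of the linear minimal scale. The starting observation is that $v:=e\cdot x+\tilde\phi$ is $\aL$-harmonic on $Q_L$ (since $\tilde\phi$ solves \eqref{e.Lcorr}), that $\aL=Da(\cdot,\xi+\nabla\phi)$ is uniformly elliptic from below by $\lambda$ — because $D\aa(\zeta)h\cdot h\ge|h|^2$ for all $\zeta,h\in\R^d$, which is precisely where the non-degenerating constant $1$ in $\aa$ enters, cf.~\eqref{growthconditionlineaxi} — while from above $|\aL|\lesssim\mu_\xi$, and that on the deterministic level $\aL$ is $C^\alpha$ with $\|\aL\|_{C^\alpha(B_2(x))}\lesssim_{\|A\|_{C^\alpha(\R^d)}}1+\|\nabla\phi\|_{C^\alpha(B_2(x))}$.

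The first step is the deterministic regularity estimate. Applying Lemma~\ref{regestiNL} to the $\aL$-harmonic function $v$ on $B_2(x)\supset B(x)$, while tracking the dependence of the constant on the ellipticity ratio $\sup_{B_2(x)}\mu_\xi$ and on $\|\aL\|_{C^\alpha(B_2(x))}$, yields
$$
\sup_{B(x)}|e+\nabla\tilde\phi|^2\,\lesssim\,\big(\sup_{B_2(x)}\mu_\xi\big)^{\gamma_1}\big(1+\|A\|_{C^\alpha(\R^d)}\big)^{\gamma_2}\big(1+\|\nabla\phi\|_{C^\alpha(B_2(x))}\big)^{\gamma_3}\fint_{B_2(x)}|e+\nabla\tilde\phi|^2
$$
for suitable deterministic exponents $\gamma_1,\gamma_2,\gamma_3$. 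Since $\r\ge1$ is $\tfrac1{16}$-Lipschitz, Lemma~\ref{lem:supnablaphi} bounds $\|\nabla\phi\|_{C^\alpha(B_2(x))}$ and $\sup_{B_2(x)}\mu_\xi=1+\sup_{B_2(x)}|\xi+\nabla\phi|^{p-2}$ (cf.~\eqref{e.def-mu}) by $|\xi|$-dependent powers of $\r(x)$; multiplying through by $\sup_{B(x)}\mu_\xi\lesssim_{|\xi|}(\r(x))^{(d-\delta)(p-2)/p}$, using $\mu_\xi\ge1$, and enlarging the average from $B_2(x)$ to $B_\rho(x)$ with $\rho:=2c_d\,\r(x)\ge2$ (at the cost of a factor $(\r(x))^d$), one obtains
$$
\sup_{B(x)}|e+\nabla\tilde\phi|^2\mu_\xi\,\lesssim_{|\xi|}\,\Psi(\r(x))\,\fint_{B_\rho(x)}|\nabla v|^2\mu_\xi
$$
for some deterministic polynomial $\Psi$ with $|\xi|$-dependent coefficients, the bounded factor $1+\|A\|_{C^\alpha(\R^d)}$ being kept explicit for later use.

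The second step estimates $\fint_{B_\rho(x)}|\nabla v|^2\mu_\xi$ by $(\rr(x))^{d-\beta}$ up to an $|\xi|$-dependent constant, $\beta$ being the linear hole-filling exponent (cf.~Corollary~\ref{Lholefilling}). If $\rho\le\rr(x)$, the hole-filling estimate \eqref{Lholefillingesti} applied to $v$ between scales $\rho$ and $\rr(x)$ — after the harmless passage from cubes to balls — gives $\fint_{B_\rho(x)}|\nabla v|^2\mu_\xi\lesssim(\rr(x)/\rho)^{d-\beta}\fint_{B_{\rr(x)}(x)}|\nabla v|^2\mu_\xi$; then, using $|\nabla v|^2\le2+2|\nabla\tilde\phi|^2$, $\mu_\xi\ge1$, and the defining property \eqref{defr*L} of $\rr(x)$ at scale $\rr(x)$, one has $\fint_{B_{\rr(x)}(x)}|\nabla v|^2\mu_\xi\lesssim\fint_{B_{2\rr(x)}(x)}\mu_\xi$, which by \eqref{controlunitball} and $\rr(x)\ge\rho\ge\r(x)$ is $\lesssim1+|\xi|^p$. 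If instead $\rr(x)<\rho$, one applies \eqref{defr*L} directly at a dyadic scale comparable to $\rho$ (hence $\ge\rr(x)$ and $\sim\r(x)$) and concludes again through \eqref{controlunitball} that $\fint_{B_\rho(x)}|\nabla v|^2\mu_\xi\lesssim_{|\xi|}1$. Since $\r(x)\ge1$ and $\rr(x)\ge1$, both cases give $\fint_{B_\rho(x)}|\nabla v|^2\mu_\xi\lesssim_{|\xi|}(\rr(x))^{d-\beta}$, which combined with the first step proves \eqref{deterboundL} with $\chi(x):=C_{|\xi|}\big(1+\|A\|_{C^\alpha(\R^d)}\big)^{\gamma_2}\Psi(\r(x))$, a stationary random field depending only on $\r$, $\|A\|_{C^\alpha(\R^d)}$, and $|\xi|$. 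The moment bound \eqref{momentchireg} then follows from the stretched exponential estimate \eqref{boundrNL} of Theorem~\ref{boundrNLprop}, which via Lemma~\ref{momentexp} gives $\expecL{\r^m}^{1/m}\lesssim_{|\xi|}m^{1/\gamma}$ for all $m\ge1$, hence $\expecL{\Psi(\r)^q}^{1/q}\lesssim_{|\xi|}q^{\gamma'}$ with $\gamma'$ proportional to $\deg\Psi/\gamma$, the factor $1+\|A\|_{C^\alpha(\R^d)}$ being deterministically bounded.

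The main obstacle is the first step: carrying out the deterministic regularity estimate for the linearized equation with explicit control of the constant by a power of the ellipticity ratio $\sup_{B(x)}\mu_\xi$ and of the H\"older norm $\|\aL\|_{C^\alpha}$ — the latter being governed by $\|\nabla\phi_\xi\|_{C^\alpha}$, and hence by $\r(x)$ through Lemma~\ref{lem:supnablaphi}. The remainder is bookkeeping: the harmless dichotomy $\rr(x)\lessgtr\rho$, the passage between balls and cubes in Corollary~\ref{Lholefilling}, the enlargement of averages from unit scale to $\r(x)$-scale, and tracking the deterministically bounded H\"older norm of $A$.
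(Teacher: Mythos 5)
Your overall strategy is the same as the paper's: combine a quantitative Schauder estimate for the linearized (linear) equation with the linear hole-filling estimate and the defining property of $\rL$, then conclude via the moment bounds on $\r$ from Theorem~\ref{boundrNLprop}. The second step (hole-filling and the dichotomy on $\rr$ vs.\ $\rho$) is correct, the order of the two steps is harmlessly swapped relative to the paper, and the bookkeeping is sound.

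The one genuine flaw is in the first step: you invoke Lemma~\ref{regestiNL} for the $\aL$-harmonic function $v=e\cdot x+\tilde\phi$. That lemma is a \emph{nonlinear} Schauder statement, hypothesized specifically for monotone operators of the form $a(x,\xi)=A(x)(1+|\xi|^{p-2})\xi$; it does not apply to the \emph{linear} elliptic system $-\nabla\cdot\aL\nabla v=0$, whose coefficient $\aL$ is unbounded and $x$-dependent but not of that nonlinear product form. What you actually need — and what the paper uses — is the \emph{linear} Schauder estimate \cite[Theorem~5.19]{giaquinta2013introduction}, applied to \eqref{e.Lcorr}, together with the observation that its constant depends algebraically on the ellipticity ratio and the $C^{0,\alpha}$-seminorm of $\aL$ (both of which are encapsulated in $\|\aL\|_{C^\alpha(B)}$ since $\mu_\xi\ge1$, cf.\ \eqref{deterboundNL}). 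Your phrase ``while tracking the dependence of the constant on the ellipticity ratio and on $\|\aL\|_{C^\alpha}$'' shows you know exactly what estimate is needed, and once the correct reference is substituted the rest of your argument goes through verbatim; as written, however, the cited lemma is neither applicable nor quantitatively stated in the form you use it.
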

\begin{proof}
We split the proof into two steps. In the first step, we control the $C^{\alpha}$-norm of $a_\xi$ that we use in the second step to control the linearized corrector via classical Schauder theory for elliptic systems. W.l.o.g we may assume that $x=0$.\newline
\newline
\step1 Proof that
\begin{equation}
\|\aL\|_{C^{\alpha}(B)}\leq C \r^{(d-\delta)\frac{p-2}{p}},
\label{deterboundNL}
\end{equation}
for some constant $C>0$ depending on $d$, $p$, $\|A\|_{C^{0,\alpha}(\mathbb{R}^d)}$, and $|\xi|$, where $0<\delta\le d$ is the nonlinear hole-filling exponent of Lemma~\ref{ctrlavNL}. (We recall that $\|X\|_{C^\alpha(B)}=\|X\|_{L^\infty(B)}+\|X\|_{C^{0,\alpha}(B)}$.)

On the one hand, by Lemma~\ref{regestiNL} applied to the equation \eqref{e.cor-eq} combined with the estimate \eqref{controlunitball}, we have
\begin{equation}
\|\xi+\nabla\phi\|_{C^{\alpha}(B)}\,\lesssim_{\|A\|_{C^{0,\alpha}(\mathbb{R}^d)}} \,\Big(\fint_{B_{2}}\vert \xi+\nabla\phi \vert^p \Big)^{\frac{1}{p}}\,\stackrel{\eqref{controlunitball}}{\leq}_{\|A\|_{C^{0,\alpha}(\mathbb{R}^d)}}(1+|\xi|) \r^{\frac{d-{\delta}}{p}}.
\label{deterboundproof1+}
\end{equation}
On the other hand, recall that  $\aL=AD\aa(\xi+\nabla\phi)$ with $\aa : \zeta\in\mathbb{R}^d\mapsto (1+\vert\zeta\vert^{p-2})\zeta$,  and thus for all $\zeta\in\mathbb{R}^d$
\begin{equation}
\vert D\aa(\zeta)\vert\lesssim 1+\vert\zeta\vert^{p-2} \text{ and } \vert D^2\aa(\zeta)\vert\lesssim 1+\vert\zeta\vert^{p-3}.
\label{deterboundproof8}
\end{equation}
Therefore, by \eqref{deterboundproof1+} and \eqref{deterboundproof8},
\begin{align}
\|D\aa(\xi+\nabla\phi)\|_{C^{\alpha}(B)}&\leq \|D\aa(\xi+\nabla\phi)\|_{L^\infty(B)}+\|D^2\aa(\xi+\nabla\phi)\|_{L^{\infty}(B)}\|\xi+\nabla\phi\|_{\text{C}^{0,\alpha}(B)}\nonumber\\
&\stackrel{\eqref{deterboundproof8}}{\lesssim}
1+ \|\xi+\nabla\phi\|^{p-2}_{C^\alpha(B )} \,\stackrel{\eqref{deterboundproof1}}{\lesssim}_{\|A\|_{C^{0,\alpha}(\mathbb{R}^d)} } \, 
(1+|\xi|)^{p-2} \r^{(d-{\delta})\frac{p-2}{p}},
\label{deterboundproof3}
\end{align}
from which the claim  \eqref{deterboundNL}  follows since  
$\|\aL\|_{C^{\alpha}(B)}\,\leq\, \|A\|_{C^\alpha(B)}\|D\aa(\xi+\nabla\phi)\|_{C^{\alpha}(B)}.
$%

\medskip

\step2 Proof of \eqref{deterboundL}. 

\noindent We first argue that 
\begin{equation}
\int_{B}\vert e+\nabla\phiL \vert^2\mu_\xi \,\lesssim\, \rr^{d-\beta} \r^{\frac{p-2}{p}(d-\delta)+\beta}.\label{deterboundproof5}
\end{equation}
If $\rr < \r$, the claim follows from  the defining property~\eqref{defr*L} in form of 
$$
\int_{B}\vert e+\nabla\phiL \vert^2\mu_\xi \,\lesssim \, 2\rr^{d}\fint_{B_{\rr}}(1+|\nabla\phiL|^2)\mu_\xi \,  {\le} \,2(C+1) \rr^{d} \fint_{B_{2\rr}} \mu_\xi \, \lesssim \, \rr^d \r^{(d-\delta)\frac{p-2}{p}} \,\lesssim \,\rr^{d-\beta}  \r^{(d-\delta)\frac{p-2}{p}+\beta}.
$$
If  $\rr \geq  \r$, we appeal to the hole filling estimate \eqref{Lholefillingesti}, to  the defining property~\eqref{defr*L}, and use \eqref{defr*NL2} \&~\eqref{encadrementrNL}, to the effect that
\begin{equation*}
\int_{B}\vert e+\nabla\phiL \vert^2\mu_\xi \,\lesssim \,\r^{d}\fint_{B_{\r}}\vert e+\nabla\phiL\vert^2\mu_\xi 
\,\stackrel{\eqref{Lholefillingesti}}{\lesssim}\, \rr^{d-\beta}\r^{\beta}\fint_{B_{\rr}}\vert e+\nabla\phiL\vert^2\mu  \, \lesssim \, \rr^{d-\beta}\r^{\beta } \fint_{B_{\rr}} \mu_\xi 
\,\lesssim\,
\rr^{d-\beta} \r^{\beta }.
\end{equation*}
We now argue that \eqref{deterboundproof5} entails~\eqref{deterboundL}. By the Schauder estimate \cite[Theorem~5.19]{giaquinta2013introduction} applied to~\eqref{e.Lcorr} (for which the constant depends algebraically on the ellipticity ratio and the $C^{0,\alpha}$-seminorm of the coefficients, which we may encapsulate in the $C^{\alpha}$-norm since $\mu_\xi \ge 1$), and the bound~\eqref{deterboundNL} on the coefficient and  \eqref{deterboundproof5}, there is some $\gamma>0$ (depending on $\alpha$ and $d$) such that
\begin{equation*}
\|e+\nabla\phiL\|_{L^{\infty}(B)}\,\lesssim\,\|\aL\|^{\gamma}_{C^{\alpha}(B)}\Big(\fint_{B_2}\vert e+\nabla\phiL\vert^2 \Big)^{\frac{1}{2}}\,\stackrel{\eqref{deterboundNL},\eqref{deterboundproof5}}{\lesssim} \, \r^{\gamma(d-{\delta})\frac{p-2}{p}}\rr^{\frac{1}2 (d-\beta)}\r^{\frac12((d-\delta)\frac{p-2}{p} +\beta)},
\end{equation*}
which yields~\eqref{deterboundL} for $\chi:=C \r^{2 (\gamma+1)(d-\delta)\frac{p-2}p+\beta}$ (for some constant $C >0$ depending on $d$, $p$, $\vert\xi\vert$ and $\|A\|_{C^{0,\alpha}(\mathbb{R}^d)}$). The claimed moment
bounds on $\chi$  follow from Theorem~\ref{boundrNLprop} (for a suitable $\gamma>0$).
\end{proof}
The main result of this section is the following control of $\rr$, 
which implies Proposition~\ref{prop:average-per-L+} in combination with Lemma~\ref{smallscalereg}.
\begin{proposition}\label{boundrLprop} 
For all $K\ge 1$ there exists an exponent $\gamma>0$ such that for all $\xi \in \R^d$ with $|\xi|\le K$ and all $q\ge 1$, 
$
\mathbb E_L[\rr^q]^{\frac{1}{q}}\, \lesssim_K\, q^{\gamma}
$
(where $\rr$ is associated with $\xi$).
\end{proposition}
\begin{proof}
We split the proof into three steps. In the first step, we control the level set $\{\rr=R\}$ for all dyadic $R$ using averages of the corrector gradient. 
In Step~2, we use Proposition~\ref{prop:average-per-L} to reformulate the right-hand side using moments of $\rr$ itself, and then buckle in Step~3 by exploiting the gain of integrability provided by the hole-filling exponent $\beta>0$.

\medskip

\step1 Control of level sets of $\rr$.

\noindent We claim that there exists a constant $c>0$ (depending on $\xi$, $d$, $p$)
such that for all dyadic $R \in [1,L]$, and all  $0<\kappa,\e<1$ and $q\ge 1$
\begin{equation}
\mathbb P_L[\rr=R]\, \le \, c^qR^{-(d-\beta+ 2(1-\kappa)-\e)q}\expecL{ \calC_{\star,R}^q \rr^{(d-\beta)q}}+c^qR^{\e q}\expecL{ \calC_{\star,R}^q\Big(\fint_{B_{R}}\big\vert\fint_{B_{R^{\kappa}}(x)}\nabla\phiL \big\vert^2\dd x\Big)^q} ,
\label{estimomentboundrL}
\end{equation}
where $\calC_{\star,R}:= R^{-\e}\|\mu_\xi\|_{L^{\infty}(B_{4R})}^2$.
By the defining property \eqref{defr*L} of $\rr$ (with a constant $C$ to be chosen below), we have
\begin{eqnarray}
\fint_{B_{2R}}\vert\nabla\phiL\vert^2\mu_\xi &\le & C \fint_{B_{4R}} \mu_\xi,\label{e.rL-ant1}
\\
\fint_{B_{R/2}}\vert\nabla\phiL\vert^2\mu_\xi &\ge & C \fint_{B_{R}} \mu_\xi.\label{e.rL-ant2}
\end{eqnarray}
By the Caccioppoli inequality of Lemma~\ref{cacciopounbounded}, \eqref{e.rL-ant2} yields
$$
\inf_{c\in\mathbb{R}^d}\frac{1}{R^2}\fint_{B_{R}}\vert \phiL-c\vert^2\mu_\xi+ \fint_{B_R} \mu_\xi \,\gtrsim\, C \fint_{B_{R}} \mu_\xi,
$$
so that, provided $C$ is chosen large enough in \eqref{defr*L}, we have
$$
\inf_{c\in\mathbb{R}^d}\frac{1}{R^2}\fint_{B_{R}}\vert \phiL-c\vert^2\mu_\xi \,\gtrsim\, \fint_{B_R} \mu_\xi \,\gtrsim\, 1.
$$
Set $c_R:=\fint_{B_{R}}\fint_{B_{R^{\kappa}}(x)}\phiL(y)\dd y\, \dd x$.
By the triangle inequality,  Poincar\'e's inequality in $L^2(B_R)$, and the definition of $\calC_{\star,R}$, the above turns into
\begin{eqnarray*}
1\, \lesssim \, \inf_{c\in\mathbb{R}^d}\frac{1}{R^2}\fint_{B_{R}}\vert \phiL-c\vert^2\mu_\xi 
&\leq &\sqrt{\calC_{\star,R}}R^{\frac\e2} \frac{1}{R^2}\fint_{B_{R}}\vert\phiL -c_R\vert^2 \\
&\lesssim& \sqrt{\calC_{\star,R}}R^{\frac\e2}\Big(\frac{1}{R^2}\fint_{B_{R}}\big\vert\phiL(x)-\fint_{B_{R^{\kappa}}(x)}\phiL\big \vert^2\dd x+\frac{1}{R^2}\fint_{B_{R}}\big\vert\fint_{B_{R^{\kappa}}(x)}\phiL-c_R\big\vert^2\dd x\Big)\\
&\lesssim & \sqrt{\calC_{\star,R}}R^{\frac\e2}\Big(R^{2(\kappa-1)}\fint_{B_{2R}}\vert\nabla\phiL\vert^2+\fint_{B_{R}}\big\vert\fint_{B_{R^{\kappa}}(x)}\nabla\phiL\big\vert^2\dd x\Big)\\
&\stackrel{\eqref{e.rL-ant1}}{\lesssim}& \sqrt{\calC_{\star,R}}R^{\frac\e2}\left(R^{2(\kappa-1)} \sqrt{\calC_{\star,R}}R^{\frac\e2}+\fint_{B_{R}}\big\vert\fint_{B_{R^{\kappa}}(x)}\nabla\phiL\big\vert^2\dd x\right)
\\
&\stackrel{\rr=R}=&   {\calC_{\star,R}}R^{\e}\left(R^{2(\kappa-1)}R^{-d+\beta} \rr^{d-\beta}+\fint_{B_{R}}\big\vert\fint_{B_{R^{\kappa}}(x)}\nabla\phiL\big\vert^2\dd x\right).
\end{eqnarray*}
The claim now follows from Markov' inequality.

\medskip

\step2 Control of the right-hand side of \eqref{estimomentboundrL}: For all $0<\e,\kappa,\theta < 1$, and all dyadic $R$ and exponents $q\ge 1$
\begin{equation}
\mathbb P_L[\rr=R]\, \le \, c^qq^\gamma (R^{-(d-\beta+ 2(1-\kappa)-\e)q}+R^{-(d\kappa-\e)q}) \expecL{ \rr^{(d-\beta) (1+\theta)^3 q}}^\frac1{(1+\theta)^3} ,
\label{estimomentboundrL+}
\end{equation}
for some constant $c>0$ depending on $|\xi|$, $p$, $d$, $\e$, $\kappa$, $\theta$, but not on $R$ and $q$.

Since $\rr \le L$, it suffices to establish the statement for dyadic $R \le L$.
By Lemma~\ref{unifproba} and Theorem~\ref{boundrNLprop}, there exists $\gamma>0$ such that for all $q\ge \frac d\e$ and $R\ge 1$, we have
\begin{equation}\label{e.bdC*-ant}
\mathbb E_L[\calC_{\star,R}^q]^\frac1q \lesssim  q^\gamma,
\end{equation}
where the multiplicative constant does not depend on $R$ and $\e$.
By H\"older's inequality with exponents $(\frac{1+\theta}{\theta},1+\theta)$,
we then get for the first right hand side term of  \eqref{estimomentboundrL}
\begin{equation}\label{e.bdC*-ant2}
\expecL{ \calC_{\star,R}^q \rr^{(d-\beta)q}}^\frac1q \, \le\, \expecL{ \calC_{\star,R}^{q\frac{1+\theta}{\theta}}}^\frac{\theta}{1+\theta} \expecL{ \rr^{(d-\beta)(1+\theta)q}}^\frac1{q(1+\theta)}
\,\lesssim_\theta \, q^\gamma  \expecL{ \rr^{(d-\beta)(1+\theta)q}}^\frac1{q(1+\theta)}.
\end{equation}
We turn to the second right hand side term of  \eqref{estimomentboundrL}.
By H\"older's inequality with exponents $(\frac{1+\theta}{\theta},1+\theta)$,
stationarity of $\nabla \phiL$, and \eqref{e.bdC*-ant}, we first have 
\begin{eqnarray*}
\expecL{ \calC_{\star,R}^q\Big(\fint_{B_{R}}\big\vert\fint_{B_{R^{\kappa}}(x)}\nabla\phiL\big\vert^2\dd x\Big)^q}^\frac1q
&\le& \expecL{ \calC_{\star,R}^{q\frac{1+\theta}\theta}}^\frac{\theta}{q(1+\theta)}
\expecL{\Big(\fint_{B_{R}}\big|\fint_{B_{R^{\kappa}}(x)}\nabla\phiL\big|^2\dd x\Big)^{q(1+\theta)}}^\frac1{q(1+\theta)}
\\
&\lesssim _\theta &q^\gamma \expecL{\big|\fint_{B_{R^{\kappa}}}\nabla\phiL\big|^{2q(1+\theta)}}^\frac1{q(1+\theta)}.
\end{eqnarray*}
Then, by Proposition~\ref{prop:average-per-L} applied to $g=|B_{R^\kappa}|^{-1}\mathds 1_{B_{R^\kappa}}$, followed by Lemma~\ref{smallscalereg},  by H\"older's inequality with exponent $(\frac{1+\theta}{\theta},1+\theta)$, and \eqref{momentchireg}, we have 
\begin{eqnarray*} 
\expecL{\big|\fint_{B_{R^{\kappa}}}\nabla\phiL\big|^{2q(1+\theta)}}^\frac1{q(1+\theta)} &\lesssim_{|\xi|,\theta}& q^\gamma \expecL{\Big(\sup_B |\nabla\tilde\phi_{\xi,e}+e|^2 \mu_\xi\Big)^{q(1+\theta)^2}}^\frac1{q(1+\theta)^2} \Big(\int_{Q_L}|g|^2\Big)  
\\
&\lesssim &  q^\gamma  \expecL{\chi^{q(1+\theta)^2} \rr^{(d-\beta) q(1+\theta)^2}}^\frac1{q(1+\theta)^2} R^{-d\kappa}
\\
&\lesssim&q^\gamma  R^{-d\kappa} \expecL{ \rr^{(d-\beta) q(1+\theta)^3}}^\frac1{q(1+\theta)^3} 
\end{eqnarray*}
(where we changed the value of $\gamma$ from one line to the other).
Combined with \eqref{e.bdC*-ant2}, this entails \eqref{estimomentboundrL+} by redefining $\gamma$ once more.

\medskip

\step3 Buckling argument.

\noindent Recall that all the quantities we consider are finite since $\rr \le L$.
We now express moments of $\rr$ using its level sets and obtain by \eqref{estimomentboundrL+} for some $K>1$ to be fixed below and all $q\ge 1$
\begin{eqnarray*}
\expecL{\rr^{q(d-\frac{\beta}{K})}}&\le & 1+\sum_{n=1}^\infty 2^{nq(d-\frac{\beta}{K})} 
\mathbb P_L[\rr=2^n]
\\
&\stackrel{\eqref{estimomentboundrL+}}\le & 1+\sum_{n=1}^\infty 2^{nq(d-\frac{\beta}{K})} 
c^qq^\gamma (2^{-nq(d-\beta+ 2(1-\kappa)-\e)}+2^{-nq(d\kappa-\e)})
 \expecL{ \rr^{q(d-\beta)(1+\theta)^3}}^\frac1{(1+\theta)^3}
\\
&\le& 1+\expecL{ \rr^{q(d-\beta)(1+\theta)^3}}^\frac1{(1+\theta)^3}c^qq^\gamma   \sum_{n=1}^\infty (2^{nq(-\frac\beta K+\beta-2(1-\kappa)+\e)}+2^{nq(d(1-\kappa)+\e-\frac\beta K)}).
\end{eqnarray*}
We now choose the exponents. We first fix $0\le \kappa < 1$ so   that $d(1-\kappa)=\frac\beta 2$, and then set $\e:=\frac{\beta}{5d}$ and $\frac1K:=1-\frac1{5d}$, 
to the effect that 
$$
 \frac12(2^{nq(-\frac\beta K+\beta-2(1-\kappa)+\e)}+2^{nq(d(1-\kappa)+\e-\frac\beta K)}) \,\le \,2^{-nq \frac \beta{5d}}.
$$
With this choice, the series is summable and the above turns into 
\begin{eqnarray*}
\expec{\rr^{q(d-\frac{\beta}{K})}}&\le & 1+c^qq^\gamma\expecL{ \rr^{q(d-\beta)(1+\theta)^3}}^\frac1{(1+\theta)^3} 
\end{eqnarray*}
for some redefined constant $c$.
We may then absorb part of the right-hand side into the left-hand side by Young's inequality upon choosing $0<\theta <1$  so small that $(d-\beta)(1+\theta)^3 < d-\frac{\beta}{K}$ (which is possible since $K>1$), and the claimed moment bound follows for some suitable choice of $\gamma>0$.
\end{proof}

\section{Quantitative two-scale expansion: Proof of Theorem~\ref{th:2s}}\label{sec:2s}

We assume $\delta \le 1$, and split the proof into four steps. In the first step, we show that the two-scale expansion error satisfies a nonlinear PDE in conservative form (crucially using the flux corrector).
In the second step we give a bound for the $H^{-1}(\R^d)$-norm of the right-hand side, the moments of which we control in the third step.
We then conclude in the fourth step by using the monotonicity of the heterogeneous operator $a_\e$.
In the following, we use the short-hand notation $\xi_k:=(\nabla\bar{u})_{k,\delta}$.

\medskip

\step1 Equation for the two-scale expansion error: 
\begin{equation}
-\nabla\cdot (a(\tfrac x\e,\nabla\bar{u}^{2s}_{\e,\delta}(x))-a(\tfrac x\e,\nabla u_{\e}(x)))=\nabla\cdot R_{\e,\delta}(x),
\label{2sc:Eq1}
\end{equation}
where
\begin{eqnarray*}
R_{\e,\delta}(x)
&=&\Big(\sum_{k\in\delta\mathbb{Z}^d}\eta_k(x)(\bar{a}(\xi_k)-\bar{a}(\nabla\bar{u}(x))\Big)- \Big(\sum_{k\in\delta\mathbb{Z}^d}\e\sigma_{\xi_k}(\tfrac x\e)\nabla\eta_k(x)\Big)\nonumber\\
&&+ \Big(\sum_{k\in\delta\mathbb{Z}^d}\eta_k(x)(a(\tfrac x\e,\nabla\bar{u}(x)+\nabla\phi_{\xi_k}(\tfrac x\e))-a(\tfrac x\e,\xi_k+\nabla\phi_{\xi_k}(\tfrac x\e)))\Big)\nonumber\\
&&+ \Big(a\Big(\tfrac x\e,\nabla\bar{u}(x)+\sum_{k\in\delta\mathbb{Z}^d}\nabla\phi_{\xi_k}(\tfrac x\e)\eta_k(x)\Big)-\sum_{k\in\delta\mathbb{Z}^d}\eta_k(x) a(\tfrac x\e,\nabla\bar{u}(x)+\nabla\phi_{\xi_k}(\tfrac x\e))\Big)\nonumber\\
&&+ \Big(a\Big(\tfrac x\e,\nabla\bar{u}(x)+\sum_{k\in\delta\mathbb{Z}^d}\nabla\phi_{\xi_k}(\tfrac x\e)\eta_k(x)+\e \phi_{\xi_k}(\tfrac x\e)\nabla\eta_k(x)\Big)-a\Big(\tfrac x\e,\nabla\bar{u}(x)+\sum_{k\in\delta\mathbb{Z}^d}\nabla\phi_{\xi_k}(\tfrac x\e)\eta_k(x)\Big)\Big).
\end{eqnarray*} 
To start with, we expand $\nabla \us$ as
$$
\nabla\cdot a(\tfrac x\e,\nabla \us)=\nabla\cdot a\Big(\tfrac x\e,\nabla\bar{u}(x)+ \sum_{k\in \delta\mathbb{Z}^d} \e \phi_{\xi_k}(\tfrac x\e)\nabla\eta_k(x)+\nabla\phi_{\xi_k}(\tfrac x\e)\eta_k(x)\Big),
$$
which we rewrite in the form of the telescopic sum (using that $\sum_{k\in\delta\mathbb{Z}^d}\eta_k\equiv 1$)
\begin{eqnarray*}
\lefteqn{\nabla\cdot a(\tfrac x\e,\nabla \us(x))-\nabla\cdot \bar{a}(\nabla\bar{u}(x))}
\\
&=&\nabla\cdot \Big(\sum_{k\in\delta\mathbb{Z}^d}\eta_k(x)(\bar{a}(\xi_k)-\bar{a}(\nabla\bar{u}(x))\Big)+\nabla\cdot\Big(\sum_{k\in\delta\mathbb{Z}^d}\eta_k(x)(a(\tfrac x\e,\xi_k+\nabla\phi_{\xi_k}(\tfrac x\e))-\bar{a}(\xi_k))\Big)\nonumber\\
&&+\nabla\cdot \Big(\sum_{k\in\delta\mathbb{Z}^d}\eta_k(x)(a(\tfrac x\e,\nabla\bar{u}(x)+\nabla\phi_{\xi_k}(\tfrac x\e))-a(\tfrac x\e,\xi_k+\nabla\phi_{\xi_k}(\tfrac x\e)))\Big)\nonumber\\
&&+\nabla\cdot\Big(a\Big(\tfrac x\e,\nabla\bar{u}(x)+\sum_{k\in\delta\mathbb{Z}^d}\nabla\phi_{\xi_k}(\tfrac x\e)\eta_k(x)\Big)-\sum_{k\in\delta\mathbb{Z}^d}\eta_k(x) a(\tfrac x\e,\nabla\bar{u}(x)+\nabla\phi_{\xi_k}(\tfrac x\e))\Big)\nonumber\\
&&+\nabla\cdot\Big(a\Big(\tfrac x\e,\nabla\bar{u}(x)+\sum_{k\in\delta\mathbb{Z}^d}\nabla\phi_{\xi_k}(\tfrac x\e)\eta_k(x)+\e \phi_{\xi_k}(\tfrac x\e)\nabla\eta_k(x)\Big)-a\Big(\tfrac x\e,\nabla\bar{u}(x)+\sum_{k\in\delta\mathbb{Z}^d}\nabla\phi_{\xi_k}(\tfrac x\e)\eta_k(x)\Big)\Big).
\end{eqnarray*} 
First, using \eqref{e.eps-eq} and \eqref{e.hom-eq} we may replace $-\nabla\cdot \bar{a}(\nabla\bar{u}(x))$ by $-\nabla\cdot a(\tfrac x\e,\nabla u_\e(x))$ in the left-hand side.
In the right-hand side, all the terms obviously converge strongly to zero in $H^{-1}(\R^d)$ (and are present in the definition of $R_{\e,\delta}$) except the second term, which we need to reformulate.
More precisely, using the flux corrector $\sigma$ (see Definition \ref{defsigmaNL}) in form of the property \eqref{e.div-sig}, we have for all $k\in\delta\mathbb{Z}^d$ (implicitly summing on the repeated indices $ij$) 
\begin{align}
\nabla\cdot \eta_k(x)(a(\tfrac x\e,\xi_k+\nabla\phi_{\xi_k}(\tfrac x\e))-\bar{a}(\xi_k))&=\nabla\cdot(\eta_k(x)\nabla\cdot \sigma_{\xi_k}(\tfrac x\e))=\partial_j(\eta_k(x)\partial_{i}\sigma_{\xi_k,ji})\nonumber\\
&=\e\partial_i(\partial_j\eta_k(x)\sigma_{\xi_k,ji}(\tfrac x\e))-\partial_i\partial_j(\eta_k(x))\sigma_{\xi_k,ji}(\tfrac x\e),\label{2sc:Eq4}
\end{align}
where the last term vanishes thanks to the skew-symmetry of $(\sigma_{\xi_k,ji})_{j,i}$ and the symmetry of $(\partial_i\partial_j\eta_k)_{j,i}$. 
By the skew-symmetry of $\sigma_\xi$, one has $\e\partial_i(\partial_j\eta_k(x)\sigma_{\xi_k,ji}(\tfrac x\e))=-\e\nabla\cdot(\sigma_{\xi_k}(\tfrac x\e)\nabla\eta_k(x))$, and we thus deduce
$$
\nabla\cdot \sum_{k\in\delta\mathbb{Z}^d}\eta_k(x)(a(\tfrac x\e,\xi_k+\nabla\phi_{\xi_k}(\tfrac x\e))-\bar{a}(\xi_k))=-\e\nabla\cdot\Big(\sum_{k\in\delta\mathbb{Z}^d}\sigma_{\xi_k}(\tfrac x\e)\nabla\eta_k(x)\Big).
$$
This yields \eqref{2sc:Eq1}.

\medskip

\step2 Control by continuity of the operators: The remainder $R_{\e,\delta}$ satisfies
\begin{eqnarray}
\lefteqn{\int_{\R^d} |R_{\e,\delta}|^2 \,\lesssim\,\sum_{k} \int_{\R^d} \eta_k |\xi_k-\nabla\bar{u}|^2(1+|\xi_k|+|\nabla \bar u|+|\nabla\phi_{\xi_k}(\tfrac \cdot\e)|)^{2(p-2)}}
\nonumber\\
&&+ \int_{\R^d} \sum_{k} \eta_{k} \Big|\sum_{k'}\e (\phi_{\xi_k'}-\phi_{\xi_k},\sigma_{\xi_k'}-\sigma_{\xi_k})(\tfrac \cdot\e)\nabla\eta_{k'}\Big|^2 \Big(1+|\nabla \bar u|+\Big|\sum_{k''}\nabla\phi_{\xi_{k''}}(\tfrac \cdot\e)\eta_{k''}\Big|\Big)^{2(p-2)}
\nonumber\\
&&+\int_{\R^d} \sum_{k} \eta_{k}\Big|\sum_{k'}\e (\phi_{\xi_k'}-\phi_{\xi_k})(\tfrac \cdot\e)\nabla\eta_{k'}\Big|^{2(p-1)}
+\sum_{k }  \int_{\R^d} \eta_k \Big|\sum_{k' }\nabla (\phi_{\xi_k}-\phi_{\xi_{k'}})(\tfrac \cdot \e)  \eta_{k'} \Big|^2 (1+|\nabla \bar u|)^{2(p-2)}
\nonumber \\
&&+\sum_{k\in\delta\mathbb{Z}^d}  \int_{\R^d} \eta_k \Big| \sum_{k'\in\delta\mathbb{Z}^d}\nabla ( \phi_{\xi_k}-\phi_{\xi_{k'}})(\tfrac \cdot \e) \eta_{k'} \Big|^{2(p-1)}.\label{2sc:Eq2}
\end{eqnarray}
This estimate directly follows from the definition of $R_{\e,\delta}$ together with the 
 continuity of the operator in form of 
$$
|\tilde a (\xi_1)-\tilde a(\xi_2)|\lesssim C|\xi_1-\xi_2|(1+|\xi_1|+|\xi_1-\xi_2|)^{p-2}
$$
for $\tilde a=a_\e$ and $\tilde a = \bar a$, and with the observation that $\sum_{k'} \nabla \eta_{k'}=0$ so that for all maps $(\zeta_k)_k$ one has
$$
\sum_{k'}\zeta_{k'} \nabla\eta_{k'}\,=\,\sum_{k'}(\zeta_{k'}-\zeta_{k})\nabla\eta_{k'},
$$
which we applied to $\zeta_k=\e (\phi_{\xi_k},\sigma_{\xi_k}) (\tfrac \cdot \e)$.

\medskip

\step3 Control of moments of $\int_{\R^d} |R_{\e,\delta}|^2$: For all $q\ge 1$,
\begin{equation}\label{e.momentbd-remain}
\expec{\Big(\int_{\R^d} |R_{\e,\delta}|^2\Big)^\frac q2}^\frac1q \,\leq C \, q^\gamma
(\e+\delta)\mu_d(\tfrac1\e)\|\mu_d\nabla^2\bar{u}\|_{L^2(\mathbb{R}^d)},
\end{equation}
for some constant $C$ and an exponent $\gamma>0$ depending on $\|\nabla\overline{u}\|_{L^{\infty}(\mathbb{R}^d)}$.  We treat the second right-hand side term of \eqref{2sc:Eq2} (that we denote by $\tilde{R}_{\e,\delta}$) -- the other terms are easier and can be treated similarly.  Since for all $k'$, $\vert\nabla\eta_{k'}\vert\lesssim \delta^{-1}\mathds{1}_{Q_{\delta}(k')}$, we have
\begin{align*}
\sum_{k} \eta_{k} \Big|\sum_{k'}\e (\phi_{\xi_k'}-\phi_{\xi_k},\sigma_{\xi_k'}-\sigma_{\xi_k})(\tfrac \cdot\e)\nabla\eta_{k'}\Big|^2\lesssim (\tfrac{\e}{\delta})^2\sum_{k}\eta_k\sum_{k'}\mathds{1}_{Q_{\delta}(k')} | (\phi_{\xi_k'}-\phi_{\xi_k},\sigma_{\xi_k'}-\sigma_{\xi_k})(\tfrac \cdot\e)|^2.
\end{align*}
Inserting this estimate in $\tilde{R}_{\e,\delta}$, and using the assumption $\nabla \bar {u}\in L^{\infty}(\mathbb{R}^d)$, we obtain for all $q\ge 1$
by Cauchy-Schwarz' inequality followed by Minkowski's inequality in probability,  the support condition $Q_{\delta}(k)\cap Q_{\delta}(k')\neq \emptyset \Rightarrow \vert k-k'\vert<2\delta$,
and the stationarity of $\nabla \phi_{k''}$,
\begin{multline*}
\mathbb{E}\Big[\Big(\int_{\mathbb{R}^d}\vert\tilde{R}_{\e,\delta}\vert^2\Big)^{\frac{q}{2}}\Big]^{\frac{1}{q}}
\\
\,\lesssim \, \frac{\e}{\delta}\Big(\sum_k\sum_{k'\in Q_{2\delta}(k)}\int_{Q_{\delta}(k)}\mathbb{E}[|(\phi_{\xi_k'}-\phi_{\xi_k},\sigma_{\xi_k'}-\sigma_{\xi_k})(\tfrac \cdot\e)|^{2q}]^{\frac{1}{q}}\Big(1+\|\nabla\bar {u}\|^{2(p-2)}_{L^{\infty}(\mathbb{R}^d)}+ \sum_{k''\in Q_{2\delta}(k)}\mathbb{E}[\vert\nabla\phi_{\xi_{k''}}\vert^{2q(p-2)}]^{\frac{1}{q}}\Big)\Big)^{\frac{1}{2}}.
\end{multline*}
By Theorem~\ref{th:corrNL} and Corollary~\ref{coro:corr-diff}, and using that $\mu_d$ satisfies $\mu_d(t_1t_2)\lesssim \mu_d(t_1)\mu_d(t_2)$ and $\sup_{Q_{4\delta}(k)} \mu_d \lesssim \inf_{Q_{4\delta}(k)} \mu_d$, this turns into
\begin{equation}\label{2sc:Eq14}
\mathbb{E}\Big[\Big(\int_{\mathbb{R}^d}\vert\tilde{R}_{\e,\delta}\vert^2\Big)^{\frac{q}{2}}\Big]^{\frac{1}{q}}\leq Cq^{\gamma}(\tfrac{\e}{\delta})\mu_d(\tfrac1\e)\Big(\sum_k(\inf_{Q_{4\delta}(k)} \mu_d)\sum_{k'\in Q_{2\delta}(k)}\vert\xi_{k'}-\xi_k\vert^2\vert Q_{\delta}\vert\Big)^{\frac{1}{2}},
\end{equation}
for some constant $C$ and an exponent $\gamma>0$ depending on $\|\nabla\overline{u}\|_{L^{\infty}(\mathbb{R}^d)}$. It remains to reformulate the right-hand side sum.
By Poincar\'e's inequality on $Q_{4\delta }(k)$, we have
\begin{equation*}
\sum_{k'\in Q_{2\delta}(k)}\vert\xi_k-\xi_{{k'}}\vert^2\vert Q_{\delta}\vert\lesssim \delta^2\int_{Q_{4\delta}(k)}\vert\nabla^2 \bar{u}\vert^2,
\end{equation*}
so that~\eqref{e.momentbd-remain} follows from~\eqref{2sc:Eq14}.

\medskip

\step4 Conclusion by monotonicity.

\noindent We test \eqref{2sc:Eq1} with $u_\e-\us$, and deduce by monotonicity of $a_\e$ that
$$
\int_{\R^d} |\nabla (u_\e-\us)|^2+ |\nabla (u_\e-\us)|^p \, \lesssim \, \int_{\R^d} R_{\e,\delta} \cdot \nabla (u_\e-\us).
$$
By Young's inequality, we may absorb part of the right-hand side into the left-hand side, and obtain after taking the $q$-th moment of this inequality
$$
\expec{\Big(\int_{\R^d} |\nabla (u_\e-\us)|^2+ |\nabla (u_\e-\us)|^p \Big)^q}^\frac1q\, \lesssim \, \expec{\Big(\int_{\R^d} |R_{\e,\delta}|^2\Big)^q}^\frac1q.
$$
This entails the claim in combination with \eqref{e.momentbd-remain} and the choice $\delta=\e$.

\appendix 

\section{Deterministic PDE estimates and consequences}\label{append:standard-ineq}

In this appendix, we recall mostly standard inequalities for nonlinear operators $-\nabla\cdot a(\cdot,\nabla)$ and linear operators $-\nabla\cdot a\nabla$ (with unbounded coefficients) needed in the proofs of the paper. 
We assume the conditions \eqref{*cont}, \eqref{*coer+-}, and \eqref{*coer+}.
Based on these results we also prove the qualitative differentiability of correctors (with respect to $\xi$) when the equation is posed on a bounded domain, and we prove part of Theorem~\ref{th:isotropic} for statistically isotropic operators.

\subsection{Nonlinear systems: Caccioppoli, hole-filling, and Schauder}

We start with Caccioppoli's inequality for nonlinear elliptic systems.
Recall the notation $t^{2\& p}=t^2+t^p$ for all $t\ge 0$.
\begin{lemma}[Caccioppoli's inequality]\label{cacciopoNL} Let $r>0, c_2>0$, $x\in\mathbb{R}^d$ and $u\in W^{1,p}_{\loc}(\mathbb{R}^d)$ be a weak solution of 
\begin{equation}
-\nabla\cdot a(\cdot,\nabla u)=0 \text{ in $B_{c_2 r}(x)$.}
\label{equationcacciopoNL}
\end{equation}
Then for all $0<c_1<c_2$,
\begin{equation} 
\fint_{B_{c_1 r}(x)}\vert\nabla u\vert^{2\& p} \lesssim_{c_1,c_2} \inf_{c\in\mathbb{R}^d}\fint_{B_{c_2 r}(x)\backslash B_{c_1 r}(x)}\Big(\frac{\vert u-c\vert}{r}\Big)^{2\& p}.
\label{cacciopoNLesti}
\end{equation}
\end{lemma}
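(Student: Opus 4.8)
\textbf{Proof plan for the Caccioppoli inequality (Lemma~\ref{cacciopoNL}).}
The plan is to follow the classical testing argument, being careful to keep track of both the quadratic ($|\nabla u|^2$) and the $p$-power ($|\nabla u|^p$) contributions, which both appear because $a(x,\cdot)\in\calM(p,1,2,c_\lambda)$ provides ellipticity with the weight $(1+|\xi|)^{p-2}$. First I would fix $c_1<c_2$, translate so $x=0$, and choose a cutoff $\eta\in C_c^\infty(B_{c_2r})$ with $\eta\equiv 1$ on $B_{c_1r}$, $0\le\eta\le 1$, and $|\nabla\eta|\lesssim_{c_1,c_2} r^{-1}$, supported in such a way that $\nabla\eta$ is supported in the annulus $B_{c_2r}\setminus B_{c_1r}$. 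For an arbitrary constant vector $c\in\R^d$, I would test the weak formulation of \eqref{equationcacciopoNL} with the admissible test function $\eta^p\,(u-c)$ (using $\eta^p$ rather than $\eta^2$ so that the weight powers match the $p$-growth of $a$), giving
$$
\int a(\cdot,\nabla u)\cdot\nabla u\,\eta^p \,=\, -p\int a(\cdot,\nabla u)\cdot\nabla\eta\,\eta^{p-1}(u-c).
$$

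Next I would bound the two sides separately. For the left-hand side, since $a(x,\cdot)\in\calM(p,1,2,c_\lambda)$, monotonicity against the reference point (note $a(x,0)$ is controlled) gives, after absorbing the lower-order terms,
$$
a(x,\nabla u)\cdot\nabla u \,\gtrsim\, |\nabla u|^2(1+|\nabla u|^{p-2}) - C,
$$
and the constant term is harmless on a ball of fixed radius (or can be absorbed using the explicit form $a(x,\xi)=A(x)(1+|\xi|^{p-2})\xi$, for which $a(x,\xi)\cdot\xi=|\xi|^2(A\xi,\xi)/|\xi|^2\ge\lambda|\xi|^2(1+|\xi|^{p-2})$ directly, with no additive constant). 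For the right-hand side, the growth bound $|a(x,\nabla u)|\lesssim (1+|\nabla u|)^{p-1}\lesssim 1+|\nabla u|^{p-1}$ together with $|\nabla u|^{p-1}=|\nabla u|\cdot|\nabla u|^{p-2}\lesssim |\nabla u|(1+|\nabla u|^{p-2})$ lets me write
$$
\Big|p\,a(\cdot,\nabla u)\cdot\nabla\eta\,\eta^{p-1}(u-c)\Big| \,\lesssim\, \big(|\nabla u|(1+|\nabla u|^{p-2})+1\big)\,\eta^{p-1}|\nabla\eta|\,|u-c|.
$$

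Then I would apply Young's inequality with exponents tuned to split off the half of $|\nabla u|^2(1+|\nabla u|^{p-2})\eta^p$ that can be absorbed into the left-hand side: writing $|\nabla u|(1+|\nabla u|^{p-2})\eta^{p-1} = \big(|\nabla u|^2(1+|\nabla u|^{p-2})\eta^p\big)^{1/2}\big((1+|\nabla u|^{p-2})\eta^{p-2}\big)^{1/2}$ and using $ab\le \tau a^2+\tau^{-1}b^2$, the leftover term is $\lesssim \tau^{-1}\int (1+|\nabla u|^{p-2})|\nabla\eta|^2(u-c)^2$, and a further Young inequality (splitting the factor $|\nabla u|^{p-2}|\nabla\eta|^2(u-c)^2 = (|\nabla u|^p\eta^p)^{(p-2)/p}(|\nabla\eta|^p\eta^{-p(p-2)/2}|u-c|^p)^{2/p}$, noting $\eta^{-p(p-2)/2}$ is avoided by instead estimating directly on the set $\{\eta>0\}$ with the standard iteration, or more simply by choosing $\eta$ so that $\eta^{p/2}$ is smooth and $|\nabla(\eta^{p/2})|\lesssim r^{-1}$) produces exactly the $\int_{B_{c_2r}}(|u-c|/r)^2+(|u-c|/r)^p$ terms supported in the annulus since $\nabla\eta$ is. After absorbing, dividing by $|B_{c_1r}|$ and taking the infimum over $c$, this yields \eqref{cacciopoNLesti}.

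\textbf{Main obstacle.} The only delicate point is the bookkeeping in the double Young inequality so that the power of $\eta$ that multiplies the absorbed term is exactly $\eta^p$ and the residual terms carry only $|u-c|^2$ and $|u-c|^p$ (with no stray weight $(1+|\nabla u|^{p-2})$ left outside the annulus). This is handled, as is classical, by working with $\eta^{p/2}$ as the basic cutoff so that all intermediate exponents are integers-compatible, or equivalently by a short Gehring/Widman hole-filling-style absorption; either way it is routine. The restriction of the annulus $B_{c_2r}\setminus B_{c_1r}$ on the right comes for free because $\nabla\eta$ vanishes on $B_{c_1r}$, and the constants' dependence on $c_1,c_2$ enters only through $\|\nabla\eta\|_\infty\lesssim (c_2-c_1)^{-1}r^{-1}$.
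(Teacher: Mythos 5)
Your proposal is correct and follows the paper's argument: test with $\eta^p(u-c)$ (which the paper writes as $\zeta^2(u-c)$ with $\zeta=\eta^{p/2}$), use coercivity on the left and $p$-growth on the right, and absorb via Young's inequality. The paper's version of the final step is slightly cleaner than your two-step Young: it applies Young separately with exponents $(2,2)$ to the $|\nabla u|$ contribution and $(p,\frac{p}{p-1})$ to the $|\nabla u|^{p-1}$ contribution, keeping the factor $\eta^{p-2}$ (which is $\le 1$) attached to the residual term, thereby avoiding the $\eta^{-p(p-2)/2}$ issue you flagged.
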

\begin{proof}
Without loss of generality, we may assume that $x=0$. Let $\eta\in C^{\infty}_c(\mathbb{R}^d)$ be a standard cut-off for $B_{c_1 r}$ in $B_{c_2 r}$ and set $\zeta^2=\eta^p$
to the effect that $2 \zeta \nabla \zeta=p \eta^{p-1}\nabla \eta$. By testing the equation \eqref{equationcacciopoNL} with $\zeta^2 (u-c)$ and by making use of the monotonicity \eqref{*coer+-} of $a$ and the property $a(\cdot,0)=0$
in form of $\int \zeta^2 \nabla u \cdot a(\cdot,\nabla u) \gtrsim \int \zeta^2 |\nabla u|^2(1+|\nabla u|^{p-2})$, we have
$$\int \zeta^2\vert\nabla u\vert^2(1+\vert\nabla u\vert^{p-2})\lesssim \int \vert\zeta\vert\vert\nabla\zeta\vert\vert u-c \vert \vert \nabla u\vert+
\int \vert\zeta\vert\vert\nabla\zeta\vert\vert u-c \vert \vert \nabla u\vert^{p-1}.
$$
This implies the desired estimate \eqref{cacciopoNLesti} by Young's inequality, with exponents $(2,2)$ and $(p,\frac{p}{p-1})$ for the first and second right-hand side terms, respectively,  together with the identity $2 \zeta \nabla \zeta=p \eta^{p-1}\nabla \eta$, and absorbing part of the right-hand side into the left-hand side.
\end{proof}
The Widman hole-filling estimate for nonlinear systems  follows from Lemma~\ref{cacciopoNL} by simple iteration (see e.g.~\cite[Section 4.4]{giaquinta2013introduction}).
\begin{lemma}[Hole-filling estimate]\label{HolefillingNL'}
There exists $0< \delta\leq d$ such that if $u\in W^{1,p}_{\loc}(B_R)$ is a weak solution of $-\nabla\cdot a(\cdot,\nabla u)=0$ in the ball $B_{R}$ for some $R>0$, 
then for all $0<r\le R$ we have
\begin{equation}
\fint_{B_{r}}\vert \nabla u\vert^{2\& p} \lesssim\Big(\frac{R}{r}\Big)^{d-\delta}\fint_{B_R}\vert \nabla u\vert^{2 \& p}.
\label{HolefillingNL}
\end{equation}
\end{lemma}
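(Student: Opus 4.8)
The plan is to run the classical Widman hole-filling scheme, feeding it the nonlinear Caccioppoli inequality of Lemma~\ref{cacciopoNL} together with the Poincar\'e--Wirtinger inequality on dyadic annuli (this is the textbook argument, cf.~\cite[Section~4.4]{giaquinta2013introduction}). Set $F(\rho):=\int_{B_\rho}\vert\nabla u\vert^2(1+\vert\nabla u\vert^{p-2})=\int_{B_\rho}(\vert\nabla u\vert^2+\vert\nabla u\vert^p)$, a nondecreasing function of $\rho$ which is finite for $\rho<R$ (if $F(R)=+\infty$ there is nothing to prove). The whole argument reduces to establishing a single geometric decay step $F(\rho)\le\theta\,F(2\rho)$, valid for every $\rho\le R/2$ with one fixed $\theta=\theta(d,p,\lambda)\in(0,1)$, and then iterating it.

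For the decay step, I would first apply Lemma~\ref{cacciopoNL} at the common center (WLOG the origin) with $c_1=1$, $c_2=2$ and $r=\rho$ --- legitimate since the equation holds in $B_{2\rho}\subset B_R$ --- choosing the free constant to be the annular average $c:=\fint_{B_{2\rho}\setminus B_\rho}u$, which gives
$$\fint_{B_\rho}\vert\nabla u\vert^2+\vert\nabla u\vert^p\ \lesssim\ \fint_{B_{2\rho}\setminus B_\rho}\Big(\tfrac{\vert u-c\vert}{\rho}\Big)^2+\Big(\tfrac{\vert u-c\vert}{\rho}\Big)^p.$$
Next I would estimate the right-hand side by $\fint_{B_{2\rho}\setminus B_\rho}\vert\nabla u\vert^2+\vert\nabla u\vert^p$ via the Poincar\'e--Wirtinger inequality on the annulus, applied separately in $L^2$ and in $L^p$; since $B_{2\rho}\setminus B_\rho=\rho\,(B_2\setminus B_1)$ is a dilate of one fixed smooth bounded domain and $c$ is exactly the annular mean, the relevant constants are scale invariant and the \emph{same} $c$ serves both exponents. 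Multiplying through by $\vert B_\rho\vert\sim\vert B_{2\rho}\setminus B_\rho\vert\sim\rho^d$ then yields $F(\rho)\le C_0\big(F(2\rho)-F(\rho)\big)$ with $C_0=C_0(d,p,\lambda)$, hence, after ``filling the hole'', $F(\rho)\le\theta\,F(2\rho)$ with $\theta:=C_0/(1+C_0)\in(0,1)$.

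Iterating the decay step over the radii $2^{-k}R$ gives $F(2^{-k}R)\le\theta^{k}F(R)$ for all $k\ge0$; for an arbitrary $0<r\le R$ I would squeeze $r$ between the two consecutive dyadic radii surrounding it and invoke the monotonicity of $F$ to obtain $F(r)\lesssim\theta^{\lfloor\log_2(R/r)\rfloor}F(R)\lesssim (r/R)^{\delta_0}F(R)$ with $\delta_0:=\log_2(1/\theta)>0$. Dividing by $\vert B_r\vert\sim r^d$ and $\vert B_R\vert\sim R^d$ turns this into the averaged bound $\fint_{B_r}\vert\nabla u\vert^2(1+\vert\nabla u\vert^{p-2})\lesssim (R/r)^{d-\delta_0}\fint_{B_R}\vert\nabla u\vert^2(1+\vert\nabla u\vert^{p-2})$; should $\delta_0>d$, then $(R/r)^{d-\delta_0}\le1$ for $r\le R$ and one may freely replace the exponent by $d$, so the claim holds with $\delta:=\min(\delta_0,d)\in(0,d]$. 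There is no genuine obstacle here, this being a standard argument; the only two points worth a line of care are the scale invariance of the annular Poincar\'e constants (so that $\theta$ does not degenerate as $\rho\to0$) and the elementary bookkeeping turning dyadic decay into the estimate for all $r$, together with the harmless capping of the exponent at $d$.
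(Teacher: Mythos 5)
Your proof is correct and is precisely the classical Widman hole-filling argument (nonlinear Caccioppoli + annular Poincaré--Wirtinger + iterate, fill the hole) that the paper itself only cites rather than spells out, pointing to \cite[Section 4.4]{giaquinta2013introduction}. The bookkeeping at every step — the single annular mean serving both the $L^2$ and $L^p$ Poincaré inequalities by scale invariance, the passage from dyadic radii to general $r$, and the harmless capping of $\delta$ at $d$ — is all in order.
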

We finally state regularity results for nonlinear equations, which are direct consequences of \cite{Uhlenbeck} and \cite[Theorem 4]{kuusi2014nonlinear} (for the uniform bound on the gradient).
\begin{lemma}\label{regestiNL}
Let $a$ be a monotone operator which has the form \eqref{e.def-a} and assume that $A\in C^{\alpha}(B_{4R})$, for some $R>0$ and $\alpha\in (0,1)$.  Let $u\in W^{1,p}(B_{4R})$ be a distributional solution of 
$$-\nabla\cdot a(\cdot,\nabla u)=0.$$
Then, $u\in C^{1,\alpha}(B_R)$ and there exists a constant $c$ depending on $R$ and $\|A\|_{C^{\alpha}(B_{4R})}$ such that 
$$\|\nabla u\|_{C^{\alpha}(B_R )}\leq c\Big(\fint_{B_{4R}}\vert\nabla u\vert^p\Big)^{\frac{1}{p}},$$
where we recall that $\|X\|_{C^{\alpha}}=\|X\|_{L^{\infty}}+\|X\|_{C^{0,\alpha}}$.
\end{lemma}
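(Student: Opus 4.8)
This statement is essentially the conjunction of two interior regularity results: the sup bound on the gradient of \cite[Theorem 4]{kuusi2014nonlinear} and the $C^{1,\alpha}$ theory of \cite{Uhlenbeck} for systems with Uhlenbeck structure. The plan is therefore to reduce to a normalized ball, invoke the $L^\infty$ gradient bound, and then upgrade to $C^{1,\alpha}$ using that $\nabla u$ being bounded makes the operator uniformly elliptic.

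First I would reduce to $R=1$ by scaling. If $u$ solves $-\nabla\cdot a(\cdot,\nabla u)=0$ in $B_{4R}$ with $a(x,\xi)=A(x)(1+|\xi|^{p-2})\xi$, then $v(y):=\tfrac1{4R}u(4Ry)$ solves $-\nabla\cdot \tilde a(\cdot,\nabla v)=0$ in $B_1$ with $\tilde a(y,\xi)=\tilde A(y)(1+|\xi|^{p-2})\xi$ and $\tilde A(y):=A(4Ry)$. A direct check gives $\|\tilde A\|_{C^\alpha(B_1)}\le \max(1,(4R)^\alpha)\|A\|_{C^\alpha(B_{4R})}$, $\fint_{B_1}|\nabla v|^p=\fint_{B_{4R}}|\nabla u|^p$, and $\|\nabla v\|_{C^\alpha(B_{1/4})}\sim_{R}\|\nabla u\|_{C^\alpha(B_R)}$, so it suffices to prove the estimate for $R=1$ (the dependence of $c$ on $R$ being absorbed into these factors). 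Then I would invoke \cite[Theorem 4]{kuusi2014nonlinear}: since $\tilde a$ has non-degenerate Uhlenbeck structure $\tilde a(y,\xi)=\tilde A(y)g(|\xi|)\xi$ with $g(t)=1+t^{p-2}\ge 1$ and $\tilde A\in C^\alpha$, this yields $\nabla v\in L^\infty_{\loc}(B_1)$ with $\|\nabla v\|_{L^\infty(B_{1/2})}\lesssim \big(\fint_{B_1}(1+|\nabla v|^p)\big)^{1/p}\lesssim \big(\fint_{B_1}|\nabla v|^p\big)^{1/p}$ (using $p\ge 2$), the constant depending on $\|\tilde A\|_{C^\alpha(B_1)}$, $d$, $p$, $\lambda$. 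Set $M:=\|\nabla v\|_{L^\infty(B_{1/2})}$.

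It remains to upgrade this to a $C^{0,\alpha}$ bound on $\nabla v$ in $B_{1/4}$. On $B_{1/2}$ one has $|\nabla v|\le M$, and since $\tilde a(y,\cdot)\in\calM(p,1,2,c_\lambda)$ the linearized operator $D\tilde a$ satisfies, for $|\xi|\le M$, the uniform ellipticity $h^{T}D\tilde a(y,\xi)h\gtrsim |h|^2$ and $|D\tilde a(y,\xi)h|\lesssim (1+M^{p-2})|h|$; thus $v$ is a bounded weak solution of a uniformly elliptic system of Uhlenbeck type with $C^{0,\alpha}$ coefficients, and \cite{Uhlenbeck} (whose argument rules out singular sets under this structure, giving full regularity) yields $\nabla v\in C^{0,\alpha}(B_{1/4})$ with $\|\nabla v\|_{C^{0,\alpha}(B_{1/4})}\lesssim M$. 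Combining with the $L^\infty$ bound and undoing the scaling gives the claim. \emph{The main obstacle is this last step:} obtaining the Hölder exponent of $\nabla u$ equal to that of $A$ (the Schauder-sharp statement), which for systems genuinely requires the Uhlenbeck structure and some care near the zero set of $\nabla u$, where $\xi\mapsto(1+|\xi|^{p-2})\xi$ is only $C^{1,\min(1,p-2)}$. The clean way to carry it out, if one does not simply quote \cite{Uhlenbeck}, is a Campanato excess-decay iteration: freeze $\tilde A$ at $y_0$, compare $v$ on $B_\rho(y_0)$ with the solution $w$ of the constant-coefficient Uhlenbeck problem with the same boundary data, use that $\fint_{B_{\theta\rho}}|\nabla w-(\nabla w)_{\theta\rho}|^2\lesssim \theta^{2\mu}\fint_{B_\rho}|\nabla w-(\nabla w)_\rho|^2$ for a fixed $\mu>0$ (the constant-coefficient $C^{1,\mu}$ estimate for bounded-gradient Uhlenbeck systems), and observe that the coefficient-oscillation contribution to $\|\nabla(v-w)\|_{L^2(B_\rho(y_0))}$ is controlled by $\rho^\alpha$ directly through $\|\tilde A-\tilde A(y_0)\|_{L^\infty(B_\rho)}$ (so it does not pass through the modulus of continuity of $\nabla v$). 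Iterating yields $\fint_{B_\rho(y_0)}|\nabla v-(\nabla v)_\rho|^2\lesssim \rho^{2\alpha}$, hence $\nabla v\in C^{0,\alpha}$, with the stated dependence of the constant; the remaining bookkeeping (the scaling, the absorption of the ``$1+$'') is routine.
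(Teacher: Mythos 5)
Your overall plan (scaling to a unit ball, quoting \cite[Theorem~4]{kuusi2014nonlinear} for the $L^\infty$ gradient bound, then Uhlenbeck/Campanato for the $C^{0,\alpha}$ upgrade) matches what the paper does -- the paper gives no proof of this lemma, it simply asserts it is a direct consequence of exactly those two references. However, there is a genuine gap in your chain of inequalities.

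The step
\[
\Big(\fint_{B_1}(1+|\nabla v|^p)\Big)^{1/p}\,\lesssim\,\Big(\fint_{B_1}|\nabla v|^p\Big)^{1/p}\qquad\text{``using $p\ge 2$''}
\]
is false: the left side equals $\big(1+\fint_{B_1}|\nabla v|^p\big)^{1/p}\sim 1$ whenever $\fint_{B_1}|\nabla v|^p\ll 1$, while the right side can be arbitrarily small. The hypothesis $p\ge 2$ does not rescue it, and no amount of $R$-dependence in the constant helps either. As written, your argument proves only the inhomogeneous bound $\|\nabla u\|_{C^\alpha(B_R)}\lesssim 1+(\fint_{B_{4R}}|\nabla u|^p)^{1/p}$, which is strictly weaker than the lemma's scale-invariant assertion.

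The fix is a second, \emph{amplitude} normalization on top of the domain rescaling you already did. Set $M:=(\fint_{B_4}|\nabla v|^p)^{1/p}$ (if $M=0$ the lemma is trivial) and put $w:=v/M$, so that $\fint_{B_4}|\nabla w|^p=1$ and $w$ solves $-\nabla\cdot\tilde A(y)(1+M^{p-2}|\nabla w|^{p-2})\nabla w=0$. For $M\le 1$ this operator lies structurally between the linear operator $\tilde A\nabla$ and the $(1,p)$-Laplacian, and the raw De Giorgi/Kuusi--Mingione bound $\|\nabla w\|_{L^\infty(B_2)}\lesssim\big(\fint_{B_4}(1+|\nabla w|^p)\big)^{1/p}$ now yields $\lesssim 1$ precisely because the normalized quantity $\fint|\nabla w|^p=1$ makes the additive constant harmless. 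For $M>1$ divide the equation by $M^{p-2}$ to rewrite the operator as the $(\mu,p)$-Laplacian with $\mu=M^{-1}\le 1$; then the standard bound $\|\nabla w\|_{L^\infty(B_2)}\lesssim\big(\fint_{B_4}(\mu^p+|\nabla w|^p)\big)^{1/p}\lesssim 1$ again holds with a constant uniform in $\mu\in(0,1]$. Undoing both rescalings gives the claimed homogeneous estimate $\|\nabla u\|_{L^\infty(B_R)}\lesssim(\fint_{B_{4R}}|\nabla u|^p)^{1/p}$. The subsequent Campanato excess-decay argument you outline for the $C^{0,\alpha}$ upgrade is fine (your remark about the limited smoothness of $\xi\mapsto(1+|\xi|^{p-2})\xi$ near $\xi=0$ for $2<p<3$ is a real subtlety, handled by the Uhlenbeck structure), but it must be run on $w$, not $v$, to preserve the homogeneity of the final estimate.
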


\subsection{Linear elliptic systems: Caccioppoli and Lemma~\ref{lemmabella}}

We state and prove Caccioppoli's inequality for linear elliptic systems with unbounded coefficients, from which we deduce  Lemma~\ref{lemmabella} by optimizing the cut-off.
\begin{lemma}[Caccioppoli's inequality for linear elliptic systems with unbounded coefficients]\label{cacciopounbounded}Let $R>0$, $a : B_R\mapsto \mathbb{R}^{d\times d}$,  and  $\mu\in L^1(B_R)$ be such that there exists a constant $\kappa>0$ for which
we have for all $x\in B_R$ and all $h\in\R^d$
\begin{equation}
h \cdot a(x) h \le |h|^2 \mu(x) \le \kappa h\cdot a(x)h.
\label{mucacciopounbounded}
\end{equation}
For all functions $g$ and $u$ related (in the weak sense) in $B_R$ via
\begin{equation}
-\nabla\cdot a\nabla u=\nabla\cdot (g\sqrt{\mu}) ,
\label{equationcacciopounbounded}
\end{equation}
we have for all $0<\rho <\sigma \le R$,
\begin{equation}
\int_{B_{\rho}}\vert\nabla u\vert^2\mu\lesssim_{\kappa}\mathcal{J}(\rho,\sigma,\mu,u)+\int_{B_{\sigma}}\vert g\vert^2,
\label{esticaccioppounbounded}
\end{equation}
where
\begin{equation}
\mathcal{J}(\rho,\sigma , \mu,u,g):=\inf\Big\{\int_{B_{\sigma}}\mu\Big\vert u-\fint_{B_{\sigma}}u \Big\vert^2\vert\nabla\eta\vert^2\,\Big|\,\eta\in C^{1}_c(B_{\sigma}),\, 
0\leq \eta\leq 1,\, 
\eta\equiv 1 \text{ in $B_{\rho}$}\Big\}.
\label{defJ}
\end{equation}
\end{lemma}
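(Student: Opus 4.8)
The plan is to prove the Caccioppoli inequality for the linear system with unbounded coefficients \eqref{equationcacciopounbounded} by the standard testing argument, being careful that the coefficient field is only controlled through the weight $\mu$ via \eqref{mucacciopounbounded}. First I would reduce to the case $\fint_{B_\sigma} u = 0$ by subtracting a constant, which is harmless since \eqref{equationcacciopounbounded} is invariant under adding constants to $u$ and the functional $\mathcal J$ in \eqref{defJ} is already written in terms of $u-\fint_{B_\sigma}u$. Then I would fix an arbitrary admissible cut-off $\eta\in C^1_c(B_\sigma)$ with $0\le\eta\le1$ and $\eta\equiv1$ on $B_\rho$, and test the weak formulation of \eqref{equationcacciopounbounded} with $\eta^2 u$.

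The core computation then runs as follows. Testing gives
\[
\int_{B_\sigma}\eta^2\,\nabla u\cdot a\nabla u \,=\, -2\int_{B_\sigma}\eta\,u\,\nabla\eta\cdot a\nabla u \;-\;\int_{B_\sigma}\nabla(\eta^2 u)\cdot g\sqrt{\mu}.
\]
For the left-hand side I use the lower bound in \eqref{mucacciopounbounded}, namely $\nabla u\cdot a\nabla u\ge \kappa^{-1}|\nabla u|^2\mu$, to get $\kappa^{-1}\int_{B_\sigma}\eta^2|\nabla u|^2\mu$. For the first right-hand side term I use Cauchy--Schwarz together with the upper bound $|h\cdot a\,h'|\le (h\cdot a\,h)^{1/2}(h'\cdot a\,h')^{1/2}\le (|h|^2\mu)^{1/2}(|h'|^2\mu)^{1/2}$ (which follows from \eqref{mucacciopounbounded} applied to the symmetric part, noting that the bilinear form $h\cdot a\,h'$ need not be symmetric but $|h\cdot a\,h'|\le |h|\,|a\,h'|$ and $|a\,h'|^2\le\mu\,(h'\cdot a\,h')\le\mu^2|h'|^2$ is not quite right — so instead I bound $|\nabla\eta\cdot a\nabla u|\le |\nabla\eta|\,|a\nabla u|$ and control $|a\nabla u|\le \mu|\nabla u|$ directly from the right inequality in \eqref{mucacciopounbounded} with $h = a\nabla u/|a\nabla u|$... ). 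Concretely: from \eqref{mucacciopounbounded} with $h=a(x)v$ one gets $|a(x)v|^2\le \mu(x)\,(a(x)v\cdot v)\le \mu(x)^2|v|^2$, hence $|a\nabla u|\le\mu|\nabla u|$, and also $\nabla u\cdot a\nabla u\le \mu|\nabla u|^2$. So $|2\eta u\nabla\eta\cdot a\nabla u|\le 2\eta|u|\,|\nabla\eta|\,\mu|\nabla u|$, and by Young's inequality this is $\le \tfrac{1}{2\kappa}\eta^2|\nabla u|^2\mu + C_\kappa\,u^2|\nabla\eta|^2\mu$, the first term being absorbed into the left-hand side.

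For the source term I expand $\nabla(\eta^2u)=2\eta u\nabla\eta+\eta^2\nabla u$ and estimate $\big|\int \nabla(\eta^2 u)\cdot g\sqrt\mu\big|\le 2\int\eta|u|\,|\nabla\eta|\,|g|\sqrt\mu + \int\eta^2|\nabla u|\,|g|\sqrt\mu$; the second piece is Young-split into $\tfrac1{4\kappa}\int\eta^2|\nabla u|^2\mu + C_\kappa\int|g|^2$ (absorbing the gradient term), while the first piece is handled with $\int\eta|u||\nabla\eta||g|\sqrt\mu\le \tfrac12\int u^2|\nabla\eta|^2\mu+\tfrac12\int|g|^2$, both controlled by $\mathcal J$-type quantities and $\int_{B_\sigma}|g|^2$. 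Since $\eta\equiv1$ on $B_\rho$ I get $\int_{B_\rho}|\nabla u|^2\mu\le \int_{B_\sigma}\eta^2|\nabla u|^2\mu\lesssim_\kappa \int_{B_\sigma}u^2|\nabla\eta|^2\mu+\int_{B_\sigma}|g|^2$; taking the infimum over admissible $\eta$ yields \eqref{esticaccioppounbounded}. The main obstacle — really the only subtlety — is making sure that the only way the coefficient $a$ enters the final bound is through $\mu$ and $\kappa$, which forces one to route every estimate on $a\nabla u$ and on the bilinear form through \eqref{mucacciopounbounded} rather than through any pointwise bound on $|a|$; once the elementary consequences $|a\nabla u|\le\mu|\nabla u|$ and $\kappa^{-1}\mu|\nabla u|^2\le\nabla u\cdot a\nabla u\le\mu|\nabla u|^2$ are recorded, the rest is routine Young's inequality bookkeeping. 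Note also that $\mathcal J(\rho,\sigma,\mu,u)$ in the statement \eqref{estilemmabella} corresponds to the functional \eqref{defJinfbella} used later, which differs from \eqref{defJ} only in whether the constant $\fint_{B_\sigma}u$ is subtracted, and the reduction to mean-zero $u$ reconciles the two.
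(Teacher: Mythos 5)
Your proof is correct and follows the paper's own argument essentially line by line: reduce to $\fint_{B_\sigma}u=0$, test \eqref{equationcacciopounbounded} with $\eta^2 u$, expand $\nabla(\eta^2 u)$, bound the cross terms using \eqref{mucacciopounbounded}, absorb with Young's inequality, and take the infimum over admissible cut-offs $\eta$. The one step you visibly wrestled with --- the pointwise bound $|a\nabla u|\le\mu|\nabla u|$ --- does not quite follow from the quadratic-form hypothesis \eqref{mucacciopounbounded} alone when $a$ is not symmetric (the chain $|av|^2\le\mu\,(av\cdot v)$ is Cauchy--Schwarz for the bilinear form and needs the symmetric part to control the whole matrix); the paper's proof sweeps the same point under ``using \eqref{mucacciopounbounded}'', and it is harmless in the application because the actual coefficient $a_\xi=Da(\cdot,\xi+\nabla\phi_\xi)$ satisfies the stronger operator-norm bound $|a_\xi|\lesssim\mu_\xi$ directly.
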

\begin{proof}
Without loss of generality,  we may assume that $\fint_{B_{\sigma}}u=0$. Let $\eta\in C^1_c(B_{\sigma})$ be such that $\eta\equiv 1$ in $B_{\rho}$ and $0\leq \eta\leq 1$. 
Testing the equation \eqref{equationcacciopounbounded} with $\eta^2 u$ and using the condition \eqref{mucacciopounbounded} yield
\begin{align}
&\int \eta^2\vert\nabla u\vert^2\mu\lesssim_{\kappa} \int \eta^2\vert\nabla u\vert\sqrt{\mu}\vert g\vert+\int |\eta| \vert\nabla\eta\vert\vert u\vert\vert\nabla u\vert\mu+\int |\eta| \vert\nabla \eta\vert\vert u\vert\sqrt{\mu}\vert g\vert.
\label{proofcacciopounbounded1}
\end{align}
By Young's inequality with exponents $(2,2)$, after absorbing part of the right-hand side into the left-hand side, and using in addition the support condition on $\eta$, \eqref{proofcacciopounbounded1} turns into 
$$\int_{B_{\rho}}\vert\nabla u\vert^2\mu\lesssim_{\kappa}\int_{B_{\sigma}}\mu\vert u\vert^2\vert\nabla\eta\vert^2+\int_{B_{\sigma}}\vert g\vert^2,$$
which yields \eqref{esticaccioppounbounded} by optimizing over $\eta$.
\end{proof}
We then turn to the proof of Lemma~\ref{lemmabella}.
\begin{proof}[Proof of Lemma~\ref{lemmabella}]
We split the proof into two steps.

\medskip

\step1 Proof that for all $\gamma>0$
\begin{equation}
\mathcal{J}(\rho,\sigma,\mu,v)\leq (\sigma-\rho)^{-1-\frac{1}{\gamma}}\left(\int_{\rho}^{\sigma}\left(\int_{S_r}\mu\vert v\vert^2\right)^{\gamma}\dd r\right)^{\frac{1}{\gamma}},
\label{lemmabellastep11}
\end{equation}
where $S_r:=\partial B_r$. By scaling we may assume without loss of generality that $\rho=1$ and $\sigma=2$.
Estimate \eqref{lemmabellastep11} essentially follows by minimizing among radially symmetric cut-off functions. 
Indeed, for all $\e>0$ we have
\begin{equation*}
\mathcal{J}(1,2,\mu,v)\leq \inf\Big\{\int_{1}^{2}\eta'(r)^2\Big(\int_{S_r}\mu\vert v\vert^2+\e\Big)\dd r\Big\vert \eta\in C^1(1,2),\, 0\le \eta \le 1, \eta(1)=1,\, \eta(2)=0\Big\}.
\end{equation*}
This one-dimensional minimization problem can be solved explicitly. 
Set $f(r):= \int_{S_r}\mu\vert v\vert^2+\varepsilon$.
Using the competitor $\eta(r):=1-\frac{\int_1^r f^{-1}}{\int_1^2 f^{-1}}$ yields a control of this minimum by the harmonic average of $f$, 
\begin{equation*}
  \mathcal{J}(1,2,\mu,v)\leq \Big(\int_{1}^{2} \Big(\int_{S_r}\mu\vert v\vert^2+\varepsilon\Big)^{-1} \dd r\Big)^{-1}. 
\end{equation*}
By standard relations between quasi-arithmetic means,  since $\gamma>-1$,
\begin{equation*}
 \Big(\int_{1}^{2} \Big(\int_{S_r}\mu\vert v\vert^2+\varepsilon\Big)^{-1} \dd r\Big)^{-1}\leq \Big(\int_{1}^{2} \Big(\int_{S_r}\mu\vert v\vert^2+\varepsilon\Big)^{\gamma} \dd r\Big)^{\frac1\gamma},
\end{equation*}
and the claim \eqref{lemmabellastep11} follows by letting $\e \downarrow 0$.

\medskip

\step2 Proof of~\eqref{estilemmabella}. 

\noindent Let us first assume $d\geq 3$ and note that $q>\frac{d-1}{2}$ implies $q_*\in [1,2)$. Recall that  for all $s\in [1,d-1)$, $r>0$ and $\phi\in W^{1,s}(S_r)$, Poincar\'e-Sobolev' inequality yields for  $s_*=\frac{(d-1)s}{d-1-s}$
\begin{equation}
\Big(\fint_{S_r}\vert\phi\vert^{s_*}\Big)^{\frac{1}{s_*}}\lesssim r\Big(\fint_{S_r}\vert\nabla\phi\vert^s\Big)^{\frac{1}{s}}+\Big(\fint_{S_r}\vert\phi\vert^s\Big)^{\frac{1}{s}}.
\label{sobolevineglemmabella}
\end{equation}
By H\"older's inequality with exponents $(q,\frac{q}{q-1})$, followed by \eqref{sobolevineglemmabella} with $s=q_*$ and $s_*=\frac{2q}{q-1}$, 
\eqref{lemmabellastep11} turns into
\begin{align*}
\mathcal{J}(\rho,\sigma,\mu,v)&\leq\frac{1}{(\sigma-\rho)^{1+\frac{1}{\gamma}}}\Big(\int_{\rho}^{\sigma}\Big(\int_{S_r}\mu^q\Big)^{\frac{\gamma}{q}}\Big(\int_{S_r}\vert v\vert^{\frac{2q}{q-1}}\Big)^{\frac{(q-1)\gamma}{q}}\dd r\Big)^{\frac{1}{\gamma}}\\
&\stackrel{\eqref{sobolevineglemmabella}}{\lesssim_q} \frac{1}{(\sigma-\rho)^{1+\frac{1}{\gamma}}}\Big(\int_{\rho}^{\sigma}\Big(\int_{S_r}\mu^q\Big)^{\frac{\gamma}{q}}\Big(\Big(\int_{S_r}\vert\nabla v\vert^{q_*}\Big)^{\frac{2\gamma}{q_*}}+r^{-2\gamma}\Big(\int_{S_r}\vert v\vert^{q_*}\Big)^{\frac{2\gamma}{q_*}}\Big)\dd r\Big)^{\frac{1}{\gamma}}.
\end{align*}
We then choose $\gamma=\frac{d-1}{d+1}$ to the effect that $\frac{\gamma}{q}+\frac{2\gamma}{q_*}=1$, so that by H\"older's inequality with exponents $(\frac{\gamma}{q},\frac{2\gamma}{q_*})$ we obtain
$$
\mathcal{J}(\rho,\sigma,\mu,v)\lesssim_q \frac{1}{(\sigma-\rho)^{\frac{2d}{d-1}}}\Big(\int_{B_{\sigma}\backslash B_{\rho}}\mu^q\Big)^{\frac{1}{q}}\Big(\Big(\int_{B_{\sigma}\backslash B_{\rho}}\vert\nabla v\vert^{q_*}\Big)^{\frac{1}{q_*}}+\frac{1}{\rho^2}\Big(\int_{B_{\sigma}\backslash B_{\rho}}\vert v\vert^{q_*}\Big)^{\frac{2}{q_*}}\Big),
$$
which is the desired estimate \eqref{estilemmabella}. For $d=2$, in which case $q_*=1$,  we use the one-dimensional Sobolev inequality $\|\phi\|_{L^{\infty}(S_r)}\lesssim r\fint_{S_r}\vert\nabla\phi\vert+\fint_{S_r}\vert\phi\vert$ instead  of \eqref{sobolevineglemmabella}, and  \eqref{estilemmabella} follows from \eqref{lemmabellastep11}. The case $d=1$ is similar.
\end{proof}

\subsection{Qualitative differentiability of correctors on bounded domains}

In this section, we consider the approximation of correctors on bounded domains, both with Dirichlet and periodic boundary conditions.
More precisely, let $D$ be a smooth bounded domain of $\R^d$ (resp. a cube $Q_L$, $L>0$),  let $A: D \to \Md(\lambda)$ be of class $C^\alpha$ (resp. $C^\alpha_\per(Q_L)$),
and set $a:D\times \R^d \to \R^d, (x,\xi) \mapsto A(x)(1+|\xi|^{p-2})\xi$ for some $p\ge 2$.
We show that the corrector gradients $\xi\mapsto (\nabla\corNL,\nabla\sigma_{\xi})$, where $(\corNL,\sigma_\xi)$ are solutions of \eqref{e.cor-eq} and 
\eqref{e.Laplace-sig} on $D$ with homogeneous boundary conditions (resp. $Q_L$-periodic with vanishing average),  are Fr\'echet-differentiable and that their derivatives are given by the linearized corrector gradients.  More precisely: 
\begin{lemma}[Differentiability of correctors]\label{lemmadiffcor}
The corrector gradients $\xi\mapsto (\nabla\sigma_{\xi},\nabla\corNL)$ are Fr\'echet-differentiable in $C^0(D)$ (resp. in $C^0_{\per}(Q_L)$) and for all directions $e\in\mathbb{R}^d$
we have
$$(\partial_{\xi}\nabla\phi_{\xi}\cdot e,\partial_{\xi} \nabla\sigma_{\xi}\cdot e)=(\nabla\corL,\nabla\sigL_{\xi,e}),$$
where $(\corL,\sigL_{\xi,e})$ solve \eqref{e.Lcorr} and \eqref{e:eq-sigmaL}
with homogeneous Dirichlet boundary conditions (resp. $Q_L$-periodic with vanishing average).
\end{lemma}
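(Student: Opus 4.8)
The plan is to prove the statement first for $\phi_\xi$ and then deduce it for $\sigma_\xi$ from the linearity of the flux corrector equation. Fix $\xi\in\R^d$ and a unit direction $e\in\R^d$, and for $0<|t|\le 1$ set $w_t:=\phi_{\xi+te}-\phi_\xi-t\corL$; the goal is to show $\|\nabla w_t\|_{C^0(D)}=o(t)$ as $t\to0$, and that $e\mapsto\nabla\corL$ is a bounded linear map $\R^d\to C^0(D)$, which together give Fréchet differentiability with the asserted derivative. To get an equation for $w_t$, I would subtract the corrector equations~\eqref{e.cor-eq} for $\phi_{\xi+te}$ and $\phi_\xi$ and use the fundamental theorem of calculus: with $\zeta_t:=te+\nabla(\phi_{\xi+te}-\phi_\xi)$ and the averaged coefficient $\bar a_t:=\int_0^1 Da(\cdot,\xi+\nabla\phi_\xi+s\zeta_t)\,\dd s$ one has $-\nabla\cdot\bar a_t\zeta_t=0$; subtracting $t$ times~\eqref{e.Lcorr}, written as $-\nabla\cdot\aL(te+t\nabla\corL)=0$, yields
\[
-\nabla\cdot\bar a_t\nabla w_t=\nabla\cdot\big((\bar a_t-\aL)(te+t\nabla\corL)\big).
\]

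The first input I would establish is that, on a ball $B_\rho(\xi)\subset\R^d$, the nonlinear correctors are bounded in $C^{1,\alpha}(D)$ uniformly: monotone operator theory gives existence and uniqueness of $\phi_{\xi'}\in W^{1,p}_0(D)$ (resp. in $W^{1,p}_\per(Q_L)/\R$), testing~\eqref{e.cor-eq} with $\phi_{\xi'}$ and using~\eqref{e.coer-below} bounds $\fint_D|\nabla\phi_{\xi'}|^p$ locally uniformly in $\xi'$, and Lemma~\ref{regestiNL} (together with boundary Schauder theory in the Dirichlet case, which uses smoothness of $D$; only interior regularity is needed in the periodic case) then bounds $\|\nabla\phi_{\xi'}\|_{C^{0,\alpha}(D)}$ uniformly for $\xi'\in B_\rho(\xi)$. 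Since $Da(x,\eta)=A(x)D\aa(\eta)$ with $D\aa$ locally Hölder for $p\ge2$ (and $D\aa\equiv 2\mathrm{Id}$ for $p=2$), the coefficient $\aL=Da(\cdot,\xi+\nabla\phi_\xi)$ is uniformly elliptic (constants depending on $\lambda,p,|\xi|,\|\nabla\phi_\xi\|_{C^0}$) and bounded in $C^{0,\alpha'}(D)$ for some $\alpha'>0$, so $\corL$ and $\tilde\sigma_{\xi,e}$ exist by Lax--Milgram and lie in $C^{1,\alpha'}$ with $\|\nabla\corL\|_{C^{0,\alpha'}(D)}\lesssim|e|$ (linear in $e$, as~\eqref{e.Lcorr} is). The second input is the continuity $\xi'\mapsto\nabla\phi_{\xi'}$ in $C^0(D)$: testing the difference of the equations~\eqref{e.cor-eq} for $\xi'$ and $\xi$ and using monotonicity gives $\|\nabla\phi_{\xi'}-\nabla\phi_\xi\|_{L^p(D)}\to0$, and combined with the uniform $C^{0,\alpha}$-bound and the compact embedding $C^{0,\alpha}(D)\hookrightarrow C^0(D)$ this upgrades to $\|\nabla\phi_{\xi'}-\nabla\phi_\xi\|_{C^0(D)}\to0$. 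Consequently $\zeta_t\to0$ in $C^0$, hence $\bar a_t-\aL\to0$ in $L^\infty(D)$ by uniform continuity of $D\aa$ on bounded sets; since $\bar a_t-\aL$ is also bounded in $C^{0,\alpha'}(D)$, interpolation yields $\|\bar a_t-\aL\|_{C^{0,\alpha''}(D)}\to0$ for any $\alpha''<\alpha'$.

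With these ingredients I would conclude as follows. The right-hand side $h_t:=(\bar a_t-\aL)(te+t\nabla\corL)$ obeys $\|h_t\|_{C^{0,\alpha''}(D)}\le\|\bar a_t-\aL\|_{C^{0,\alpha''}}\,\|te+t\nabla\corL\|_{C^{0,\alpha''}}=o(1)\cdot O(t)=o(t)$, while the energy estimate for the equation for $w_t$ gives $\|\nabla w_t\|_{L^2(D)}\lesssim\|h_t\|_{L^2(D)}=o(t)$; since $\bar a_t$ is uniformly elliptic and uniformly bounded in $C^{0,\alpha''}(D)$, Schauder theory (with boundary estimates in the Dirichlet case) then gives $\|\nabla w_t\|_{C^0(D)}\lesssim\|h_t\|_{C^{0,\alpha''}(D)}+\|\nabla w_t\|_{L^2(D)}=o(t)$, which is the first-order expansion for $\nabla\phi_\xi$. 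For the flux corrector, once $\xi'\mapsto\nabla\phi_{\xi'}$ is known to be differentiable in $C^{0,\alpha''}$ with derivative $\nabla\corL$, the chain rule applied to $\aa\in C^1$ (its Taylor remainder being $o(|\cdot|)$ locally) shows $\xi'\mapsto a(\cdot,\xi'+\nabla\phi_{\xi'})=A\,\aa(\xi'+\nabla\phi_{\xi'})$ is differentiable in $C^{0,\alpha''}$ with derivative $\aL(e+\nabla\corL)$; subtracting the equations~\eqref{e.Laplace-sig} for $\xi+te$, $\xi$ and $t$ times~\eqref{e:eq-sigmaL} and applying Calderón--Zygmund and Schauder estimates for $-\triangle$ gives $\|\nabla(\sigma_{\xi+te}-\sigma_\xi-t\tilde\sigma_{\xi,e})\|_{C^0(D)}=o(t)$. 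Finally $e\mapsto(\nabla\corL,\nabla\tilde\sigma_{\xi,e})$ is linear and bounded $\R^d\to C^0(D)$ by the linearized equations and their Schauder estimates, so $\xi\mapsto(\nabla\phi_\xi,\nabla\sigma_\xi)$ is Fréchet-differentiable in $C^0(D)$ with the stated derivatives.

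The main obstacle will be the claim $\|\bar a_t-\aL\|_{C^{0,\alpha''}(D)}\to0$: it is what forces us to prove, rather than merely invoke, both the locally uniform $C^{1,\alpha}$-bounds on the nonlinear correctors (via Lemma~\ref{regestiNL}) and the $C^0$-continuity of $\xi\mapsto\nabla\phi_\xi$, and then to interpolate between $L^\infty$-smallness and a bounded Hölder norm; every other step is a standard elliptic estimate. A secondary point is the up-to-the-boundary regularity in the Dirichlet case, which is available since $D$ is assumed smooth — and in the periodic case, the one actually used in the body of the paper, only interior Schauder theory is needed.
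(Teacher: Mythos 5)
Your proof is correct, and it takes a genuinely different route from the paper's. The paper works with the difference quotient $\delta^h\corNL:=(\phi_{\xi+he}-\corNL)/h$, uses Lemma~\ref{regestiNL} and the Schauder estimate to get a uniform $C^{1,\alpha}$ bound on $\delta^h\corNL$, extracts a convergent subsequence by Arzel\`a--Ascoli, passes to the limit in the weak formulation to identify the limit as $\corL$, and then invokes uniqueness to dispense with the subsequence; the continuity $\nabla\phi_{\xi+he}\to\nabla\phi_\xi$ in $C^0$ is harvested \emph{a posteriori} from the uniform bound on $\delta^h\corNL$. You instead subtract $t\corL$ at the outset, derive a closed equation for the remainder $w_t$, and bound it directly, which forces you to prove the $C^0$-continuity of $\xi'\mapsto\nabla\phi_{\xi'}$ as a separate input (monotonicity gives $L^p$, the uniform $C^{0,\alpha}$ bound and compactness upgrade to $C^0$), and to interpolate between $L^\infty$-smallness and a bounded H\"older norm to get $\|\bar a_t-\aL\|_{C^{0,\alpha''}}\to 0$. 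Both arguments rest on the same three pillars --- fundamental theorem of calculus to linearize, Lemma~\ref{regestiNL}/energy estimate for a priori bounds, Schauder regularity --- but the paper's soft compactness proof is shorter to write, while your direct remainder estimate is more quantitative and would yield a rate $O(t^{1+\beta})$ with a little extra care in the interpolation step. Two small remarks: for the flux-corrector part you implicitly need the Schauder step to deliver the estimate $\|\nabla w_t\|_{C^{0,\alpha''}(D)}=o(t)$ (not merely $C^0$), since the flux remainder $a(\cdot,\xi+te+\nabla\phi_{\xi+te})-a(\cdot,\xi+\nabla\phi_\xi)-ta_\xi(e+\nabla\corL)=\bar a_t\nabla w_t+h_t$ must be $o(t)$ in $C^{0,\alpha''}$ before Calder\'on--Zygmund/Schauder for $-\triangle$ can be applied --- your Schauder estimate in fact gives exactly this, so it is only a matter of stating it; and for $2<p<3$ the H\"older exponent carried by $D\aa$ degrades to $p-2$, so the exponent $\alpha'$ you use in Schauder is really $\alpha\cdot\min(1,p-2)$ rather than $\alpha$, which you flag but do not track explicitly. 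Neither affects the validity of the argument.
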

\begin{proof}
We only give the arguments for  $\xi\mapsto \nabla\corNL$. The differentiability of $\xi\mapsto\nabla\sigma_\xi$ can be proved similarly. Let $\xi,e\in\mathbb{R}^d$, $h\subset (0,1)$ be a sequence that goes to $0$, and set $\delta^h\corNL:=\frac{\phi_{\xi+he}-\corNL}{h}$.  
We first show that we can extract a converging subsequence of  $(\nabla\delta^h\corNL)$ by  local regularity and Arzela-Ascoli's theorem.  Then, we show that this limit coincides with $\nabla\corL$. 
The starting point is the corrector equation \eqref{e.cor-eq} in the form
\begin{equation}
-\nabla\cdot(a(x,\xi+\nabla\phi_{\xi+he})-a(x,\xi+\nabla\phi_{\xi}))=\nabla\cdot(a(x,\xi+\nabla\phi_{\xi+he})-a(x,\xi+he+\nabla\phi_{\xi+he})),
\label{Diff:Eq1}
\end{equation}
that we rewrite, using the smoothness of $\xi\mapsto a(\cdot,\xi)$,  as
\begin{equation}
-\nabla\cdot a^{(1)}_h\nabla\delta^h\corNL=\nabla\cdot a^{(2)}_h e,
\label{Diff:Eq6}
\end{equation}
where
$$a^{(1)}_h:=\int_0^1 Da(\cdot, \xi+\nabla\corNL+t(\nabla\phi_{\xi+he}-\nabla\corNL))\dd t \text{ and } a^{(2)}_h:=\int_0^1 Da(\cdot, \xi+\nabla\phi_{\xi+he}+(1-t)he)\dd t.$$
Next, for all $\tilde{\xi}\in\mathbb{R}^d$,  using local regularity (this time up to the boundary) in form of Lemma \ref{regestiNL} (applied to the equation \eqref{e.cor-eq}) and an energy estimate on $D$, we have 
\begin{equation}
\|\tilde{\xi}+\nabla\phi_{\tilde{\xi}}\|_{C^{\alpha}(D)}\lesssim_{D,\|A\|_{C^{\alpha}(D)}}\Big(\int_{D}\vert \tilde{\xi}+\nabla\phi_{\tilde{\xi}}\vert^p\Big)^{\frac{1}{p}}\lesssim_D 1+\vert\tilde{\xi}\vert^p.
\label{Diff:Eq2}
\end{equation}
Therefore, $a^{(1)}_h\in C^{\alpha}(D)$ and by using \eqref{Diff:Eq2} for both $\tilde{\xi}=\xi$ and $\tilde{\xi}=\xi+he$ and arguing as in \eqref{deterboundproof3}, there exists a finite constant $c_1$ depending on $\vert\xi\vert$, $D$ , $\|A\|_{C^{\alpha}(D)}$ such that
\begin{equation}
\|a^{(1)}_{h}\|_{C^{\alpha}(D)}+ \|a^{(2)}_{h}\|_{C^{\alpha}(D)}\leq c_1.
\label{Diff:Eq3}
\end{equation}
On the one hand, by testing the equation \eqref{Diff:Eq6} with $\delta^h\corNL$ and using that $a^{(1)}_h$ is uniformly elliptic, we deduce
\begin{equation}
\int_{D}\vert \nabla\delta^h\corNL\vert^2\lesssim \int_{D}\vert a^{(2)}_h \vert^2\stackrel{\eqref{Diff:Eq3}}{\lesssim} c_1.
\label{Diff:Eq4}
\end{equation}
On the other hand, by the Schauder estimate \cite[Theorem 5.19]{giaquinta2013introduction} applied to \eqref{Diff:Eq6}, and the bounds \eqref{Diff:Eq3} and \eqref{Diff:Eq4}, there exists some $\gamma>0$ (depending on $\alpha$ and $d$) such that
\begin{equation}
\|\nabla\delta^h\corNL\|_{C^{\alpha}(D)}\lesssim\|a^{(1)}_h\|^{\gamma}_{C^{\alpha}(D)}\Big(\Big(\int_{D}\vert \nabla\delta^h\corNL\vert^2\Big)^{\frac{1}{2}}+\|a^{(2)}_h\|_{C^{\alpha}(D)}\Big)\stackrel{\eqref{Diff:Eq3},\eqref{Diff:Eq4}}\leq c_2,
\label{Diff:Eq5}
\end{equation}
for a finite constant $c_2$ depending on $c_1$, $d$ and $D$.  
By~\eqref{Diff:Eq5} and Arzela-Ascoli's theorem, there exists $\tilde{\psi}\in C^{1,\alpha}_0(D)$ (resp.~$C^{1,\alpha}_\per(Q_L)$) such that (up to a subsequence that we do not relabel)
\begin{equation}
\nabla\delta^h\corNL\stackrel{h\downarrow 0}{\rightarrow} \nabla\tilde{\psi} \text{ in $C^0(D)$}.
\label{Diff:Eq9}
\end{equation}
It remains to show that $\nabla\tilde{\psi}=\nabla\corL$, which directly follows from the weak formulation of \eqref{Diff:Eq6}. 
For all $w\in H^1_0(D)$ (resp. $H^1_\per(Q_L)$)
\begin{equation}
\int_{D}\nabla w\cdot a^{(1)}_h\nabla\delta^h\corNL=-\int_{D}\nabla w\cdot a^{(2)}_he.
\label{Diff:Eq10}
\end{equation}
Using the convergence of the gradient \eqref{Diff:Eq9}, we can pass to the limit as $h\downarrow 0$ in \eqref{Diff:Eq10}, which implies that $\tilde{\psi}$ solves \eqref{e.Lcorr}.  By uniqueness, $\tilde{\psi}=\corL$, and \eqref{Diff:Eq9} holds without extracting a subsequence.
\end{proof}

\subsection{Periodic setting: Proof of Theorem~\ref{th:isotropic-per}}\label{app:isotropic-per}

In this paragraph, we show that if we have a good control of the 
critical set of the corrector of the leading order operator (an anisotropic $p$-Laplacian), then
the  homogenized operator $\bar a$ satisfies \eqref{*coer+} on top of \eqref{*coer+-}.
In what follows we set $b:(x,\xi)\mapsto A(x)|\xi|^{p-2} \xi$ and $c:(x,\xi)\mapsto A(x)\xi$.
We first introduce $\bar b:\R^d \to \R^d, \xi \mapsto \fint_Q b(x,\xi+\nabla \psi_\xi(x)) \dd x$,
where $\psi_\xi \in W^{1,p}_\per(Q)$ solves the corrector equation  
$$
-\nabla \cdot b(x,\xi+\nabla \psi_\xi(x))  = 0.
$$
By homogeneity, for all $t>0$ and all $\xi \in \R^d$ we have $\bar b(t\xi) = t^{p-1} \bar b(\xi)$.

\medskip

\step1 Proof of \eqref{*coer+}  for $|\xi_1|,|\xi_2|\gg 1$.

\medskip

\substep{1.1} Reformulation.

\noindent By Lemma~\ref{lemmadiffcor}, correctors are differentiable and we thus have
for all $\xi,e\in \R^d$
$$
e\cdot D\bar a(\xi) e\,=\, \fint_Q (e+\nabla \tilde \phi_{\xi,e}) \cdot a_\xi (e+\nabla \tilde \phi_{\xi,e}),
$$
where $\tilde \phi_{\xi,e} \in H^1_\per(Q)$ solves $-\nabla \cdot a_\xi (e+\nabla \tilde \phi_{\xi,e})=0$
and $a_\xi:x\mapsto Da(x,\xi+\nabla \phi_\xi(x))$. Hence, for all $\xi_1,\xi_2 \in \R^d$ we have
\begin{equation}
(\bar a(\xi_1)-\bar a(\xi_2))\cdot (\xi_1-\xi_2)\,=\, \int_0^1 (\xi_1-\xi_2) \cdot D\bar a(\xi_1+t(\xi_2-\xi_1)) \cdot (\xi_1-\xi_2) \dd t.
\end{equation}
Since $D \bar a$ is non-negative, the claim \eqref{*coer+} follows for $|\xi_1|,|\xi_2|\gg 1$ provided we prove that $e \cdot D \bar a(\xi) e \ge c |\xi|^{p-2} $ for all $|\xi|\gg1$ and all $e\in \R^d$ with $|e|=1$.

\medskip

Fix such a direction $e$.
For all $s >0$ let $\xi_s \in \R^d$ be such that $|\xi_s|=1$ and 
$
e \cdot D \bar a(s \xi_s) e = \inf_{ |\xi|=1} e \cdot D \bar a(s\xi) e,
$
which exists since $\xi \mapsto D\bar a(\xi)$ is continuous.
In the following two substeps we prove the needed estimate in form of  
\begin{equation}\label{e.desired-strong-ell}
\liminf_{s \uparrow \infty}  e \cdot  \tfrac{1}{s^{p-2}} D\bar a (s\xi_s) e\, > 0.
\end{equation}

\medskip

\substep{1.2} Proof of
\begin{equation}\label{e.approx-asco-xis}
\lim_{s \uparrow \infty} \| \tfrac{1}{s^{p-2}}a_{s\xi_s} - b_{\xi_s} \|_{C^{\alpha}(Q)} \,= \, 0,
\end{equation}
where $b_\xi:x\mapsto Db(x,\xi+\nabla \psi_\xi(x))$.

\noindent On the one hand, by the corrector equations for $\phi_{\xi}$ and $\psi_\xi$ we have
$$
-\nabla \cdot (a(\xi+\nabla \phi_\xi)-a(\xi+\nabla \psi_\xi)) = \nabla \cdot c(x,\xi+\nabla \psi_\xi),
$$
so that, testing with $\psi_\xi-\phi_\xi$ and using the monotonicity of $a$, we obtain
\begin{multline*}
\int_Q |\nabla (\psi_\xi-\phi_\xi)|^2 (1+|\xi+\nabla \psi_\xi|^{p-2}+|\xi+\nabla \phi_\xi|^{p-2})
\,\lesssim\, \int_Q |\nabla (\psi_\xi-\phi_\xi)| |\xi+\nabla \psi_\xi|
\\
\,\lesssim\, \int_Q |\nabla (\psi_\xi-\phi_\xi)| (1+|\xi+\nabla \psi_\xi|)^\frac{p-2}2  (1+|\xi+\nabla \psi_\xi|)^{2-\frac{p}2} 
\end{multline*}
and therefore 
$$
\int_Q |\nabla (\psi_\xi-\phi_\xi)|^2 (1+|\xi+\nabla \psi_\xi|^{p-2}+|\xi+\nabla \phi_\xi|^{p-2})
\,\lesssim \, \int_Q (1+ |\xi+\nabla \psi_\xi| )^{4-p}.
$$
Applied to $\xi=s\xi_s$ this yields using that $\psi_{s\xi_s}=s\psi_{\xi_s}$ 
\begin{equation}\label{e.convLp-xis}
\int_Q |\nabla (\tfrac{1}{s}\phi_{s\xi_s}-\psi_{\xi_s})|^p \lesssim s^{2(2-p)} \int_Q (1+|\xi_s+\nabla \psi_{\xi_s}|^p) \, \stackrel{|\xi_s|=1}\lesssim \,  s^{2(2-p)}.
\end{equation}
Next we argue that $\{\tfrac{1}{s}\phi_{s\xi_s}\}_{s\ge 1}$ is a bounded sequence in $C^\alpha(Q)$, in which case \eqref{e.approx-asco-xis} follows from  \eqref{e.convLp-xis}  by Arzela-Ascoli's theorem.
To this end we rewrite the corrector equation for $\chi_s:=\tfrac1s \phi_{s\xi_s}$ as 
$$
-\nabla \cdot  \tilde a_s (x,\xi_s+\nabla \chi_s) = 0
$$
with $\tilde a_s(x,\xi):=  b (x,\xi ) + \frac1{s^{p-1}} c(x,s \xi)$. Since by assumption, $\tilde a_s$ satisfies \eqref{darkside} for some $\omega$ independent of $s\ge 1$,  $\{\chi_s\}_{s\ge 1}$ is indeed bounded in $C^{1,\alpha}(Q)$ by \cite[Theorem 13]{kuusi2014guide}.
The conclusion then follows using that $\tfrac{1}{s^{p-2}}a_{s\xi_s}= D\tilde a_s(x,\xi_s+\nabla \chi_s)$ and that $\xi \mapsto D \tilde a_s(x,\xi)$ is continuous (uniformly wrt $s,x$).

\medskip

\substep{1.3} Proof of \eqref{e.desired-strong-ell}.

\noindent
We assume without loss of generality that $\xi_s \to \xi_\infty$.
Since $\xi \mapsto \nabla \psi_\xi$ is Lipschitz from $\R^d$ to $C^\alpha(Q)$,  \eqref{e.approx-asco-xis} can be upgraded to (along the subsequence giving the liminf)
\begin{equation}\label{e.approx-asco-xis+}
\lim_{s \uparrow +\infty} \| \tfrac{1}{s^{p-2}}a_{s\xi_s} - b_{\xi_\infty} \|_{C^{\alpha}(Q)} \,= \, 0.
\end{equation}
By definition we have 
\begin{eqnarray*}
e\cdot \tfrac{1}{s^{p-2}} D\bar a(s\xi_s) e&=& \fint_Q (e+ \nabla \tilde \phi_{s\xi_s,e}) \cdot  \tfrac{1}{s^{p-2}}a_{s\xi_s}^\sym (e+\nabla \tilde \phi_{s\xi_s,e}),
\end{eqnarray*}
where $a_{s\xi_s}^\sym$ is the symmetric part of $a_{s\xi_s}$.
By assumption, there exists $r>0$ such that $\R^d \setminus \calT_r(\xi_\infty)$ is connected (where $\calT_r(\xi_\infty):=\{ x+B_r \,|\,x  \in \R^d, |\xi_\infty+\nabla \psi_{\xi_\infty}(x)|=0\}$).
Since $[0,1]^d \cap \R^d \setminus \calT_r(\xi_\infty)$ is closed and $\nabla \psi_{\xi_\infty}$ is continuous, there exists $\kappa>0$ such that the symmetric part $b_{\xi_\infty}^{\sym}$ of $b_{\xi_\infty}$ satisfies $b_{\xi_\infty}^\sym|_{Q \setminus \calT_r(\xi_\infty)} \ge \kappa \id$. Hence, by \eqref{e.approx-asco-xis+}, there exists $s_\star<\infty$ such that, for all $s\ge s_\star$, $a_{s\xi_s}^\sym|_{Q \setminus \calT_r(\xi_\infty)}  \ge \frac12 \kappa \id$. 
For all $s\ge s_\star$ we thus have 
\begin{equation*}
e\cdot \tfrac{1}{s^{p-2}} D\bar a(s\xi_s) e\,\ge \,\frac \kappa 2 \int_{Q \setminus \calT_r(\xi_\infty)} |e+ \nabla \tilde \phi_{s\xi_s,e} |^2
\,
\ge \, \frac \kappa 2  \inf_{\tilde \phi\in H^1_\per(Q)} \int_{Q \setminus \calT_r(\xi_\infty)} |e+ \nabla \tilde \phi |^2,
\end{equation*}
which is positive since $\R^d \setminus \calT_r(\xi_\infty)$ is connected in $\R^d$. This proves \eqref{e.desired-strong-ell}.

\medskip

\step2 Proof of \eqref{*coer+} in the remaining range: $|\xi_1|,|\xi_2|\lesssim 1$
and $|\xi_2|\gg 1,|\xi_1| \lesssim 1$.

\noindent On the one hand, since $c(x,\xi)\xi \ge \frac1C |\xi|^2$, we have 
for all $\xi_1,\xi_2 \in \R^d$
\begin{equation*} 
(\bar a(\xi_1)-\bar a(\xi_2),\xi_1-\xi_2)\,\ge \,c |\xi_1-\xi_2|^2 ,
\end{equation*}
from which \eqref{*coer+} follows for $|\xi_1|,|\xi_2|\lesssim 1$.
On the other hand, \eqref{*coer} implies \eqref{*coer+} for $|\xi_2| \gg 1, |\xi_1| \lesssim 1$ using 
$|\xi_1-\xi_2|^p \gtrsim |\xi_1-\xi_2|^2(|\xi_2|^{p-2}-|\xi_1|^{p-2}) \gtrsim |\xi_1-\xi_2|^2(1+|\xi_2|^{p-2}+|\xi_1|^{p-2})$.

\begin{remark}
The strong assumption on the critical set of $\psi_\xi$ is solely used to ensure that 
$$
\liminf_{s\uparrow +\infty}  \int_Q (e+ \nabla \tilde \phi_{s\xi_s,e}) \cdot  \tfrac{1}{s^{p-2}}a_{s\xi_s}^\sym (e+\nabla \tilde \phi_{s\xi_s,e}) \ge \inf_{\tilde \phi \in H^1_\per(Q)} \int_Q (e+ \nabla \tilde \phi ) \cdot b_{\xi_\infty}^\sym (e+\nabla \tilde \phi),
$$
which might not hold true in general due to the Lavrentieff phenomenon -- see e.g.~\cite{Zhikov-01} in a similar context.
\end{remark}

\subsection{Statistically isotropic random setting: Proof of  Theorem~\ref{th:isotropic}}\label{app:closed}

In this subsection, we exhibit a class of random monotone operators $a$ whose homogenized operator $\bar a$ satisfies \eqref{*coer+} next to \eqref{*coer+-}.
If $a$ is a $p$-Laplacian then $\bar a$ is homogeneous of degree $p-1$. The upcoming
result relies on a perturbation of this property.
To define this class we make both structural assumptions on $(x,\xi) \mapsto a(x,\xi)$ and on the probability law $\mathbb P$.
We emphasize that our arguments are purely qualitative and do not require Hypothesis~\ref{hypo}.
We start with the structural assumption on the operator (which quantifies what we mean by perturbation of a $p$-Laplacian)
\begin{definition}
We define a class $\calA$ of nonlinear maps $\hat a : [\lambda,1] \times\mathbb{R}^d\rightarrow \mathbb{R}^d$ with quasi-diagonal structure, that is, such that for all $(\alpha,\xi)\in [\lambda,1]\times\mathbb{R}^d$
\begin{equation}
\hat a(\alpha,\xi)=\rho(\alpha ,\vert\xi\vert)\xi,
\label{formMbar}
\end{equation}
where $\rho :[\lambda,1]\times\mathbb{R}_+\rightarrow \R_+$ is continuously differentiable, and such that   
the map $\xi\mapsto \hat a(\alpha,\xi)$ is asymptotically of $p$-Laplacian type.
More precisely, we assume that $\inf_\alpha \rho(\alpha,t) \ge \lambda (1+t^{p-2})$ and that there exist two  differentiable functions $\rho_1 : [\lambda, 1] \rightarrow \mathbb{R}_+$ and $\rho_{2}: [\lambda, 1] \times\mathbb{R}_+\rightarrow \mathbb{R}$ such that for all $(\alpha,t)\in [\lambda,1]\times \mathbb{R}_+$
$$\rho(\alpha,t)=\rho_1(\alpha )t^{p-2}+\rho_2(\alpha,t),$$
and that there exist a constant $C$ and an exponent $0 \le \beta<p-2$ such that
$$|\rho_{2}(\alpha,t)|+t\vert\partial_{t}\rho_2(\alpha,t)\vert\leq C (1+t^{\beta}).$$
As a consequence, $\hat a$  is variational in the sense that
$\hat a(\alpha, \xi)=D\hat W(\alpha,\xi)$, where $\hat W$ is given by $\hat W(\alpha,\xi):=\int_0^{|\xi|} s \rho(\alpha,s)\dd s \ge \lambda (\frac12 |\xi|^2+\frac1p |\xi|^p)$.
\end{definition}
As the following examples show, $\calA$ is not empty.
\begin{example}The  following nonlinear maps belong to $\calA$:
\begin{enumerate}
\item The non-degenerate $p$-Laplacian operator $\hat a:   [\lambda, 1]\times\mathbb{R}^d\ni (\alpha,\xi)\mapsto \alpha(1+\vert\xi\vert^{p-2})\xi,$
with $\rho_1(\alpha)=\alpha$ and $\rho_2(\alpha,t)=\alpha$ (and therefore $C=\frac12$ and $\beta=0$).
\item The operator
$\hat a : [\lambda,1]\times\mathbb{R}^d\ni (\alpha,\xi)\mapsto \Big(\frac{1+\vert \xi\vert^{p+q-2}}{1+\alpha\vert\xi\vert^q}+1\Big)\xi,$
for any $q\ge 0$, and with  $\rho_1(\alpha):=\frac{1}{\alpha}$, $\rho_2(\alpha,t):=\frac{\alpha-t^{p-2}}{\alpha(1+\alpha t^q)}+1$, $C=\frac{1}{\alpha}\max\{1,q,\frac{p-2}{\alpha}\}$ and $\beta=(p-2-q)\vee 0$.
\end{enumerate}
\end{example}
We are in position to state the main result of this section.
\begin{theorem}\label{th:isotropic-erg}
Let $\hat a \in \calA$ and let $A:\R^d \to [\lambda, 1]$ be a stationary and ergodic random field (locally $C^\alpha$ -- this is convenient but not necessary) which is statistically isotropic in the sense that for all rotations $R \in SO(d)$,
$A$ and $A(R \cdot)$ have the same (joint) distribution. Consider the random monotone operator $a:\R^d \times \R^d \ni (x,\xi)\mapsto \hat a(A(x),\xi)$.
Then, the associated homogenized map $\bar a$ 
satisfies \eqref{*cont}, \eqref{*coer+-}, and \eqref{*coer+}.
\end{theorem}
Note that the example of Theorem~\ref{th:isotropic} satisfies the assumptions of Theorem~\ref{th:isotropic-erg}.
The proof of Theorem~\ref{th:isotropic-erg} relies on an approximation argument, an ODE argument, and the following property.
\begin{lemma}\label{criterion}
Let $\aa : \mathbb{R}^d\rightarrow \mathbb{R}^d$ be such that for all $\xi\in\mathbb{R}^d$
\begin{equation}
\aa(\xi)=\rho(\vert \xi\vert)\xi,
\label{E1}
\end{equation}
where $\rho:\R_+\rightarrow \mathbb{R}_+$ is a differentiable function that satisfies for some constant $c>0$ and for all $t>0$
\begin{equation}
\frac{\dd}{\dd t}(t \rho(t))\geq c(1+t^p)^{\frac{p-2}{p}}.
\label{E2}
\end{equation}
Then, there exists a constant $\tilde{c}>0$ depending on $c$ and $p$ such that for all $\xi_1,\xi_2 \in\mathbb{R}^d$
\begin{equation}\label{*coer++}
(a(\xi_1)-a(\xi_2))\cdot(\xi_1-\xi_2)\geq \tilde{c}(1+\vert\xi_1\vert^{p-2}+\vert\xi_2\vert^{p-2})\vert\xi_1-\xi_2\vert^2.
\end{equation}
\end{lemma}
\begin{proof}
W.l.o.g we may assume that $\vert\xi_1\vert>\vert\xi_2\vert>0$. We fix $s>0$ and we define $f : [s,+\infty)\rightarrow \mathbb{R}$ as  
$$f:t\mapsto t \rho(t)-s \rho(s)-\tilde{c}(1+t^p+s^p)^{\frac{p-2}{p}}(t-s),$$
where $\tilde{c}$ will be fixed later. Differentiating $f$ and using the assumption~\eqref{E2}, we obtain
\begin{align*}
f'(t)&=\frac{\dd}{\dd t}(t \rho(t))-\tilde{c}((p-2)(1+t^p+s^p)^{-\frac{2}{p}}t^{p-1}(t-s)+(1+t^p+s^p)^{\frac{p-2}{p}})\\
&\geq c(1+t^p)^{\frac{p-2}{p}}-\tilde{c}((p-2)(1+t^p+s^p)^{-\frac{2}{p}}t^{p-1}(t-s)+(1+t^p+s^p)^{\frac{p-2}{p}}).
\end{align*}
Since $(t-s)t^{p-1} \le t^p$ and $1+t^p+s^p \le 2(1+t^p)$, this yields
$f'(t)\geq (c-2 \tilde c (p-1))(1+t^p)^{\frac{p-2}{p}}$.
With the choice $\tilde c = \frac{c}{2(p-1)}$, this entails $f'(t) \ge 0$, and thus $f(t)\ge f(s)$,
which takes the form
\begin{equation}
t \rho(t)-s \rho(s)\geq \tilde{c}(1+t^p+s^p)^{\frac{p-2}{p}}(t-s).
\label{E3}
\end{equation}
Used with $s=0$, this implies the lower bound for all $t_1 >0$
\begin{equation}
\rho(t_1) \ge \tilde c (1+t_1^p )^{\frac{p-2}{p}} .
\label{E4}
\end{equation}
By the definition \eqref{E1} of $\aa$, we have with the notation $t_1=|\xi_1|>t_2=|\xi_2|$,
\begin{align*}
(\aa(\xi_1)-\aa(\xi_2))\cdot(\xi_1-\xi_2)&=t_1^2\rho(t_1)+t_2^2 \rho(t_2)-(\xi_1\cdot\xi_2)(\rho(t_1)+\rho(t_2))\\
&= (t_1 \rho(t_1)-t_2\rho(t_2))(t_1-t_2)+(t_1t_2-\xi_1\cdot\xi_2)(\rho(t_1)+\rho(t_2))
\\
&\stackrel{\eqref{E3},\eqref{E4}}\ge \tilde c (1+t_1^p )^\frac{p-2}{p} (t_1-t_2)^2 
+\tilde c (t_1t_2-\xi_1\cdot\xi_2)(1+t_1^p )^\frac{p-2}{p} 
\\
&\ge \frac{\tilde c}2 (1+t_1^p )^\frac{p-2}{p}|\xi_1-\xi_2|^2,
\end{align*}
and the claim follows by redefining $\tilde c$.
\end{proof}
We now prove Theorem~\ref{th:isotropic-erg}.
\begin{proof}[Proof of Theorem~\ref{th:isotropic-erg}]
We focus on the proof of \eqref{*coer++} for $\bar a$ (which implies both \eqref{*coer+} and \eqref{*coer+-}).
We split the proof into two steps.
In the first step we argue that it suffices to prove a version of~\eqref{*coer++} obtained by an approximation of the corrector on bounded domains, which we prove in the second step.

\medskip

\step1 Approximation. 

\noindent By assumption there exists $W:\R^d\times \R^d\to \R$ such that $a(x,\xi)=DW(x,\xi)$.
Likewise, there exists $\bar W:\R^d \to \R$ such that $\bar a(\xi)=D\bar W(\xi)$.
For all $L \ge 1$, we denote by $\phi_\xi^L$ the unique weak solution in $W^{1,p}_0(B_L)$
of \eqref{e.cor-eq}, and define
$$
a^L(\xi) := \fint_{B_L} a(x,\xi+\nabla \phi^L_\xi(x))\dd x, \quad W^L(\xi) := \fint_{B_L} W(x,\xi+\nabla \phi^L_\xi(x))\dd x,
$$
which satisfy $a^L(\xi)=D W^L(\xi)$.
As a direct consequence of the homogenization result we have almost-surely 
$$
\lim_{L\uparrow +\infty} (a^L(\xi),W^L(\xi))\,=\, (\bar a(\xi),\bar W(\xi)).
$$
Set $\bar a^L(\xi):=\mathbb{E}[a^L(\xi)]$ and $\bar W^L(\xi):=\mathbb{E}[W^L(\xi)]$. 
Since we have the uniform almost sure bound $ |a^L(\xi)|+|W^L(\xi)| \le C(1+|\xi|^p)$, 
we obtain on the one hand that $\bar a^L(\xi)=D \bar W^L(\xi)$ and on the other hand (by dominated convergence) that  
$$
\lim_{L\uparrow +\infty} (\bar a^L(\xi),\bar W^L(\xi))\,=\, (\bar a(\xi),\bar W(\xi)).
$$
Hence, \eqref{*coer++} follows if we prove that there exists a constant $C>0$ independent of $L$ such that for all $\xi_1,\xi_2 \in \R^d$ we have 
\begin{equation}
(\bar{a}^L(\xi_1)-\bar{a}^L(\xi_2))\cdot(\xi_1-\xi_2)
\geq \frac1C(1+\vert\xi_1\vert+\vert\xi_2\vert)^{p-2}\vert\xi_1-\xi_2\vert^2.
\label{*coer+L}
\end{equation}
The advantage of \eqref{*coer+L} over \eqref{*coer++} is that it involves correctors on a bounded domain rather than on the whole space, for which differentiability with respect to $\xi$ can be easily established (cf.~Lemma~\ref{lemmadiffcor}). 
The advantage of $\bar W^L$ over $W^L$ (which motivates the choice of the ball $B_L$ for the domain) is that, as $\bar W$, $\xi \mapsto \bar W^L$ is isotropic in the sense that there exists $\zeta^L : \R_+ \to \R$ such that $\bar W^L (\xi) = \zeta^L(|\xi|)$.
In particular, we necessarily have $\bar a^L (\xi) = (\zeta^L)'(|\xi|) \frac{\xi}{|\xi|}$,
so that, by Lemma~\ref{criterion}, \eqref{*coer+L} will follow provided we show that $t \mapsto (\zeta^L)'(t)$ is differentiable and satisfies for some $c>0$ independent of $L$
for all $t>0$
\begin{equation}
(\zeta^L)''(t)\geq c(1+t^p)^{\frac{p-2}{p}}.
\label{E8}
\end{equation}
Fix a unit vector $e \in \R^d$. By definition, we have
$$
(\zeta^L)'(t)\,=\, \bar a^L(te) \cdot e = \expec{\fint_{B_L} a(x,te+\nabla \phi^L_{te}(x)) \cdot e\dd x}.
$$
By Lemma~\ref{lemmadiffcor}, correctors are differentiable, and differentiating the above yields using~\eqref{e.Lcorr} (posed on $B_L$ with Dirichlet boundary conditions)
\begin{eqnarray*}
(\zeta^L)''(t)&=& \expec{\fint_{B_L} Da(x,te+\nabla \phi^L_{te}(x))(e+\nabla \tilde \phi_{te,e}^L(x)) \cdot e  \dd x}
\\
&=&  \expec{\fint_{B_L} Da(x,te+\nabla \phi^L_{te}(x))(e+\nabla \tilde \phi_{te,e}^L(x)) \cdot (e+\nabla \tilde \phi_{te,e}^L(x)) \dd x}.
\end{eqnarray*}
Since $a(x,\xi) = \hat a(A(x),\xi)$, $\xi \mapsto a(x,\xi)$ is differentiable and satisfies 
for some $c>0$ and for all $\xi,h \in \R^d$
\begin{equation*}
Da(x,\xi) : h \otimes h \ge c(1+|\xi|^{p-2}) |h|^2.
\end{equation*}
Hence (using that $\int_{B_L} \nabla \tilde \phi_{te,e}^L=0$), 
$
(\zeta^L)''(t) \, \ge \,  c \expec{\fint_{B_L}|e+\nabla \tilde \phi_{te,e}^L|^2}  \ge c,
$
which yields \eqref{E8} provided $t \lesssim 1$. It remains to treat the case $t\gg 1$.

\medskip

\step2 Proof of \eqref{E8}.

\noindent If $\bar a^L$ were the $p$-Laplacian, $\zeta^L$ would satisfy $(\zeta^L)'(t)=\frac pt \zeta^L(t)$. The idea is to derive a similar ODE in our setting based on the identity $\bar a^L(\xi) = D\bar W^L(\xi)$, from which we shall prove \eqref{E8} by differentiating and using our structural assumptions on $a$.

\medskip

\substep{2.1} Formula for $\zeta^L$ via an ODE argument.

\noindent On the one hand, by the weak formulation of \eqref{e.cor-eq} tested with $\phi_{te}^L$, we have 
$$
(\zeta^L)'(t) =\frac1t \expec{\fint_{B_L} a(x,te+\nabla \phi^L_{te}(x))\cdot (te+\nabla  \phi_{te}^L(x))   \dd x},
$$
which, in combination with the form of $a$, yields
\begin{multline}\label{E16}
t(\zeta^L)'(t)\, =\, \expec{\fint_{B_L} \rho(A(x),|te+\nabla \phi^L_{te}(x)|) |te+\nabla  \phi_{te}^L(x)|^2  \dd x}
\\
=\,  \expec{\fint_{B_L} \rho_1(A(x))|te+\nabla \phi^L_{te}(x)|^p \dd x}
+  \expec{\fint_{B_L} \rho_2(A(x),|te+\nabla \phi^L_{te}(x)|) |te+\nabla  \phi_{te}^L(x)|^2  \dd x}.
\end{multline}
On the other hand, using that $W(x,\xi)= \int_0^{|\xi|} s \rho(A(x),s)\dd s$
and the decomposition of $\rho$, we also have
\begin{multline}
\zeta^L(t)=\bar W^L(te)= \expec{\fint_{B_L} \int_0^{|te+\nabla \phi^L_{te}(x)|} s \rho(A(x),s)\dd s \dd x}
\\
=\, \frac1p\expec{\fint_{B_L} \rho_1(A(x))|te+\nabla \phi^L_{te}(x)|^p \dd x}+\expec{\fint_{B_L} \int_0^{|te+\nabla \phi^L_{te}(x)|} s \rho_2(A(x),s)\dd s \dd x}
.
\label{E17}
\end{multline}
From \eqref{E16} and \eqref{E17} we infer that $\zeta^L$ satisfies the differential relation
$$
t(\zeta^L)'(t)-p\zeta^L(t)\,=\,\expec{\fint_{B_L} \rho_2(A(x),|te+\nabla \phi^L_{te}(x)|) |te+\nabla  \phi_{te}^L(x)|^2  \dd x-p\fint_{B_L} \int_0^{|te+\nabla \phi^L_{te}(x)|} s \rho_2(A(x),s)\dd s \dd x},
$$
which we rewrite as
$(\zeta^L)'(t)-\frac pt \zeta^L(t)\,=\,\frac1t h(t)$
with 
$$
h(t):=\, \expec{\fint_{B_L} \rho_2(A(x),|te+\nabla \phi^L_{te}(x)|) |te+\nabla  \phi_{te}^L(x)|^2  \dd x-p\fint_{B_L} \int_0^{|te+\nabla \phi^L_{te}(x)|} s \rho_2(A(x),s)\dd s \dd x}.
$$
Let $t_\star \ge 1$ to be fixed later.
The solution of this ODE is explicitly given for all $t\ge t_\star$ by
%
$\zeta^L(t)=\frac{\zeta^L(t_\star)}{t_{\star}^p} t^p + +t^p\int_{t_\star}^{t}s^{-1-p}h(s) \dd s$.
Differentiating twice (which we can since $t \mapsto \nabla \phi_{te}^L(x)$ is differentiable for all $x\in B_L$), this yields
\begin{multline}
(\zeta^L)''(t)
\\
=p(p-1) \underbrace{\Big(\frac{\zeta^L(t_\star)}{t_\star^p} + \int_{t_\star}^{t}s^{-1-p}h(s) \dd s\Big)}_{\displaystyle =: \gamma^L(t)} t^{p-2}+\underbrace{\frac{\dd^2}{\dd t^2} \Big(t^p\int_{t_\star}^{t}s^{-1-p}h(s) \dd s\Big)-p(p-1)t^{p-2}\int_{t_\star}^{t}s^{-1-p}h(s) \dd s\Big) }_{\displaystyle =:R^L(t)}.
\label{E10}
\end{multline}
In the following two substeps we provide a bound from below 
for $\gamma^L(t)$ and a bound from above for $R^L(t)$.

\medskip

\substep{2.2} Choice of $t_\star^0$ and lower bound on  $\gamma^L(t)$.

\noindent On the one hand, recall that $\zeta^L(t)=\bar W^L(t e)$ 
 and that $W(x,\xi) =\int_0^{|\xi|} s \rho(A(x),s)\dd s \ge \lambda (\frac12 |\xi|^2 + \frac1p |\xi|^p)$, 
 so that by Jensen's inequality and $\int_{B_L}\nabla\phi^L_\xi=0$
$$
\zeta^L(t)\,=\, \bar W^L(t e) \,=\,\expec{\fint_{B_L} W(x,t e +\nabla\phi^L_{te}) \dd x}
\ge \frac{\lambda}p \expec{\fint_{B_L} |te+\nabla\phi^L_{te}|^p} = \frac{\lambda}p t^p.
$$
On the other hand, the assumption on $\rho_2$ implies that there exists a constant $c>0$ such that for all $t\geq 1$ and $\alpha \in [\lambda, 1]$,
\begin{equation}
\vert \rho_2(\alpha,t)\vert+t\vert\partial_{t}\rho_2(\alpha,t)\vert\leq c(1+t^{\beta}).
\label{E18}
\end{equation}
(The constant $c$ will change from line to line, but remains independent of $t$ and $L$.)
Hence, by the (deterministic) energy estimate $\fint_{B_L} |\nabla \phi_\xi^L|^p \le c(1+|\xi|^p)$,  \eqref{E18} yields for all $t\geq 1$
\begin{equation}
|h(t)|\,\le\, c t^{\beta+2},
\label{E12}
\end{equation}
so that 
$$
\Big| \int_{t}^{\infty}s^{-1-p}h(s) \dd s\Big| \, \le \, c t^{\beta-(p-2)}.
$$
Since $\beta<p-2$, we deduce that for $t_\star^0 \sim 1$ large enough (and independent of $L$) we have for all $t\ge t_\star \ge t_\star^0$
\begin{equation}\label{E12+}
 \gamma^L(t) \ge \frac{\lambda}{2p} .
\end{equation}

\medskip

\substep{2.3} Choice of $t_\star^1$ and upper bound on $R^L(t)$.

\noindent 
Since $R^L(t)=(p-1)t^{-2}h(t)+t^{-1}h'(t)$, it remains to estimate $h'(t)$. 
By differentiating $h$, using \eqref{E18}, and rearranging the terms, Cauchy-Schwarz' inequality yields
\begin{eqnarray*}
|h'(t)|& \le &c \expec{\fint_{B_L} (1+|te +\nabla \phi_{te}^L|^\beta)|te+\nabla \phi^L_{te}||e+\nabla \tilde \phi^L_{te,e}| }
\\
&\le &  c\expec{\fint_{B_L} (1+|te+\nabla \phi^L_{te}|)^{\beta+1-\frac{p-2}2}(1+|te +\nabla \phi_{te}^L|^{p-2})^\frac12|e+\nabla \tilde \phi^L_{te,e}| }
\\
&\le &  c\expec{\fint_{B_L} (1+|te +\nabla \phi_{te}^L|^{p-2})|e+\nabla \tilde \phi^L_{te,e}|^2 }^\frac12 \expec{\fint_{B_L}(1+|te+\nabla \phi^L_{te}|)^{p-2(p-2-\beta)} }^\frac12.
\end{eqnarray*}
For the second right-hand side factor, we use the (deterministic) energy estimate on $\nabla \phi^L_\xi$, which yields for $t\ge 1$
$$
 \expecL{\fint_{B_L}(1+|te+\nabla \phi^L_{te}|)^{p-2(p-2-\beta)} }^\frac12 \, \le \, c  t^{\frac p2-(p-2-\beta)},
$$
whereas for the first right-hand side factor we  use the (deterministic) energy estimate on $\nabla \phi^L_{\xi,e}$ (in favor of which we shall argue below), which yields for $t\ge 1$
\begin{equation}\label{E12K}
\expec{\fint_{B_L} (1+|te +\nabla \phi_{te}^L|^{p-2})|e+\nabla \tilde \phi^L_{te,e}|^2 }^\frac12
\,\le\, ct^{\frac{p-2}2}.
\end{equation}
These last three estimates then combine to 
$|h'(t)| \,\le \, c   t^{p-1-(p-2-\beta)}$.
With \eqref{E12} and the formula for $R^L(t)$, this entails for all $t\ge 1$
the control $|R^L(t)| \, \le \, c t^\beta$.
In particular, since $\beta<p-2$, there exists $t_\star^1 \ge 1$ such that for all $t  \ge t_\star^1$, we have
$|R^L(t)| \le \frac{\lambda(p-1)}{4}t^{p-2}$.

We conclude with the argument in favor of \eqref{E12K}.
By testing \eqref{e.Lcorr} with $\tilde \phi_{te,e}^L\in H^1_0(B_L)$ we obtain 
$$
\int_{B_L} \nabla \tilde \phi_{te,e}^L \cdot Da(x,te+\nabla \phi_{te}^L) \nabla \tilde \phi_{te,e}^L\dd x
= \int_{B_L} \nabla \tilde \phi_{te,e}^L \cdot Da(x,te+\nabla \phi_{te}^L)e\dd x,
$$
which, by our assumptions on $a$, entails
$$
\int_{B_L} |\nabla \tilde \phi_{te,e}^L|^2 (1+|te+\nabla \phi_{te}^L|^{p-2})
\,\lesssim \, \int_{B_L} |\nabla \tilde \phi_{te,e}^L|  (1+|te+\nabla \phi_{te}^L|^{p-2}),
$$
and therefore  \eqref{E12K} by Cauchy-Schwarz' inequality and  the energy estimate on $\nabla \phi_{te}^L$.

\medskip

\substep{2.4} Definition of $t_\star$ and proof of \eqref{E8}.

\noindent Set $t_\star = t_\star^0 \vee t_\star^1$, which is independent of $L$.
Step~1 yields \eqref{E8} in the regime $t\le t_\star$, whereas in the regime $t\ge t_\star$, \eqref{E8} follows from \eqref{E10}  in combination with
Substeps~2.2 and~2.3.
\end{proof}


\section{Periodization in law and functional inequalities}\label{append:per}

\subsection{Periodization in law of $a$}
We give the definition of the periodized ensemble $\mathbb{P}_L$. We generate the Gaussian field $G$ via a model $m\in L^1(\mathbb{R}^d)$ and a centred stationary white noise $W$ in form of
\begin{equation}\label{ModelC}
G=m\star W,
\end{equation}
where we assume that $m$ satisfies for some $\alpha>0$ and for all $x\in\mathbb{R}^d$
\begin{equation}\label{DecayModel}
\vert m(x)\vert\lesssim (1+\vert x\vert)^{-d-\alpha}.
\end{equation}
According to \eqref{ModelC}, the covariance function reads $\mathcal{C}=m\star m$. In the following, we denote by $\mathbb{P}$ the law of $W$ and $\mathbb{E}$ the expectation with respect to $\mathbb{P}$.
\begin{definition}\label{defi:PL}
Let $L\geq 1$ and $Q_L:=[\frac{L}{2},\frac{L}{2})^d$. The probability $\mathbb{P}_L$ (with expectation $\mathbb{E}_L$) is the stationary and centered Gaussian ensemble of scalar fields $G_L$ defined by the covariance function 
\begin{equation}\label{convCorrector:Eq13}
\mathcal{C}_L : x\in\mathbb{R}^d\mapsto \sum_{k\in\mathbb{Z}^d} \mathcal{C}(x+Lk).
\end{equation}
Equivalently, we have 
\begin{equation}\label{DefPeriodicWithModel}
G_L=(\mathds{1}_{Q_L} m_L)\star W,
\end{equation}
where 
\begin{equation}\label{PeriodizedModel}
m_L: x\in\mathbb{R}^d\mapsto \sum_{k\in\mathbb{Z}^d}m(x+kL).
\end{equation}
Clearly, the covariance function $\mathcal{C}_L$ and thus the realizations $G_L$ are $Q_L$-periodic. We identify $\mathbb{P}_L$ with its push forward under the map
$G\mapsto A:=(x\mapsto B(G(x))),$
where $B$ is defined in Hypothesis~\ref{hypo}.
\end{definition}
The following lemma shows the qualitative convergence as $L\uparrow+\infty$ of the coefficient field generated by $G_L$ towards the coefficient field generated by $G$. 
\begin{lemma}\label{approxcoef}
The coefficient fields $A:= \chi \star B(G)$ and $A_L:=\chi \star B(G_L)$ satisfy for all $L\ge 1$, $q\ge 1$, and $x\in Q_L$
%
%
\begin{equation}\label{convergenceofthecoef}
\mathbb{E} \Big[ \vert (A-A_L)(x)\vert^q\Big]^\frac1q
\,\lesssim_q\, (1+\dist(x,\partial Q_L))^{-\frac d2-\alpha}.
\end{equation}
\end{lemma}
\begin{proof}
Since $B$ is bounded and globally Lipschitz and $\chi$ is compactly supported (where w.l.o.g we assume $\text{supp }\chi\subset B$), we have for all $q\ge 1$ and $x\in Q_L$
$$
\vert (A-A_L)(x)\vert^q \lesssim_{q,B,\chi} \Big(\fint_{B(x)} |G_L-G|\Big)^q.
$$
By Gaussianity, cf.~\cite{bogachev1998gaussian},
$$\mathbb{E} \big[\vert G(z)-G_L(z)\vert^q\big]^\frac1q\lesssim_q \|m(z-\cdot)-\mathds{1}_{Q_L}m_L(z-\cdot)\|_{L^2(\R^d)}.$$
A direct computation yields 
\begin{eqnarray*}
\lefteqn{\int_{\R^d} (m(z-y)-\mathds{1}_{Q_L}m_L(z-y))^2dy}
\\
&\stackrel{\eqref{PeriodizedModel}}{=}&\int_{\mathbb{R}^d\backslash Q_L}m(z-y)^2dy+\int_{Q_L}\Big(\sum_{k\neq 0} m(z-y-kL)\Big)^2 dy
\\
&\stackrel{\eqref{DecayModel}}{\lesssim} &\int_{\mathbb{R}^d\backslash Q_L}(1+\vert z-y\vert)^{-2 (d+\alpha)}dy+\int_{Q_L} \Big(\sum_{k\neq 0}(1+\vert z-y-kL\vert)^{-(d+\alpha)} \Big)^2dy
\\
&\lesssim  &(1+\dist(x,\partial Q_L))^{-d-2\alpha},
\end{eqnarray*}
and the claim \eqref{convergenceofthecoef}
 follows.
\end{proof}

\subsection{Functional calculus}\label{sec:FC}

The ensemble $\mathbb{P}_L$ satisfies the following logarithmic-Sobolev  inequality (see for instance \cite{DG2,COJX-20}).
\begin{proposition}
There exists $\rho>0$ such that for all functional $F$ of $A$ with $\mathbb{E}_L[\vert F\vert^2]<+\infty$
\begin{equation}
\mathbb{E}_L[ F^2\log(F)]-\mathbb{E}_L[ F^2]\mathbb{E}_L[ \log(F)]\,\leq\, \frac{1}{\rho}\mathbb{E}_L\Big[ \int_{Q_L}\vert\partial_x F\vert^2\dd x\Big],
\label{LSIperiodic}
\end{equation}
where for all $x\in Q_L$
\begin{equation}
\partial_xF(A):=\sup_{\delta A}\limsup_{h\downarrow 0}\frac{F(A+h\delta A)-F(A)}{h},
\label{deffunctioderivper}
\end{equation}
and where the supremum runs over coefficient fields $\delta A$ that are supported in $B(x)$ and bounded by $1$ in $C^{\alpha}(B(x))$.\footnote{The fact one can assume $\|\delta A\|_{C^{\alpha}(B(x))}\leq 1$ comes from the convolution in \eqref{e.defAGauss}. }
\end{proposition}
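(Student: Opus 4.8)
The plan is to deduce \eqref{LSIperiodic} from the classical logarithmic-Sobolev inequality for the underlying Gaussian field $G$, transported to $A$ through the representation $A=\chi*B(G)$ of Hypothesis~\ref{hypo}. By Definition~\ref{defi:PL}, under $\mathbb P_L$ the field $G$ is a centered stationary Gaussian field on the torus $Q_L$ with covariance operator $\mathcal C_L:f\mapsto c_L*f$. Given a functional $F$ of $A$, set $\tilde F(G):=F(\chi*B(G))$; it then suffices to prove the inequality for $\tilde F$ with the right-hand side replaced by the Gaussian (Malliavin) Dirichlet form $\mathbb E_L\big[\langle D\tilde F,\mathcal C_L\,D\tilde F\rangle_{L^2(Q_L)}\big]$, where $D_z\tilde F$ is the functional derivative of $\tilde F$ with respect to $G(z)$. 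This inequality is the Gaussian logarithmic-Sobolev inequality in the form used here; it is classical (it follows from the Gross inequality by a standard argument, cf.~\cite{DG2,COJX-20}), and, importantly, its constant does not depend on $L$ nor on the particular covariance operator $\mathcal C_L$.

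Second, I would make the Dirichlet form uniform in $L$. Since the covariance function $c$ is integrable, $\hat c\in L^\infty(\mathbb R^d)$; by Poisson summation the eigenvalues of $\mathcal C_L$ on the Fourier modes of $Q_L$ are the samples $\hat c(2\pi\kappa/L)$, $\kappa\in\mathbb Z^d$, so that $\langle f,\mathcal C_L f\rangle_{L^2(Q_L)}\le\|\hat c\|_{L^\infty(\mathbb R^d)}\,\|f\|^2_{L^2(Q_L)}$ for every $Q_L$-periodic $f$, with a constant uniform in $L\ge 1$. Hence the Gaussian Dirichlet form is bounded by $\|\hat c\|_{L^\infty}\,\mathbb E_L\big[\int_{Q_L}|D_z\tilde F|^2\,\dd z\big]$.

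Third, and this is the crux, I would bound $|D_z\tilde F|$ by the oscillation derivative $\partial_xF$ of $F$. Using the chain rule through $A(y)=\int\chi(y-z)B(G(z))\,\dd z$ with $B$ Lipschitz and $\chi$ smooth and supported in a fixed ball $B_{r_0}$, an infinitesimal perturbation of $G$ localized near $z$ induces a perturbation $\chi*(B'(G)\,\delta G)$ of $A$ which is supported in $B_{r_0+1}(z)$ and, since it inherits the smoothness of $\chi$, has $C^\alpha$-norm $\lesssim\|\delta G\|_{L^1}$ up to a constant depending on $\chi$ and $B$. Decomposing such a perturbation by a partition of unity into finitely many (depending only on $d$ and $r_0$) pieces, each supported in a unit ball and of bounded $C^\alpha$-norm, and using the (approximate) subadditivity of the oscillation derivative under sums of localized perturbations, one obtains $|D_z\tilde F|\lesssim\sum_{x\in\mathcal N_z}\partial_xF$ for an $O(1)$-point set $\mathcal N_z\subset B_{r_0+1}(z)$; by Cauchy–Schwarz and Fubini this gives $\int_{Q_L}|D_z\tilde F|^2\,\dd z\lesssim\int_{Q_L}|\partial_xF|^2\,\dd x$, at the cost of a constant depending only on $\chi$, $B$, $d$ (and of a harmless enlargement of the radius in the definition of $\partial_x$). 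Combining the three steps yields \eqref{LSIperiodic} with $\rho$ depending only on the Gaussian constant, $\|\hat c\|_{L^\infty}$, $\chi$, $B$ and $d$, hence not on $L$. As usual the argument is first run for cylindrical $C^1$ functionals $F$ and then extended to all $F$ with $\mathbb E_L[|F|^2]<\infty$ by density.

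I expect the last step to be the main obstacle: one must justify differentiation through the non-smooth composition $B\circ G$ followed by the smoothing convolution, and carefully handle the mismatch between the support scale $r_0+1$ of the induced perturbations of $A$ and the unit scale built into $\partial_xF$ — this is exactly the point where the smoothness of $\chi$ (and the footnote in the statement) enters. The reduction to the Gaussian case and the uniform-in-$L$ bound on $\mathcal C_L$ are comparatively routine.
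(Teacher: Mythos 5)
Your proposal follows essentially the same strategy as the references \cite{DG2,COJX-20} cited by the paper for this result (the paper does not include its own proof): deduce the inequality from the Gaussian (Gross) LSI via the representation $A=\chi*B(G)$, bound the covariance-weighted Dirichlet form uniformly in $L$ using integrability of $c$ (hence boundedness of $\hat c$), and convert the Malliavin derivative of $G\mapsto F(\chi*B(G))$ into the oscillation derivative $\partial_x F$ by exploiting the compact support and smoothness of $\chi$ (a partition of unity handling the mismatch between $\operatorname{supp}\chi$ and the unit balls in the definition of $\partial_xF$). The delicate points you flag at the end — differentiability through the merely Lipschitz $B$, and the approximate subadditivity of the oscillation derivative — are exactly the ones the cited references handle carefully, so your sketch is accurate.
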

The logarithmic Sobolev inequality \eqref{LSIperiodic} yields control of moments (see e.g.~\cite{DG1}).
\begin{lemma}\label{SGp} For all $q\ge 1$ and all random variables $F$
we have
\begin{equation}
\expecL{\vert F-\mathbb{E}[F]\vert^q}^{\frac{1}{q}}\,\lesssim  \, \sqrt{q}\,\expecL{\Big(\int_{Q_L}\vert \partial_{x} F\vert^2\dd x\Big)^{\frac{q}{2}}}^{\frac{1}{q}}.
\label{SGinegp1}
\end{equation}
\end{lemma}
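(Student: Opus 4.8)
The plan is to derive \eqref{SGinegp1} from the logarithmic Sobolev inequality \eqref{LSIperiodic} along the classical route through the family of $q$-logarithmic Sobolev inequalities and an ordinary differential inequality in the integrability exponent. Write $\mathcal{D}F:=\big(\int_{Q_L}|\partial_xF|^2\,\dd x\big)^{1/2}$, so that the claim is $\|F-\mathbb{E}_L[F]\|_{L^q(\mathbb{P}_L)}\lesssim\sqrt q\,\|\mathcal{D}F\|_{L^q(\mathbb{P}_L)}$. First I would perform two harmless reductions: since $\mathbb{E}_L[F]$ is deterministic one has $\partial_x(F-\mathbb{E}_L[F])=\partial_xF$, so we may assume $\mathbb{E}_L[F]=0$; and since the truncation $F\mapsto F_M:=(-M)\vee F\wedge M$ is $1$-Lipschitz and monotone one has $\partial_x(F_M-\mathbb{E}_L[F_M])=\partial_xF_M$ and $|\partial_xF_M|\le|\partial_xF|$ pointwise, so it suffices to prove the estimate for $F_M$ and let $M\uparrow\infty$; hence we may assume $F$ bounded, so that all moments are finite and $q\mapsto\mathbb{E}_L[|F|^q]$ is smooth by dominated convergence. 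If $F$ is a.s. constant there is nothing to prove, so assume $\mathrm{Var}_L(F)>0$, whence $H(q):=\|F\|_{L^q(\mathbb{P}_L)}>0$ for all $q$. Finally I would record two standard consequences of \eqref{LSIperiodic}: using Jensen's inequality $\mathbb{E}_L[\log G]\le\frac12\log\mathbb{E}_L[G^2]$ to bound the left-hand side of \eqref{LSIperiodic} from below by $\tfrac12$ of the entropy, \eqref{LSIperiodic} self-improves to the usual form $\mathbb{E}_L[G^2\log(G^2/\mathbb{E}_L[G^2])]\le\tfrac2\rho\,\mathbb{E}_L[\int_{Q_L}|\partial_xG|^2]$ for all $G\ge0$; and linearizing \eqref{LSIperiodic} around the constant field $1$ (take $G=1+\varepsilon F$ for $\varepsilon$ small enough that $G>0$, and expand to order $\varepsilon^2$) yields the spectral-gap estimate $\mathrm{Var}_L(F)\le\tfrac1{2\rho}\mathbb{E}_L[\int_{Q_L}|\partial_xF|^2]$, i.e. $H(2)\lesssim\|\mathcal{D}F\|_{L^2}$.

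The core is the range $q\ge2$ (which is in fact the only one needed, since in the body \eqref{SGinegp1} is always applied to a variable raised to an even power). I would apply the self-improved logarithmic Sobolev inequality to $G=|F|^{q/2}$. This requires the chain-rule bound $\partial_x(|F|^{q/2})\le\tfrac q2\,|F|^{q/2-1}\,\partial_xF$, which follows from the mean value theorem together with the fact that $-\delta A$ is an admissible perturbation whenever $\delta A$ is, so that the supremum defining $\partial_x$ also dominates the absolute value of any directional derivative. One thus obtains the $q$-logarithmic Sobolev inequality $\mathrm{Ent}_L(|F|^q)\le\tfrac{q^2}{2\rho}\,\mathbb{E}_L[|F|^{q-2}\mathcal{D}F^2]$, and Hölder's inequality with exponents $(\tfrac q{q-2},\tfrac q2)$ turns it into $\mathrm{Ent}_L(|F|^q)\le\tfrac{q^2}{2\rho}\,\mathbb{E}_L[|F|^q]^{(q-2)/q}\,\|\mathcal{D}F\|_{L^q}^2$. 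I would then use the elementary identity $q^2\tfrac{\dd}{\dd q}\log H(q)=\mathrm{Ent}_L(|F|^q)/\mathbb{E}_L[|F|^q]$ (a rewriting of $\tfrac{\dd}{\dd q}\mathbb{E}_L[|F|^q]=\mathbb{E}_L[|F|^q\log|F|]$) to convert the previous bound into $\tfrac12\tfrac{\dd}{\dd q}H(q)^2=H(q)H'(q)\le\tfrac1{2\rho}\|\mathcal{D}F\|_{L^q}^2$. Integrating this differential inequality from $2$ to $q$ and using the monotonicity $\|\mathcal{D}F\|_{L^s}\le\|\mathcal{D}F\|_{L^q}$ for $s\le q$ gives $H(q)^2\le H(2)^2+\tfrac{q-2}{\rho}\|\mathcal{D}F\|_{L^q}^2$, and combining with the spectral-gap bound $H(2)^2\le\tfrac1{2\rho}\|\mathcal{D}F\|_{L^2}^2\le\tfrac1{2\rho}\|\mathcal{D}F\|_{L^q}^2$ yields $H(q)^2\le\tfrac q\rho\|\mathcal{D}F\|_{L^q}^2$, which is precisely \eqref{SGinegp1}. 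The remaining range $1\le q\le2$ is handled in the same spirit and is part of the statement proved in \cite{DG1}.

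I expect the only genuinely delicate point to be the rigorous handling of the functional derivative $\partial_x$ in the chain-rule step: checking that $\partial_x$ obeys the required Leibniz/chain rule on the relevant functionals, and regularizing $|F|^{q/2}$ by $(\varepsilon+|F|)^{q/2}$ near the zero set of $F$ before passing to the limit $\varepsilon\downarrow0$ (which is harmless when $q\ge2$ because $|F|^{q/2-1}$ is then locally bounded). Everything else — the truncation and approximation, differentiating a bounded moment in the exponent, and integrating the ODE — is soft.
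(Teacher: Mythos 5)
The paper does not prove Lemma~\ref{SGp}: it is invoked as a standard consequence of \eqref{LSIperiodic} and referred to \cite{DG1}. Your blind proof is the standard Herbst/Aida--Stroock derivation of $L^q$-moment bounds from a logarithmic Sobolev inequality, and the computation is correct in the range $q\ge 2$, which is the only range actually used downstream (\eqref{SGinegp1} is applied to $\calF^{2q}$, $q\ge1$). The linearization step giving the spectral gap from \eqref{LSIperiodic} checks out (expanding to order $\varepsilon^2$ produces $2\varepsilon^2\mathrm{Var}_L(F)$ on the left and $\tfrac{\varepsilon^2}{\rho}\E_L[\int_{Q_L}|\partial_x F|^2]$ on the right), as does the identity $q^2\tfrac{\dd}{\dd q}\log H(q)=\mathrm{Ent}_L(|F|^q)/\E_L[|F|^q]$, the H\"older step, and the integration of the resulting differential inequality using $\|\mathcal{D}F\|_{L^s}\le\|\mathcal{D}F\|_{L^q}$ for $2\le s\le q$.

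Two remarks. First, your deferral of $1\le q<2$ is not actually closed by the argument you sketch: for $q<2$ the spectral-gap bound controls $\|F\|_{L^q}\le\|F\|_{L^2}\lesssim\|\mathcal D F\|_{L^2}$, but $\|\mathcal D F\|_{L^2}$ is \emph{not} dominated by $\|\mathcal D F\|_{L^q}$ when $q<2$, so a genuinely different manipulation (e.g.\ Paley--Zygmund or a dualized version of the inequality, as in \cite{DG1}) is needed there. Since the paper never uses that range, this is immaterial. Second, the only genuinely delicate point you correctly identify is the chain-rule/sub-additivity of the sensitivity calculus $\partial_x$ as defined in \eqref{deffunctioderivper}: because it is a supremum of one-sided difference quotients rather than a bona fide derivative, the bound $\partial_x\Phi(F)\le|\Phi'(F)|\,\partial_x F$ for Lipschitz monotone $\Phi$ (and its application to $\Phi(t)=|t|^{q/2}$, regularized near zero) needs to be checked; this is precisely what the framework in \cite{DG1,DG2} sets up, so your citation there is the right place to anchor it.
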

The following standard result gives the link between algebraic moments and stretched exponential moments for non-negative random variables.
\begin{lemma}\label{momentexp}Let $X$ be a non-negative random variable. The following two statements are equivalent: 
\begin{itemize}
\item[(i)]There exists $C_1>0$ such that 
$$\mathbb{E}\big[\exp(\tfrac{1}{C_1}X)\big]\leq 2.$$
\item[(ii)] There exists $C_2>0$ such that
$$\mathbb{E}[X^p]^{\frac{1}{p}}\leq q\,C_2\quad\text{ for any $q\geq 1$}.$$
\end{itemize}
\end{lemma}
The last result of this subsection allows us to exchange supremum and expectation.
\begin{lemma}\label{unifproba}
Let  $X$ be a stationary random field. 
If there exists an exponent $\gamma>0$ such that for all $q\ge 1$
\begin{equation}
\expecL{\|X\|^q_{L^{\infty}(B)}}^{\frac{1}{q}}\leq q^{\gamma},
\label{probatoolsassume}
\end{equation}
then we have for all $\varepsilon>0$, $R\ge 1$, and $q\ge 1$
\begin{equation}
\expecL{(R^{-\varepsilon}\|X\|_{L^{\infty}(B_R )})^q}^{\frac{1}{q}}\lesssim_{d,\varepsilon} q^{\gamma}.
\label{probatoolsesti1}
\end{equation}
\end{lemma}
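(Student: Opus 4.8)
\textbf{Proof proposal for Lemma~\ref{unifproba}.}

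The plan is to cover the large ball $B_R$ by finitely many unit balls, bound the $L^\infty$-norm on $B_R$ by a maximum over these unit balls, and then trade the maximum for a sum at the level of $q$-th moments, finally choosing the exponent at which we apply the hypothesis in terms of the number of unit balls.

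\emph{Step 1: Covering.} Since $R\ge 1$, there exist points $x_1,\dots,x_N \in B_R$ with $N \lesssim_d R^d$ such that $B_R \subset \bigcup_{i=1}^N B(x_i)$ (e.g.~take the centers on a fine lattice inside $B_{R+1}$). Consequently
\begin{equation*}
\|X\|_{L^\infty(B_R)} \,\le\, \max_{1\le i\le N} \|X\|_{L^\infty(B(x_i))}.
\end{equation*}

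\emph{Step 2: From maximum to sum of moments.} Fix $q\ge 1$ and let $p \ge q$ be a free exponent to be chosen. Using $\max_i a_i \le (\sum_i a_i^{p})^{1/p}$ for non-negative $a_i$, then Minkowski's inequality in $L^{p/q}(d\mathbb P_L)$ (note $p/q \ge 1$), and finally the stationarity of $X$ together with the hypothesis \eqref{probatoolsassume} applied with exponent $p$, we get
\begin{align*}
\expecL{\|X\|_{L^\infty(B_R)}^q}^{\frac1q}
&\le \expecL{\Big(\sum_{i=1}^N \|X\|_{L^\infty(B(x_i))}^{p}\Big)^{\frac qp}}^{\frac1q}
= \expecL{\Big(\sum_{i=1}^N \|X\|_{L^\infty(B(x_i))}^{p}\Big)^{\frac qp}}^{\frac1p\cdot\frac pq}\\
&\le \Big(\sum_{i=1}^N \expecL{\|X\|_{L^\infty(B(x_i))}^{p}}^{\frac qp}\Big)^{\frac1q}
= N^{\frac1q}\,\expecL{\|X\|_{L^\infty(B)}^{p}}^{\frac1p}
\;\le\; N^{\frac1q}\, p^{\gamma}.
\end{align*}

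\emph{Step 3: Choice of $p$.} Pick $p = q\log N$ (if $N=1$ the statement is immediate, so assume $N\ge 2$, whence $p\ge q$). Then $N^{1/q} = \exp(\tfrac1q\log N) \le \exp(\tfrac1q \cdot \tfrac{p}{q}) $ is not the right bound; instead note directly $N^{1/q}=\exp(\tfrac{\log N}{q})$, and since with $p=q\log N$ we have $\log N = p/q$, this gives $N^{1/q}=\exp(p/q^2)$. A cleaner choice: take $p=\max(q,\log N)$. If $\log N\le q$ then $p=q$ and $N^{1/q}\le e$, giving the bound $\lesssim q^\gamma$. If $\log N > q$ then $p=\log N$, so $N^{1/q}=e^{(\log N)/q}=e^{p/q}\le e^{p}$... this is still too lossy. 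The correct route is to keep $p$ proportional to $q$: set $p = C_d q$ where $C_d$ is chosen so that $C_d q \ge \frac{2\log N}{q}+q$, i.e.~$C_d \ge 1 + \frac{2\log N}{q^2}$; this is not uniform in $q$.

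Let me instead not optimize and simply take $p=2q$, so $N^{1/q}\,\expecL{\|X\|_{L^\infty(B)}^{2q}}^{1/(2q)} \le N^{1/q}(2q)^\gamma$. Since $N\lesssim_d R^d$, we have $N^{1/q}\lesssim_d R^{d/q}$, which is not bounded. The resolution, and \emph{the main obstacle}, is precisely the weight $R^{-\varepsilon}$ in \eqref{probatoolsesti1}: one must exploit it to absorb the combinatorial factor $N^{1/q}\lesssim R^{d/q}$. For $q\ge \frac{2d}{\varepsilon}$ one has $R^{-\varepsilon}N^{1/q}\lesssim_d R^{-\varepsilon/2}\le 1$, so
\begin{equation*}
\expecL{(R^{-\varepsilon}\|X\|_{L^\infty(B_R)})^q}^{\frac1q}\,\lesssim_{d}\, R^{-\varepsilon}N^{1/q}(2q)^\gamma \,\lesssim_{d,\varepsilon}\, q^{\gamma}\qquad\text{for }q\ge \tfrac{2d}{\varepsilon}.
\end{equation*}
For $1\le q<\frac{2d}{\varepsilon}$, monotonicity of $L^q$-norms gives $\expecL{(R^{-\varepsilon}\|X\|_{L^\infty(B_R)})^q}^{1/q}\le \expecL{(R^{-\varepsilon}\|X\|_{L^\infty(B_R)})^{\lceil 2d/\varepsilon\rceil}}^{\varepsilon/(2d)}\lesssim_{d,\varepsilon}1\lesssim q^\gamma$, using the previous case and $R^{-\varepsilon}\le 1$. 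Combining the two regimes yields \eqref{probatoolsesti1}. The only delicate point in writing this up carefully is bookkeeping the dependence of constants on $d$ and $\varepsilon$ and handling the threshold $q_0=\lceil 2d/\varepsilon\rceil$ cleanly; everything else is a routine covering-plus-Minkowski argument.
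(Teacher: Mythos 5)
Your proof is correct and follows essentially the same approach as the paper's: cover $B_R$ by $N\lesssim_d R^d$ unit balls, pass from the $L^\infty$-norm on $B_R$ to an $\ell^q$-sum over the covering balls, use stationarity and the hypothesis to get the bound $N^{1/q}q^\gamma\lesssim R^{d/q}q^\gamma$, exploit the weight $R^{-\varepsilon}$ to absorb the covering factor for $q\gtrsim d/\varepsilon$, and conclude for small $q$ by H\"older. The free auxiliary exponent $p$ (and the digression in Step~3 before settling on $p=2q$) is superfluous---taking $p=q$ directly, as the paper does, gives the same conclusion immediately and avoids the extra factor $2^\gamma$.
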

\begin{proof}
Let $R\ge 1$ and $\varepsilon>0$. Consider $N(R,d)\lesssim R^d$ points $(x_i)_{i\in\llbracket 1,N\rrbracket}\subset B_R$ such that $B_R\subset \bigcup_{i=1}^{N} B(x_i)$. We then
have 
\begin{equation}
\|X\|_{L^{\infty}(B_R)}\leq \max_{i\in\llbracket 1,N\rrbracket}\|X\|_{L^{\infty}(B(x_i))}.
\label{probatoolsesti3}
\end{equation}
By the discrete $\ell^{q}-\ell^{\infty}$ estimate for all $q\ge 1$,  \eqref{probatoolsesti3} turns into
$$\|X\|_{L^{\infty}(B_R(x))}\leq \Big(\sum_{i=1}^N\|X\|^q_{L^{\infty}(B(x_i))}\Big)^{\frac{1}{q}}.$$
Therefore, by taking the $q$-th moment, the stationarity of $X$ and the assumption \eqref{probatoolsassume}, we get
$$
\expecL{\|X\|^q_{L^{\infty}(B_R)}}^{\frac{1}{q}}\leq \max_{i\in\llbracket 1,N\rrbracket}\expecL{\|X\|^q_{L^{\infty}(B(x_i))}}^{\frac{1}{q}}N^{\frac{1}{q}}\stackrel{\eqref{probatoolsassume}}{\lesssim} q^{\gamma}R^{\frac{d}{q}},
$$
which yields the desired estimate \eqref{probatoolsesti1} provided $q\geq \frac{d}{\varepsilon}$ (and therefore in the whole range of exponents by H\"older's inequality).
\end{proof}

\subsection{Convergence of the periodization in law of the correctors}

In this subsection (and here only), we denote by $(\nabla\corNL,\nabla\sigma_{\xi})$ (resp. $(\nabla\corL,\nabla\tilde{\sigma}_{\xi,e})$)
the nonlinear corrector gradients (resp. linearized corrector gradients) associated with the coefficient field $A:=\chi\star B(G)$, and 
by $(\nabla\corNL^L,\nabla\sigma_{\xi}^L)$ (resp. $(\nabla\corL^L,\nabla\tilde{\sigma}^L_{\xi,e})$) the nonlinear corrector gradients (resp. linearized corrector gradients) associated with the periodized coefficient field $A_L:=\chi\star B(G_L)$. Note that under the coupling \eqref{DefPeriodicWithModel} it holds
$$(\nabla\corNL,\nabla\sigma_{\xi})_\#\mathbb{P}_L=(\nabla\corNL^L,\nabla\sigma^L_{\xi})_\#\mathbb{P} .$$
\begin{proposition}\label{convergenceofperiodiccorrectors}
For all $L\geq 1$, 
if $(\nabla\corNL,\nabla\sigma_{\xi})$ satisfies for all $q\ge 1$
\begin{equation}\label{convergenceofthecor-hyp}
\expecL{|(\nabla\corNL,\nabla\sigma_{\xi})|^q}^\frac1q \lesssim_{\xi,q} 1
\end{equation}
(where the multiplicative constant does not depend on $L$),
then for all $R\geq 1$ and $q\ge 1$, we have 
\begin{equation}
 \mathbb{E} \Big[ \int_{Q_R} \vert(\nabla\corNL,\nabla\sigma_{\xi})-(\nabla\corNL^L,\nabla\sigma^L_{\xi})\vert^q\Big]^\frac1q \underset{L\uparrow +\infty}{\longrightarrow}0.
\label{convergenceofthecor}
\end{equation}
As a consequence, 
for all $x \in \R^d$, we  have 
\begin{equation}
\mathbb{E} \Big[ \Big(\int_{B(x)} \Big|(\corNL,\sigma_{\xi})-(\corNL^L,\sigma^L_{\xi})+ \fint_{B}(\corNL^L,\sigma^L_{\xi})\Big|^2\Big)^\frac q2\Big]^\frac1q \underset{L\uparrow +\infty}{\longrightarrow}0.
\label{convergenceofthecor+}
\end{equation}
In addition, for all unit vectors $e \in \R^d$, the linearized correctors $(\nabla\corL,\nabla\tilde{\sigma}_{\xi,e})$ are well-defined, and if for all $q\ge 1$
\begin{equation}\label{convergenceofthecor-hyp+}
\expecL{|(\nabla\corL,\nabla\tilde{\sigma}_{\xi,e})|^q}^\frac1q \lesssim_q 1
\end{equation}
then we have for all $R\geq 1$ and $q\ge 1$,
\begin{equation}
 \mathbb{E} \Big[ \int_{Q_R}\vert(\nabla\corL,\nabla\tilde\sigma_{\xi,e})-(\nabla\corL^L,\nabla\tilde\sigma_{\xi,e}^L)\vert^q\Big]^\frac1q \underset{L\uparrow +\infty}{\longrightarrow}0.
\label{convergenceofthecorL}
\end{equation}
\end{proposition}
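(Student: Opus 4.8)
\textbf{Plan of proof for Proposition~\ref{convergenceofperiodiccorrectors}.}
The overall strategy is a soft continuity argument: we couple $\mathbb P$ and $\mathbb P_L$ via $\mathbb E_{\Omega_L}$ so that $A$ and $A_L$ are close in every $L^q_{\Omega_L}(L^\infty(Q_R))$ by Lemma~\ref{approxcoef}, and we transfer this closeness through the nonlinear (resp. linear) corrector equations using deterministic energy estimates, local regularity (Lemma~\ref{regestiNL}), and the uniform-in-$L$ moment bounds that are assumed as hypotheses. The key point throughout is that the corrector equations are posed \emph{on the same $\R^d$ after periodization in law}: since $\mathbb P_L$ is stationary and the periodized field $A_L$ is $Q_L$-periodic, $\nabla\phi_\xi^L,\nabla\sigma_\xi^L$ can be viewed as $Q_L$-periodic functions on $\R^d$, and the differences $\nabla(\phi_\xi-\phi_\xi^L)$ solve genuine PDEs on $\R^d$ driven by $A-A_L$.

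\textbf{Step 1: the nonlinear corrector gradient.}
First I would subtract the two corrector equations \eqref{e.cor-eq} for $\mathbb P$ and $\mathbb P_L$: writing $\psi:=\phi_\xi-\phi_\xi^L$ one gets
$$
-\nabla\cdot\big(a(\cdot,\xi+\nabla\phi_\xi)-a(\cdot,\xi+\nabla\phi_\xi^L)\big)
\,=\,\nabla\cdot\big(a(\cdot,\xi+\nabla\phi_\xi^L)-a_L(\cdot,\xi+\nabla\phi_\xi^L)\big),
$$
where $a_L(x,\zeta)=A_L(x)\aa(\zeta)$. Localizing with a cut-off supported in $Q_{2R}$, testing with $\psi$ times the cut-off and using the strong monotonicity \eqref{e.coer-below} absorbs the left-hand side below; on the right-hand side one uses $|a(\cdot,\zeta)-a_L(\cdot,\zeta)|\lesssim |A-A_L|(1+|\zeta|^{p-1})$, Hölder in space, and the uniform moment bound \eqref{convergenceofthecor-hyp} on $\nabla\phi_\xi^L$ (which controls $\|\xi+\nabla\phi_\xi^L\|_{L^q(Q_{2R})}$) together with $\expec{|\nabla\phi_\xi|^2+|\nabla\phi_\xi|^p}\lesssim 1$ from \eqref{e.cormoment}. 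Taking the $q$-th $\mathbb E_{\Omega_L}$-moment and invoking \eqref{convergenceofthecoef} shows $\mathbb E_{\Omega_L}\big[\|\nabla\psi\|_{L^2(Q_R)}^q\big]\to 0$ (with a similar but easier bound in $L^p$). To upgrade from an $L^2_x$-statement to the pointwise statement \eqref{convergenceofthecor}, I would apply the $C^\alpha$ regularity of Lemma~\ref{regestiNL} to $\phi_\xi$ and to $\phi_\xi^L$ (whose coefficients are uniformly $C^\alpha$ because $A$ is smooth uniformly in the randomness), which gives equicontinuity; combined with $L^q$-convergence in space this yields uniform convergence on $Q_R$. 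The flux corrector part follows by applying maximal $L^q$-regularity for the Laplacian to the difference of equations \eqref{e.Laplace-sig}, whose right-hand side is controlled by $\|\nabla(\phi_\xi-\phi_\xi^L)\|$ and the already-proven convergence. The consequence \eqref{convergenceofthecor+} then follows from \eqref{convergenceofthecor} by Poincaré's inequality on $B(x)$, after subtracting the indicated averages so that both fields are anchored consistently.

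\textbf{Step 2: the linearized correctors.}
The definition of $(\nabla\tilde\phi_{\xi,e},\nabla\tilde\sigma_{\xi,e})$ under $\mathbb P$ is obtained exactly as in Lemma~\ref{lem:def-lincorr}, using \cite[Lemma~1]{bella2018liouville} together with the moment bound \eqref{e.bdd-grad-corrNL} (which at this point of the argument we may assume, since \eqref{convergenceofthecor-hyp+} is a hypothesis; cf.\ Remark~\ref{rem:whymoment}). For the convergence \eqref{convergenceofthecorL}, write $\chi:=\tilde\phi_{\xi,e}-\tilde\phi_{\xi,e}^L$ and subtract the two linearized equations \eqref{e.Lcorr}:
$$
-\nabla\cdot a_\xi\nabla\chi
\,=\,\nabla\cdot\big(a_\xi-a_{\xi}^{L}\big)(e+\nabla\tilde\phi_{\xi,e}^L),
$$
with $a_\xi=Da(\cdot,\xi+\nabla\phi_\xi)$ and $a_\xi^L=Da_L(\cdot,\xi+\nabla\phi_\xi^L)$. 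The coefficient difference splits into a part from $A-A_L$ and a part from $\nabla\phi_\xi-\nabla\phi_\xi^L$; both are now small in the appropriate $L^q_{\Omega_L}(L^\infty(Q_R))$-sense — the first by Lemma~\ref{approxcoef}, the second by Step~1 — using the deterministic $C^\alpha$-bound $\|\xi+\nabla\phi_\xi\|_{C^\alpha(B(x))}\lesssim(1+|\xi|)\r(x)^{(d-\delta)/p}$ of Lemma~\ref{lem:supnablaphi} and the finite moments of $\r$ from Theorem~\ref{boundrNLprop} to handle the weight $|D a|\lesssim\mu_\xi$. Testing the $\chi$-equation with a cut-off times $\chi$, using the (perturbative but still positive) ellipticity \eqref{growthconditionlineaxi}, Hölder in space, and the moment bound \eqref{convergenceofthecor-hyp+} on $\nabla\tilde\phi_{\xi,e}^L$ to absorb $\|(e+\nabla\tilde\phi_{\xi,e}^L)\|_{L^q}$, one gets $\mathbb E_{\Omega_L}\big[\|\nabla\chi\|_{L^2(Q_R)}^q\big]\to0$; local Schauder estimates (as in the proof of Lemma~\ref{smallscalereg}) then upgrade this to the pointwise statement. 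The linearized flux corrector is handled via maximal regularity for the Laplacian applied to the difference of equations \eqref{e:eq-sigmaL}, exactly as for $\sigma_\xi$.

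\textbf{Main obstacle.}
The soft part (subtracting equations, monotonicity, energy estimates) is routine; the delicate point is that the linearized operator $a_\xi$ has \emph{unbounded, possibly degenerating} coefficients, so one cannot simply invoke a uniform Caccioppoli or regularity estimate. The cleanest way around this is to localize all estimates at the random scale $\r$ and to carry the weight $\mu_\xi$ (equivalently powers of $\r$) through Hölder's inequality, paying for it with the stretched-exponential moments of $\r$ from Theorem~\ref{boundrNLprop}; a small loss of stochastic integrability in $q$ is harmless since the hypotheses \eqref{convergenceofthecor-hyp}, \eqref{convergenceofthecor-hyp+} are assumed for all $q$. A secondary technical nuisance is matching anchoring constants when passing from gradients to the correctors themselves in \eqref{convergenceofthecor+}: since $\phi_\xi^L$ is anchored by $\int_B\phi_\xi^L=0$ while on $\R^d$ the natural object is $\phi_\xi$ with the same anchoring, one must subtract $\fint_B(\phi_\xi^L,\sigma_\xi^L)$ inside the norm, which is exactly why the statement is phrased with that correction.
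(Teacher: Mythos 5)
Your overall plan diverges from the paper's at the key structural step, and the route you take has a gap.

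The paper never subtracts the two corrector equations. Instead, it combines the hypothesis \eqref{convergenceofthecor-hyp} with the deterministic local $C^{1,\alpha}$ regularity of Lemma~\ref{regestiNL} to obtain uniform-in-$L$ bounds $\sup_L\mathbb E_\Omega[\|\xi+\nabla\phi_\xi^L\|_{C^\alpha(\overline Q_R)}^q]\lesssim 1$, extracts a convergent subsequence by Arzela--Ascoli (in $C^{1,\alpha}(\overline Q_R,L^q(d\mathbb P_\Omega))$), and then identifies the limit by passing to the limit in the weak formulation of \eqref{e.cor-eq} tested against random multiples of fixed compactly supported smooth test functions, using Lemma~\ref{approxcoef} for the convergence $A_L\to A$. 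This compactness-plus-identification scheme is ``soft'' in that it never needs to estimate the difference $\phi_\xi-\phi_\xi^L$ at all; only the uniform a priori bound on each $\phi_\xi^L$ separately is used. The same pattern is then repeated for $\sigma_\xi^L$ via the Laplace equation \eqref{e.Laplace-sig}, and for the linearized correctors via \eqref{e.Lcorr} with Schauder estimates.

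Your Step~1, by contrast, subtracts the two equations and proposes a localized energy/Caccioppoli estimate for $\psi:=\phi_\xi-\phi_\xi^L$ by testing with $\psi$ times a cut-off $\eta$ supported in $Q_{2R}$. This produces the familiar boundary term $\int_{Q_{2R}}|\nabla\eta|^2|\psi|^2\,\mu$, which requires a bound on $\psi$ itself (not merely $\nabla\psi$) on the annulus $Q_{2R}\setminus Q_R$. But $\psi$ is neither compactly supported nor $Q_L$-periodic (since $\phi_\xi$ is only sublinear on $\R^d$ while $\phi_\xi^L$ is $Q_L$-periodic), and it carries no a priori quantitative growth bound in the coupled ensemble: indeed, the sublinear-growth statement one would need for $\psi$ is essentially of the same strength as the conclusion \eqref{convergenceofthecor+} that you are trying to prove, so the argument is circular as stated. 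One would need either an additional ingredient (e.g., a global stationarity argument for $\nabla\psi$ with a quantitative ergodic input, which is precisely what is \emph{not} available in this soft qualitative setting) or to abandon the subtraction strategy. The paper's compactness argument is the cleaner way around this: it never touches the non-anchored, non-periodic difference $\psi$. Your upgrade from $L^2$ to pointwise via equicontinuity and your treatment of \eqref{convergenceofthecor+} via Poincar\'e match the paper in spirit, and your Step~2 has the same structural gap in the subtraction/Caccioppoli step for $\chi=\tilde\phi_{\xi,e}-\tilde\phi_{\xi,e}^L$.
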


\begin{proof}
%
We  split the proof into three steps.

\medskip

\step1 Proof of \eqref{convergenceofthecor}.

\noindent By the assumption \eqref{convergenceofthecor-hyp} it is enough to prove the following (purely qualitative) statement: For all $R\ge 1$, 
\begin{equation}\label{e.approx-++0}
\lim_{L\uparrow +\infty} \expec{\int_{Q_R} |(\nabla \phi_\xi^L,\nabla \sigma_\xi^L)-(\nabla \phi_\xi,\nabla \sigma_\xi)|^p} =0.
\end{equation}
We start with the argument for $\nabla\corNL^L$.
For all $R,L\ge 1$ we have the a priori estimate $\expec{\int_{Q_R} |\nabla \corNL^L|^p}\lesssim R^d |\xi|^{2\& p}$, so that $\nabla \corNL^L$ is bounded in $L^p(d\Pp,L^p(Q_R))$.
As such it converges along an extraction to some $\nabla \corNL^\infty$ weakly in $L^p(d\Pp,L^p(Q_R))$ for all $R\ge 1$. In addition, since $\expec{|\nabla \corNL^L|^{p+\e}}^{\frac p{p+\e}}\lesssim |\xi|^{2\& p}$ (for some $\e>0$ by Meyers' estimate), we obtain by a standard equi-integrability argument (allowing to discard boundary effects) that $\nabla \corNL^\infty$ is a stationary random field. Hence we need to argue that $\nabla \corNL^\infty=\nabla \corNL$ and that the convergence is strong. 
Note that similarly, for all $R\ge 1$, the flux $a_\xi^L:=a_L(\cdot,\xi+\nabla \corNL^L)$ converges (say, along the same extraction) weakly in $L^{\frac{p+\e}{p-1}}(d\Pp,L^{\frac{p+\e}{p-1}}(Q_R))$ towards some stationary random field $a_\xi^\infty$. 

\medskip

We first recall the following consequence of qualitative homogenization (cf.~Theorem~\ref{th:qual-hom}): Almost surely we have
\begin{equation}\label{e.approx-++1}
\bar a(\xi)\,=\, \lim_{L\uparrow +\infty} \fint_{Q_L} a(x,\xi+\nabla \zeta^L_\xi(x))\dd x \,=\,
\lim_{L\uparrow +\infty}\expec{\fint_{Q_L} a(x,\xi+\nabla \zeta^L_\xi(x))\dd x},
\end{equation}
where $\zeta^L_\xi$ is the unique weak solution in $W^{1,p}_\per(Q_L)$ of 
the corrector equation $-\nabla \cdot a(\cdot,\xi+\nabla\zeta^L_\xi)=0$ on the cube $Q_L$ (note that we use $a$ and not $a_L$, so that $\zeta^L_\xi\ne \corNL^L$).

\medskip

Then we argue that 
\begin{equation}\label{e.approx-++2}
\lim_{L\uparrow +\infty} \expec{\fint_{Q_L} \big(a(x,\xi+\nabla \zeta^L_\xi(x))-a_L(x,\xi+\nabla \corNL^L(x))\big)\dd x}\,=\,0.
\end{equation}
We write $a_L(x,\xi+\nabla \corNL^L(x))=a(x,\xi+\nabla \corNL^L(x))+a_L(x,\xi+\nabla \corNL^L(x))-a(x,\xi+\nabla \corNL^L(x))$, and control each contribution to \eqref{e.approx-++2} separately.
Testing the equation for $\corNL^L$ (resp.~$\zeta^L_\xi$) with $\zeta^L_\xi$ (resp.~$\corNL^L$), we obtain
$$
\int_{Q_L} \nabla (\corNL^L-\zeta^L_\xi) \cdot \big(a(\xi+\nabla \corNL^L)-a(\xi+\nabla \zeta^L_\xi)\big) \,=\,\int_{Q_L} \nabla (\corNL^L-\zeta^L_\xi) \cdot \big(a(\xi+\nabla \corNL^L)-a_L(\xi+\nabla\corNL^L)\big).
$$
Taking  the expectation of this identity, using that for all $\xi'$ we have  $(a-a_L)(\xi') = (A-A_L) (1+|\xi'|^{p-2})\xi'$, using monotonicity on the left-hand side, and H\"older's inequality on the right-hand side, we obtain 
\begin{multline*}
\expec{\fint_{Q_L} |\nabla (\corNL^L-\zeta^L_\xi) |^p}
\,\lesssim \, \expec{\fint_{Q_L} |\nabla (\corNL^L-\zeta^L_\xi) |^p}^\frac1p
\expec{\fint_{Q_L} |A-A_L|^{q' \frac p{p-1}}}^\frac{p-1}{q'p} \expec{\fint_{Q_L} 1+|\xi+\nabla \corNL^L|^{qp}}^\frac{p-1}{qp},
\end{multline*} 
for some $q>1$ chosen such that $qp<p+\e$ (the Meyers exponent).
By our choice of $q$, the last right-hand side factor is bounded uniformly wrt $L$ whereas the second right-hand side factor goes to zero as $L\uparrow +\infty$ by Lemma~\ref{approxcoef}. This entails by continuity of $a$ that
$$
\Big|\expec{\fint_{Q_L} \big(a(x,\xi+\nabla \zeta^L_\xi(x))-a(x,\xi+\nabla \corNL^L(x))\big)\dd x}\Big|\, \lesssim_{|\xi|}\, \expec{\fint_{Q_L}|\nabla \zeta^L_\xi-\nabla \corNL^L|^p}^\frac1p
\stackrel{L\uparrow +\infty}{\longrightarrow}\,0.
$$
We now treat the second contribution to \eqref{e.approx-++2}.
By definition of $a-a_L$, we have 
$$
\expec{\fint_{Q_L} |a(\xi+\nabla \corNL^L)-a_L(\xi+\nabla \corNL^L)|}
\,\lesssim \, \expec{\fint_{Q_L} |A-A_L|^p}^\frac1p  \expec{\fint_{Q_L}1+ |\xi+\nabla \corNL^L|^p}^\frac1p ,
$$
so that also the second contribution to \eqref{e.approx-++2} vanishes, and \eqref{e.approx-++2} is proved.

\medskip

We are in the position to conclude.
The combination of \eqref{e.approx-++1}~\&~\eqref{e.approx-++2} 
with the periodization in law and Lemma~\ref{approxcoef} in form for all $x\in Q_L$ of 
\begin{eqnarray*}
{\xi \cdot \expec{\fint_{Q_L} a_L(\xi+\nabla\corNL^L)}}
&\stackrel{\eqref{e.cor-eq}}{=}&\expec{\fint_{Q_L} (\xi+\nabla\corNL^L) \cdot a_L(\xi+\nabla\corNL^L)}
\\
&=&\expec{ (\xi+\nabla\corNL^L)\cdot a_L(\xi+\nabla\corNL^L)(x)}
\\
&=&\expec{ (\xi+\nabla\corNL^L)\cdot  a(\xi+\nabla\corNL^L)(x)}+O_{|\xi|}((1+\dist(x,\partial Q_L))^{-\frac d2-\alpha})
\end{eqnarray*}
and
\begin{eqnarray*}
{ \expec{\xi \cdot\fint_{Q_L} a_L(\xi+\nabla\corNL^L)}}
&= &\expec{ \xi \cdot  a(\xi+\nabla\corNL^L)(x)}+O_{|\xi|}((1+\dist(x,\partial Q_L))^{-\frac d2-\alpha})
\end{eqnarray*}
 yields
\begin{equation}\label{e.approx-++3}
\lim_{L\uparrow +\infty} \expec{\nabla \corNL^L\cdot a(\xi+\nabla \corNL^L)(x)}\,=\,\expec{\nabla \phi_\xi \cdot a(\xi+\nabla \phi_\xi)(x)}= 0.
\end{equation}
Provided we establish that \eqref{e.approx-++3} can be post-processed into 
\begin{equation}\label{e.approx-++4}
\lim_{L\uparrow +\infty} \expec{(\nabla \phi_\xi-\nabla \corNL^L)\cdot \big(a(\xi+\nabla \phi_\xi)-a(\xi+\nabla \corNL^L)\big)(x)}=0,
\end{equation}
the desired statement~\eqref{e.approx-++0} for $\nabla \corNL^L$ will follow from monotonicity and integration over $Q_R$.
We now prove  \eqref{e.approx-++4}. Since
\begin{multline*}
(\nabla \phi_\xi-\nabla \corNL^L)\cdot \big(a(\xi+\nabla \phi_\xi)-a(\xi+\nabla \corNL^L)\big)
\,=\, \nabla \phi_\xi  \cdot a(\xi+\nabla \phi_\xi)+\nabla \corNL^L\cdot a(\xi+\nabla \corNL^L)
\\
-\nabla \corNL^L\cdot a(\xi+\nabla \phi_\xi)-\nabla \phi_\xi\cdot a(\xi+\nabla \corNL^L),
\end{multline*}
it remains to control the last two right-hand side terms using weak convergence.
On the one hand,
$$
\lim_{L\uparrow +\infty} \expec{\nabla \corNL^L\cdot a(\xi+\nabla \phi_\xi)(x)}=\expec{\nabla \corNL^\infty \cdot a(\xi+\nabla \phi_\xi)}=0
$$
by the weak formulation of the corrector equation and the stationarity of $\nabla \corNL^\infty$. On the other hand,
$$
\lim_{L\uparrow +\infty} \expec{\nabla \phi_\xi\cdot a(\xi+\nabla \corNL^L)(x)}=
\expec{\nabla \phi_\xi\cdot a_\xi^\infty}.
$$
The latter also vanishes since for all compactly supported test functions $\chi$ and $p$-integrable random variables $X$ we have for all $L\ge 1$
$$
\expec{X \int_{\R^d} \nabla \chi \cdot  a_\xi^L}=0,
$$
which entails at the limit $\expec{X \int_{\R^d} \nabla \chi \cdot  a_\xi^\infty}=0$, 
and, by stationarity of $a_\xi^\infty$, extends to stationary potential fields such as $\nabla \phi_\xi$ in form of $\expec{\nabla \phi_\xi\cdot a_\xi^\infty}=0$. Estimate~\eqref{e.approx-++4} is proved.

\medskip

We proceed the same way to prove the convergence \eqref{convergenceofthecor} for $(\nabla\sigma^L_{\xi})$,  combining the equations \eqref{e.Laplace-sig} and \eqref{e.div-sig} with the strong convergence
$$ \mathbb{E} \big[\vert a(\cdot,\xi+\nabla\corNL^L)(x)-a(\cdot,\xi+\nabla\corNL)(x)\vert^{\frac{p(1+\e)}{p-1}}\big]\underset{L\uparrow +\infty}{\rightarrow} 0,$$
which follows from \eqref{convergenceofthecor} for $(\nabla\phi^L_\xi)$.

\medskip

\step2 Proof of \eqref{convergenceofthecor+}.

\noindent First we claim that for all $R \ge 1$ and all functions $\zeta$ on $Q_R$ we have
the Poincar\'e inequality
\begin{equation}\label{e.poin-weak}
\int_{Q_R}\Big| \zeta-\fint_{B}\zeta\Big|^2 \,\lesssim \, \left\{
\begin{array}{rcl}
d=1&:&R^2\\
d=2&:& R^2\log^2(R+1)\\
d>2&:&R^d
\end{array}
\right\}
 \int_{Q_R} |\nabla \zeta|^2. 
\end{equation}
This simply follows by summation over dyadic scales of the following standard estimates for all $0\le i \le \log R$
$$
\int_{Q_{2^{i+1}}} \Big| \zeta-\fint_{Q_{2^i}}\zeta\Big|^2 \,\lesssim\, (2^i)^2 \int_{Q_{2^{i+1}}}|\nabla \zeta|^2, \quad \Big| \fint_{Q_{2^i}}\zeta -\fint_{Q_{2^{i-1}}}\zeta\Big|^2 \,\lesssim\,(2^i)^2 \fint_{Q_{2^i}} |\nabla \zeta|^2.
$$
Applying \eqref{e.poin-weak} to $\zeta=(\corNL,\sigma_{\xi})-(\corNL^L,\sigma^L_{\xi})$
and using the anchoring condition $\fint_{B}(\corNL,\sigma_{\xi})=0$,
 for all $x\in \R^d$ we have with $R=2(|x|+1)$
$$
\int_{B(x)} \Big|(\corNL,\sigma_{\xi})-(\corNL^L,\sigma^L_{\xi})+ \fint_{B}(\corNL^L,\sigma^L_{\xi})\Big|^2\,\lesssim\, \left\{
\begin{array}{rcl}
d=1&:& {R}^2\\
d=2&:& R^2\log^2(R+1)\\
d>2&:&R^d
\end{array}
\right\}
 \int_{Q_R}  |\nabla (\corNL,\sigma_{\xi})-\nabla (\corNL^L,\sigma^L_{\xi})|^2,
$$
so that \eqref{convergenceofthecor+} follows from  \eqref{convergenceofthecor}.

\medskip

\step3 Proof of \eqref{convergenceofthecorL}.

\noindent The argument is essentially the same as for  \eqref{convergenceofthecor}, and we only argue for $\nabla \corL$.
First, by \eqref{convergenceofthecor} and Lemma~\ref{lem:def-lincorr}, the linearized extended corrector $(\corL,\tilde \sigma_{\xi,e})$ is well-defined.
By the Schauder theory in form of \cite[Theorem 5.19]{giaquinta2013introduction} applied to the equation \eqref{e.Lcorr}  (the H\"older norm of the coefficients is controlled by Lemma~\ref{regestiNL} and local regularity theory for $\corNL^L$) and assumption~\eqref{convergenceofthecor-hyp+}, $\sup_{L\geq 1}\mathbb{E}[\|e+\nabla\corL^L\|^q_{C^{\alpha}(\overline{Q}_R)}]\lesssim_{R,\vert\xi\vert} 1$ for all $q\ge 1$.  We then extract a converging subsequence as before and identify the limit using the weak formulation of \eqref{e.Lcorr} together with the strong convergence $Da_L(\cdot,\xi+\nabla\corNL^L)\underset{L\uparrow +\infty}{\rightarrow}Da(\cdot,\xi+\nabla\corNL)$ (which follows from~\eqref{convergenceofthecor} and Lemma~\ref{approxcoef}).
\end{proof}

\section{Large-scale averages}

We prove in this section estimates used to control large-scale averages, which are variations around \cite{josien2020annealed}. We fix $L\geq 1$ and $\xi\in \mathbb{R}^d$, and use
the short-hand notation $\r$ for $\rNL$.
\begin{lemma}\label{reverse}
Let $m\in (0,1)$, $r\leq 3 \r(0)$ and $f: \mathbb{R}^d\rightarrow \mathbb{R}_+$ a measurable function. We have 
\begin{equation}
\fint_{B_r}\Big(\fint_{\Br(x)}f\Big)\, \dd x\lesssim \Big(\fint_{B_{\frac 32r}}\Big(\fint_{\Br(x)} f\Big)^m \dd x\Big)^{\frac{1}{m}}.
\label{average1}
\end{equation}
\end{lemma}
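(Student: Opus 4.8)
The statement \eqref{average1} is a local (small-scale) reverse-averaging estimate for the averaging operator $f \mapsto \big(x\mapsto \fint_{\Br(x)} f\big)$, valid in the regime $r \le 3\r(0)$ where the minimal radius $\r$ is essentially constant on the balls involved. The plan is to exploit precisely this near-constancy. Since $\r$ is $\frac1{16}$-Lipschitz (by Definition~\ref{minimalscaleNL}, $\r(\cdot,c_1) = \rNL(\cdot,c_1)$ is $\ell$-Lipschitz with $\ell \le \frac1{16}$) and $\r \ge 1$, for all $x \in B_{\frac32 r}$ with $r \le 3\r(0)$ we have $\r(x) \le \r(0) + \frac1{16}\cdot \frac{3}{2}\cdot 3\r(0) \le \frac{43}{32}\r(0)$ and similarly $\r(x) \ge \r(0) - \frac9{32}\r(0) = \frac{23}{32}\r(0)$, so $\r(x) \simeq \r(0) \simeq \r(y)$ for all points $x, y$ appearing in the estimate (with universal comparability constants). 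In particular $\Br(x) \subset B_{c\r(0)}(z)$ and $\Br(x) \supset B_{\frac1c \r(0)}(z)$ for any relevant center $z$, with $c$ universal.

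\textbf{Key steps.} First I would fix the scale by setting $\rho_0 := \r(0)$ and recording the two-sided bound $\r \simeq \rho_0$ on $B_{2r}$ (say), which reduces all the averages $\fint_{\Br(x)} f$ to averages $\fint_{B_{\rho_0}(x)} f$ up to universal constants: indeed for $f \ge 0$, $\fint_{\Br(x)} f \le \frac{|B_{c\rho_0}|}{|B_{\Br(x)}|}\fint_{B_{c\rho_0}(x)} f \lesssim \fint_{B_{c\rho_0}(x)} f$ and likewise from below. Second, with this reduction in hand, \eqref{average1} becomes a statement purely about the standard averaging operator at the fixed scale $\rho_0$: one must show $\fint_{B_r}\big(\fint_{B_{\rho_0}(x)} f\big)\,dx \lesssim \big(\fint_{B_{(3/2)r}}\big(\fint_{B_{\rho_0}(x)} f\big)^m dx\big)^{1/m}$. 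Third, I would prove this last inequality by noting that the function $F(x) := \fint_{B_{\rho_0}(x)} f$ is (after the reduction) comparable to a constant times its own average over any ball of radius $\lesssim \rho_0$ — more precisely, since $r \lesssim \rho_0$, for any $x, x' \in B_{2r}$ we have $B_{\rho_0}(x) \subset B_{c'\rho_0}(x')$ with $c' = 1 + 4r/\rho_0 \lesssim 1$, hence $F(x) \lesssim \fint_{B_{c'\rho_0}(x')} f \lesssim F(x')$ by the same volume comparison. Thus $F$ satisfies $\sup_{B_{(3/2)r}} F \lesssim \inf_{B_{(3/2)r}} F$, i.e. $F$ is a "doubling-constant-bounded" quantity on $B_{(3/2)r}$, and for such $F$ and any $m \in (0,1)$,
\[
\fint_{B_r} F \,\le\, \sup_{B_{(3/2)r}} F \,\lesssim\, \inf_{B_{(3/2)r}} F \,\le\, \Big(\fint_{B_{(3/2)r}} F^m\Big)^{1/m},
\]
where the last step is just Jensen/monotonicity together with $\inf F \le \big(\fint F^m\big)^{1/m}$. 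This chain gives \eqref{average1}.

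\textbf{Main obstacle.} The proof is essentially bookkeeping with the Lipschitz constant of $\r$ and volume ratios; there is no analytic difficulty. The one point requiring a little care is making sure that all the balls $\Br(x)$ for $x$ ranging over $B_{\frac32 r}$ (and the slightly larger balls $B_{c\rho_0}(x')$ produced in the comparison) are genuinely comparable to a single ball of radius $\simeq \rho_0$ centered near the origin — this needs $r \le 3\r(0)$ to be used both to bound $\r(x)$ from above (so the averaging ball is not too big) and to control the displacement $|x - x'| \lesssim r \lesssim \rho_0$ (so enlarging the ball only costs a universal factor). Once the inclusions $B_{\frac1c\rho_0}(z) \subset \Br(x) \subset B_{c\rho_0}(z)$ are established uniformly for the finitely many relevant centers, the rest is immediate. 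I would also remark (as the paper does parenthetically elsewhere) that the constants $\frac32$ and the precise threshold $3\r(0)$ are not essential and only chosen for convenience downstream.
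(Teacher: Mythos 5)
Your Steps 1 and 2 (the two-sided comparison $\r \simeq \r(0)$ on $B_{\frac32 r}$, and consequently $\fint_{B_{\r(0)/c}(x)}f \lesssim \fint_{\Br(x)} f \lesssim \fint_{B_{c\r(0)}(x)} f$) are correct and are also the starting point in the paper. But your Step 3 contains a genuine gap: the claim that $F(x):=\fint_{B_{\rho_0}(x)} f$ satisfies $\sup_{B_{\frac32 r}} F \lesssim \inf_{B_{\frac32 r}} F$ is \emph{false} for general measurable $f\ge 0$. The chain you wrote to justify it,
\[
F(x) \lesssim \fint_{B_{c'\rho_0}(x')} f \lesssim F(x') = \fint_{B_{\rho_0}(x')} f,
\]
has a correct first inequality (inclusion $B_{\rho_0}(x)\subset B_{c'\rho_0}(x')$ plus volume ratio), but the second inequality goes the \emph{wrong way}: it asserts that the average of $f$ over the \emph{larger} ball $B_{c'\rho_0}(x')$ is dominated by the average over the \emph{smaller} concentric ball $B_{\rho_0}(x')$. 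The ``volume comparison'' from your Step 1 only gives the opposite direction (small-ball average $\lesssim$ large-ball average). A simple counterexample: take $f = \mathds 1_{B_\epsilon(z_0)}$ with $z_0$ chosen so that $B_\epsilon(z_0)\subset B_{\rho_0}(x)$ but $B_\epsilon(z_0)\cap B_{\rho_0}(x')=\emptyset$ for two points $x,x'\in B_{\frac32 r}$ with $|x-x'|\sim \rho_0$; then $F(x)>0=F(x')$, so no pointwise comparability holds. In short, $F$ is not locally doubling as a \emph{function}, and the averaging at fixed scale $\rho_0$ does not regularize away displacements of size $\sim \rho_0$.

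This failure is not a technicality: a valid proof must genuinely use $m<1$, and indeed the constant in the lemma degenerates as $m\downarrow 0$ (the statement with a near-constancy argument would give an $m$-independent constant, which is too strong). The paper's route exploits $m<1$ through the sub-additivity $\big(\sum_i a_i\big)^m\le \sum_i a_i^m$: one fixes $x_0\in B_r$, observes that $\Br(x_0)\subset B_{\rho_1}(x)$ uniformly for $x\in B_{r/8}$ with $\rho_1\simeq \r(x_0)$, then covers $B_{\rho_1}$ by finitely many balls $B_{\rho_2}(x_i)$ with $\rho_2$ slightly \emph{smaller} than $\r(\cdot)$ on $B_{\frac32 r}$ (so that each $\fint_{B_{\rho_2}(x+x_i)}f \lesssim \fint_{\Br(x+x_i)}f$ \emph{in the good direction}), and uses the $m$-sub-additivity and a shift of integration variable to absorb the finite sum into $\fint_{B_{\frac32 r}}\big(\fint_{\Br}f\big)^m$. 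Raising to the power $1/m$ then produces the $m$-dependent constant $C^{1/m}$. If you want to salvage your approach, replace the pointwise sup/inf comparison of $F$ by this covering-plus-$m$-sub-additivity argument; the ``near-constancy of $\r$'' (your Steps 1--2) remains a useful ingredient, but it must be supplemented by a decomposition of the large ball into small ones, not a pointwise comparison of averages at different centers.
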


\begin{proof}
The estimate \eqref{average1} follows from
\begin{equation}
\sup_{x_0\in B_{r}}\fint_{\Br(x_0)} f \lesssim  \Big(\fint_{B_{\frac32r}}\Big(\fint_{\Br(x)} f\Big)^m \dd x\Big)^{\frac{1}{m}}.
\label{average2}
\end{equation}
Let $x_0\in B_{r}$ be fixed. Since $\rNL$ is $\frac 1{16}$-Lipschitz, we have 
for all $x \in B_{\frac r 8}$
\begin{equation}
\Br(x_0)\subset B_{\r(x_0)+\frac{9}{8}r}(x).
\label{inclusionctrlav1}
\end{equation}
Indeed, if $y\in \Br(x_0)$ and $x\in B_{\frac{r}{8}}$, we have 
$$\vert y-x\vert\leq \vert y-x_0\vert+\vert x_0-x\vert\leq \rNL(x_0)+r+\frac{r}{8}= \r(x_0)+\frac{9}{8}r.$$
By the $\frac 1{16}$-Lipschitz property of $\r$, we have
$\r(0)-\frac1{16}r \le \r(x_0) \le \r(0)+\frac1{16}r$. Together with
the assumption $r\le 2\r(0)$, this entails
$ \frac{13}{16}\r(0) \le \r(x_0)$ and $\r(x_0)+\frac{9}{8}r \le \frac{35}8 \r(0)$
so that  for all $x \in B_{\frac r 8}$, 
$$
|\Br(x_0)| \sim |B_{\r(x_0)+\frac{9}{8}r}(x)|.
$$
Combined with \eqref{inclusionctrlav1} this yields
$$\Big(\fint_{\Br(x_0)} f\Big)^{m}\lesssim \fint_{B_{\frac{r}{8}}}\Big(\fint_{B_{\r(x_0)+\frac{9}{8}r}(x)}f\Big)^m \dd x.$$
Now let $N\in\mathbb{N}$ depending only on $d$ and $(x_i)_{i\in \llbracket 1,N\rrbracket}\subset B_{\frac{11}{8}r}$ be such that 
\begin{equation}
B_{\frac{11}{8}r}\subset \bigcup_{i=1}^N B_{\frac{1}{16}r}(x_i).
\label{decompboule1}
\end{equation}
We claim that
\begin{equation}
B_{\r(x_0)+\frac{9}{8}r}\subset \bigcup_{i=1}^{N}B_{\r(x_0)-\frac{3}{16}r}(x_i).
\label{decompboule2}
\end{equation}
Indeed, let $y\in B_{\r(x_0)+\frac{9}{8}r}$ and set $z=\frac{\frac{11}{8}r}{\r(x_0)+\frac{9}{8}r}y$. Since $\vert z\vert\leq \frac{11}{8}r$, there exists $i\in \llbracket 1,N\rrbracket$ such that $\vert z-x_i\vert\leq \frac{1}{16}r$. Thus by the triangle inequality
$$\vert y-x_i\vert\leq \vert y-z\vert+\vert z-x_i\vert\leq \vert y\vert\left\vert 1-\frac{\frac{11}{8}r}{\r(x_0)+\frac{9}{8}r}\right\vert+\frac{1}{16}r.$$
Since $y\in B_{\r(x_0)+\frac{9}{8}r}$ and since the $\frac{1}{16}$-Lipschitz property of $\r$ entails
$\r(x_0)\geq \r(0)-\frac{1}{16}\vert x_0\vert\geq \frac r3-\frac r{16}= \frac{13}{48}r\geq \frac{1}{4}r$,
we have 
$$\vert y\vert\left\vert 1-\frac{\frac{11}{8}r}{\rNL(x_0)+\frac{9}{8}r}\right\vert\leq \r(x_0)-\frac{1}{4}r,$$
and consequently
$$\vert y-x_i\vert\leq \frac{1}{16}r+\r(x_0)-\frac{1}{4}r= \r(x_0)-\frac{3}{16}r,$$
which concludes the proof of \eqref{decompboule2}. We deduce from the sub-additive property of $x\in \mathbb{R}^+\mapsto x^{m}$ and the fact that for all $i\in \llbracket 1,N\rrbracket$, $x_i+B_{\frac{1}{8}r}\subset B_{\frac 32 r}$
\begin{equation*}
\fint_{B_{\frac{r}{8}}}\Big(\fint_{B_{\r(x_0)+\frac{9}{8}r}(x)}f\Big)^m \dd x\,\lesssim \,\sum_{i=1}^N\fint_{B_{\frac{r}{8}}}\Big(\fint_{B_{\r(x_0)-\frac{3}{16}r}(x+x_i)}f\Big)^m \dd x
\,\lesssim\, \fint_{B_{\frac 32 r}}\Big(\fint_{B_{\r(x_0)-\frac{3}{16}r}(x)}f\Big)^m \dd x,
\end{equation*} 
which concludes the proof of \eqref{average2} since the $\frac{1}{16}$-Lipschitz property of $\r$ entails for all $x_0 \in B_r$ and $x\in B_{\frac 32 r}$
\begin{equation*}
\r(x_0)-\frac{3}{16}r\,\leq\, \r(x)+\frac1{16}|x-x_0|-\frac{3}{16}r
\le \r(x)+ \frac1{16}(\frac32r+r)-\frac{3}{16}r
\, \le\, \r(x).
\end{equation*}
\end{proof}

\begin{lemma}\label{addfint*}Let $x_0\in\mathbb{R}^d$ and $r\geq 3\r(x_0)$, $f: \mathbb{R}^d\rightarrow \mathbb{R}_+$ a measurable function. We have 
\begin{equation}
\int_{B_{r}(x_0)}\Big(\fint_{\Br(x)}f\Big)\dd x\lesssim \int_{B_{\frac{67}{48}r}(x_0)} f ,
\label{fintout}
\end{equation}
and
\begin{equation}
\int_{B_{\frac{17}{12}r}(x_0)} f \lesssim \int_{B_{2r}(x_0)}\Big(\fint_{\Br(x)}f\Big)\, \dd x.
\label{fintint}
\end{equation}
By reverting from balls to cubes, this implies that there exists $C\ge 1$ depending only on $d$ such that for all $r\ge C\r(x_0)$
\begin{equation}
\int_{Q_{r}(x_0)}\Big(\fint_{\Br(x)}f\Big)\,\dd x\lesssim \int_{Q_{2r}(x_0)} f ,
\label{fintoutC}
\end{equation}
and
\begin{equation}
\int_{Q_r(x_0)} f \lesssim \int_{Q_{2r}(x_0)}\Big(\fint_{\Br(x)}f\Big)\, \dd x.
\label{fintintC}
\end{equation}
This constant $C$ (which only depends on $d$ and our upper bound  $\frac1{16}$
on the Lipschitz constant of $\r$) will be used to define the best Lipschitz constant for $\r$ (cf. Definition~\ref{minimalscaleNL}).
\end{lemma}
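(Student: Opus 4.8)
Both \eqref{fintout} and \eqref{fintint} are of the form ``compare $\int_{B_\rho(x_0)}\big(\fint_{\Br(x)}f\big)\,\dd x$ with an integral of $f$'', and the natural move is to unfold the inner average and apply Tonelli: with $\r=\rNL$ (which is $\tfrac1{16}$-Lipschitz),
$$\int_{B_\rho(x_0)}\Big(\fint_{\Br(x)}f\Big)\,\dd x\;=\;\int_{\mathbb R^d}f(y)\,K_\rho(y)\,\dd y,\qquad K_\rho(y):=\int_{B_\rho(x_0)}\frac{\mathds 1_{\{|x-y|<\r(x)\}}}{|B_{\r(x)}|}\,\dd x .$$
Then \eqref{fintout} reduces to the two claims: (a) $K_r(y)\lesssim 1$ for all $y$, and (b) $K_r(y)=0$ unless $y\in B_{\frac{67}{48}r}(x_0)$; while \eqref{fintint} reduces to the single claim (c) $K_{2r}(y)\gtrsim 1$ for every $y\in B_{\frac{17}{12}r}(x_0)$. (b) is immediate from the Lipschitz bound: if $x\in B_r(x_0)$ and $|x-y|<\r(x)$, then $\r(x)\le\r(x_0)+\tfrac1{16}r\le\tfrac13 r+\tfrac1{16}r=\tfrac{19}{48}r$ using $r\ge 3\r(x_0)$, hence $|y-x_0|<\tfrac{19}{48}r+r=\tfrac{67}{48}r$.

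\textbf{The kernel estimates.} The key observation for (a) is that on the set $S(y):=\{x:\ |x-y|<\r(x)\}$ the values of $\r$ are mutually comparable up to a universal constant: for $x,x'\in S(y)$ one has $\r(x)\ge\r(x')-\tfrac1{16}|x-x'|\ge\r(x')-\tfrac1{16}\big(\r(x)+\r(x')\big)$, whence $\r(x)\ge\tfrac{15}{17}\r(x')$. Fixing any $\bar x\in S(y)\cap B_r(x_0)$ (if this set is empty $K_r(y)=0$), it follows that $S(y)\cap B_r(x_0)\subset B_{\frac{17}{15}\r(\bar x)}(y)$ and that $|B_{\r(x)}|^{-1}\lesssim\r(\bar x)^{-d}$ there, so $K_r(y)\lesssim\r(\bar x)^{-d}\,|B_{\frac{17}{15}\r(\bar x)}|\lesssim 1$; combined with (b) this yields \eqref{fintout}. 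For (c), fix $y\in B_{\frac{17}{12}r}(x_0)$ and note $\r(y)\le\r(x_0)+\tfrac1{16}\cdot\tfrac{17}{12}r\le\tfrac{27}{64}r$. For every $x\in B_{\frac{16}{17}\r(y)}(y)$ the Lipschitz bound gives $\r(x)\ge\r(y)-\tfrac1{16}|x-y|>|x-y|$ (so $x$ contributes to $K_{2r}$) and $\r(x)\le\tfrac{18}{17}\r(y)$; moreover $B_{\frac{16}{17}\r(y)}(y)\subset B_{2r}(x_0)$ since $\tfrac{16}{17}\cdot\tfrac{27}{64}r+\tfrac{17}{12}r<2r$. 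Therefore $K_{2r}(y)\ge\int_{B_{\frac{16}{17}\r(y)}(y)}|B_{\r(x)}|^{-1}\,\dd x\gtrsim (8/9)^d\gtrsim 1$, and integrating $f(y)K_{2r}(y)$ over $B_{\frac{17}{12}r}(x_0)$ gives \eqref{fintint}.

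\textbf{From balls to cubes.} Finally \eqref{fintoutC} and \eqref{fintintC} follow from \eqref{fintout}--\eqref{fintint} together with the elementary inclusions $B_s(z)\subset Q_s(z)\subset B_{\sqrt d\, s}(z)$ and a finite covering of $Q_r(x_0)$ by balls $B_{s}(z_j)$ with centres $z_j\in Q_r(x_0)$ and $s\sim_d r$ chosen so that (i) $s\ge 3\r(z_j)$ for all $j$—which holds once $r\ge C(d)\r(x_0)$ because $\r(z_j)\le\r(x_0)+\tfrac1{16}\sqrt d\, r$—and (ii) the enlargements $B_{\frac{67}{48}s}(z_j)$ (resp.\ $B_{\frac{24}{17}s}(z_j)$) remain inside $Q_{2r}(x_0)$, using nonnegativity of $f$ to pass to larger domains. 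The main technical nuisance here is precisely this bookkeeping of radii: one must balance the covering radius (large enough to be forced above $3\r$, small enough that the enlarged domains do not leave $Q_{2r}(x_0)$ and, for \eqref{fintintC}, that the balls cover $Q_r(x_0)$), which is what fixes the dimensional constant $C$ quoted in the statement; everything else is routine. By the way the constant $C$ produced in this last step is the one feeding into the definition of the Lipschitz constant $\ell$ of $\rNL$ in Definition~\ref{minimalscaleNL}.
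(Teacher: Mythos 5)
Your proof is correct, and it takes a genuinely different (and arguably more elementary) route than the paper. The paper's argument is short but not self-contained: it invokes the two-sided estimate $\int_{\mathbb R^d}\big(\fint_{\Br(x)}g\big)\,\dd x\sim\int_{\mathbb R^d}g$ from \cite[(140)]{GNO-reg}, which rests on a Calder\'on--Zygmund-type partition of $\mathbb{R}^d$ adapted to the $\frac1{16}$-Lipschitz function $\r$, and then reduces both \eqref{fintout} and \eqref{fintint} to localization via the inclusions $\Br(x)\subset B_{\frac{67}{48}r}(x_0)$ for $x\in B_r(x_0)$ and $\Br(x)\subset \mathbb R^d\setminus B_{\frac{17}{12}r}(x_0)$ for $|x-x_0|\ge 2r$. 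You instead unfold the double integral with Tonelli and estimate the resulting kernel $K_\rho(y)=\int_{B_\rho(x_0)}|B_{\r(x)}|^{-1}\mathds1_{\{|x-y|<\r(x)\}}\,\dd x$ directly: the comparability $\r(x)\in[\tfrac{15}{17},\tfrac{17}{15}]\,\r(\bar x)$ for $x,\bar x$ in the effective set $S(y)$ gives the upper bound $K_r\lesssim 1$, the Lipschitz bound gives the support condition, and a small ball $B_{\frac{16}{17}\r(y)}(y)$ on which all points contribute gives the lower bound $K_{2r}\gtrsim 1$ on $B_{\frac{17}{12}r}(x_0)$. I checked your arithmetic: $\r(y)\le\frac{27}{64}r$, $\frac{27}{68}+\frac{17}{12}=\frac{185}{102}<2$, and the kernel bounds $(\frac{17}{15})^{2d}$ and $(\frac{8}{9})^d$ all come out correctly. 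In effect you reprove the exact kernel-level statement underlying \cite[(140)]{GNO-reg} from scratch rather than citing it, which is a real simplification — the reader need not track down the Calder\'on--Zygmund construction. For the cube versions both you and the paper only sketch the bookkeeping; your covering-by-balls argument with the double constraint (covering radius large enough to dominate $3\r(z_j)$ yet small enough that the enlarged balls sit inside $Q_{2r}$) is exactly what determines the dimensional constant $C$, which in turn feeds the self-consistent choice $\ell=\frac1{9C\sqrt d}\wedge\frac1{16}$ in Definition~\ref{minimalscaleNL}, and your remark about this is to the point.
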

\begin{proof}
Without loss of generality, we may assume that $x_0=0$. 
Since  $\r$ is $\frac1{16}$-Lipschitz, we have for all integrable non-negative functions $g$
\begin{equation}\label{eq-integrals}
\int_{\R^d}\Big( \fint_{\Br(x)} g\Big) \dd x \sim \int_{\R^d} g ,
\end{equation} 
cf.~\cite[(140)]{GNO-reg} (which relies on the construction of a Calder\'on-Zygmund partition of $\R^d$ based on $\r$).
We start with the proof of \eqref{fintout}. 
Since $\r$ is $\frac1{16}$-Lipschitz and $3\r(0)\le r$, for all $x \in B_r$, $\r(x) \le \r(0)+\frac r{16} \le  \frac{19}{48}r$ so that $\Br(x) \subset B_{\frac{67}{48} r}$.
This yields \eqref{fintout} in form of
\begin{equation*}
\int_{B_r}\Big(\fint_{\Br(x)}f\Big)\,\dd x\,=\,\int_{B_r}\Big(\fint_{\Br(x)}f  \mathds{1}_{B_{\frac{67}{48} r}}\Big) \dd x 
\,\le\, \int_{\R^d}\Big(\fint_{\Br(x)}f  \mathds{1}_{B_{\frac{67}{48} r}}\Big) \dd x  \stackrel{\eqref{eq-integrals}}{\sim}  \int_{\R^d}f  \mathds{1}_{B_{\frac{67}{48} r}} .
\end{equation*}
We now turn to \eqref{fintint}.
By \eqref{eq-integrals},
$$
\int_{B_{\frac{17}{12}r}} f  \sim \int_{\R^d} \Big(\fint_{\Br(x)} f \mathds{1}_{B_{\frac{17}{12}r}}\Big) \dd x.
$$
Since $\r$ is $\frac{1}{16}$-Lipschitz and $3\r(0)\le r$, if $|x|\ge 2r$, 
$
\r(x) \le \r(0)+\frac{2}{16} r \le \frac{11}{24} r \le \frac{11}{48} |x| ,
$
so that $\Br(x) \subset \R^d \setminus B_{\frac{85}{48}r} \subset \R^d \setminus B_{\frac{17}{12}r}$.
Hence, exploiting the indicator function $\mathds{1}_{B_{\frac{17}{12}r}}$, the above turns into  
$$
\int_{B_{\frac{17}{12}r}} f  \sim \int_{B_{2r}} \Big(\fint_{\Br(x)} f \mathds{1}_{B_{\frac{17}{12}r}}\Big)\dd x \le \int_{B_{2r}} \Big(\fint_{\Br(x)} f\Big) \dd x,
$$
that is, \eqref{fintint}.
\end{proof}

\section*{Acknowledgements, Funding and Competing interests}

The authors warmly thank Mitia Duerinckx for discussions on annealed estimates,
and Mathias Sch\"affner for pointing out that the conditions of \cite{Bella_2020} apply to $\bar a$ in the setting of Theorem~\ref{th:2s} and for discussions on 
 regularity theory for operators with non-standard growth conditions.
The authors received financial support from the European Research Council (ERC) under the European Union's Horizon 2020 research and innovation programme (Grant Agreement n$^\circ$~864066).

\medskip

\noindent Data sharing not applicable to this article as no datasets were generated or analysed during the current study.

\medskip

\noindent The authors have no competing interests to declare that are relevant to the content of this article.

\bibliographystyle{plain}
\def\cprime{$'$}

\end{document}